\patchcmd{\thebibliography}{\list}{\list}{}{}
\theoremstyle{plain}
\newtheorem{exmp}{Example}
\newtheorem{theorem}{Theorem}[section]
\newtheorem{proposition}[theorem]{Proposition}
\newtheorem{lemma}[theorem]{Lemma}
\newtheorem{corollary}[theorem]{Corollary}
\theoremstyle{definition}
\newtheorem{definition}[theorem]{Definition}
\newtheorem{assumption}[theorem]{Assumption}
\newcommand\numberthis{\addtocounter{equation}{1}\tag{\theequation}}
\newcommand{\V}{\mathrm{Var}}
\newcommand{\tr }{\mathsf{T}}
\newcommand{\algc}[1]{\hfill$\triangleright$~\textit{#1}}
\newcommand{\afterctrlc}[1]{\STATE $\!\triangleright$~\textit{#1}}
\renewcommand{\baselinestretch}{1.24}
\title{Differentially Private Inference for Longitudinal Linear Regression}
\newif\ifuniqueAffiliation
\title[Differentially Private Inference for Longitudinal Linear Regression]{Differentially Private Inference for Longitudinal Linear Regression}
\author{Getoar Sopa}\thanks{Department of Statistics, Columbia University.}
\author{Marco Avella Medina}\thanks{
This research was partly
supported by  NSF grants DMS-2310973 (Avella Medina) and DMS-2413828 (Rush).}
\author{Cynthia Rush}
\begin{document}

\setcounter{page}{1} 
\begingroup
\renewcommand{\baselinestretch}{1.24}\selectfont
\begin{abstract}
  Differential Privacy (DP) provides a rigorous framework for releasing statistics while protecting  individual information present in a dataset. Although substantial progress has been made on differentially private linear regression, existing methods almost exclusively address the item-level DP setting, where each user contributes a single observation. Many scientific and economic applications instead involve longitudinal or panel data, in which each user contributes multiple dependent observations. In these settings, item-level DP offers inadequate protection, and user-level DP - shielding an individual’s entire trajectory - is the appropriate privacy notion.
We develop a comprehensive framework for estimation and inference in longitudinal linear regression under user-level DP. We propose a user-level private regression estimator based on aggregating local regressions, and we establish finite-sample guarantees and asymptotic normality under short-range dependence.  For inference, we develop a 
privatized, bias-corrected covariance estimator that is automatically heteroskedasticity- and autocorrelation-consistent. 
These results provide the first unified framework for practical user-level DP estimation and  inference in longitudinal linear regression under dependence, with strong theoretical guarantees and promising empirical performance.
\end{abstract}
\endgroup
\keywords{Differential Privacy, Longitudinal Linear Regression, HAC Inference}

\maketitle

\section{Introduction}
Differential Privacy (DP) is a general framework that facilitates worst-case privacy guarantees for users whose data is employed in calculating published statistics \citep{dwork2006}. While initially a mostly theoretical construction, DP has seen widespread adoption in public, commercial, and research applications \citep{abowd2018us, cormode2018privacy, liu2024survey}. The fundamental idea behind DP is that properly calibrated noise \textit{must} be injected into non-trivial statistics to ensure private releases. The paramount challenge in constructing DP statistics, then, is constructing randomization mechanisms in such a way as to provide meaningful privacy guarantees while minimizing the loss of statistical efficiency. 

Perhaps the foremost  tool in applied statistics is linear regression. It is therefore not surprising that DP estimation and inference for linear regression has been the subject of much work \citep{sheffet2019old,barrientos2019differentially, cai2021cost, alabi2020differentially, milionis2022differentially,  alabi2023differentially, amineasy,cai2023private,barrientos2024feasibility}. Studies of linear regression under DP constraints have so far been constructed in the \textit{item-level} DP setting, where each user contributes exactly one datapoint, and the target of the DP randomization is to mask the contribution of any one user's datapoint. There are many applications, however, where every user contributes multiple datapoints to the linear regression problem, such as in medical experiments \citep{flynn2009effects,  church2010effects}, social science studies 
\citep{sobel2012does, mewes2021experiences}, and microeconomic data analyses \citep{jacobson1993earnings, autor2014trade}.  We call this \textit{longitudinal} or \textit{panel} linear regression. Due to the temporal dependence usually observed in such datasets, iid data models are not appropriate in these applications. 

Moreover, in  statistical applications such as longitudinal linear regression, item-level DP does not provide satisfactory privacy guarantees. Intuitively, we do not consider a user's contribution to a statistic to be private unless all of that user's datapoints are masked simultaneously. Many recent works in DP have addressed this problem by incorporating a framework called user-level (or person-level) Differential Privacy (uDP) that protects against worst-case changes in a single user's entire data contribution \citep{levy:sun:amin:kale:kulesza:mohri:suresh2021,cummings:feldman:mcmillan:talwar2022,ghazietal2023,kent2024rate,agarwal:kamath:majud:mouzakis:silver:ullman2025}.
While uDP is also a natural privacy framework for longitudinal data,   to our knowledge all existing work on uDP assumes iid data, which as mentioned above is not usually a good model for longitudinal regression. 
An exception considering both uDP and dependency is recent work by one of the authors \citep{roth:avellamedina2025} that analyzes  uDP mean estimation rates under dependence characterized by log-Sobolev inequalities. However, that paper does not  address inference so the procedures therein are primarily of theoretical interest rather than motivated by practical considerations. 

In this work, we study estimation and inference in the linear regression model under user-level DP constraints and temporal dependence. 
The main contributions of this paper are summarized as follows:
\begin{enumerate}[(a)]
    \item \textbf{Adaptive uDP mean estimation:} We propose a novel algorithm for performing mean estimation  using minimal knowledge of the data generating process. This is especially important in the longitudinal regression setting for two main reasons. First, the algorithm is a practical one that can be applied out-of-box 
    to real regression problems with limited apriori knowledge. Second, longitudinal data often have significant intertemporal dependence, which makes conventional tuning strategies for clip-and-noise  algorithms not directly applicable. We show empirically that our algorithm achieves very strong performance for moderate sample sizes. Our methods extend ideas of \cite{karwa:vadhan2018,biswas:dong:kamath:ullman2020} to general, possibly dependent data. 
    \item \textbf{uDP Linear regression estimation:} We propose a user-level private estimator for the longitudinal linear regression coefficients based on an average of  linear regression estimates computed from each user's data. We argue that such an estimator often outperforms estimators based on privatizing summary statistics in the uDP setting, and prove finite sample and asymptotic convergence of the private estimator under short-term dependence in users' data. 
    \item \textbf{uDP Inference:} We propose user-level private estimators for the asymptotic covariance of the private OLS estimator based on a privatized sample covariance estimator with a finite sample correction term. We show that this estimator is consistent and automatically heteroskedasticity and autocorrelation consistent, and demonstrate its accuracy in small samples. Our construction shares similarities with cluster-robust standard errors \citep{liang1986longitudinal,arellano1987computing}, but is adapted to our private distributed setting.  
    \item \textbf{uDP Difference-between-groups estimator:} We propose estimators for the difference in coefficient between two groups (for instance, treatment and control), and the asymptotic covariance thereof, and prove corresponding convergence guarantees. 
    This contribution is particularly interesting in practice as such variables are commonly encountered in applications in the form of categorical variables such as treatment status, sex, or  ethnicity, just to name a few.
    
    \item \textbf{Concentration results under dependence:} We obtained various new concentration inequalities under strong mixing as a side product of our analysis of OLS  with longitudinal data. In particular, a key intermediate step in our construction generalizes the finite sample analysis of  \cite{hsu:kakade:zhang2012} to autocorrelated settings. For this we relied on the work of \cite{merlevede:peligrad:rio2011}. Our results and techniques also bear some resemblance to recent work on regression under short-term dependence \citep{fan2016penalized}. We believe our concentration results are interesting on their own right and are likely to find many more useful applications in future work. 
\end{enumerate}
The remainder of this article is structured as follows. In Section \ref{sec:preliminaries} we introduce the necessary background from differential privacy and probability theory to present our main results. In Section \ref{sec:dpmean} we give the core mean estimation algorithm that underpins our longitudinal estimators. The algorithm returns a DP mean estimator that will be instantiated in appropriate ways in later sections to provide uDP estimators with strong statistical guarantees. In Section \ref{sec:dpregression} we present our main private longitudinal linear regression estimator. This estimator is complemented in Section \ref{sec:inf} with a private estimator of its asymptotic covariance. We use the estimators  to construct private approximate  pivots for consistent inference. We illustrate the practicality of our methods in numerical examples with modest sample sizes using simulated data in Section \ref{sec:simulations} and with the Childhood Asthma Management Program (CAMP) teaching dataset \citep{childhood2000long} in Section \ref{sec:real_data}. The Appendix contains all the proofs.

\section{Preliminaries}
\label{sec:preliminaries}

For a natural number $K \in \mathbb{N}$, we let $[K]$ denote the set $\{1, \dots, K\}$. For a vector $v \in \mathbb{R}^p$, we write $\|v\| = \sqrt{v^\tr v}$. For a real matrix $A \in \mathbb{R}^{p \times q}$,  we write $\|A\| = \max_{v \in \mathbb{R}^d: \|v\| = 1} \|Av\|$, and we let $\lambda_{\min}(A)$ and $\lambda_{\max}(A)$ denote the smallest and largest eigenvalue of $A$, respectively. We denote the space of real symmetric $d \times d$ matrices by $\mathcal{M}_{symm}^{d \times d}$.   We write $f(x) = O(g(x))$ if $\exists C>0$ such that $f(\cdot ) \leq C(g(\cdot ))$ for any admissible arguments of $f$ and $g$. Similarly, we write $f(x) = \Omega(g(x))$ if $\exists c>0$ such that $f(\cdot) \geq cg(\cdot)$ for any admissible argument of $f$ and $g$, and $f(x) \asymp g(x)$ if $f(x) = O(g(x))$ and $f(x) = \Omega(g(x))$. Moreover, we write $f(x) = o(g(x))$ if $\frac{f(x)}{g(x)} \to 0$ as $x \to \infty$. Finally, for an event $E$ we let $\mathbbm{1}\{E\}$ or $\mathbbm{1}_E$ denote the indicator function over the event $E$. 

\subsection{Gaussian Differential Privacy}
The measure of privacy that we maintain in this paper is Gaussian Differential Privacy (GDP), which provides a naturally interpretable framework and several desirable properties \citep{dong2022gaussian}. To define GDP, we must first introduce the notion of a \textit{trade-off function}.
\begin{definition}
    Let $P$ and $Q$ be two probability distributions defined on the same measurable space $(\mathcal{X}, \mathcal{F})$. A (potentially randomized) rejection rule  is any $\mathcal{F}$-measurable function $\phi : \mathcal{X} \mapsto [0,1]$. 
    The trade-off function $T(P,Q): [0,1] \mapsto [0,1]$ is defined as
    $
    T(P,Q)(\alpha) = \inf \{1 - \mathbb{E}_Q[\phi]:  \mathbb{E}_P[\phi] \leq \alpha\},
    $
    where the infimum is taken over the set of rejection rules $\phi$. 
\end{definition}
Intuitively, the trade-off function is a measure of the difficulty of distinguishing between distributions $P$ and $Q$ based on statistical hypothesis testing: for any given $\alpha \in [0,1]$, if the probability of a false rejection under $P$ is bounded by $\alpha$, it quantifies the smallest possible probability of a false rejection under $Q$. GDP assesses the privacy of an algorithm by comparing the difficulty in distinguishing between the output generated by two neighboring datasets against the difficulty in distinguishing two Gaussians. To that end, for a fixed privacy parameter $\mu>0$, define $
G_\mu := T\left(N(0,1), N(\mu,1)  \right)$,
which satisfies 
$
G_\mu(\alpha) =\Phi(\Phi^{-1}(1-\alpha)-\mu)$ for $\alpha \in [0,1]$,  
where $\Phi(\cdot)$ denotes the standard normal CDF. 
To characterize user-level Gaussian Differential Privacy (uGDP), we first introduce the notion of neighboring datasets in the uDP framework. Here, a dataset is a matrix $X^n \in \mathbb{R}^{nT \times d}$ containing $T$ measurements that are $d$-dimensional, for each of $n$ users.
\begin{definition}
\label{def:dataset}
    Let $X^n = (X_1^\tr, \dots, X_n^\tr)^\tr  $ with $X_i \in \mathbb{R}^{T \times d}$  and $X^{'n} = (X_1'^\tr, \dots X_n'^\tr)$ be an equally sized dataset. $X^n$ and $X^{'n}$ are user-level neighboring if $d_H(X^{n}, X^{'n}):= \sum_{i=1}^n \mathbbm{1}\{X_i \neq X_i' \} = 1 $ where $\mathbbm{1}\{X_i \neq X_i' \} = 0$ only if all elements of the matrices $X_i$ and $X'_i$ are equal.
\end{definition}
We emphasize that two datasets are user-level neighbors if they differ in the data of exactly one of $n$ users, where it is allowed for all $T$ entries of that user to differ between the datasets. 
\begin{definition}
     A randomized statistic $ h$ satisfies $\mu$-uGDP if  $T( h(X^n),  h(X^{'n}))(\alpha) \geq G_\mu(\alpha)$ for \textit{all} two user-level neighboring datasets $X^n$ and $X^{'n}$ and
    for all $\alpha \in[0,1].$
\end{definition}
Hence, a randomized statistic satisfies $\mu$-uGDP if distinguishing between the output generated by any two neighboring inputs is at least as difficult as distinguishing between a $N(0,1)$ and $N(\mu,1)$ based on a single draw. Thus, this notion of differential privacy is easily understood in the hypothesis testing framework \citep{wasserman2010statistical}. We emphasize that the randomness considered in the trade-off function comes from the random nature of the statistic $h$ rather than the data $X^n$, which for this purpose is considered fixed. Notice that as $\mu$ shrinks, the statistic $h$ becomes more private as distinguishing becomes harder.
The canonical way of ensuring an algorithm satisfies GDP is to add properly calibrated Gaussian noise. To formalize this, we first introduce the notion of global sensitivity of an algorithm.
\begin{definition}
   Let $h:\mathcal{X}^n\to \mathbb{R}^d$ be a statistic. Then, the user-level global sensitivity of $h$ is defined as
    $
    \text{GS}(h) = \sup_
    {d_H({X^n}, {X}^{'n}) = 1} \|{h}({X}^n) - {h}({X}^{'n})\|.$
\end{definition}
This allows us to introduce the Gaussian mechanism and its privacy guarantee.
\begin{proposition}[Theorem 1 of \citealt{dong2022gaussian}]
\label{prop:gaussianmechanism}
   Let $h$ be a statistic with user-level global sensitivity $\text{GS}({h})$. Then, the randomized statistic $\tilde{h}(X^n) =h(X^n)+ \frac{\text{GS}(h)}{\mu}Z$, with $Z \sim N(0,I_d)$, satisfies $\mu$-uGDP.
\end{proposition}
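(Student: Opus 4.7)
The plan is to reduce the question to a direct calculation of the trade-off function between two shifted multivariate Gaussians with common covariance, then invoke monotonicity of $G_\mu$ in $\mu$ together with the sensitivity bound. Write $\Delta := \mathrm{GS}(h)$ and $\sigma := \Delta/\mu$. For any two user-level neighboring datasets $X^n, X^{\prime n}$, the outputs $\tilde h(X^n)$ and $\tilde h(X^{\prime n})$ are distributed as $N(h(X^n), \sigma^2 I_d)$ and $N(h(X^{\prime n}), \sigma^2 I_d)$ respectively. So it suffices to show
\[
T\bigl( N(m_0, \sigma^2 I_d),\, N(m_1, \sigma^2 I_d)\bigr)(\alpha) \;\geq\; G_\mu(\alpha), \qquad \alpha \in [0,1],
\]
whenever $\|m_1 - m_0\| \leq \Delta$.

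First I would identify the trade-off function between two equal-covariance Gaussians as $G_\eta$ with $\eta = \|m_1 - m_0\|/\sigma$. The key step here is an application of the Neyman--Pearson lemma: for any fixed type I error $\alpha$, the optimal rejection rule is a likelihood-ratio test. For equal-covariance Gaussians, the log-likelihood ratio is an affine function of the projection of the observation onto the direction $u := (m_1 - m_0)/\|m_1 - m_0\|$. Thus the optimal test depends on the data only through this one-dimensional projection, which is distributed as $N(u^\tr m_0, \sigma^2)$ under $P$ and $N(u^\tr m_1, \sigma^2)$ under $Q$. After centering and rescaling by $\sigma$, the problem reduces to distinguishing $N(0,1)$ from $N(\|m_1 - m_0\|/\sigma, 1)$, which gives exactly $G_{\|m_1 - m_0\|/\sigma}$ by the definition of the Gaussian trade-off function.

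Second, I would use the sensitivity bound $\|h(X^n) - h(X^{\prime n})\| \leq \Delta$ together with $\sigma = \Delta/\mu$ to conclude $\|m_1 - m_0\|/\sigma \leq \mu$. Finally I would invoke the fact that $G_\eta(\alpha) = \Phi(\Phi^{-1}(1-\alpha) - \eta)$ is monotonically non-increasing in $\eta$ for each fixed $\alpha \in [0,1]$, which follows immediately from monotonicity of $\Phi$. Hence $G_{\|m_1 - m_0\|/\sigma}(\alpha) \geq G_\mu(\alpha)$, and since the neighboring pair was arbitrary, $\tilde h$ satisfies $\mu$-uGDP.

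The main obstacle, if any, is verifying cleanly that the multivariate Neyman--Pearson optimal test depends only on the one-dimensional projection onto $u$ and that this reduction preserves the trade-off function pointwise in $\alpha$; the rest is a one-line sensitivity bound plus monotonicity of $\Phi$. Since the statement is quoted from \cite{dong2022gaussian}, I would also note that an alternative route is to cite their general post-processing/composition machinery: any algorithm whose output law is a post-processing of a Gaussian shift on the real line with signal-to-noise at most $\mu$ inherits $\mu$-GDP, and the Gaussian mechanism is the canonical instance of this.
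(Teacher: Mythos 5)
Your proof is correct, and it is essentially the canonical argument: the paper does not prove this proposition itself but imports it verbatim as Theorem 1 of \citealt{dong2022gaussian}, and your route (Neyman--Pearson reduction of the test between $N(m_0,\sigma^2 I_d)$ and $N(m_1,\sigma^2 I_d)$ to the one-dimensional projection onto $(m_1-m_0)/\|m_1-m_0\|$, yielding $T = G_{\|m_1-m_0\|/\sigma}$, followed by the sensitivity bound $\|m_1-m_0\|/\sigma \leq \mu$ and monotonicity of $\eta \mapsto G_\eta(\alpha)$) is exactly the standard derivation underlying the cited theorem. The only point worth noting for completeness is that for equal-covariance Gaussians the likelihood ratio has a continuous distribution, so the Neyman--Pearson optimal test needs no randomization and the infimum in the trade-off function is attained, which your sketch implicitly uses and which holds here without issue.
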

Finally, we make use of the following two important properties of GDP that allow us to derive the privacy guarantees of more complicated algorithms.
\begin{proposition}[Proposition 4 of \citealt{dong2022gaussian}]
\label{prop:postprocessing}
    Let the randomized statistic ${h}: \mathcal{X}^n \mapsto \mathcal{Y}_1$ satisfy $\mu$-uGDP. Furthermore, let ${g} : \mathcal{Y}_1 \mapsto \mathcal{Y}_2$ be a (possibly random) map from the output of the statistic ${h}$ onto the space $\mathcal{Y}_2$. Then ${g} \circ {h}: \mathcal{X}^n \mapsto \mathcal{Y}_2$ also satisfies $\mu$-uGDP.
\end{proposition}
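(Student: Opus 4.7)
The plan is to argue directly from the definition of uGDP that postprocessing cannot enlarge the set of achievable (Type I, Type II) error pairs when distinguishing the outputs of $h$ on two neighboring datasets, and hence the trade-off function can only move upward. Concretely, I would fix two user-level neighboring datasets $X^n$ and $X'^n$, let $P$ and $Q$ be the induced distributions of $h(X^n)$ and $h(X'^n)$ on $\mathcal{Y}_1$, and let $P'$ and $Q'$ be the corresponding push-forwards under $g$ on $\mathcal{Y}_2$. The goal reduces to showing $T(P',Q')(\alpha) \geq T(P,Q)(\alpha)$, since combining this with the uGDP hypothesis $T(P,Q)(\alpha) \geq G_\mu(\alpha)$ will yield the claim.

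The key step is a simulation argument. Given any measurable rejection rule $\phi': \mathcal{Y}_2 \to [0,1]$, I would construct a rejection rule $\phi: \mathcal{Y}_1 \to [0,1]$ on the intermediate space by setting $\phi(y_1) = \mathbb{E}_g[\phi'(g(y_1))]$, where the expectation averages only over the (independent) internal randomness used by $g$. Applying Fubini then gives $\mathbb{E}_P[\phi] = \mathbb{E}_{P'}[\phi']$ and $\mathbb{E}_Q[\phi] = \mathbb{E}_{Q'}[\phi']$. Thus every feasible pair of Type I and Type II errors attainable by tests on $\mathcal{Y}_2$ is also attainable by some test on $\mathcal{Y}_1$, but possibly not conversely.

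Because of this inclusion, the infimum defining $T(P',Q')(\alpha)$ is taken over a collection of test-performances contained in the one defining $T(P,Q)(\alpha)$, so $T(P',Q')(\alpha) \geq T(P,Q)(\alpha)$ pointwise in $\alpha$. Since this holds for arbitrary neighbouring $X^n$ and $X'^n$, we conclude that $g \circ h$ satisfies $\mu$-uGDP.

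The only real delicacy I foresee is verifying that the mapping $y_1 \mapsto \mathbb{E}_g[\phi'(g(y_1))]$ defines a measurable $[0,1]$-valued function when $g$ is itself random; handling this cleanly requires realizing $g$ as a measurable Markov kernel and invoking standard Fubini-type arguments for kernels. Beyond this measure-theoretic bookkeeping, the proof is short and essentially a one-line reduction to the data-processing inequality for trade-off functions, so I do not expect a substantive obstacle.
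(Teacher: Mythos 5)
Your proof is correct: the simulation argument (pulling back a rejection rule on $\mathcal{Y}_2$ to $\phi(y_1) = \mathbb{E}_g[\phi'(g(y_1))]$ via a Markov kernel and Fubini, so that $T(g\circ h(X^n), g\circ h(X'^n)) \geq T(h(X^n), h(X'^n)) \geq G_\mu$) is exactly the standard data-processing inequality for trade-off functions. The paper itself offers no proof --- it cites Proposition 4 of \citet{dong2022gaussian} --- and your argument is essentially the one given in that reference, so there is nothing to add.
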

Intuitively, the preceding proposition, which in the DP literature is referred to as the \textit{post-processing property}, says that processing the output of private statistic cannot degrade its privacy guarantee. 
\begin{proposition}[Corollary 2 of \citealt{dong2022gaussian}]\label{prop:composition}
    For $j \in [N]$, let ${h}_j$ be randomized statistics that satisfy $\mu_j$-uGDP. Then their $N$-fold composition satisfies $\sqrt{\sum_{j=1}^N \mu_j^2}$-uGDP.
\end{proposition}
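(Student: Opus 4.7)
The idea is to reduce the composition statement to a calculation about $N$-dimensional Gaussian location testing, exploiting the hypothesis-testing characterization of GDP that is built into its definition through trade-off functions. Fix throughout a pair of user-level neighboring datasets $X^n, X'^n$; the privacy claim must hold uniformly over all such pairs.

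First I would develop a tensorization operation on trade-off functions. For $f_j = T(P_j, Q_j)$, define $f_1 \otimes f_2 := T(P_1 \times P_2, Q_1 \times Q_2)$, and verify that this is well-defined on trade-off functions (independent of the representing distributions), symmetric, associative, and monotone under pointwise domination (if $f_j \geq g_j$ pointwise, then $f_1 \otimes f_2 \geq g_1 \otimes g_2$). I would then argue that the trade-off function between the joint outputs $(h_1(X^n), \ldots, h_N(X^n))$ and $(h_1(X'^n), \ldots, h_N(X'^n))$ is bounded below by $\bigotimes_{j=1}^N T(h_j(X^n), h_j(X'^n))$. In the non-adaptive setting this follows from Fubini; in the adaptive setting (where $h_j$ may depend on the outputs of $h_1, \ldots, h_{j-1}$) one writes the joint likelihood ratio as a telescoping product over $j$ and applies a substitution-style lemma (in the spirit of Lemma~2.7 of Dong et al.) conditionally on the previous outputs.

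Next I would prove the Gaussian identity $G_{\mu_1} \otimes \cdots \otimes G_{\mu_N} = G_{\sqrt{\sum_j \mu_j^2}}$ by applying the Neyman--Pearson lemma to $N(0, I_N)$ against $N(\boldsymbol{\mu}, I_N)$ with $\boldsymbol{\mu} = (\mu_1, \ldots, \mu_N)^\tr$. The log-likelihood-ratio statistic is an affine function of $\boldsymbol{\mu}^\tr X$, which is $N(0, \|\boldsymbol{\mu}\|^2)$ under the null and $N(\|\boldsymbol{\mu}\|^2, \|\boldsymbol{\mu}\|^2)$ under the alternative. After standardizing, the testing problem collapses to the one-dimensional Gaussian location test with separation $\|\boldsymbol{\mu}\| = \sqrt{\sum_j \mu_j^2}$, whose trade-off function is $G_{\|\boldsymbol{\mu}\|}$ by definition of $G_\mu$.

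Combining these, the hypothesis that each $h_j$ satisfies $\mu_j$-uGDP yields $T(h_j(X^n), h_j(X'^n)) \geq G_{\mu_j}$ pointwise on $[0,1]$ for every user-level neighboring pair. Monotonicity of $\otimes$ together with the Gaussian identity then implies that the joint trade-off function of the $N$-fold composition is bounded below by $G_{\sqrt{\sum_j \mu_j^2}}$, uniformly over $X^n, X'^n$, which is exactly the claimed $\sqrt{\sum_j \mu_j^2}$-uGDP guarantee. The main obstacle I anticipate is the adaptive case of the tensorization step: one must show that the telescoping factorization of the joint likelihood ratios still matches the tensor-product operation even though the later mechanisms' conditional distributions themselves depend on the earlier random outputs, which requires a careful conditioning argument rather than a naive product bound. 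Once that machinery is in place, the Gaussian identity and the final assembly are essentially mechanical.
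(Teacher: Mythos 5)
The paper offers no proof of this proposition: it is imported verbatim by citation as Corollary~2 of \citet{dong2022gaussian}, so there is no internal argument to compare against. Your proposal is a correct reconstruction of exactly the cited reference's proof --- the tensor product $\otimes$ on trade-off functions with its well-definedness and monotonicity, the adaptive composition theorem handled via conditional (telescoping) factorization of likelihood ratios, and the Gaussian identity $G_{\mu_1}\otimes\cdots\otimes G_{\mu_N}=G_{\sqrt{\sum_j \mu_j^2}}$ via Neyman--Pearson applied to $N(0,I_N)$ versus $N(\boldsymbol{\mu},I_N)$ --- so it matches the source's approach in all essentials.
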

The above results is known as the \textit{composition property} of GDP, and makes it possible to tightly quantify the privacy loss incurred by releasing multiple privatized statistics that possibly take previously privatized statistics as input. For example, if the randomized statistic $h: \mathcal{X}^n \mapsto \mathcal{Y}_1$ satisfies $\mu/\sqrt{2}$-GDP and the randomized statistic $g: \mathcal{X}^{n-1} \times \mathcal{Y}_1 \mapsto \mathcal{Y}_2$ satisfies $\mu/\sqrt{2}$-GDP, then jointly outputting $h(X^n)$ and $g(X^n, h(X^n))$ satisfies $\sqrt{\mu^2 / 2 + \mu^2 / 2} = \mu$-GDP. Propositions \ref{prop:postprocessing} and \ref{prop:composition} allow us to construct intricate private algorithms using simple building blocks and still provide meaningful privacy guarantees.  

\subsection{Probability theory}
We review some basic probabilistic notions that recur throughout this work, beginning with sub-Gaussian random vectors.
\begin{definition}\label{def:subgaussian}
    A random vector $X \in \mathbb{R}^d$ is $\sigma_X^2$-sub-Gaussian if for all $u \in \mathbb{S}^{d-1}$ and $\lambda \in \mathbb{R}$, it satisfies
    $\mathbb{E}[e^{\lambda u^\tr(X - \mathbb{E}[X]) }] \leq \exp({\sigma_X^2 \lambda^2}/{2}).$
\end{definition}
Next, we define the types of stochastic processes that are used in this paper. Definitions \ref{def:stochprocess}-\ref{def:causalarma} below are standard, adapted from \cite{brockwell1991time}.
\begin{definition}\label{def:stochprocess}
    A real-valued stochastic process $(\varepsilon_t)_{t \in \mathbb{Z}}$ is a collection of random variables defined on a joint probability space $(\mathcal{X}, \mathcal{F}, \mathbb{P})$. The process $(\varepsilon_t)_{t \in \mathbb{Z}}$ is weakly stationary if $\mathbb{E}[\varepsilon_t^2] <\infty$ 
    and $\mathbb{E}[\varepsilon_t] = m$ for all $t \in \mathbb{Z}$, and if furthermore $\gamma(t,s) := \text{Cov}(\varepsilon_t, \varepsilon_s) = \gamma(t+r,s+r)$ for all $t,s,r \in \mathbb{Z}$.
\end{definition}

We further define the concept of strongly mixing sequences. Let $(X_t)_{t \in \mathbb{Z}}$ be a process in $\mathbb{R}^d$ and let $\mathcal{F}_a^b = \sigma(X_a, \dots, X_b)$ be the sigma algebra induced by $X_a$ through $X_b$. 

\begin{definition}\label{def:strongmixing}
    The $\alpha$-mixing coefficients of a stochastic process $(X_t)_{t\in \mathbb{Z}}$ are defined as 
    $
    \alpha(k) = \sup_{t \in \mathbb{Z}} \sup_{A \in \mathcal{F}_{-\infty}^t, \, B \in \mathcal{F}_{t + k}^\infty} |\mathbb{P}(A \cap B) - \mathbb{P}(A)\mathbb{P}(B)|,
    $
    and we say that the process is strongly mixing if $\alpha(k) \to 0$ as $k \to \infty$. Furthermore, a process is exponentially strongly mixing if $\exists C,c>0$ such that $\forall k\geq1$ we have $\alpha(k) \leq Ce^{-ck}$.
\end{definition}
Finally, all of our motivating examples are in the class of ARMA processes, and in particular in causal ARMA processes. Intuitively, causality guarantees that the effect of errors or initial values decays as time progresses. These definitions straightforwardly extend to multivariate time series.
\begin{definition}\label{def:arma}
    The weakly stationary stochastic process $(\varepsilon_t)_{t \in \mathbb{Z}}$ is an ARMA(p,q) process if it can be represented as
    $
    \varepsilon_t - \sum_{k=1}^p \phi_k \varepsilon_{t-k} = r_t + \sum_{l=1}^q \theta_l r_{t-l},
    $
    where $r_t$ is mean zero and $\text{Cov}(r_t, r_s) = \mathbbm{1}_{t=s}\sigma^2$ for all $t,s \in \mathbb{Z}$. Defining the polynomials $\phi(z) := 1 - \sum_{k=1}^p \phi_k z^k$ and $\theta(z) := 1 + \sum_{l=1}^q \theta_l z^l$, the ARMA process can be represented as
    $\phi(B)\varepsilon_t = \theta(B) r_t,$
    where $B$ is the backshift operator. We assume that $\phi(\cdot)$ and $\theta(\cdot)$ have no common zeros.
\end{definition}
\begin{definition}\label{def:causalarma}
    The ARMA(p,q) process $(\varepsilon_t)_{t \in \mathbb{Z}}$ as defined in \ref{def:arma}  is said to be causal iff all zeros of $\phi(z)$ (defined in Definition~\ref{def:arma}) lie strictly outside the unit circle. In this case, there exist absolutely summable coefficients $(\psi_j)_{j \geq 0}$ such that
    $\varepsilon_t = \sum_{j \geq 0} \psi_j r_{t-j}$ for all $t\in\mathbb{Z}.$
\end{definition}

\section{Adaptive differentially private mean estimation}
\label{sec:dpmean}
Our main workhorse throughout this paper is  Algorithm \ref{alg:trimestTemp}, which is an adaptive GDP mean-estimation algorithm. It is conceptually similar to existing ideas in the literature such as the Coinpress algorithm of \citep{biswas:dong:kamath:ullman2020} and precursors \citep{karwa:vadhan2018, kamath:li:singhal:ullman2019}, with a major difference being that Algorithm \ref{alg:trimestTemp} does not require significant structural assumptions or knowledge of the underlying data generating process. In the next section, we will see that it can be instantiated to construct uGDP estimators with strong statistical guarantees.
\begin{algorithm}
\small
\begingroup
\renewcommand{\baselinestretch}{1}\selectfont
\caption{DP Trimming-based  mean estimation \hfill \textsc{DPTrimMean}$((D_i)_{i=1}^n,\mu,B,R,\xi)$}
\label{alg:trimestTemp}
\begin{algorithmic}[1]
\STATE \textbf{Input:} Data $(D_i)_{i=1}^n$, privacy parameter $\mu$, radius $B$, 
max rounds $R$, failure probability $\xi$\
\STATE \textbf{Set:} $\tau \gets n - \frac{2}{\mu}{\sqrt{2R\log\frac{{4R}}{\xi}}}$ and  $n_{LB} \gets \max\left(2\tau - n,1\right)$ 
\STATE \textbf{Init:} $\displaystyle \hat{m}_{-2} \gets 0$, $\hat{m}_{-1} \gets 0$, $r \gets 0$, $r^* \gets -1$ 
\WHILE{$r \le R$ and $r^* = -1$}
    \STATE $\omega_i \gets \mathbbm{1}\{\|D_i-\hat m_{r-1}\|\le B/2^r\}$, \qquad
           $\omega \gets \sum_{i=1}^n \omega_i + \tfrac{2\sqrt{R}}{\mu}Z_r$, \; $Z_r\sim\mathcal{N}(0,1)$ 
    \IF{$\omega < \tau$}
        \STATE $S \gets \{ i : \|D_i- {\hat m_{r-2}}\| < B/2^{r-1} \}$, \qquad $n_{\tilde{S}} \gets \max(|S|, n_{LB})$, \qquad $\mathcal{C} \gets \sqrt{2 - \frac{r}{R}}$
        \STATE $\hat m_{\mathsf{DP}} \gets {\hat m_{r-2}} + \frac{1}{n_{\tilde{S}}}\sum_{i\in S}(D_i- {\hat m_{r-2}}) + \frac{2\sqrt{2}B}{2^{r-1}\mathcal{C}{\mu} n_{LB}}N_{*}$, \; $N_{*}\sim\mathcal{N}(0,I)$ \label{line:mhatmhatminus2}
        \STATE $r^* \gets r-1$, \qquad $B^* \gets \frac{2\sqrt{2}B}{2^{r^*}\mathcal{C} \mu n_{LB}}$
    \ELSIF{$r=R$}
        \STATE $S \gets \{ i : \|D_i-\hat m_{R-1}\| < B/2^{R} \}$, \qquad  $n_{\tilde{S}} \gets \max(|S|, n_{LB})$
        \STATE $\hat m_{\mathsf{DP}} \gets \hat m_{R-1} + \frac{1}{n_{\tilde{S}}}\sum_{i\in S}(D_i-\hat m_{R-1}) + \frac{2\sqrt{2}B}{2^{R}\mu n_{LB}}N_{*}$, \; $N_{*}\sim\mathcal{N}(0,I)$  
        \STATE $r^* \gets R$, \qquad $B^* \gets \frac{2\sqrt{2}B}{2^{r^*} \mu n_{LB}}$
    \ELSE  
        \STATE $S \gets \{ i : \|D_i-\hat m_{r-1}\| < B/2^{r} \}$, \qquad $n_{\tilde{S}} \gets \max(|S|, n_{LB})$ \label{step:iterateS}
        \STATE $\hat m_r \gets \hat m_{r-1} + \frac{1}{n_{\tilde{S}}}\sum_{i\in S}(D_i-\hat m_{r-1}) + \frac{4B\sqrt{R}}{2^{r}\mu n_{LB}}N_r$, \; $N_r\sim\mathcal{N}(0,I)$  \label{step:iteratem}
        \STATE $r \gets r+1$ 
    \ENDIF
\ENDWHILE
\STATE \textbf{Output:} $(\hat{m}_{\mathsf{DP}}, \hat{m}_{r^*-1}, r^*, B^*)$
\end{algorithmic}
\endgroup
\end{algorithm}

Our mean-estimation algorithm works as follows. We start with a ball of large radius, centered without loss of generality at $\mathbf{0}$.
The algorithm then iteratively privately tests whether enough user data falls inside a new ball centered at the previous private mean estimate, with a radius that is a constant factor smaller than the previous radius. If yes, the algorithm estimates a new private mean using the data inside the refined ball, which has a reduced sensitivity compared to the previous ball. It repeats this process until a private test indicates that too many datapoints fall outside a newly proposed ball, at which point a final private estimate is computed over the datapoints inside the final ball that passed the private test. In this sense, our algorithm can also be interpreted as an automated, iterative Propose-Test-Release implementation \citep{dwork:lei2009,avellamedina:brunel2020}. In our instantiation of the algorithm, we configure our private test threshold in such a way that the algorithm will with high probability not terminate in any given iteration as long as all users fall inside that iteration's ball. We note that if we are only interested in estimating the mean of our input data, Algorithm~\ref{alg:trimestTemp} could output $\hat{m}_{\mathsf{DP}}$, the final mean estimate, only. However, Algorithm~\ref{alg:trimestTemp} also outputs the quantities $r^*$, $B^*$ and $\hat{m}_{r^* - 1}$, which are the algorithm's termination iteration, final noise scale, and final ball center, respectively. These additional output values will be used later for privately estimating covariance matrices in Section \ref{sec:inf}.

The main practical advantage of our algorithm compared to existing methods is that it adapts in a data driven way to the level of concentration present in the users' data. 
Thus, the scale of truncation and noise addition is neither too large nor too small, which the algorithm achieves through private testing instead of requiring structural assumptions such as Gaussian tails or within constant factor knowledge of the covariance, like in \cite{karwa:vadhan2018,kamath:li:singhal:ullman2019,biswas:dong:kamath:ullman2020}.
In this construction, in addition to deciding a priori on a privacy budget $\mu$, one also has to provide three tuning parameters corresponding to a  bound $B$ on the data, the maximum number of refinement rounds $R$ and a failure probability $\xi$ that we want our estimator to achieve in finite samples. Since the radius of the projection ball is explored in an exponentially shrinking grid, in practice one can choose large conservative values of $B$ for the initial radius.\footnote{In fact, if $B$ is too small in the first iteration, one could increase the parameters $B$, $R$, update $\mu\gets \mu(1-1/(2\sqrt{R}))$, and restart the algorithm. For simplicity we do not consider this.}

We illustrate our algorithm with a simple example, where $D_i \overset{iid}{\sim} N([50, 50]^{\tr  }, 10I_2)$, for $i = 1, \dots, 300$. We initialize the algorithm with privacy parameter $\mu = 1$, radius $B = 10^{14}$, maximum number of rounds $R = 50$, and failure probability $\xi = 0.05$. Note that $B$ is set much higher than necessary and is large enough to accommodate near unbounded data. The algorithm terminated in $r^* = 43$ iterations. While the initial noisy estimate is $\hat{m}_0 = [2.3 \cdot 10^{13}, 8.09 \cdot 10^{12}]^{\tr  }$, the final output $\hat{m}_{\mathsf{DP}} =[49.93, 50.22]^{\tr  }$ is close to the truth $[50, 50]^{\tr  }$. Moreover, the final radius used is $\frac{B}{2^{r^*}}= 10^{14}/2^{43} = 11.37$, which is very close to the theoretical uniform concentration radius. To see this, notice that $\|D_i - [50, 50]^\tr\|^2 /10\sim \chi_2^2$, so 
\[
\mathbb{P}\Big(\max_{i = 1, \dots, 300} \|D_i - [50,50]^\tr\| \leq \sqrt{20\log\frac{1}{1-(1-\xi)^{1/300}}} \,\Big) = 1-\xi,
\]
and for $\xi = 0.05$, this radius is equal to approximately $13.17$. Our radius is slightly smaller as we allow a small number of users to fall outside the projection ball. However, our algorithm was able to find this radius privately, {without prior knowledge of the data distribution}. Figure \ref{fig:dpillustration} displays the last five steps of the algorithm.

The main result regarding Algorithm \ref{alg:trimestTemp} is stated in Theorem \ref{thm:trimest}. It concerns deterministic data and the randomness comes entirely from the randomness injected by the algorithm. The proof can be found in Appendix \ref{app:trimest}.

\begin{theorem}
\label{thm:trimest}
    Algorithm \ref{alg:trimestTemp} satisfies $\mu$-GDP. Furthermore, assume there exist $B, K_1, K_2>0$ and a constant $m$ such that
    $\max_{i \in [n]} \|D_i\| \leq B$,  $\|\bar{D} - m\| \leq K_1$ and  $\max_{i \in [n]}\|{D}_i - m\| \leq K_2$, where $\bar{D} := \frac{1}{n}\sum_{i=1}^n D_i$.
    If $R \geq C \log \frac{B}{K_2}$ and $n \geq C'(\frac{1}{\mu}\sqrt{R(\log\frac{R}{\xi}+ d)})$  for universal constants $C, C'>0$, then Algorithm \ref{alg:trimestTemp} satisfies, with probability at least $1-\xi$,
    \[
    \|\hat{m}_{\mathsf{DP}} - m\| = O\Big(K_1 +  \frac{K_2}{n \mu}\sqrt{R \log \frac{R}{\xi}} + \frac{K_2}{n \mu}\sqrt{d + \log \frac{{1}}{\xi}}  \Big) = O\Big(K_1 +  \frac{K_2}{n \mu}\sqrt{d + R \log \frac{R}{\xi}} \Big).
    \]
\end{theorem}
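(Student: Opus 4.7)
The plan is to handle privacy and utility separately. For privacy, I would apply Proposition~\ref{prop:gaussianmechanism} to each Gaussian release and combine via Proposition~\ref{prop:composition}. The algorithm issues three kinds of noisy releases: the counts $\omega$ (sensitivity $1$, noise scale $2\sqrt{R}/\mu$); the intermediate mean updates on line~\ref{step:iteratem} (noise scale $4B\sqrt{R}/(2^r\mu n_{LB})$); and the final release on line~\ref{line:mhatmhatminus2} (noise scale $2\sqrt{2}B/(2^{r-1}\mathcal{C}\mu n_{LB})$ with $\mathcal{C}=\sqrt{2-r/R}$). The only delicate step is the sensitivity of the mean releases: a single-user swap can perturb both the numerator $\sum_{i\in S}(D_i-\hat{m}_{\cdot})$ (by at most twice the ball radius) and the denominator $n_{\tilde{S}}=\max(|S|,n_{LB})$ (by at most one); an $\frac{a}{b}-\frac{a'}{b'}$ decomposition then bounds the sensitivity uniformly by $O(B/(2^r n_{LB}))$. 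Plugging into the Gaussian mechanism calibration, each count and each mean update are $\mu/(2\sqrt{R})$-GDP while the final release is $\mathcal{C}\mu/\sqrt{2}$-GDP, and the choice of $\mathcal{C}$ is designed precisely so that $\sqrt{\sum_j\mu_j^2}=\mu$ on every possible execution path.

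For utility, I would work on a high-probability event $\mathcal{E}$ of probability at least $1-\xi$ controlling every Gaussian injected by the algorithm: via a union bound using one-dimensional Gaussian and Laurent--Massart chi-square tail bounds, on $\mathcal{E}$ we have $\max_r|Z_r|\lesssim\sqrt{\log(R/\xi)}$, $\max_r\|N_r\|\lesssim\sqrt{d+\log(R/\xi)}$, and $\|N_*\|\lesssim\sqrt{d+\log(1/\xi)}$. I would then prove by induction on $r$ two invariants on $\mathcal{E}$: (i) every datum lies in the current ball, so $|S|=n$ and $n_{\tilde{S}}=n$; and (ii) the running estimate satisfies $\|\hat{m}_r-m\|\leq K_1+O\!\bigl(B\sqrt{R(d+\log(R/\xi))}/(2^r\mu n)\bigr)$. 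Invariant (i) implies the test $\omega\geq\tau$ passes because $\tau=n-(2/\mu)\sqrt{2R\log(4R/\xi)}$ absorbs exactly the scale of $|Z_r|$; invariant (ii) follows from the telescoping identity $\hat{m}_r=\bar{D}+(4B\sqrt{R}/(2^r\mu n))N_r$ that holds as soon as $|S|=n$, combined with $\|\bar{D}-m\|\leq K_1$. The hypothesis $n\geq C'\sqrt{R(d+\log(R/\xi))}/\mu$ keeps the noise term subdominant to $K_1+K_2$, and $R\geq C\log(B/K_2)$ ensures the radius can shrink past this threshold within the $R$-round budget, so the algorithm terminates with $B/2^{r^*}=O(K_1+K_2)$.

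Two terminal branches must then be analyzed. In the IF branch (test fails first at some $r\leq R$), the set $S$ used in the final release is defined with respect to $\hat{m}_{r-2}$ and radius $B/2^{r-1}$, i.e., the previous round's configuration, which by invariant (i) satisfies $|S|=n$ and $n_{\tilde{S}}=n$; hence $\hat{m}_{\mathsf{DP}}=\bar{D}+(\text{final Gaussian noise})$ and $\|\hat{m}_{\mathsf{DP}}-m\|\leq K_1+O(K_2\sqrt{d+\log(1/\xi)}/(n\mu))$. In the ELIF branch ($r=R$ with the test still passing), one can only guarantee $|S|\geq n_{LB}=n-O(\sqrt{R\log(R/\xi)}/\mu)$, and the decomposition $\frac{1}{|S|}\sum_{i\in S}(D_i-m)=\frac{n}{|S|}(\bar{D}-m)-\frac{1}{|S|}\sum_{i\notin S}(D_i-m)$ exposes a bias of order $K_2(n-|S|)/n=O(K_2\sqrt{R\log(R/\xi)}/(n\mu))$ that, together with the final Gaussian noise, yields the $\sqrt{R\log(R/\xi)}$ summand in the stated rate. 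The main obstacle, in my view, is the coupled nature of the induction and the data-dependent denominator: the Gaussian noise in every intermediate $\hat{m}_r$ must simultaneously remain small enough that the next ball still contains all data, while the sensitivity analysis for the mean update must stay honest despite $n_{\tilde{S}}$ depending on the data. The careful calibrations of $\tau$, $n_{LB}$, and $\mathcal{C}$ are what permit a single high-probability event to underwrite both the inductive step and the sensitivity bound at once.
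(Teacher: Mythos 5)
Your privacy argument coincides with the paper's (Proposition \ref{prop:trimprivacy}): per-release sensitivity via the four-case $\frac{a}{b}-\frac{a'}{b'}$ analysis with the $n_{LB}$ floor, Gaussian-mechanism calibration with $\mu_1=\mu_2=\mu/(2\sqrt{R})$, $\mu_3=\mu/\sqrt{2}$, composition, and the budget-recapture constant $\mathcal{C}$ on the early-exit path. The utility scaffolding — one event $\mathcal{E}$ controlling $\max_r|Z_r|$, $\max_r\|N_r\|$, $\|N_*\|$, an induction showing the noisy test cannot fail while $B/2^{r+1}\geq 2K_2$, hence $B/2^{r^*}=O(K_2)$ under $R\geq C\log(B/K_2)$ — is also exactly the paper's (Lemma \ref{lem:propertiesmean}, Properties 1--2).

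There is, however, a genuine gap in your IF-branch analysis. You assert that the final set $S$ (defined w.r.t.\ $\hat m_{r-2}$ and radius $B/2^{r-1}$) satisfies $|S|=n$ ``by invariant (i).'' But invariant (i) is only maintained while the radius exceeds roughly $4K_2$, and early termination occurs precisely \emph{after} the radius has shrunk below that threshold — that is, in the regime where the invariant has already ceased to hold. Between the first round at which some data fall outside the ball and the round at which the noisy count finally drops below $\tau$, several rounds can elapse during which the test keeps passing with $|S|<n$: on $\mathcal{E}$ the passed check only certifies $|S|\geq n_{LB}=n-O(\mu^{-1}\sqrt{R\log(R/\xi)})$, and in those rounds the centers $\hat m_r$ are trimmed means, so your telescoping identity $\hat m_r=\bar D+\text{noise}$ fails there as well. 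Consequently the early-termination output is a trimmed mean missing up to $n-n_{LB}$ users and carries the same bias $O\bigl(K_2\frac{|S^{\mathsf c}|}{|S|}\bigr)=O\bigl(\frac{K_2}{n\mu}\sqrt{R\log\frac{R}{\xi}}\bigr)$ that you correctly derived in the ELIF branch; your claimed IF-branch bound $K_1+O(K_2\sqrt{d+\log(1/\xi)}/(n\mu))$ is therefore not justified. The repair is within your own toolkit: apply your ELIF decomposition $\frac{1}{|S|}\sum_{i\in S}(D_i-m)=\frac{n}{|S|}(\bar D-m)-\frac{1}{|S|}\sum_{i\notin S}(D_i-m)$ — equivalently the paper's $\bar D_S=\bar D-\frac{|S^{\mathsf c}|}{|S|}(\bar D_{S^{\mathsf c}}-\bar D)$ with $\|\bar D_{S^{\mathsf c}}-\bar D\|\leq 2K_2$ (Properties 3--4 of Lemma \ref{lem:propertiesmean}) — uniformly at termination in \emph{both} branches, using only $|S|\geq n_{LB}$. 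Since the theorem's rate already contains the $\sqrt{R\log(R/\xi)}$ term, the final bound is unaffected; it is the branch-level bookkeeping, not the conclusion, that is wrong.
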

The above results can easily be modified to accommodate random data by exploiting concentration results and analyzing the algorithm on a high-probability event where the user data are well concentrated. In the following, we illustrate this with a prototypical iid sub-Gaussian uDP mean estimation problem. We first note the following standard concentration result for this setting.

\begin{lemma}\label{lemma:subgaussianmeanconc}
    Let $w_{i,t} \overset{iid}{\sim} \text{subG}(\sigma^2)$ for $i \in [n],t \in [T]$ and $m := \mathbb{E}[w_{i,t}] \in \mathbb{R}^d $. Define $(\bar{w}_i)_{i=1}^n$ with $\bar{w}_i := \frac{1}{T} \sum_{t=1}^T w_{i,t}$. Then, for absolute constant $C>0$, with probability at least $1-\xi$,
    \[
    \max_{i \in [n]} \|\bar{w}_i - m\| \leq C\sigma\sqrt{\frac{1}{T}\Big(d + \log \frac{n}{\xi}\Big)} \quad \text{ and } \quad \Big\|\frac{1}{n}\sum_{i=1}^n \bar{w}_i - m\Big\| \leq C\sigma\sqrt{\frac{1}{nT}\Big(d + \log \frac{1}{\xi}\Big)}.    \]
\end{lemma}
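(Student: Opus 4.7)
The plan is to reduce both bounds to a single standard tool: the sub-Gaussian norm concentration inequality for random vectors in $\mathbb{R}^d$. Specifically, I will use the fact that if $X \in \mathbb{R}^d$ is a centered $\tau^2$-sub-Gaussian random vector in the sense of Definition \ref{def:subgaussian}, then there is an absolute constant $C_0 > 0$ such that for all $\delta \in (0,1)$,
\[
\mathbb{P}\bigl(\|X\| > C_0 \tau \sqrt{d + \log(1/\delta)}\bigr) \leq \delta.
\]
This is obtained by writing $\|X\| = \sup_{u \in \mathbb{S}^{d-1}} u^\tr X$, controlling $u^\tr X$ by the defining sub-Gaussian MGF bound together with Markov/Chernoff, and passing to a $1/2$-net of $\mathbb{S}^{d-1}$ of cardinality at most $5^d$.

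The first step is a sub-Gaussianity closure computation. Since the $w_{i,t}$ are independent and each is $\sigma^2$-sub-Gaussian, for any unit vector $u$ and any $\lambda \in \mathbb{R}$,
\[
\mathbb{E}\bigl[e^{\lambda u^\tr(\bar w_i - m)}\bigr] = \prod_{t=1}^T \mathbb{E}\bigl[e^{(\lambda/T) u^\tr(w_{i,t}-m)}\bigr] \leq \exp\!\Bigl(\frac{\sigma^2 \lambda^2}{2T}\Bigr),
\]
so $\bar w_i - m$ is $(\sigma^2/T)$-sub-Gaussian. Exactly the same computation, with $nT$ terms rather than $T$, shows that $\frac{1}{n}\sum_{i=1}^n \bar w_i - m = \frac{1}{nT}\sum_{i,t}(w_{i,t}-m)$ is $(\sigma^2/(nT))$-sub-Gaussian.

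The second step applies the vector tail bound twice. For the first inequality, apply it to each $\bar w_i - m$ with $\tau^2 = \sigma^2/T$ and $\delta = \xi/(2n)$, then take a union bound over $i \in [n]$ to obtain
\[
\max_{i \in [n]} \|\bar w_i - m\| \leq C_0 \sigma \sqrt{\frac{1}{T}\bigl(d + \log(2n/\xi)\bigr)}
\]
with probability at least $1 - \xi/2$. For the second inequality, apply the same tail bound to $\frac{1}{n}\sum_i \bar w_i - m$ with $\tau^2 = \sigma^2/(nT)$ and $\delta = \xi/2$, getting the desired bound with probability at least $1 - \xi/2$. A final union bound combines both events and yields the joint claim with probability at least $1-\xi$, after absorbing $\log 2$ into the absolute constant $C$.

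There is no real obstacle here; the work is essentially bookkeeping. The only point to be slightly careful about is that the lemma states a single constant $C$ controlling both bounds simultaneously, so one has to allocate the failure probability across the two events (e.g., $\xi/2$ each) and absorb the resulting $\log 2$ factors into $C$. The covering/net argument behind the sub-Gaussian vector tail bound is the nontrivial ingredient, but it is entirely standard and would be cited rather than reproduced.
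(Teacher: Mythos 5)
Your proposal is correct and follows essentially the same route as the paper: concentration of one-dimensional projections $u^\tr(\bar w_i - m)$ via the sub-Gaussian Chernoff/Hoeffding bound, lifted to the Euclidean norm by an $\epsilon$-net argument (the paper's Lemma \ref{lemma:epsilonNets} with a $9^d$-net, your cited $5^d$-net bound), followed by a union bound over the $n$ users. Your explicit $\xi/2$ allocation to make the two bounds hold jointly is a minor bookkeeping refinement of the paper's proof, which establishes each bound separately at level $1-\xi$.
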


We may now apply Theorem \ref{thm:trimest} to the problem of user-level privately estimating $m$ by analyzing Algorithm \ref{alg:trimestTemp} on the $1-\xi$ probability event of Lemma \ref{lemma:subgaussianmeanconc}. In particular, this allows us to take $K_1 = O(\sqrt{\frac{1}{nT}(d + \log \frac{1}{\xi})})$ and $K_2 = O(\sqrt{\frac{1}{T}(d + \log \frac{n}{\xi})})$ in Theorem \ref{thm:trimest}. This immediately yields the following corollary. 
\begin{corollary}\label{corollary:subgaussianmeanestimation}   Let $w_{i,t} \overset{iid}{\sim} \text{subG}(\sigma^2)$ for $i \in [n],t \in [T]$ and $m := \mathbb{E}[w_{i,t}] \in \mathbb{R}^d $. Applying Algorithm \ref{alg:trimestTemp}  with privacy parameter $\mu$ to $(\bar{w}_i)_{i=1}^n$ where $\bar{w}_i := \frac{1}{T} \sum_{t=1}^T w_{i,t}$ yields a $\mu$-uGDP estimate $\hat{m}_{\mathsf{DP}}$. Moreover, given the scaling conditions of Theorem \ref{thm:trimest}, with probability at least $1-\xi$,
    \[
    \|\hat{m}_{\mathsf{DP}} - m\| = O\Big(\sigma \sqrt{\frac{1}{nT}\Big(d + \log \frac{1}{\xi}\Big)} + \sigma\sqrt{\frac{1}{n^2\mu^2T}\Big(R \log \frac{R}{\xi}+d \Big)\Big(d + \log \frac{n}{\xi}\Big)}   \Big).
    \]
\end{corollary}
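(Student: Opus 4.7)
The plan is to decouple the claim into (i) a privacy reduction and (ii) a utility bound obtained by conditioning on the concentration event of Lemma~\ref{lemma:subgaussianmeanconc} and then invoking the deterministic guarantee of Theorem~\ref{thm:trimest}.

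For privacy, I would observe that $\bar{w}_i = T^{-1}\sum_{t=1}^T w_{i,t}$ depends on the data of user $i$ alone. Hence any two user-level neighboring datasets $\{w_{i,t}\}$ and $\{w'_{i,t}\}$, differing only in the trajectory of a single user $i_0$, induce input sequences $(\bar{w}_i)_{i=1}^n$ and $(\bar{w}'_i)_{i=1}^n$ to Algorithm~\ref{alg:trimestTemp} that differ in only one coordinate. Since Theorem~\ref{thm:trimest} certifies $\mu$-GDP of the algorithm under such a single-coordinate change, composing with the deterministic averaging map (and invoking post-processing, Proposition~\ref{prop:postprocessing}) yields $\mu$-uGDP of the overall procedure on the original data $(w_{i,t})_{i,t}$.

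For utility, I would first condition on the $(1-\xi/2)$-probability event of Lemma~\ref{lemma:subgaussianmeanconc} (applied with $\xi/2$ in place of $\xi$), on which
\[
\Big\|\tfrac{1}{n}\textstyle\sum_{i=1}^n \bar{w}_i - m\Big\| \leq K_1 := C\sigma\sqrt{\tfrac{1}{nT}\bigl(d + \log \tfrac{2}{\xi}\bigr)}, \qquad \max_{i} \|\bar{w}_i - m\| \leq K_2 := C\sigma\sqrt{\tfrac{1}{T}\bigl(d + \log \tfrac{2n}{\xi}\bigr)}.
\]
These are precisely the deterministic hypotheses $\|\bar D - m\|\le K_1$ and $\max_i\|D_i-m\|\le K_2$ required by Theorem~\ref{thm:trimest} with $D_i = \bar w_i$. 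The remaining hypothesis $R \ge C\log(B/K_2)$ is a condition on user-chosen tuning parameters rather than on the data, and the scaling condition on $n$ is assumed in the statement of the corollary; $B$ need only be an upper bound for $\max_i\|\bar w_i\|$ on the same event, which is automatic up to enlargement.

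Applying Theorem~\ref{thm:trimest} on this event with failure probability $\xi/2$ yields, with probability at least $1-\xi/2$ conditional on the concentration event,
\[
\|\hat m_{\mathsf{DP}} - m\| = O\Bigl(K_1 + \tfrac{K_2}{n\mu}\sqrt{d + R\log(R/\xi)}\Bigr).
\]
Substituting the expressions for $K_1$ and $K_2$, squaring under the square root to combine the two factors in the second term into $(d + \log(n/\xi))(d + R\log(R/\xi))/(n^2\mu^2 T)$, taking a union bound over the two $(1-\xi/2)$ events, and absorbing constants into the $O(\cdot)$ yields exactly the claimed rate with probability at least $1-\xi$. There is no substantive obstacle here; the argument is a mechanical reduction, and the only care required is in the probability bookkeeping and in confirming that $B$ enters the conclusion only through the logarithmic dependence of $R$ on it.
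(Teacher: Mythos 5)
Your proposal is correct and follows essentially the same route as the paper: condition on the $(1-\xi/2)$-probability event of Lemma~\ref{lemma:subgaussianmeanconc} to supply $K_1$ and $K_2$, invoke the deterministic guarantee of Theorem~\ref{thm:trimest} at failure level $\xi/2$, substitute, and union bound (the paper additionally absorbs $\log\frac{1}{\xi}$ via $\log\frac{1}{\xi}\le R\log\frac{R}{\xi}$, which your use of the second form of the Theorem~\ref{thm:trimest} bound already handles). One cosmetic remark: your privacy step is a pre-processing reduction — the per-user averaging maps user-level neighboring datasets to inputs of Algorithm~\ref{alg:trimestTemp} differing in a single coordinate, after which the algorithm's $\mu$-GDP guarantee applies directly — rather than an application of the post-processing property of Proposition~\ref{prop:postprocessing}, which concerns maps applied to the \emph{output} of a private mechanism.
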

Here the $R \log \frac{R}{\xi}$ term is the cost of our private adaptive search for a radius of correct order. In particular, `oracle' procedures where the exact concentration radius is known apriori, such as Algorithm 2 of \cite{levy:sun:amin:kale:kulesza:mohri:suresh2021}, yields a similar error bound, with $R \log \frac{R}{\xi}$ replaced by $\log \frac{1}{\xi}$.  In Section~\ref{sec:dpregression}, we apply these tools to the problem of user-level privately estimating the coefficients of a linear regression problem. 
\begin{figure}[t]
\centering
\begin{minipage}[c]{0.4\linewidth}
  \centering
  \includegraphics[width=\linewidth]{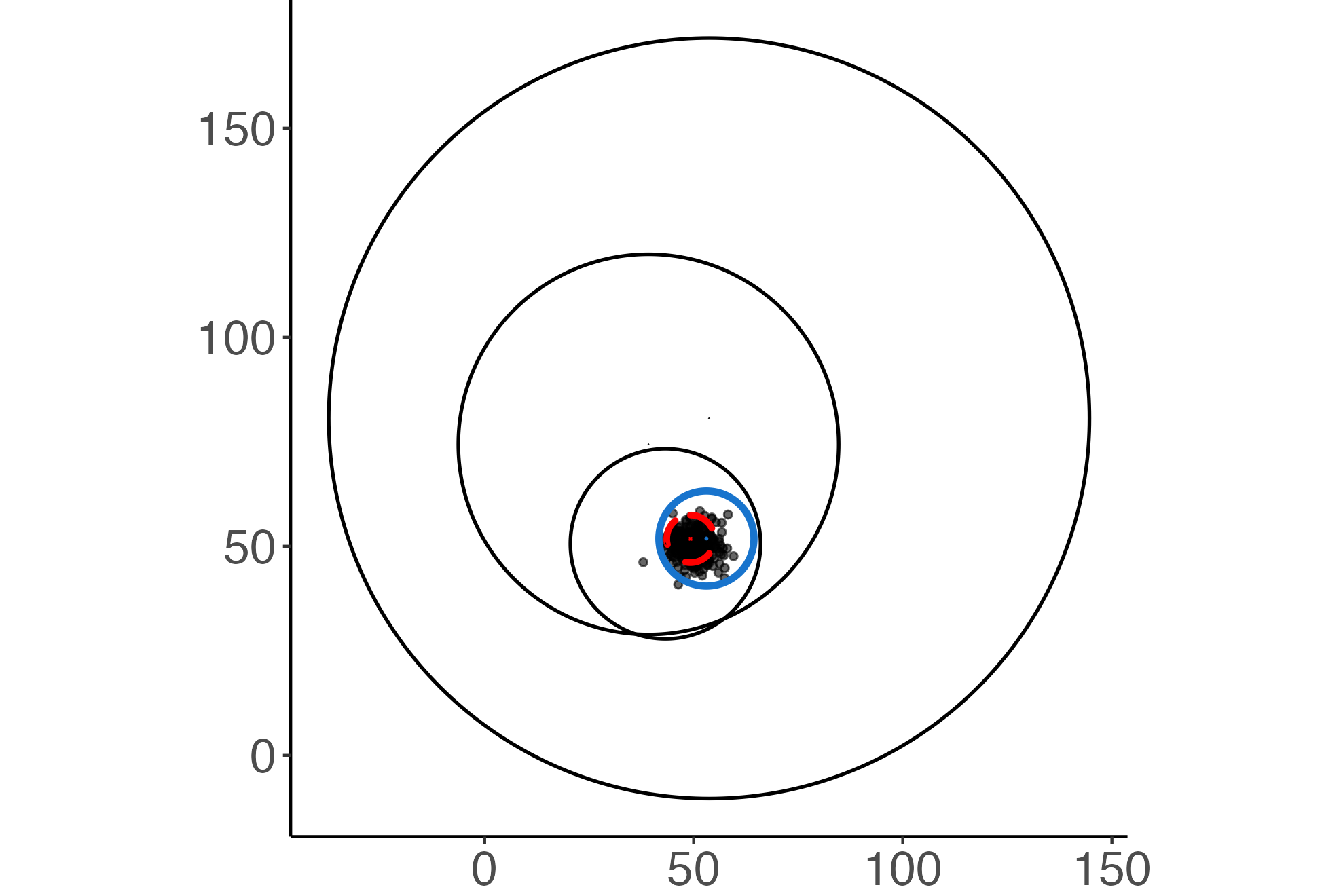}
\end{minipage}\hspace{0\linewidth}
\begin{minipage}[c]{0.58\linewidth}
  \caption{\small An illustration of the final five iterations of Algorithm \ref{alg:trimestTemp}.  The dashed, red circle represents the final attempted and rejected mean refinement, whereas the blue circle is the final projection ball.}
  \label{fig:dpillustration}
\end{minipage}
\end{figure}
\subsection{Discussion}
Algorithm \ref{alg:trimestTemp} privately identifies the optimal projection radius by iteratively running a sequence of sensitivity one queries, and terminating when one of these queries falls below a threshold. This bears resemblance to the AboveThreshold framework (e.g., Algorithm 1 in \citealt{dwork:roth2014}), which additionally adds noise to the threshold in the $\epsilon$-DP framework. 
By doing so, adding noise with constant order variance suffices to achieve a fixed privacy guarantee, regardless of the amount of queries that are potentially executed.  Similar manipulations might be possible in the $\mu$-GDP setting and could potentially lead to an improvement of the bound in Theorem \ref{thm:trimest}, but a thorough exploration of this goes beyond the scope of this paper. 
\section{Differentially Private longitudinal linear regression estimation}
\label{sec:dpregression}
\subsection{Set-up and assumptions}
The first, most basic longitudinal linear regression model that we examine in the framework of uGDP is 
\begin{align}
\label{eq:basemodel}
    y_{i,t} = \beta^\tr   x_{i,t} + \varepsilon_{i,t},
\end{align}
where $i \in [n]$ denotes the user index, $t \in [T]$ denotes the time index, and $x_{i,t}$ is a collection of time-varying covariates, with the possible exception of a constant term to accommodate an intercept.\footnote{Our results do not change if each user has a unique intercept, but the guarantees will only apply to non-intercept coefficients.} Our coefficient of interest is $\beta$. In the following, we often use $Y \in \mathbb{R}^{nT}$ to refer to the vector of stacked $y_{i,t}$, use $X \in \mathbb{R}^{nT \times d}$ to refer to the matrix of stacked $x_{i,t}$, and use $\varepsilon \in \mathbb{R}^{nT}$ to refer to the vector of stacked $\varepsilon_{i,t}$. We let $Y_i$, $X_i$, and $\varepsilon_i$ refer to the respective blocks contributed by user $i$. We require the following regularity conditions on the covariate and error distributions.
\begin{assumption}
    \label{assumptions:initialcovariates}(Exponentially mixing design)
    The covariates satisfy   $\Gamma_i^{-\frac{1}{2}}x_{i,t} \sim \text{subG}(\sigma_x^2)$, where $\mathbb{E}[x_{i,t}x_{i,t}^\tr] = \Gamma_i \in \mathbb{R}^{d \times d}$ with  $d = O(1)$ and $0 < c_{\Gamma} \leq \lambda_{\min}(\Gamma_i) \leq \lambda_{\max} (\Gamma_i) \leq C_\Gamma < \infty$. Furthermore, we assume that $(x_{i,t})_{t\in\mathbb{Z}}$ is strongly mixing $ \forall i \in [n]$ with $\alpha_{x_i}(k) \leq C \exp\{-ck \}$ for constants $C,c>0$. Finally, we assume that $x_i$ is independent of $x_j$ for $i \neq j$.
\end{assumption}
\begin{exmp}\label{example:covariateassumption}
    Let $x_{i,t} = (1; z_{i,t}^\tr   )^\tr  $ for all $i \in [n], t \in [T] $. Assumption \ref{assumptions:initialcovariates} is met if $(z_{i,t})_{t\in\mathbb{Z}}$, for each $i \in [n]$, obeys the causal multivariate ARMA(p,q) process
    $z_{i,t} - m = \sum_{j=0}^\infty \Psi_je_{i, t-j},$
    where $\lambda_{\min}(\sum_{j=0}^\infty\Psi_j \Psi_j^\tr) >0$, and where we have the following conditions on $e_{i,t} \overset{iid}{\sim}\text{subG}(\sigma_x^2)$: they admit a density with respect to the Lebesgue measure, $\mathbb{E}[e_{i,t}] = 0$, $\V(e_{i,t}) =: \Sigma_e$, and $0 < \lambda_{\min}(\Sigma_e) \leq \lambda_{\max}(\Sigma_e)<\infty$. For further details, see Appendix \ref{subsec:covariateassumption}.
\end{exmp}

\begin{assumption}
    \label{assumptions:initialerrors}
    (Strongly mixing errors) For all $i \in [n]$ and $ T\in \mathbb{N}$, the errors satisfy $\mathbb{E}[\varepsilon_i] = 0$ and $\V(\varepsilon_i)=:\Sigma_T \in \mathbb{R}^{T \times T}$ with  $\lambda_{\min}(\Sigma_T)\geq c_\Sigma>0$ where $(\varepsilon_i, X_i)$ is independent of $(\varepsilon_j, X_j)$ for $i\neq j$. Furthermore, $\varepsilon_i \mid X_i \sim \text{subG}(\sigma_\varepsilon^2)$ for $i \in [n]$ where $\sigma_\varepsilon^2$ does not depend on $T$. Finally, $\mathbb{E}[\varepsilon \mid X] = 0$,  both $(\varepsilon_{i,t})_{t \in \mathbb{Z}}$ and  $(x_{i,t} \varepsilon_{i,t})_{t \in \mathbb{Z}}$ are  weakly stationary, and  $(x_{i,t}, \varepsilon_{i,t})_{t \in \mathbb{Z}}$ is strongly mixing with mixing coefficients $\alpha_{\epsilon_i, x_i}(k) \leq Ck^{-3-q}$ for some $q>0$ and $C>0$.
\end{assumption}
In our asymptotic analyses, we consider sequences $(n,T) \to \infty$. Hence, $\varepsilon_i$ implicitly depends on $T$, but we suppress this dependence for simplicity, and in the remainder of this work we usually do the same for $\Sigma_T$, denoting it simply as $\Sigma$. We note that since it is assumed $\varepsilon_i \mid X_i\sim \text{subG}(\sigma_\varepsilon^2)$ for all $T\geq 1$, it follows that $\|\Sigma\| \leq \sigma_\varepsilon^2$ uniformly in $T$. 
\begin{exmp}\label{example:ARassumptions}
    Assumption \ref{assumptions:initialerrors} is met if $(\varepsilon_i)_{i=1}^n$ are   generated, for each $i \in [n]$, from a causal AR(p) process
    $\varepsilon_{i,t} = \sum_{j=0}^\infty \psi_j r_{i,t-j},$
    where $r_{i,t} \overset{iid}{\sim} \text{subG}(\sigma_r^2)$ have distributions absolutely continuous with respect to the Lebesgue measure. In particular,  we have $\sigma_\varepsilon^2 \leq \sigma_r^2 \sum_{j\geq 0} \psi_j^2$. For further details, see Appendix \ref{subsec:exampleARAssumption}.
\end{exmp}

\subsection{uDP Regression estimation}\label{subsec:regestimation}

Assumptions \ref{assumptions:initialcovariates} and \ref{assumptions:initialerrors}  ensure that one can construct user specific $\sqrt{T}$-consistent estimates of $\beta$ as 
\begin{equation}
\label{eq:betai}
\hat{\beta}_i := 
    X_i^\dagger Y_i,
\end{equation} 
where $\dagger$ denotes the Moore-Penrose pseudoinverse, which has the property $A^\dagger = (A^\tr A)^\dagger A^\tr$ for any real matrix $A \in \mathbb{R}^{m\times p}$ and $B^\dagger = B^{-1}$ for invertible matrices $B \in \mathbb{R}^{m \times m}$. Hence, when $\text{Rank}(X_i) = d$, we have $\hat{\beta}_i = (X_i^\tr X_i)^{-1} X_i^\tr Y_i$, the usual local least squares estimator. Furthermore, the estimator in \eqref{eq:betai} has the property that it is the unique minimum $\ell_2$-norm least squares solution regardless of the rank of $X_i$.  Since \textsc{DPTrimMean} takes numerical inputs, we use the Moore-Penrose local least squares solution so as not to reveal rank deficiency in case $\text{Rank}(X_i) < d$. Using the local estimates of \eqref{eq:betai},  ultimately the center can construct a distributed $\sqrt{nT}$-consistent, but non-private, estimate with the user-level sample mean:
\begin{equation}\label{eq:betahat}
    \bar{\beta} := \frac{1}{n}\sum_{i=1}^n \hat{\beta}_i.
\end{equation}
Thus, a natural user-level private analog of $\bar{\beta}$, and that which we employ in this work, is one that instead \emph{privately} computes the user-level sample mean of the local user regression estimates. 
We take a moment to reflect on what makes this a good candidate estimator, and why in many settings it may be preferred over a DP analog of the aggregated least squares estimator. In item-level DP linear regression, it is commonplace to separately calculate private summary statistics 
$
\widehat{(X^\tr X)}_{\mathsf{DP}}$ and $\widehat{(X^\tr Y)}_{\mathsf{DP}}$ 
and then compose these to obtain
$
\hat{\beta}_{\mathsf{SSDP}} := (\widehat{(X^\tr X)}_{\mathsf{DP}})^{-1} \widehat{(X^\tr Y)}_{\mathsf{DP}}$, 
see for example \cite{wang2018revisiting, sheffet2019old,alabi2020differentially}.
However, uniform concentration around a common mean - which, as evident from Theorem \ref{thm:trimest}, is desirable in uDP estimation - is unlikely to hold for the summary statistics of the regression coefficient. This can make the averaging estimator a more appealing candidate for privatization. Figure \ref{fig:sumstatill} serves as a simple example. 
\begin{figure}[t]
\centering
\begin{minipage}[c]{0.4\linewidth}
  \centering
  \includegraphics[width=\linewidth]{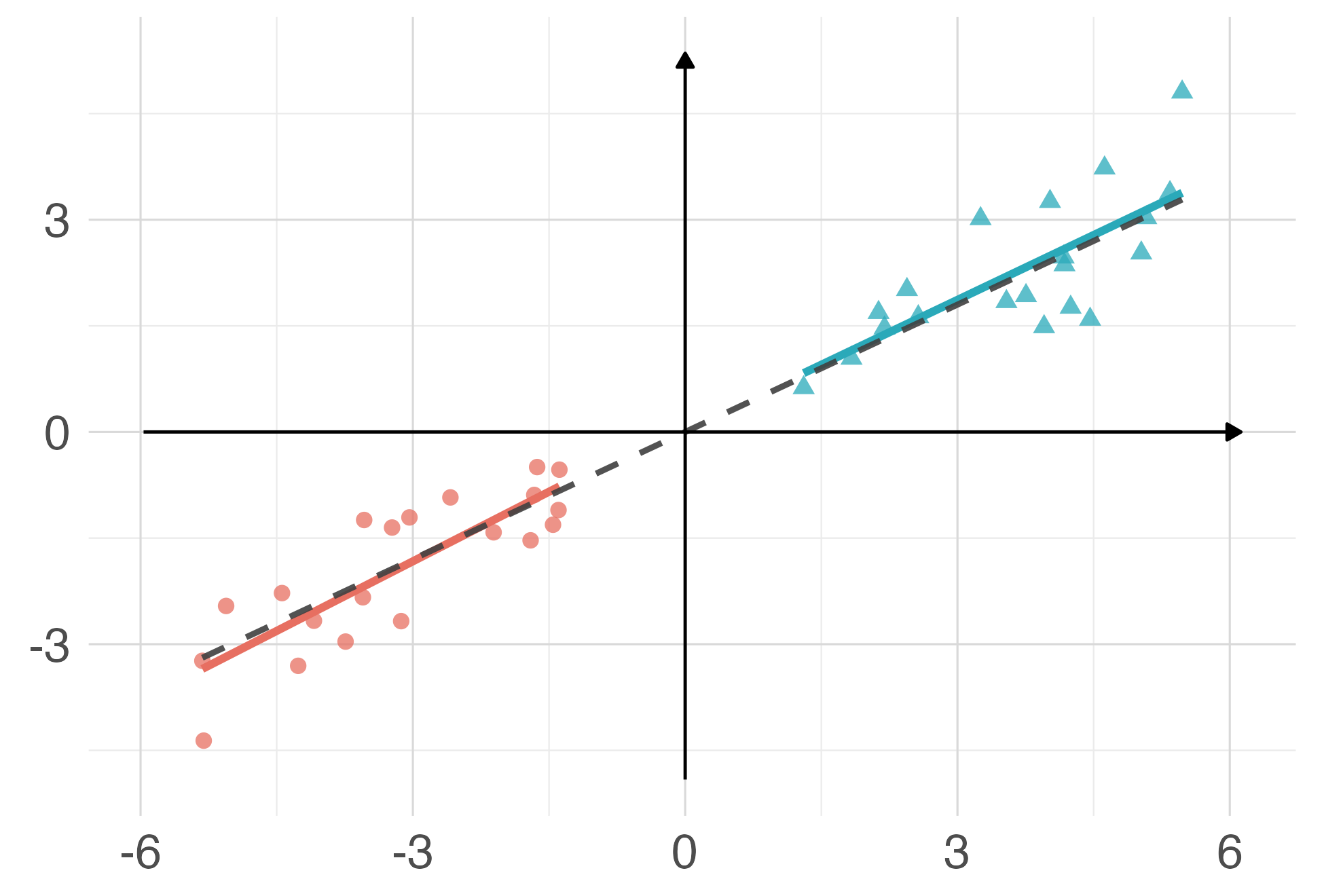}
\end{minipage}\hspace{0\linewidth}
\begin{minipage}[c]{0.58\linewidth}
\small
  \caption{{\small The two users' Gram matrices do not concentrate uniformly. However, their local regression fits concentrate tightly around the common regression coefficients.}}
  \label{fig:sumstatill}
\end{minipage}
\end{figure}
Furthermore, due to the linear nature of the linear regression problem, and unlike in most M-estimation problems, each local estimate $\hat{\beta}_i$ is an unbiased estimate when $X_i$ is full rank, making averaging local estimates a valid alternative as there is no  $O(\frac{1}{T})$ bias term that persists after averaging.

We now turn towards privatizing our candidate estimate \eqref{eq:betahat} by applying the techniques introduced in Section \ref{sec:dpmean}. In particular, we have the following result concerning the concentration of these estimates around the true regression coefficient $\beta$, which are the same as in the iid case as derived in \cite[Theorem 11]{hsu:kakade:zhang2012}, up to a slightly inflated requirement on $T$. As in \cite{hsu:kakade:zhang2012}, the proof relies on writing the difference $\hat{\beta}_i - \beta$ as a ``bounded'' linear transformation of the sub-Gaussian  $\varepsilon_i$ on a high-probability event that the covariates' empirical second moment matrices are well behaved. The main difference with their result is that, in our work, the covariates are temporally dependent and we must control many Gram matrices simultaneously.  The proof of the lemma is in Appendix \ref{app:betasconcentration}.
\begin{lemma}\label{lem:betasconcentration}
    Admit Assumptions \ref{assumptions:initialcovariates}-\ref{assumptions:initialerrors}, and let $\hat{\beta}_i$ be as in~\eqref{eq:betai}and $\bar{\beta}$ as in~\eqref{eq:betahat}. Then, with probability at least $1-\xi$,
    \begin{align*}
    \max_{i \in [n]}\|\hat{\beta}_i - \beta\| &= O\Big(\sigma_\varepsilon\max_{i \in [n]}\|\Gamma_i^{-\frac{1}{2}}\|\sqrt{\frac{1}{T}\big(d + \log \frac{n}{\xi}\big)} \Big), \\
    \|\bar{\beta} - \beta\| &= O\Big(\sigma_\varepsilon  \sqrt{\frac{1}{nT} \big(\frac{1}{n} \sum_{i=1}^n\|\Gamma_i^{-\frac{1}{2}}\|^2\big)\big(d + \log \frac{1}{\xi}\big)} \Big),
    \end{align*}
    when $T \geq C(d +\log \frac{nT}{\xi} )^2$, with $C>0$ a constant that depends on  $O(1)$ population parameters.
\end{lemma}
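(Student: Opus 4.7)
The plan is to start from the identity $\hat{\beta}_i - \beta = (X_i^\tr X_i)^\dagger X_i^\tr \varepsilon_i$ and argue on a high-probability event $\mathcal{E}$ on which $X_i$ is full-rank for every $i\in[n]$ with $\lambda_{\min}(X_i^\tr X_i/T)$ bounded below by a constant multiple of $c_\Gamma$. On $\mathcal{E}$ the estimation error collapses to $(X_i^\tr X_i)^{-1}X_i^\tr \varepsilon_i$, which conditionally on the design is a \emph{linear} functional of the sub-Gaussian vector $\varepsilon_i\mid X_i$; the bound then reduces to a conditional sub-Gaussian tail plus a union bound in the per-user statement, and to a cross-user sum of conditionally independent sub-Gaussian vectors in the averaged statement.

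First I would control the empirical Gram matrices. Assumption~\ref{assumptions:initialcovariates} makes each coordinate of $\Gamma_i^{-1/2}x_{i,t}$ sub-Gaussian and the sequence exponentially $\alpha$-mixing, so a matrix Bernstein/Hoeffding-type inequality for strongly mixing vectors (Merlev\`ede--Peligrad--Rio 2011, as cited in the introduction) yields, for a fixed $i$,
\begin{equation*}
    \Big\|\tfrac{1}{T}X_i^\tr X_i - \Gamma_i\Big\| \le \tfrac{1}{2}\lambda_{\min}(\Gamma_i)
\end{equation*}
with probability at least $1-\xi/(2n)$ provided $T \gtrsim (d+\log(nT/\xi))^2$, the squared log being the customary cost of exponential mixing. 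A union bound over $i\in[n]$ produces $\mathcal{E}$ with $\mathbb{P}(\mathcal{E})\ge 1-\xi/2$, and on $\mathcal{E}$ we have $\|(X_i^\tr X_i)^{-1}\|\le 2\|\Gamma_i^{-1}\|/T \le 2\|\Gamma_i^{-1/2}\|^2/T$ for every $i$.

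Next, conditional on $X$, use $\varepsilon_i\mid X_i\sim \mathrm{subG}(\sigma_\varepsilon^2)$. The vector $(X_i^\tr X_i)^{-1}X_i^\tr \varepsilon_i$ is then sub-Gaussian with parameter dominated by $\sigma_\varepsilon^2\|(X_i^\tr X_i)^{-1}X_i^\tr\|^2 = \sigma_\varepsilon^2\|(X_i^\tr X_i)^{-1}\|$, and a standard sub-Gaussian vector norm bound gives, with conditional probability at least $1-\xi/(2n)$,
\begin{equation*}
    \|\hat{\beta}_i-\beta\| \;\lesssim\; \sigma_\varepsilon \sqrt{\|(X_i^\tr X_i)^{-1}\|\bigl(d+\log\tfrac{n}{\xi}\bigr)} \;\lesssim\; \sigma_\varepsilon \|\Gamma_i^{-1/2}\|\sqrt{\tfrac{1}{T}\bigl(d+\log\tfrac{n}{\xi}\bigr)}.
\end{equation*}
A union bound over $i$ and integration over $\mathcal{E}$ delivers the first claim. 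For the averaged bound, decompose $\bar{\beta}-\beta = n^{-1}\sum_i(X_i^\tr X_i)^{-1}X_i^\tr\varepsilon_i$; by Assumption~\ref{assumptions:initialerrors} the summands are independent across $i$ conditional on $X$, with total conditional sub-Gaussian parameter at most $n^{-2}\sigma_\varepsilon^2\sum_{i=1}^n\|(X_i^\tr X_i)^{-1}\| \lesssim (nT)^{-1}\sigma_\varepsilon^2 \cdot n^{-1}\sum_{i=1}^n \|\Gamma_i^{-1/2}\|^2$ on $\mathcal{E}$, from which a single sub-Gaussian vector tail yields the advertised rate.

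The principal obstacle is the uniform-in-$i$ lower bound on $\lambda_{\min}(X_i^\tr X_i)$ under temporal dependence: exponentially mixing sub-Gaussian vectors enjoy only a Bernstein-type tail with a logarithmic correction, so to union-bound over $n$ independent users while tracking constants requires the sample-size condition $T\gtrsim(d+\log(nT/\xi))^2$ stated in the lemma, and forces a careful matrix-valued adaptation of Merlev\`ede--Peligrad--Rio. Everything else is comparatively mechanical: conditioning on the design cleanly decouples the noise from the mixing covariates, and the conditional sub-Gaussianity $\varepsilon_i\mid X_i\sim\mathrm{subG}(\sigma_\varepsilon^2)$ lets us avoid reintroducing temporal dependence on the error side, which is exactly the device that allowed \cite{hsu:kakade:zhang2012} to obtain sharp rates in the i.i.d.\ case.
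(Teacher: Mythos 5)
Your proposal is correct and follows essentially the same route as the paper's proof: a per-user Gram-matrix concentration event built from Merlev\`ede--Peligrad--Rio-type bounds for mixing sub-exponentials with the $(d+\log(nT/\xi))^2$ sample-size requirement, followed by conditional sub-Gaussian tail bounds for $(X_i^\tr X_i)^{-1}X_i^\tr\varepsilon_i$ given the design, union bounds over $i$, an $\epsilon$-net for the vector norm, and conditional independence across users for the averaged bound. The only (cosmetic) difference is that you bound the conditional sub-Gaussian parameter via the identity $\|(X_i^\tr X_i)^{-1}X_i^\tr\|^2=\|(X_i^\tr X_i)^{-1}\|$, while the paper reaches the same $O(\|\Gamma_i^{-1/2}\|^2/T)$ bound through a three-factor decomposition and its Lemma~\ref{lem:spectrumbound}; likewise the paper never needs a genuinely matrix-valued mixing inequality, since its Lemma~\ref{lemma:epsilonNets} reduces everything to scalar directions.
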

%
%
Hence, our preceding discussion and the one in Section \ref{sec:dpmean} suggest the following private estimator of $\beta$. First, each user $i \in [n]$ estimates their local regression coefficients, $\hat{\beta}_i$. Then, we provide $(\hat{\beta}_i)_{i=1}^n$ as input to Algorithm \ref{alg:trimestTemp}, and denote the resulting output as $\hat{\beta}_{\mathsf{DP}}$. By construction of Algorithm \ref{alg:trimestTemp}, this estimate will thus satisfy $\mu$-uGDP.
Before we state our utility guarantees for the estimator $\hat{\beta}_{\mathsf{DP}}$, we now introduce some quantities that will serve as thresholds for our results to hold, where in the following $B_{\min}$ and $R_{\min}$ relate to the inputs $B$ and $R$ of Algorithm \ref{alg:trimestTemp}. For some constants $C_1, C_2, C_3, C_4>0$ that may depend on $O(1)$ population parameters, and $g(n, T, \xi, d):=C_2 (d + \log \frac{nT}{\xi})^2$:
\begin{equation}
\begin{split}
\label{eq:min_vals}
&n_{\min}(R, \mu, \xi, d) := C_1\frac{1}{\mu} \sqrt{R\Big(\log \frac{R}{\xi} + d\Big)},  \quad  T_{\min}(n, T, \xi, d) :=\min \{T : T\geq g(n, T, \xi, d)\}, \\
&R_{\min}(T,B) := C_3 \log {B\sqrt{T 
}}
, \quad \text{ and } \quad B_{\min}(\beta, n,T, \xi, d) := \|\beta\| + {C_4}\sqrt{\frac{1}{T
}\big(d + \log \frac{n}{\xi}\big)}.
\end{split}
\end{equation}
In the following, we usually suppress the dependence of these functions on their arguments. 

These ingredients suffice to provide high-probability error bounds on our resulting private estimator of $\beta$. However, to derive its asymptotic distribution, we require that the users' Gram matrices are uniformly well behaved as formalized next.
\begin{assumption}
\label{assumption:UI}
    For a given $k\geq1$, there exist a $\delta>0$ and $T_0 > d$ such that $X_i^\tr X_i$ is invertible  almost surely for all $i \geq 1$ and
    $\sup_{i \geq 1, T\geq T_0} \mathbb{E}[\|(\frac{1}{T} X_i^\tr X_i)^{-1}\|^{k+ \delta}] <\infty.$
\end{assumption}
An easily verifiable sufficient condition under which Assumption \ref{assumption:UI} holds is presented in the following lemma. The proof can be found in Appendix \ref{app:suffUI}.

\begin{lemma}\label{lem:suffUI}
    Assumption \ref{assumption:UI} is satisfied if, for any $v \in \mathbb{S}^{d-1}$, denoting $w_{i,t} := v^\tr x_{i,t}$, the density $f_{W_i}$ of $W_i=(w_{i,1}, \dots w_{i,T})$ is such  that
    $
    \sup_{i\geq 1, w\in \mathbb{R}^T}f_{W_i}(w) \leq K^T
    $
    for some constant $K>0$ that is independent of $n$, $T$ and $v$, and we take $T_0 \geq 2(d + k + \delta')$ for any $\delta'>0$. 
\end{lemma}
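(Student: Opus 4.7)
The plan is to reduce the moment bound to an anti-concentration estimate on $\lambda_{\min}(X_i^\tr X_i)$, derive this estimate by combining the assumed density bound with a covering of $\mathbb{S}^{d-1}$, and then integrate via the layer-cake formula. Setting $Z_i := \lambda_{\min}(X_i^\tr X_i)$ and noting $\|(\tfrac{1}{T}X_i^\tr X_i)^{-1}\| = T/Z_i$,
\[
\mathbb{E}[(T/Z_i)^{k+\delta}] = (k+\delta)\int_0^\infty u^{-(k+\delta)-1}\mathbb{P}(Z_i/T < u)\,du,
\]
so it suffices to control $\mathbb{P}(Z_i < Tu)$ for small $u$, uniformly in $i \geq 1$ and $T \geq T_0$.

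For any fixed $v \in \mathbb{S}^{d-1}$, the density bound on $W_i = X_i v$ together with the Euclidean ball volume yields $\mathbb{P}(\|X_i v\|^2 \leq s) \leq K^T \pi^{T/2} s^{T/2}/\Gamma(T/2+1)$, which by Stirling is at most $T^{-1/2}(2\pi e K^2 s/T)^{T/2}$ up to constants. Passing to the infimum over $\mathbb{S}^{d-1}$ via an $\eta$-net $\mathcal{N}_\eta$ of cardinality at most $(3/\eta)^d$: on $\{\|X_i\|\leq M\}$, choosing $\eta = \sqrt{s}/M$ and using $\|X_i \tilde v\| \leq \|X_i v\| + \|X_i\|\eta$, the event $\{Z_i < s\} \cap \{\|X_i\|\leq M\}$ is contained in $\bigcup_{\tilde v \in \mathcal{N}_\eta}\{\|X_i \tilde v\|^2 < 4s\}$, so a union bound gives
\[
\mathbb{P}(Z_i < s,\, \|X_i\| \leq M) \lesssim (3M)^d s^{-d/2}\cdot T^{-1/2}(8\pi e K^2 s/T)^{T/2}.
\]
Taking $M = A\sqrt{T}$ and $s = Tu$ collapses this to $(3A)^d T^{-1/2} c_K^{T/2} u^{(T-d)/2}$ with $c_K = 8\pi e K^2$, all powers of $T$ canceling favorably. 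The complementary event $\{\|X_i\| > A\sqrt{T}\}$ is controlled via Assumption~\ref{assumptions:initialcovariates}: sub-Gaussianity and Minkowski give $\mathbb{E}[(\sum_t \|x_{i,t}\|^2)^q] \leq C_q T^q$ for any $q$, so Markov produces $\mathbb{P}(\|X_i\| > A\sqrt{T}) \leq C_q A^{-2q}$. A dyadic peeling in $\|X_i\|$ combined with H\"older's inequality at exponent close to $1$---feasible because $T_0 \geq 2(d+k+\delta')$ leaves slack $d/2+\delta'$ above the required moment order $k+\delta$---makes this contribution summable and uniformly bounded in $T$.

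Plugging the small-ball bound into the layer-cake integral and splitting the range of $u$ at $1/c_K$ gives a finite bound provided $(T-d)/2 > k+\delta$; the hypothesis $T \geq T_0 \geq 2(d+k+\delta')$ yields $(T-d)/2 \geq d/2 + k + \delta' > k + \delta$ for any $\delta \in (0,d/2+\delta')$, and the $T$-dependent constants combine to a bound independent of $T \geq T_0$. The almost-sure invertibility required by Assumption~\ref{assumption:UI} follows as a byproduct of the same estimate, since $\mathbb{P}(Z_i = 0) = \lim_{s \downarrow 0}\mathbb{P}(Z_i < s) = 0$. The main obstacle I foresee is the coupling between the covering scale $\eta$ and the random operator norm $\|X_i\|$: the natural $\eta$ depends on $\|X_i\|$, which is itself the object being controlled. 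Separating the analysis into a truncated part handled by the net and a tail part handled by moments of $\|X_i\|$ is the crux of the argument, and the bookkeeping needed to check that all powers of $T$ arising from Stirling, from $|\mathcal{N}_\eta|$, and from the ball volume balance into a $T$-free constant is what makes the result delicate.
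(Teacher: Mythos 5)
Your proposal is correct and follows the same skeleton as the paper's proof: a layer-cake reduction to the small-ball probability of $\lambda_{\min}(\frac{1}{T}X_i^\tr X_i)$, the key estimate $\mathbb{P}(\|X_i v\|^2 \leq s) \leq K^T \,\mathrm{Vol}(B_T(\sqrt{s}))\lesssim (c_K s/T)^{T/2}$ from the density bound, an $\epsilon$-net on $\mathbb{S}^{d-1}$ at a scale tied to the (random) operator norm, and monotonicity in $T$ for $u$ below a fixed threshold to get uniformity over $T \geq T_0$, with the exponent count $(T_0-d)/2 > k+\delta$ matching the paper's and almost-sure invertibility recovered by letting $s \downarrow 0$. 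The one genuine divergence is how you handle the coupling you correctly identify as the crux, namely that the net scale depends on $\|X_i\|$: the paper truncates $\|\Gamma_{i,T}\|$ at an $r$-dependent level $L(r) \asymp 1+|\log r|^2$ chosen via the exponential concentration inequality for strongly mixing sub-Gaussian rows (its equation for $\|\Gamma_{i,T}-\Gamma_i\|$), so the tail event contributes $r^{k+\delta+\gamma}$ directly and the net cardinality only picks up integrable $|\log r|^{2d}$ factors; you instead use only polynomial moments of $\|X_i\|$ (Minkowski plus Markov, requiring no mixing at all) together with dyadic peeling and a $\min\{a,b\}\leq a^{1-\theta}b^\theta$ interpolation, which degrades the $u$-exponent by a factor $\theta<1$ that is absorbed by the slack $\delta'-\delta$ guaranteed by $T_0 \geq 2(d+k+\delta')$. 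Your route is marginally more elementary and self-contained, at the cost of a slightly lossier exponent and a peeling step that you sketch rather than execute; the paper's route is sharper and reuses concentration machinery it has already built, but is tied to the exponential-mixing hypothesis. Both are valid, and the peeling details, while unwritten, are routine given the moment bounds you state.
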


Intuitively, this sufficient condition simply requires that the covariates have a density that does not explode too quickly as $T$ grows, as is the case for example under very strong dependence. Below we provide a simple example that satisfies this condition.

\begin{exmp}\label{example:UI}
    If $(x_{i,t})_{t\geq 1}$ are generated by a causal multivariate Gaussian AR(q) 
    and the innovations have covariance matrix $\Sigma_z \succ \mathbf{0}$, then the process satisfies the condition of Lemma \ref{lem:suffUI}; hence, Assumption \ref{assumption:UI}. For further details, see Appendix \ref{subsec:UIexample}.
\end{exmp}

Finally, our results will make use of the following growth condition for asymptotic normality of the estimator:
\begin{equation}\label{eq:growthconditions}
    \frac{1}{n\mu^2}\Big( R \log \frac{R}{\xi} \Big) \Big(\log \frac{n}{\xi} \Big) = o(1), \quad \text{ and} \quad \xi = o(1).
\end{equation}
We are now ready to state the privacy and statistical guarantees of our uDP regression estimator, presented in Theorem~\ref{thm:currentconv}.   We remark that the high probability error bound in Theorem~\ref{thm:currentconv}(b) follows directly from the deterministic guarantees of Algorithm \ref{alg:trimestTemp}, presented in Theorem \ref{thm:trimest}, combined with the concentrations statements of Lemma \ref{lem:betasconcentration}.
The proof is given in Appendix \ref{app:thm:currentconv}.

\begin{theorem}
\label{thm:currentconv}
Let $\hat{\beta}_i$ be as in \eqref{eq:betai} and let $\hat{\beta}_{\mathsf{DP}} = \hyperref[alg:trimestTemp]{\textsc{DPTrimMean}}((\hat{\beta}_i)_{i=1}^n, \mu, B, R, \xi/2)$.

\textbf{(a) Privacy.} 
$\hat{\beta}_{\mathsf{DP}}$ satisfies $\mu$-uGDP.

\textbf{(b) Error Bound.}
Under Assumptions \ref{assumptions:initialcovariates}-\ref{assumptions:initialerrors} and if $B \geq B_{\min}$, $R \geq R_{\min}$, $n \geq n_{\min}$, and $T \geq T_{\min}$ using the values from \eqref{eq:min_vals}, with probability at least $1-\xi$,
\[
\|\hat{\beta}_{\mathsf{DP}} - \beta\|
= O\Big(  \sigma_\varepsilon 
\Big[\sqrt{\frac{\frac{1}{n}\sum_{i=1}^n \|\Gamma_i^{-\frac{1}{2}}\|^2 (d + \log \tfrac{1}{\xi})}{nT} } 
+ \max_{i \in [n]} \|\Gamma_i^{-\frac{1}{2}}\|\sqrt{\frac{(R\log\frac{R}{\xi} + d )(d + \log \frac{n}{\xi})}{n^2\mu^2T}} 
 \Big] \Big).
\]

\textbf{(c) Asymptotic Normality.}
With $\Lambda_i := \sum_{\ell=-\infty}^\infty \mathbb{E}[x_{i,0} x_{i,\ell}^\tr\varepsilon_{i,0}\varepsilon_{i,\ell}]$, assume the existence of $V:=\lim_{n \to \infty} \frac{1}{n} \sum_{i=1}^n \Gamma_i^{-1} \Lambda_i \Gamma_i^{-1}$. If the assumptions in (b) continue to hold under \eqref{eq:growthconditions}, and we admit Assumption \ref{assumption:UI} with $k=2$, then
$
        \sqrt{nT}(\hat{\beta}_{\mathsf{DP}} - \beta) \overset{d}{\to} N(0,  V)$ as $(n,T) \to \infty.
$
 
\end{theorem}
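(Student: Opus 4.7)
The plan is to address the three parts in order, with part (c) being the main content.

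For part (a), privacy is immediate: Algorithm \ref{alg:trimestTemp} satisfies $\mu$-GDP by Theorem \ref{thm:trimest}, and because altering user $i$'s raw data $(X_i,Y_i)$ to $(X_i',Y_i')$ changes only the single vector $\hat\beta_i$ while leaving every other $\hat\beta_j$ fixed, user-level neighboring raw datasets induce $(\hat\beta_i)_{i=1}^n$'s that differ in exactly one coordinate. The post-processing property (Proposition \ref{prop:postprocessing}) then delivers $\mu$-uGDP for $\hat\beta_{\mathsf{DP}}$.

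For part (b), the plan is to invoke Theorem \ref{thm:trimest} conditionally on the $(1-\xi/2)$-event supplied by Lemma \ref{lem:betasconcentration} (applied with failure probability $\xi/2$). On that event one may take $m=\beta$, $K_1=O(\sigma_\varepsilon\sqrt{\tfrac{1}{nT}(\tfrac{1}{n}\sum_i\|\Gamma_i^{-1/2}\|^2)(d+\log\tfrac{1}{\xi})})$, and $K_2=O(\sigma_\varepsilon\max_i\|\Gamma_i^{-1/2}\|\sqrt{\tfrac{1}{T}(d+\log\tfrac{n}{\xi})})$. The thresholds $B_{\min}, R_{\min}, n_{\min}, T_{\min}$ in \eqref{eq:min_vals} are designed so that the hypotheses of Theorem \ref{thm:trimest} ($\max_i\|\hat\beta_i\|\le B$, $R\ge C\log(B/K_2)$, and the sample-size condition) are satisfied; a union bound over the algorithmic failure (run with $\xi/2$) and the concentration failure yields the stated error bound with probability $1-\xi$.

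For part (c), I would decompose
\begin{equation*}
\sqrt{nT}(\hat\beta_{\mathsf{DP}}-\beta)=\sqrt{nT}(\hat\beta_{\mathsf{DP}}-\bar\beta)+\sqrt{nT}(\bar\beta-\beta).
\end{equation*}
On the event of Lemma \ref{lem:betasconcentration} (where no $\hat\beta_i$ is trimmed because $\max_i\|\hat\beta_i-\beta\|\le K_2$ falls within the terminating ball), the structure of Algorithm \ref{alg:trimestTemp} forces $\hat\beta_{\mathsf{DP}}=\bar\beta+\text{Gaussian noise}$ of scale $O(\tfrac{K_2}{n\mu}\sqrt{R\log\tfrac{R}{\xi}+d})$, so $\sqrt{nT}\|\hat\beta_{\mathsf{DP}}-\bar\beta\|=O_P(\tfrac{1}{\mu}\sqrt{(R\log\tfrac{R}{\xi}+d)(d+\log\tfrac{n}{\xi})/n})=o_P(1)$ by the growth condition \eqref{eq:growthconditions}. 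What remains is a CLT for $\sqrt{nT}(\bar\beta-\beta)=\tfrac{1}{\sqrt n}\sum_{i=1}^n U_{i,T}$ where, using Assumption \ref{assumption:UI} with $k=2$ to guarantee almost-sure invertibility of $X_i^\tr X_i$,
\begin{equation*}
U_{i,T}=\sqrt{T}(\hat\beta_i-\beta)=\Big(\tfrac{1}{T}X_i^\tr X_i\Big)^{-1}\cdot\tfrac{1}{\sqrt{T}}X_i^\tr\varepsilon_i.
\end{equation*}
The $U_{i,T}$ are independent across $i$ with $\mathbb{E}[U_{i,T}]=0$ since $\mathbb{E}[\varepsilon\mid X]=0$. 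For each fixed $i$, the ergodic theorem under strong mixing gives $\tfrac{1}{T}X_i^\tr X_i\to\Gamma_i$ a.s., while the CLT for strongly mixing sequences (applicable under Assumption \ref{assumptions:initialerrors} since $(x_{i,t}\varepsilon_{i,t})_{t\in\mathbb{Z}}$ is stationary and mixes at a polynomial rate $k^{-3-q}$ with enough moments) yields $\tfrac{1}{\sqrt T}X_i^\tr\varepsilon_i\overset{d}{\to}N(0,\Lambda_i)$. Combining via Slutsky gives $U_{i,T}\overset{d}{\to}N(0,\Gamma_i^{-1}\Lambda_i\Gamma_i^{-1})$. I would then conclude with a triangular-array (Lindeberg-Feller) CLT applied to the row-independent array $\{U_{i,T}\}$, using the assumed limit $V=\lim_n\tfrac{1}{n}\sum_i\Gamma_i^{-1}\Lambda_i\Gamma_i^{-1}$ for the variance convergence and a uniform $(2+\delta')$-th-moment bound on $U_{i,T}$ for the Lindeberg condition.

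The main obstacle is controlling the triangular-array CLT uniformly as $(n,T)\to\infty$ jointly. Two ingredients must be knit together: first, uniform $(2+\delta')$-moment bounds on $\|U_{i,T}\|$, obtained by Hölder-combining Assumption \ref{assumption:UI}'s $(2+\delta)$-moment bound on $\|(\tfrac{1}{T}X_i^\tr X_i)^{-1}\|$ with higher-order moment bounds on $\|\tfrac{1}{\sqrt T}X_i^\tr\varepsilon_i\|$ derived from the sub-Gaussianity of $\varepsilon_i\mid X_i$ and the polynomial mixing of $(x_{i,t}\varepsilon_{i,t})_{t\in\mathbb{Z}}$ (e.g.\ through Rio/Doukhan-style moment inequalities); second, upgrading the pointwise-in-$i$ variance convergence $\V(U_{i,T})\to\Gamma_i^{-1}\Lambda_i\Gamma_i^{-1}$ to a uniform convergence of the Cesàro mean $\tfrac{1}{n}\sum_i\V(U_{i,T})\to V$, which will use the uniform integrability granted by the $(2+\delta)$-th-moment bound. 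This is precisely why Assumption \ref{assumption:UI} is invoked with $k=2$.
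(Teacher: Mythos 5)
Your parts (a) and (b) are correct and essentially the paper's argument: privacy via Theorem \ref{thm:trimest}, and the error bound by instantiating Theorem \ref{thm:trimest} with the $K_1,K_2$ supplied by Lemma \ref{lem:betasconcentration} and a union bound. Two small notes. First, in (a) the relevant fact is that $(X_i,Y_i)\mapsto\hat\beta_i$ is applied \emph{per user before} privatization, so user-level neighboring raw datasets induce inputs to \textsc{DPTrimMean} differing in one record and Theorem \ref{thm:trimest}'s guarantee transfers directly; Proposition \ref{prop:postprocessing} concerns functions of the \emph{output} and is not the right citation. Second, in (c) your justification that ``no $\hat\beta_i$ is trimmed'' on the concentration event is inaccurate: the noisy count test only guarantees that at most $O(\tfrac{1}{\mu}\sqrt{R\log(R/\xi)})$ users fall outside the final ball, and the paper handles this via Property 4 of Lemma \ref{lem:propertiesmean}, whose bound on $\|\hat\beta_{\mathsf{DP}}-\bar\beta\|$ includes the trimming contribution $\tfrac{K_2}{n\mu}\sqrt{R\log\tfrac{R}{\xi}}$. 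Your stated noise scale happens to contain that term, so your quantitative conclusion $\sqrt{nT}(\hat\beta_{\mathsf{DP}}-\bar\beta)=o_p(1)$ survives, but the reasoning should go through the trimmed-mean bound, not through ``nothing is trimmed.''

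Where you genuinely diverge is the CLT. You run Lindeberg--Feller directly on $U_{i,T}=\Gamma_{i,T}^{-1}S_{i,T}$ with the random Gram inverse inside the summands. The paper instead linearizes first, writing $\tfrac{1}{\sqrt n}\sum_i\Gamma_{i,T}^{-1}S_{i,T}=\tfrac{1}{\sqrt n}\sum_i\Gamma_i^{-1}S_{i,T}+\tfrac{1}{\sqrt n}\sum_i(\Gamma_{i,T}^{-1}-\Gamma_i^{-1})S_{i,T}$ and killing the remainder in $L^2$ at rate $O(T^{-1})$ --- this is exactly where Assumption \ref{assumption:UI} with $k=2$ enters, via H\"older against $\sup_i\mathbb{E}\|\Gamma_{i,T}^{-1}\|^{2+\delta}<\infty$ together with $\sup_i(\mathbb{E}\|\Gamma_{i,T}-\Gamma_i\|^{p})^{2/p}=O(T^{-1})$ and $\sup_i(\mathbb{E}\|S_{i,T}\|^{p})^{2/p}=O(1)$ from Lemma \ref{auxres:GammaITExpectationBound}. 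The triangular-array CLT is then applied to $b_{i,T}(v)=\tfrac{1}{\sqrt T}\sum_t v^\tr\Gamma_i^{-1}x_{i,t}\varepsilon_{i,t}$ with \emph{deterministic} matrices, so the row variances are computed exactly by stationarity, the truncation error $R_{i,T}$ is controlled uniformly in $i$ by Davydov's inequality, and the Lindeberg condition follows from the uniform $(2+\delta)$-moment bound of Lemma \ref{auxres:momentstrongmixing}. The payoff of linearizing first is that the variance convergence $V_m(v)^2\to v^\tr Vv$ becomes bookkeeping.

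The soft spot in your route is the step you compress into ``uniform integrability granted by the $(2+\delta)$-th-moment bound.'' Along an arbitrary joint sequence $(n_m,T_m)\to\infty$ you need $\tfrac{1}{n_m}\sum_{i\le n_m}\V(v^\tr U_{i,T_m})\to v^\tr Vv$, and pointwise-in-$i$ convergence $\V(U_{i,T})\to\Gamma_i^{-1}\Lambda_i\Gamma_i^{-1}$ plus uniform integrability does \emph{not} deliver this; you need uniformity in $i$, i.e.\ $\sup_{i\ge1}\|\mathbb{E}[U_{i,T}U_{i,T}^\tr]-\Gamma_i^{-1}\Lambda_i\Gamma_i^{-1}\|\to0$ as $T\to\infty$. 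This is provable, but it requires the quantitative decomposition through $\Lambda_{i,T}$ and $\Delta_{i,T}=\Gamma_{i,T}^{-1}-\Gamma_i^{-1}$ with carefully chosen H\"older exponents and the uniform mixing/moment constants --- which is precisely what the paper carries out, not in this theorem, but in the proof of Theorem \ref{thm:covariancerobust}. So your plan closes only by importing essentially the same estimates the paper uses for its linearization step; for the point estimator the paper's order of operations (linearize, then CLT on a stationary linear array) is the lighter path, while your direct-array variance computation is, in effect, the work the paper defers to its covariance-consistency result.
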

{We remark that the first term on the right side of the error bound in Theorem \ref{thm:currentconv}(b) is the usual statistical error rate for estimating $\beta$ with $\frac{1}{n}\sum_{i=1}^n \hat\beta_i$, whereas  the second term is the privacy error, or the ``cost of privacy". Under \eqref{eq:growthconditions}, this cost of privacy becomes negligible as $n \to \infty$, no matter the relative growth of $n$ and $T$.}

Furthermore, in the result of Theorem \ref{thm:currentconv}(c), notice that if all users' covariates have the same first and second order moments, i.e.\ $\Gamma_i = \Gamma$ and $\Lambda_i = \Lambda$ for common matrices $\Gamma$, $\Lambda$, then the asymptotic covariance satisfies $V = \Gamma^{-1} \Lambda \Gamma^{-1}$. This equals the asymptotic covariance of the usual ordinary least squares estimator $\hat{\beta}_{\mathsf{OLS}}$, and therefore Theorem \ref{thm:currentconv} implies that $\hat{\beta}_{\mathsf{DP}}$ does not asymptotically lose in efficiency compared to the non-private OLS estimator in this case. As a direct consequence, in settings where the alternative estimator $\hat{\beta}_{\mathsf{SSDP}}$ has privacy noise decaying with $T$, our estimator does as well asymptotically.

\subsection{uDP Two-group regression estimation}\label{subsec:twogroupreg}
Besides the linear regression model introduced in (\ref{eq:basemodel}), a model of interest is one that also incorporates time-invariant covariates. Since each user only has one observation per time-invariant variable, it is not obvious that the effect of this variable can be user-level privately estimated in such a way that the privacy error decays with the panel length $T$. 
In the following, we focus on the simple case in which two groups (e.g.\ control and treatment) have a different intercept and/or slope: 
\begin{equation}\label{eq:diffcoeffs}
    y_{i,t} = (\beta  + \theta z_i)^\tr   x_{i,t}  + \varepsilon_{i,t},
\end{equation}
where $z_i \in \{0, 1\}$ indicates group membership. We note that $\theta$ could in principle be sparse, so that if $x_{i,t}$ has an intercept term, model (\ref{eq:diffcoeffs}) includes as as special case the model $y_{i,t} = \beta^\tr x_{i,t} + \theta z_i + \varepsilon_{i,t}$, for $\theta \in \mathbb{R}$.
To facilitate our analysis, we assume that  $n_1 := \sum_{i=1}^nz_i$ and $n_0:=n-n_1$ are of the same order.
\begin{assumption}
\label{assumption:ndiffreq}
    We have that $n_0, n_1 = \Omega(n)$ and $\frac{n_1}{n}\overset{}{\to} p \in (0,1)$ as $n\to \infty$.
\end{assumption}
If the group membership variable $z_i$ were \textit{public}, then one could estimate $\theta$ by applying Algorithm \ref{alg:trimestTemp} to the users in Groups $1$ and $2$ separately, and then differencing the respective estimates. If the group membership variable $z_i$ is itself sensitive, then this approach fails to provide privacy guarantees. Algorithm \ref{alg:diffmeanest} modifies Algorithm \ref{alg:trimestTemp} to privately differentiate between users with $z_i = 1$ and $z_i = 0$, and applying this algorithm separately to the users in the two groups yields the following privacy and utility guarantees.
\begin{theorem}
\label{thm:diffconv}
Let $\hat{\beta}_i$ be as in~\eqref{eq:betai} and let  $\hat{\beta}_{1, \mathsf{DP}} = \hyperref[alg:trimestTemp]{\textsc{DPTreatMean}}((D_i, z_i)_{i=1}^n,\mu/\sqrt{2},B,R,\xi/4)$  and $\hat{\beta}_{0, \mathsf{DP}} = \hyperref[alg:trimestTemp]{\textsc{DPTreatMean}}((D_i, 1-z_i)_{i=1}^n,\mu/\sqrt{2},B,R,\xi/4)$.
Define $\hat{\theta}_{\mathsf{DP}} := \hat{\beta}_{1, \mathsf{DP}} - \hat{\beta}_{0, \mathsf{DP}}$.

\textbf{(a) Privacy.} 
$\hat{\theta}_{\mathsf{DP}}$ satisfies $\mu$-uGDP.

\textbf{(b) Error Bound.}
Under Assumptions \ref{assumptions:initialcovariates}-\ref{assumptions:initialerrors}, \ref{assumption:ndiffreq}, and if $B \geq B_{\min} + \|\theta\|$, $R \geq R_{\min}$, $n \geq n_{\min}$, and $T\geq T_{\min}$,
then with probability at least $1-\xi$, letting $M_j := \frac{\sum_{i: z_i = j}\|\Gamma_i^{-\frac{1}{2}}\|^2}{\sum_{i:z_i = j} 1}$,
\begin{align*}
&\|\hat{\theta}_{\mathsf{DP}} - \theta\|
= O\Big(  \sigma_\varepsilon\Big[\sqrt{\frac{(M_0 + M_1)(d + \log \frac{1}{\xi})}{nT}} +  \max_{1 \leq i \leq n}\|\Gamma_i^{-\frac{1}{2}}\| \sqrt{\frac{(R\log\frac{R}{\xi} + d )(d + \log \frac{n}{\xi})}{n^2\mu^2T}} \Big]\Big).
\end{align*}

\textbf{(c) Asymptotic Normality.}
If in addition we admit Assumption \ref{assumption:UI} with $k = 2$ and condition \eqref{eq:growthconditions}, then if $V_1=\lim_{n\to \infty} \frac{1}{p^2n} \sum_{i:z_i = 1} \Gamma_i^{-1} \Lambda_i \Gamma_i^{-1}$ and $V_0=\lim_{n\to \infty} \frac{1}{(1-p)^2n} \sum_{i:z_i = 0} \Gamma_i^{-1} \Lambda_i \Gamma_i^{-1}$ exist, we have
$
\sqrt{nT}( \hat{\theta}_{\mathsf{DP}} - \theta ) \overset{d}{\to} N(0,V_1+V_0  )$ as  $(n,T) \to \infty.
$
\end{theorem}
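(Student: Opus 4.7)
The plan is to reduce the theorem to Theorem~\ref{thm:currentconv} applied separately to the subsamples indexed by $\{i : z_i = 1\}$ and $\{i : z_i = 0\}$, then stitch the two halves together. For part (a), each call $\hat{\beta}_{j,\mathsf{DP}}$ to \textsc{DPTreatMean} with budget $\mu/\sqrt{2}$ satisfies $(\mu/\sqrt{2})$-uGDP on the full dataset, accounting for the fact that a neighboring user may simultaneously flip both $D_i$ and $z_i$. Composition (Proposition~\ref{prop:composition}) of the two releases yields $\sqrt{\mu^2/2 + \mu^2/2} = \mu$-uGDP for the pair $(\hat{\beta}_{1,\mathsf{DP}}, \hat{\beta}_{0,\mathsf{DP}})$, and post-processing (Proposition~\ref{prop:postprocessing}) preserves this for the difference $\hat{\theta}_{\mathsf{DP}}$.

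For part (b), observe that within Group $j \in \{0,1\}$ the model reduces to \eqref{eq:basemodel} with constant coefficient $\beta + j\theta$. I would apply Lemma~\ref{lem:betasconcentration} restricted to each group, noting that $n_j = \Omega(n)$ by Assumption~\ref{assumption:ndiffreq}, so that the local estimates $\hat{\beta}_i$ concentrate around $\beta + j\theta$ with rates governed by $M_j$. Feeding these into Theorem~\ref{thm:trimest} exactly as in the proof of Theorem~\ref{thm:currentconv}(b) yields, with probability $1 - \xi/2$,
\[
\|\hat{\beta}_{j,\mathsf{DP}} - (\beta + j\theta)\| = O\Big(\sigma_\varepsilon \sqrt{\tfrac{M_j (d+\log(1/\xi))}{nT}} + \sigma_\varepsilon \max_{i:\, z_i = j}\|\Gamma_i^{-\frac{1}{2}}\| \sqrt{\tfrac{(R\log\frac{R}{\xi} + d)(d + \log\frac{n}{\xi})}{n^2\mu^2 T}} \Big).
\]
Combining the two bounds via the triangle inequality $\|\hat{\theta}_{\mathsf{DP}} - \theta\| \leq \|\hat{\beta}_{1,\mathsf{DP}} - (\beta+\theta)\| + \|\hat{\beta}_{0,\mathsf{DP}} - \beta\|$ and upper bounding each group-wise max by the global $\max_{i\in [n]}\|\Gamma_i^{-\frac{1}{2}}\|$ yields the claimed rate. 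The only modification relative to Theorem~\ref{thm:currentconv}(b) is that the initial radius threshold must be inflated by $\|\theta\|$ so that both coefficient vectors lie inside the initial projection ball.

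For part (c), Theorem~\ref{thm:currentconv}(c) applied separately to each group gives $\sqrt{n_j T}(\hat{\beta}_{j,\mathsf{DP}} - (\beta + j\theta)) \overset{d}{\to} N(0, V'_j)$ with $V'_j := \lim_{n\to\infty} \frac{1}{n_j}\sum_{i: z_i = j}\Gamma_i^{-1}\Lambda_i \Gamma_i^{-1}$. Rescaling by $\sqrt{n/n_j} \to 1/\sqrt{p_j}$, where $p_1 = p$ and $p_0 = 1-p$, yields $\sqrt{nT}(\hat{\beta}_{j,\mathsf{DP}} - (\beta + j\theta)) \to N(0, V_j)$ since $V_j = V'_j/p_j$ by the definitions in the statement. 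By Assumptions~\ref{assumptions:initialcovariates}--\ref{assumptions:initialerrors}, the data of Group 0 and Group 1 users are independent, and the Gaussian noise drawn inside the two \textsc{DPTreatMean} calls is independent; hence the two limiting Gaussians are independent, and their difference is distributed as $N(0, V_1 + V_0)$. The main obstacle I anticipate is verifying the sensitivity and utility of \textsc{DPTreatMean}: one must confirm that under user-level neighboring (where $z_i$ itself may change), the private selection mechanism has the claimed $(\mu/\sqrt{2})$-uGDP guarantee, and that on a high-probability event where the group sizes are correctly identified, its behavior matches \textsc{DPTrimMean} applied to a single group up to constants, so that Theorem~\ref{thm:currentconv} can be invoked without loss.
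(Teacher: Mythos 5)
Your proposal follows essentially the same route as the paper: the paper likewise reduces to per-group applications of the mean-estimation machinery via Lemma~\ref{lem:betasconcentration} (with $B$ inflated to cover $\|\theta\|$ and $n_j=\Omega(n)$ from Assumption~\ref{assumption:ndiffreq}), combines the two bounds by the triangle inequality, derives the group-wise CLTs exactly as in Theorem~\ref{thm:currentconv}(c) with the rescaling by $p$ and $1-p$, and uses cross-group independence plus composition and post-processing for privacy. The one ``obstacle'' you flag --- a privacy and utility guarantee for \textsc{DPTreatMean} under neighbors that may flip $z_i$, showing that on a high-probability event the noisy group size leaves $\tau$ and $n_{LB}$ of order $n$ so the \textsc{DPTrimMean} analysis transfers --- is precisely the content of the paper's Theorem~\ref{thm:trimdiffest} in Appendix~\ref{app:thm:diffconv}, so your plan is correct and complete modulo that verification, which the paper carries out in detail.
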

The proof is in Appendix \ref{app:thm:diffconv}.
We remark that this estimator achieves a privacy error, i.e., the second term in the error bound of Theorem~\ref{thm:diffconv}(b), decaying with $T$, while this is not possible when privatizing summary statistics to construct a private estimate of $\beta$. To see this, consider the simple case $y_{i,t} = \beta^\tr x_{i,t} + \theta z_{i} + \varepsilon_{i,t}$, for which $\frac{1}{nT} X^\tr X = \begin{bmatrix}
    \frac{1}{nT} \sum_{i,t} x_{i,t} x_{i,t}^\tr & \frac{1}{nT} \sum_{i,t} z_i x_{i,t} \\
  \frac{1}{nT}\sum_{i,t} z_i x_{i,t}^\tr & \frac{1}{n} \sum_{i=1}^n z_i
\end{bmatrix}$. 
It is clear that only the top left block's statistic can give rise to the uniform concentration necessary to attain a privacy error decaying with $T$. 
\section{Differentially Private HAC inference}
\label{sec:inf}
Inference in linear regression problems with observations across time usually requires the estimation of a Heteroskedasticity and Autocorrelation Consistent (HAC) covariance matrix, due to the probable intertemporal dependence. In longitudinal regression, this is often resolved by using the so-called cluster robust  covariance matrix estimator, which estimates the asymptotic covariance of $\hat{\beta}_{\mathsf{OLS}}$ as $Q^{-1} \hat{W} Q^{-1}$, with $Q = \frac{1}{n} \sum_{i=1}^n X_i^\tr X_i$ and $\hat{W} = \frac{1}{n} \sum_{i=1}^n X_i^\tr \hat{\varepsilon}_i \hat{\varepsilon}_i^\tr X_i$ \citep{liang1986longitudinal, arellano1987computing, wooldridge2003cluster}. It has been shown in \cite{hansen2007asymptotic} that this estimator is $\sqrt{n}$-consistent in the $(n,T) \to \infty$ regime. Unfortunately, this estimate of the covariance is not directly applicable in our setting because (i) our estimator, $\hat{\beta}_{\mathsf{DP}}$, has a different asymptotic covariance (see Theorem \ref{thm:currentconv}) and (ii) we cannot readily control the sensitivity of $\frac{1}{T} X_i^\tr X_i$ and $\frac{1}{T} X_i^\tr \hat{\varepsilon}_i \hat{\varepsilon}_i^\tr X_i$, which is a critical step for private noise calibration. 

Hence, we propose a simple private estimator of the asymptotic covariance of our private point estimators that is also HAC and $\sqrt{n}$-consistent in our setting. Rather than calculating a sandwich-type estimator, we propose a private analog of the sample covariance estimator across $\hat{\beta}_i$, that is, a sample average over $(\hat{\beta}_i - \hat{\beta}_{\mathsf{DP}})(\hat{\beta}_i - \hat{\beta}_{\mathsf{DP}})^\tr \approx (X_i^\tr X_i)^{-1} X_i^\tr \varepsilon_i \varepsilon_i^\tr X_i (X_i^\tr X_i)^{-1}$. Thus, our estimator takes an average across users to sidestep heteroskedasticity and autocorrelation issues. This is similar in spirit to the cluster robust standard errors, but additionally takes into account dispersion across $\hat{\beta}_i$. 

\begin{algorithm}
\small
\caption{uGDP Covariance matrix estimation}
\begin{algorithmic}[1]
\label{alg:covestimation1}
\STATE \textbf{Input:} User-level local estimates $\hat{\beta}_i$, the output  $(\hat{\beta}_{\mathsf{DP}}, \hat{m}_{r^*-1}, r^*, B^*)$ of Algorithm \ref{alg:trimestTemp}, as well as the Algorithm \ref{alg:trimestTemp}  quantities $n_{LB}$ and initial radius $B$, and covariance privacy parameter $\mu_{\mathsf{var}}$.
\STATE Let $\delta_{\mathsf{DP}} \gets \|\hat{\beta}_{\mathsf{DP}} - \hat{m}_{r^*-1}\|$, $\kappa \gets \frac{B}{2^{r^*}} + \delta_{\mathsf{DP}}$,  $S \gets \{i \in [n]: \|\hat{\beta}_i - \hat{m}_{r^*-1}\| \leq \frac{B}{2^{r^*}}\}$, $n_{\tilde{S}} \gets \max(|S|, n_{LB})$\label{alg:linewithS}
\STATE Let 
        \[
        \tilde{V}(\hat{\beta}_{\mathsf{DP}})_{\mathsf{DP}} \gets \frac{1}{n_{\tilde{S}}^2}\sum_{i \in S }(\hat{\beta}_i - \hat{\beta}_{\mathsf{DP}})(\hat{\beta}_i - \hat{\beta}_{\mathsf{DP}})^\tr  
        + (B^*)^2I
        + W,
        \]
        where $W_{i,i} \overset{iid}{\sim} N(0, \frac{16\kappa^4}{n_{LB}^4\mu_{\mathsf{var}}^2} )$ and  $W_{i,j} \overset{iid}{\sim} N(0,\frac{8\kappa^4}{n_{LB}^4\mu_{\mathsf{var}}^2} )$ for $j >i$ with $W_{i,j} = W_{j,i}$ for $j <i$.  \label{alg:cov_est}
\STATE Let $\hat{V}(\hat{\beta}_{\mathsf{DP}})_{\mathsf{DP}}$ be the projection of $\tilde{V}(\hat{\beta}_{\mathsf{DP}})_{\mathsf{DP}}$ onto the cone $\mathcal{M}_{symm}^{d \times d}$. 
\STATE \textbf{Output:} $\hat{V}(\hat{\beta}_{\mathsf{DP}})_{\mathsf{DP}}$
\end{algorithmic}
\end{algorithm}

In particular, we apply  Algorithm \ref{alg:covestimation1} to the problem of estimating the covariance of $\hat{\beta}_{\mathsf{DP}}$. We note that our covariance estimator in Line~\ref{alg:cov_est} of Algorithm~\ref{alg:covestimation1} includes a small sample correction term $(B^*)^2I$ as explained in the next subsection, where $B^*$ is the output radius of Algorithm \ref{alg:trimestTemp}.
Since Algorithm \ref{alg:covestimation1} is essentially a private sample covariance estimator of the local estimates of the users that were included in the estimate of $\hat{\beta}_{\mathsf{DP}}$, the algorithm requires one to have already privately estimated $\hat{\beta}_{\mathsf{DP}}$.  

\begin{theorem}
\label{thm:covariancerobust}
Let $\hat{\beta}_{\mathsf{DP}}$ be estimated with $\mu_{\mathsf{est}}$-uGDP. Let $(\hat{\beta}_{i})_{i=1}^n$ and $\hat{\beta}_{\mathsf{DP}}$ be the input to Algorithm \ref{alg:covestimation1} with privacy level $\mu_{\mathsf{var}}$ and denote the resulting estimate by $\hat{V}(\hat{\beta}_{\mathsf{DP}})_{\mathsf{DP}}$.

\textbf{(a) Privacy.} 
$(\hat{\beta}_{\mathsf{DP}}, \hat{V}(\hat{\beta}_{\mathsf{DP}})_{\mathsf{DP}})$ satisfies $\sqrt{\mu_{\mathsf{est}}^2 + \mu_{\mathsf{var}}^2}$-uGDP.

\textbf{(b) Convergence.}
If 
all conditions of Theorem \ref{thm:currentconv} are met,
then $ (\hat{V}(\hat{\beta}_{\mathsf{DP}})_{\mathsf{DP}})^{-\frac{1}{2}} (\hat{\beta}_{\mathsf{DP}} - \beta) \overset{d}{\to} N(0,I_d)$ as $(n,T) \to \infty$.
\end{theorem}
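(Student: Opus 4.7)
For part (a), the plan is to use the composition property (Proposition \ref{prop:composition}) together with the Gaussian mechanism (Proposition \ref{prop:gaussianmechanism}) and post-processing (Proposition \ref{prop:postprocessing}). Since $\hat\beta_{\mathsf{DP}}$ and the auxiliary outputs $(\hat m_{r^*-1}, r^*, B^*)$ are released by Algorithm \ref{alg:trimestTemp} with $\mu_{\mathsf{est}}$-uGDP, it suffices to show that Algorithm \ref{alg:covestimation1} satisfies $\mu_{\mathsf{var}}$-uGDP conditional on these first-stage outputs. For this I would bound the Frobenius sensitivity of
\[
M(D) := \frac{1}{n_{\tilde S}^2}\sum_{i \in S}(\hat\beta_i - \hat\beta_{\mathsf{DP}})(\hat\beta_i - \hat\beta_{\mathsf{DP}})^\tr
\]
under a single-user replacement. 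Because $S$ contains only users with $\|\hat\beta_i - \hat m_{r^*-1}\| \leq B/2^{r^*}$ and $\|\hat\beta_{\mathsf{DP}} - \hat m_{r^*-1}\| = \delta_{\mathsf{DP}}$, every summand has Frobenius norm at most $\kappa^2$. A short case analysis on whether the changed user enters or leaves $S$, combined with the floor $n_{\tilde S} \geq n_{LB}$, yields $\|M(D) - M(D')\|_F \leq 4\kappa^2/n_{LB}^2$. The noise $W$ in Algorithm \ref{alg:covestimation1} is calibrated exactly to this Frobenius sensitivity (treating the symmetric matrix as a vector with Frobenius inner product), producing $\mu_{\mathsf{var}}$-uGDP; adding the constant shift $(B^*)^2 I$ and projecting onto the PSD cone are data-independent post-processing steps, so composition gives the total budget $\sqrt{\mu_{\mathsf{est}}^2 + \mu_{\mathsf{var}}^2}$.

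For part (b), I would reduce the claim by Slutsky's theorem to showing $nT\, \hat V(\hat\beta_{\mathsf{DP}})_{\mathsf{DP}} \overset{p}{\to} V$, since Theorem \ref{thm:currentconv}(c) already gives $\sqrt{nT}(\hat\beta_{\mathsf{DP}} - \beta) \overset{d}{\to} N(0,V)$ and $V$ is positive definite. Decompose $\tilde V(\hat\beta_{\mathsf{DP}})_{\mathsf{DP}} = A + (B^*)^2 I + W$ with $A := \frac{1}{n_{\tilde S}^2}\sum_{i \in S}(\hat\beta_i - \hat\beta_{\mathsf{DP}})(\hat\beta_i - \hat\beta_{\mathsf{DP}})^\tr$, and analyze each piece after multiplying by $nT$. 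On the high-probability event of Lemma \ref{lem:betasconcentration} every $\hat\beta_i$ lies in the final ball, so $S = [n]$ and $n_{\tilde S} = n$. Writing $\hat\beta_i - \hat\beta_{\mathsf{DP}} = (\hat\beta_i - \beta) - (\hat\beta_{\mathsf{DP}} - \beta)$ and invoking the error bound in Theorem \ref{thm:currentconv}(b), the cross and square terms contribute $O_p(1/n)$ after multiplication by $nT$, reducing the leading part to $\frac{T}{n}\sum_{i=1}^n(\hat\beta_i - \beta)(\hat\beta_i - \beta)^\tr$. Direct rate computation using $(B^*)^2 = O(K_2^2/(n\mu)^2)$ and $\|W\|_F = O_p(d\kappa^2/(n_{LB}^2\mu_{\mathsf{var}}))$ shows that $nT(B^*)^2$ and $nT\|W\|_F$ are both $O((d+\log(n/\xi))/(n\mu^2))$, which is $o(1)$ under the growth condition \eqref{eq:growthconditions}. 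The projection onto the PSD cone is $1$-Lipschitz in Frobenius norm (and acts as the identity once $\tilde V_{\mathsf{DP}}$ is close to the positive-definite $V/(nT)$), so consistency is preserved, and then Slutsky yields the claimed convergence.

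The main obstacle is thus establishing the within-user law of large numbers $\frac{T}{n}\sum_{i=1}^n(\hat\beta_i - \beta)(\hat\beta_i - \beta)^\tr \overset{p}{\to} V$. Independence across $i$ allows a Chebyshev-style argument, but this requires (i) identifying the per-user limit and (ii) a uniform $2+\delta$ moment bound. Writing
\[
T(\hat\beta_i - \beta)(\hat\beta_i - \beta)^\tr = \bigl(T^{-1}X_i^\tr X_i\bigr)^{-1}\bigl(T^{-1}X_i^\tr \varepsilon_i\varepsilon_i^\tr X_i\bigr)\bigl(T^{-1}X_i^\tr X_i\bigr)^{-1},
\]
the mixing conditions in Assumptions \ref{assumptions:initialcovariates}--\ref{assumptions:initialerrors} give $T^{-1}\mathbb{E}[X_i^\tr\varepsilon_i\varepsilon_i^\tr X_i]\to\Lambda_i$ (the HAC limit) and $T^{-1}X_i^\tr X_i\to\Gamma_i$, while Assumption \ref{assumption:UI} with $k=2$ supplies the $2+\delta$ moment of $\|(T^{-1}X_i^\tr X_i)^{-1}\|$ needed both to justify interchanging limits with expectation and to bound the variance of the empirical average. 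These pieces parallel the HAC machinery underlying Theorem \ref{thm:currentconv}(c) and can be adapted directly from the intermediate lemmas supporting it.
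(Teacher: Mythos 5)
Your part (a) is correct and is essentially the paper's own argument: the same case analysis gives Frobenius sensitivity $4\kappa^2/n_{LB}^2$, the noise is calibrated through the isometric vectorization of symmetric matrices, and composition with post-processing yields $\sqrt{\mu_{\mathsf{est}}^2+\mu_{\mathsf{var}}^2}$-uGDP. Part (b) also follows the paper's skeleton, but it contains one genuine gap: your claim that on the event of Lemma \ref{lem:betasconcentration} we have $S=[n]$ and $n_{\tilde S}=n$. This is false. The algorithm terminates precisely when a noisy count drops below $\tau$, and the guarantee extractable at termination (Property 3 of Lemma \ref{lem:propertiesmean}, via \eqref{eq:tildeScardinality}) is only $|S|\geq n_{LB}=n-\tfrac{4}{\mu}\sqrt{2R\log\tfrac{4R}{\xi}}$; indeed the paper's own illustration in Section \ref{sec:dpmean} notes the final radius is slightly \emph{smaller} than the uniform concentration radius because a few users are allowed to fall outside the final ball. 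Since $S$ is selected in a data-dependent way (proximity to the private center $\hat m_{r^*-1}$), the restricted average $\frac{T}{|S|}\sum_{i\in S}(\hat\beta_i-\beta)(\hat\beta_i-\beta)^{\tr}$ is a \emph{trimmed} covariance whose selection bias must be shown negligible. The paper handles this with the decomposition $\frac{T}{|S|}\sum_{i\in S}(\hat{\beta}_{i} - \beta)(\hat{\beta}_{i} - \beta)^\tr = \frac{T}{n}\sum_{i=1}^n (\hat{\beta}_{i} - \beta)(\hat{\beta}_{i} - \beta)^\tr + \frac{T(n - |S|)}{n}(A_S - A_{S^{\mathsf{c}}})$, bounding the correction by $\frac{2T|S^{\mathsf{c}}|}{n}\max_{i\in[n]}\|\hat\beta_i-\beta\|^2 = O\bigl(\frac{d+\log\frac{n}{\xi}}{n}\sqrt{R\log\frac{R}{\xi}}\bigr)$, which is $o(1)$ under \eqref{eq:growthconditions}. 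Without this (or an equivalent) step, your reduction to $\frac{T}{n}\sum_{i=1}^n(\hat\beta_i-\beta)(\hat\beta_i-\beta)^{\tr}$ is unjustified.

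A secondary, fixable issue is your ``Chebyshev-style'' law of large numbers. A variance bound for the summands $W_{i,T}W_{i,T}^{\tr}$ with $W_{i,T}=\sqrt{T}(\hat\beta_i-\beta)=\Gamma_{i,T}^{-1}S_{i,T}$ requires fourth-moment control of $W_{i,T}$, hence roughly $\mathbb{E}\|\Gamma_{i,T}^{-1}\|^{4+\delta}$, whereas Assumption \ref{assumption:UI} is admitted only with $k=2$, supplying $(2+\delta)$-th moments. The paper circumvents this by applying the von Bahr--Esseen inequality (Lemma \ref{lemma:vonbahresseen}) to obtain an $L^{1+\delta}$ law of large numbers for the centered independent array, which needs only $(1+\delta)$-th moments of $\|W_{i,T}W_{i,T}^{\tr}\|$ --- exactly what $k=2$ delivers via H\"older together with Lemma \ref{auxres:GammaITExpectationBound}. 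Your remaining steps (uniform convergence of the per-user bias $\mathbb{E}[W_{i,T}W_{i,T}^{\tr}]\to\Gamma_i^{-1}\Lambda_i\Gamma_i^{-1}$ via the HAC truncation argument, Ces\`aro convergence to $V$, the $o(1)$ rates for $nT(B^*)^2I$ and $nTW$, and the $1$-Lipschitz PSD projection) match the paper's proof and are sound as sketched.
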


The combination of Theorems \ref{thm:currentconv} and \ref{thm:covariancerobust} allows us to perform asymptotically valid inference about $\beta$. Since we have private estimators that are asymptotically normal and private estimators of their asymptotic variance, we have access to private asymptotic pivots. By post-processing, we can therefore release multiple confidence intervals based on these private approximate pivots without incurring further privacy losses. The following corollary formalizes the guarantees of private confidence intervals constructed using this approach.

\begin{corollary}\label{corollaryCI}
    Admit the conditions of Theorem \ref{thm:covariancerobust}. Then, as $(n,T) \to \infty$
    \[\mathbb{P}\Big((\hat{\beta}_{\mathsf{DP}})_j - z_{1-\alpha/2}\sqrt{(\hat{V}(\hat{\beta}_{\mathsf{DP}})_{\mathsf{DP}})_{j,j}} \leq \beta_j \leq   (\hat{\beta}_{\mathsf{DP}})_j + z_{1-\alpha/2}\sqrt{(\hat{V}(\hat{\beta}_{\mathsf{DP}})_{\mathsf{DP}})_{j,j}} \Big) \overset{p}{\to} 1-\alpha,
    \]
     and these confidence intervals satisfy $\sqrt{\mu_{\mathsf{est}}^2 + \mu_{\mathsf{var}}^2}$-uGDP for all $\alpha\in(0,1)$. 
\end{corollary}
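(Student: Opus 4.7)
The plan is to combine the two main ingredients already established in Theorems~\ref{thm:currentconv} and~\ref{thm:covariancerobust}: the asymptotic normality of $\sqrt{nT}(\hat\beta_{\mathsf{DP}}-\beta)$ and the consistency of the private covariance estimator, together with the post-processing and composition properties of uGDP.

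\textbf{Privacy.} First I would handle the privacy claim, which is essentially immediate. By Theorem~\ref{thm:covariancerobust}(a) the joint release $(\hat\beta_{\mathsf{DP}},\hat V(\hat\beta_{\mathsf{DP}})_{\mathsf{DP}})$ satisfies $\sqrt{\mu_{\mathsf{est}}^2+\mu_{\mathsf{var}}^2}$-uGDP. Each confidence interval is a deterministic function of this pair (and of the fixed quantile $z_{1-\alpha/2}$), so Proposition~\ref{prop:postprocessing} shows that the intervals inherit the same privacy guarantee, for any collection of levels $\alpha$.

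\textbf{Coverage.} The main work is to marginalize the multivariate pivot from Theorem~\ref{thm:covariancerobust}(b) down to a single coordinate. Fix $j\in\{1,\dots,d\}$ and write
\[
T_{n,j} \;:=\; \frac{(\hat\beta_{\mathsf{DP}})_j-\beta_j}{\sqrt{(\hat V(\hat\beta_{\mathsf{DP}})_{\mathsf{DP}})_{j,j}}}
\;=\;\frac{\sqrt{nT}\,(\hat\beta_{\mathsf{DP}}-\beta)_j}{\sqrt{nT\,(\hat V(\hat\beta_{\mathsf{DP}})_{\mathsf{DP}})_{j,j}}}.
\]
For the numerator, Theorem~\ref{thm:currentconv}(c) combined with the continuous mapping theorem applied to $v\mapsto e_j^\tr v$ yields $\sqrt{nT}\,(\hat\beta_{\mathsf{DP}}-\beta)_j\overset{d}{\to}N(0,V_{j,j})$. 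For the denominator, Theorem~\ref{thm:covariancerobust}(b) implies that $\hat V(\hat\beta_{\mathsf{DP}})_{\mathsf{DP}}$ is a consistent estimator of the variance of $\hat\beta_{\mathsf{DP}}$, so that $nT\cdot(\hat V(\hat\beta_{\mathsf{DP}})_{\mathsf{DP}})_{j,j}\overset{p}{\to}V_{j,j}$; this can be extracted directly from the proof of Theorem~\ref{thm:covariancerobust}, or more cleanly by noting that the multivariate pivot being asymptotically $N(0,I_d)$ forces the ratio of the $j$-th diagonal of $\hat V$ to the true variance of $(\hat\beta_{\mathsf{DP}})_j$ to tend to $1$ in probability. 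Since $V_{j,j}>0$ by the positive definiteness of $V$ inherited from Assumptions~\ref{assumptions:initialcovariates}--\ref{assumptions:initialerrors}, Slutsky's theorem then gives $T_{n,j}\overset{d}{\to}N(0,1)$.

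\textbf{Conclusion.} From $T_{n,j}\overset{d}{\to}N(0,1)$ and continuity of the standard normal CDF at $\pm z_{1-\alpha/2}$, one obtains $\mathbb{P}(|T_{n,j}|\le z_{1-\alpha/2})\to 1-\alpha$, which is exactly the asserted coverage probability after rearranging the inequality into the form displayed in the corollary. The convergence in probability to the constant $1-\alpha$ follows because the limit is deterministic.

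\textbf{Main obstacle.} There is no deep step here; the result is really a packaging of Theorems~\ref{thm:currentconv}(c) and~\ref{thm:covariancerobust}(b) with Slutsky and post-processing. The one subtlety worth being careful about is passing from the multivariate studentization in Theorem~\ref{thm:covariancerobust}(b) to the coordinatewise studentization used to build the marginal confidence intervals, since $e_j^\tr\hat V^{-1/2}(\hat\beta_{\mathsf{DP}}-\beta)$ is not the same random variable as $(\hat\beta_{\mathsf{DP}}-\beta)_j/\sqrt{(\hat V)_{j,j}}$; the scaling argument above sidesteps this by separately handling numerator and denominator via Theorem~\ref{thm:currentconv}(c) and the diagonal consistency of $\hat V$.
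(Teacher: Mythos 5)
Your proposal is correct and matches the paper's (implicit) reasoning: the corollary is a direct packaging of the pivot from Theorem~\ref{thm:covariancerobust}(b) — whose proof in fact establishes $nT\,\hat V(\hat\beta_{\mathsf{DP}})_{\mathsf{DP}} \overset{p}{\to} V$, giving the diagonal consistency you need — together with Slutsky's theorem and the post-processing property for privacy. One caution: your parenthetical shortcut, that asymptotic standard normality of the multivariate pivot alone ``forces'' $nT(\hat V)_{j,j}/V_{j,j}\overset{p}{\to}1$, is not a valid deduction in general (the studentized statistic can be asymptotically $N(0,I_d)$ with a denominator correlated with the numerator that fails to converge), so you should rely on your primary route of extracting the convergence $nT\,\hat V \overset{p}{\to} V$ directly from the proof of Theorem~\ref{thm:covariancerobust}.
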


Moreover, one is often interested in testing hypotheses of the form
$
H_0 : R\beta = r,
$
where $R \in \mathbb{R}^{q \times d}$ is a rank $q$ matrix and $q$ is the number of linearly independent restrictions imposed on $\beta$ under $H_0$. Since our estimate is asymptotically normal and we have a consistent estimate of the asymptotic covariance matrix, we can plug these into a Wald test to obtain a private hypothesis test. This is formalized in the following corollary.
\begin{corollary}\label{corollaryTest}
     Admit the conditions of Theorem \ref{thm:covariancerobust}. Then, under $H_0$, as $(n,T) \to \infty$
     $$\hat{W}_{\mathsf{DP}}:= (R\hat{\beta}_{\mathsf{DP}} -r)^\tr(R \hat{V}(\hat{\beta}_{\mathsf{DP}})_{\mathsf{DP}} R^\tr)^{-1} (R\hat{\beta}_{\mathsf{DP}} - r) \overset{d}{\to} \chi_q^2,$$
      and these tests satisfy $\sqrt{\mu_{\mathsf{est}}^2 + \mu_{\mathsf{var}}^2}$-uGDP for all $R \in \mathbb{R}^{q \times d}$ and $r \in \mathbb{R}^q$.
\end{corollary}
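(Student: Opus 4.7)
I would split the claim into the privacy and distributional pieces, proving privacy by post-processing and distributional convergence by Slutsky. For privacy, Theorem \ref{thm:covariancerobust}(a) gives that $(\hat{\beta}_{\mathsf{DP}}, \hat{V}(\hat{\beta}_{\mathsf{DP}})_{\mathsf{DP}})$ satisfies $\sqrt{\mu_{\mathsf{est}}^2 + \mu_{\mathsf{var}}^2}$-uGDP. Since $R \in \mathbb{R}^{q \times d}$ and $r \in \mathbb{R}^q$ are deterministic specifications of the null, $\hat{W}_{\mathsf{DP}}$ is a fixed measurable function of this pair, and Proposition \ref{prop:postprocessing} transfers the guarantee.

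\textbf{Convergence under $H_0$.} Under $H_0$, $R\hat{\beta}_{\mathsf{DP}} - r = R(\hat{\beta}_{\mathsf{DP}} - \beta)$, so Theorem \ref{thm:currentconv}(c) together with the continuous mapping theorem yields $\sqrt{nT}\, R(\hat{\beta}_{\mathsf{DP}} - \beta) \overset{d}{\to} \zeta \sim N(0, RVR^\tr)$, where $RVR^\tr$ is invertible because $R$ has full row rank $q$ and $V$ is positive definite under the hypotheses of Theorem \ref{thm:currentconv}(c). From the proof of Theorem \ref{thm:covariancerobust}(b) I would extract the consistency $nT\cdot \hat{V}(\hat{\beta}_{\mathsf{DP}})_{\mathsf{DP}} \overset{p}{\to} V$, which then gives $(nT\cdot R\hat{V}(\hat{\beta}_{\mathsf{DP}})_{\mathsf{DP}} R^\tr)^{-1} \overset{p}{\to} (RVR^\tr)^{-1}$ by continuous mapping. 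Rewriting
\[
\hat{W}_{\mathsf{DP}} = \bigl[\sqrt{nT}\,R(\hat{\beta}_{\mathsf{DP}} - \beta)\bigr]^\tr \bigl(nT\cdot R\hat{V}(\hat{\beta}_{\mathsf{DP}})_{\mathsf{DP}} R^\tr\bigr)^{-1} \bigl[\sqrt{nT}\,R(\hat{\beta}_{\mathsf{DP}} - \beta)\bigr]
\]
and applying Slutsky's theorem yields $\hat{W}_{\mathsf{DP}} \overset{d}{\to} \zeta^\tr (RVR^\tr)^{-1} \zeta$ as $(n,T) \to \infty$. A standard quadratic-form identity identifies this with $\|(RVR^\tr)^{-1/2}\zeta\|^2$, where $(RVR^\tr)^{-1/2}\zeta \sim N(0, I_q)$, so the limit is $\chi_q^2$.

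\textbf{Main obstacle.} The only nontrivial ingredient is the consistency $nT\cdot \hat{V}(\hat{\beta}_{\mathsf{DP}})_{\mathsf{DP}} \overset{p}{\to} V$, since Theorem \ref{thm:covariancerobust}(b) is phrased as a studentization result rather than direct covariance consistency. I would expect this to be available inside the proof of Theorem \ref{thm:covariancerobust}(b): the non-private sample covariance $\frac{1}{n_{\tilde S}^2}\sum_{i\in S}(\hat{\beta}_i - \hat{\beta}_{\mathsf{DP}})(\hat{\beta}_i - \hat{\beta}_{\mathsf{DP}})^\tr$ from Line \ref{alg:cov_est} of Algorithm \ref{alg:covestimation1}, multiplied by $nT$, should converge in probability to $V$ under the short-range dependence of Assumption \ref{assumptions:initialerrors} and the growth conditions in \eqref{eq:growthconditions}, while the privacy noise matrix $W$ and the finite-sample correction $(B^*)^2 I$ are $o_p((nT)^{-1})$ in operator norm, and the projection onto $\mathcal{M}_{symm}^{d\times d}$ preserves this limit by continuity. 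Once this consistency is in hand, the remaining argument is a routine Slutsky manipulation.
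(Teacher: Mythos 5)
Your proposal is correct and takes essentially the same route as the paper, which gives no separate proof of Corollary \ref{corollaryTest} and treats it as an immediate consequence of Theorem \ref{thm:covariancerobust}: privacy by post-processing (Proposition \ref{prop:postprocessing}), and the $\chi_q^2$ limit by Slutsky and the continuous mapping theorem applied to the private asymptotic pivot. The one ingredient you flag as nontrivial, the consistency $nT\,\hat{V}(\hat{\beta}_{\mathsf{DP}})_{\mathsf{DP}} \overset{p}{\to} V$, is precisely what the paper's proof of Theorem \ref{thm:covariancerobust}(b) establishes en route to the studentization claim (see the decomposition in \eqref{eq:nTV}, where the noise term $nTW$ and the correction $nT(B^*)^2 I$ are shown to be $o_p(1)$ and the leading term converges to $V$), so your argument is fully supported by what the paper already provides.
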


\subsection{Small sample correction }
In practice it is useful to consider a finite sample correction similar to the one introduced in \cite{avellamedina:bradshaw:loh2023}.  Indeed, the ``base'' of the estimator is an estimator that with high probability takes the form
\begin{align*}
\hat{V} &:= \frac{1}{|S|^2}\sum_{i\in S} (\hat{\beta}_{i} - \hat{\beta}_{\mathsf{DP}})(\hat{\beta}_{i} - \hat{\beta}_{\mathsf{DP}})^\tr = \frac{1}{|S|^2}\sum_{ i \in S}(\hat{\beta}_{i} - \bar{\beta}_{S})(\hat{\beta}_{i} - \bar{\beta}_{S})^\tr + \frac{(B^*)^2}{|S| }Z Z^\tr,
\end{align*}
where $S$ contains the set of users whose data was used in computing $\hat{\beta}_{\mathsf{DP}}$, i.e.\ those who were not trimmed. This has expectation $
\mathbb{E}[\hat{V}_{}] \approx \V(\bar{\beta}_{S}) + \frac{(B^*)^2}{|S| }I$. On the other hand, with high probability, $\hat{\beta}_{\mathsf{DP}} = \frac{1}{|S|}\sum_{i \in S} \hat{\beta}_{i} + B^* Z$,
with $Z \sim N(0,I_d)$, such that $\V(\hat{\beta}_{\mathsf{DP}}) \approx \V(\bar{\beta}_{S}) + (B^*)^2I$,
and hence we conclude that 
$
\V(\hat{\beta}_{\mathsf{DP}}) - \mathbb{E}[\hat{V}_{}] \approx (1 - \frac{1}{|S|})(B^*)^2I \approx (B^*)^2 I.
$
As such, we employ this quantity as a bias correction term in our covariance estimation procedure. 

\subsection{Inference in the two-group model}
With a minor change, it is possible to extend this algorithm and its guarantees to perform inference in the two-group model of  \eqref{eq:diffcoeffs}. Indeed, one may separately estimate $\hat{\beta}_{0, \mathsf{DP}}$ and $\hat{\beta}_{1,\mathsf{DP}}$ as in Section \ref{subsec:twogroupreg}, and separately apply these to Algorithm \ref{alg:covestimationGrouped}, a slightly modified version of Algorithm \ref{alg:covestimation1}, to obtain private covariance estimators $\hat{V}(\hat{\beta}_{0,\mathsf{DP}})_{\mathsf{DP}}$ and $\hat{V}(\hat{\beta}_{1,\mathsf{DP}})_{\mathsf{DP}}$. By independence across users, and hence groups, $\hat{V}(\hat{\theta}_{\mathsf{DP}})_{\mathsf{DP}} := \hat{V}(\hat{\beta}_{0,\mathsf{DP}})_{\mathsf{DP}} + \hat{V}(\hat{\beta}_{1,\mathsf{DP}})_{\mathsf{DP}}$  yields a consistent estimate under the conditions of Theorem \ref{thm:diffconv}. For a precise theorem statement, we refer the reader to Appendix \ref{Appendix:TwoGroupCov}.

\subsection{Discussion}
In the event of iid errors and equal covariate second moments, it is simple to use Algorithm \ref{alg:trimestTemp} to privately estimate $\frac{1}{T}X^\tr X$ and $\sigma^2$, providing a $\sqrt{nT}$-consistent estimate of the asymptotic covariance of $\hat{\beta}_{\mathsf{DP}}$. More interestingly, it could be possible to estimate a private analog of the HAC estimation method of \cite{andrews1991heteroskedasticity} with the tools and mechanics used throughout our paper. We conjecture this could lead to a $\sqrt{nT}$-consistent estimator for the covariance matrix under appropriate regularity conditions. A formal exploration of this problems goes beyond the scope of this paper and is left for future work. 

Moreover, we remark that our tail assumptions in Assumptions \ref{assumptions:initialcovariates} and \ref{assumptions:initialerrors} can be relaxed in several ways, at the cost of weaker finite sample error bounds. In particular, the errors are only required to have finite moments up to a certain degree, so that even polynomial tails can be permissible. 
Similarly, the tails of the covariate distribution can be instead assumed to be sub-Weibull throughout as in, for example, 
\cite{wong2020lasso}, but can not be weakened beyond that without developing new concentration results, since our control of the Gram matrix relies on the concentration results in \cite{merlevede:peligrad:rio2011} for exponentially strongly mixing sequence of sub-Weibulls.

\section{Simulations}
\label{sec:simulations}
\subsection{ARMA Estimation}
We consider the model
$
y_{i,t} = \beta^\tr x_{i,t} + \varepsilon_{i,t},
$
with $d = 4$, where we draw $\beta_j \overset{iid}{\sim} \text{Unif}(-20, 20)$. In this model $(x_{i,t}, \varepsilon_{i,t})_{t=1}^T$ are cross-sectionally iid, and we generate $m_i \overset{iid}{\sim} N(0, 9I)$ and $x_{i,t} = m_i + 0.5 (x_{i,t-1} - m_i) + \eta_{i,t}$ with $\eta_{i,t} \overset{iid}{\sim} N(0, I)$. Finally, the errors follow the model $\varepsilon_{i,t} = 0.5\varepsilon_{i,t-1} +0.5 r_{i,t-1}+ r_{i,t} $, where $r_{i,t} \overset{iid}{\sim} N(0, 1)$.
We investigate the performance of various estimators of $\beta$ in this model, namely: the DP estimator $\hat{\beta}_{\mathsf{DP}}$ as in Theorem \ref{thm:currentconv} with $\mu = 1$-uGDP, the DP estimator $\hat{\beta}_{\mathsf{DP}}$ with $\mu=\infty$-uGDP (i.e., no privacy), and the OLS estimator $\hat{\beta}_{\mathsf{OLS}}=(X^\tr X)^{-1} X^\tr Y$. Table \ref{table:RMSE_one_neat} displays the empirical scaled RMSE, $\sqrt{nT}\|\hat{\beta} - \beta\|$, of these estimators across 1000 independent replications, and Figure \ref{fig:rmse_comparison} displays the empirical \textit{unscaled} RMSE, i.e.\ $\|\hat{\beta} - \beta\|$, across these replications. We used $B = 100$, $R = 10$, and $\xi = 10 ^{-5}$ in all instantiations of Algorithm \ref{alg:trimestTemp}. 
\begin{table}[ht]
\centering
\caption{Empirical value of $\sqrt{nT}\|\hat{\beta}-\beta\|$ for various estimators of $\beta$.}
\label{table:RMSE_one_neat}
\begingroup
\setlength{\tabcolsep}{10pt}
\renewcommand{\baselinestretch}{1}\selectfont
\begin{tabular}{l ccc ccc ccc}
\toprule
& \multicolumn{3}{c}{DP-OLS ($\mu=1$)}
& \multicolumn{3}{c}{DP-OLS ($\mu=\infty$)}
& \multicolumn{3}{c}{OLS ($\mu = \infty$)} \\
\cmidrule(lr){2-4}\cmidrule(lr){5-7}\cmidrule(lr){8-10}
$n\backslash T$ & 10 & 40 & 160 & 10 & 40 & 160 & 10 & 40 & 160 \\
\midrule
300  & 18.91 & 21.65 & 24.03 & 14.49 & 17.19 & 18.93 & 12.16 & 15.65 & 16.33 \\
600  & 15.79 & 19.13 & 20.93 & 14.20 & 17.20 & 18.77 & 12.16 & 15.16 & 16.45 \\
1200 & 14.72 & 18.15 & 19.94 & 14.27 & 17.39 & 18.81 & 11.89 & 15.27 & 16.16 \\
2400 & 14.61 & 17.63 & 19.30 & 14.53 & 17.06 & 18.66 & 11.94 & 15.28 & 16.22 \\
\bottomrule
\end{tabular}
\endgroup
\end{table}

Recall that by Theorem \ref{thm:currentconv}, 
$\hat{\beta}_{\mathsf{DP}}$ 
satisfies the following error bound, for $T$ sufficiently large:
\[
\sqrt{nT}\|\hat{\beta}_{\mathsf{DP}} - \beta\| \lesssim C_{\mathsf{stat}} + \frac{C_{\mathsf{priv}}}{\sqrt{n}}, \quad 
\]
where $C_\mathsf{stat}$ captures the usual normalized statistical error of estimating $\beta$ using $\frac{1}{n}\sum_{i=1}^n \hat{\beta}_i$, whereas $\frac{1}{\sqrt{n}} C_\mathsf{priv}$ represents the normalized cost of privacy.
In particular, the cost of privacy term $\frac{1}{\sqrt{n}} C_\mathsf{priv}$ captures the excess error of taking  privacy level $\mu=1$ with respect to taking $\mu = \infty$ in Theorem \ref{thm:currentconv}. This aligns with our findings in Table \ref{table:RMSE_one_neat} and Figure \ref{fig:rmse_comparison}, where we observe that the RMSE of $\hat{\beta}_{\mathsf{DP}}$ with $\mu = 1$ converges to the RMSE of $\hat{\beta}_{\mathsf{DP}}$ with $\mu = \infty$ as $n$ increases, regardless of the value of $T$. Moreover, we observe that in all methods - private and non-private - the gain from increasing $n$ is larger than the gain of increasing $T$. This, perhaps, can be explained by the fact that increasing $T$ introduces new correlated data, whereas increasing $n$ introduces a new independent sequence of data, and the latter is more beneficial in finite samples. 
\begin{figure}[ht]
    \centering
    \begin{subfigure}[b]{0.32\textwidth}
        \centering
        \includegraphics[width=\textwidth]{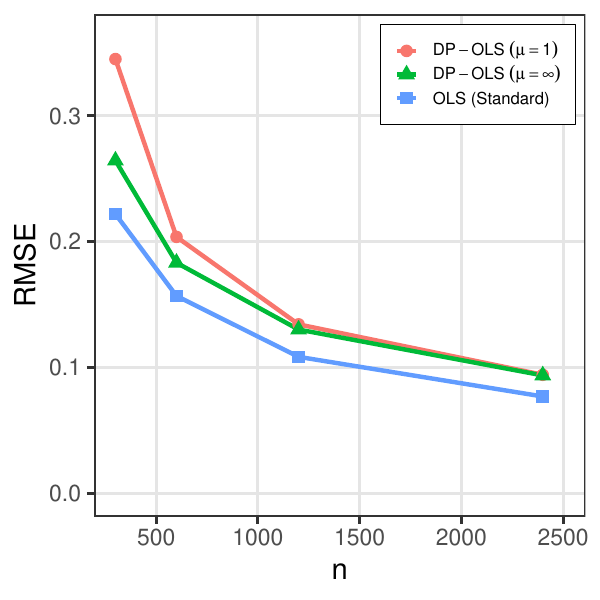}
        \caption{$T = 10$}
        \label{fig:T10}
    \end{subfigure}
    \hfill
    \begin{subfigure}[b]{0.32\textwidth}
        \centering
        \includegraphics[width=\textwidth]{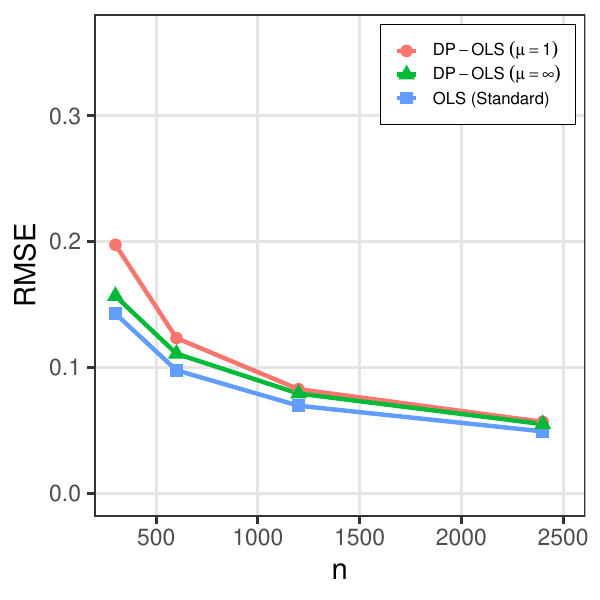}
        \caption{$T = 40$}
        \label{fig:T40}
    \end{subfigure}
    \hfill
    \begin{subfigure}[b]{0.32\textwidth}
        \centering
        \includegraphics[width=\textwidth]{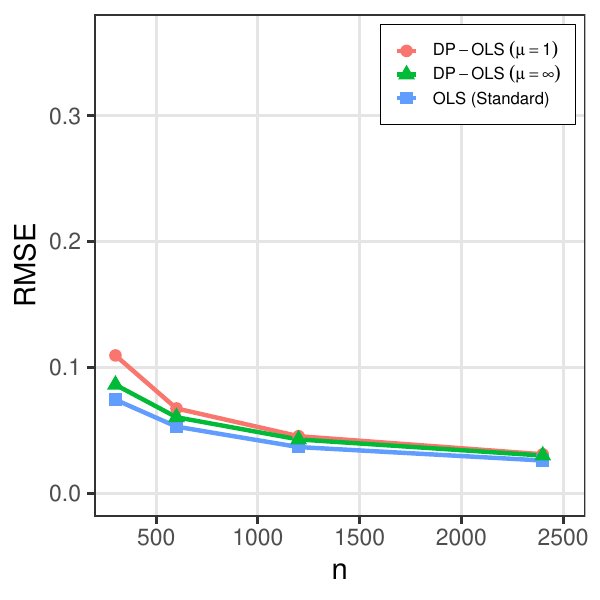}
        \caption{$T = 160$}
        \label{fig:T160}
    \end{subfigure}
    \caption{The unscaled RMSE for various $(n,T)$ combinations.}
    \label{fig:rmse_comparison}
\end{figure}

\subsection{ARMA Inference}
We take the model of the previous subsection, but now fix $T = 15$ and also consider $n = 4800$, and we assess the empirical coverage and width of the confidence intervals
$
(\hat{\beta}_j -  z_{0.975}\hat{V}_{j,j}, \hat{\beta}_j +  z_{0.975}\hat{V}_{j,j})
$ using the cluster covariance estimator as described in Algorithm \ref{alg:covestimation1}, which by Corollary \ref{corollaryCI} asymptotically achieve $95\%$ coverage.
We evaluate intervals based on Theorem \ref{thm:covariancerobust} that satisfy $\sqrt{2}$-uGDP (by taking $\mu_\mathsf{est} = \mu_\mathsf{var} = 1$) as well as $\infty$-uGDP (i.e., no privacy at all) As before, we take $B = 100$, $R = 10$ and $\xi = 10^{-5}$ in estimating $\hat{\beta}_{\mathsf{DP}}$. We compare these to the coverage levels achieved by the non-private pooled OLS estimator $\hat{\beta}_{\mathsf{OLS}} = (X^\tr X)^{-1} X^\tr Y$ with non-HAC covariance estimate $\hat{\sigma}^2 (X^\tr X)^{-1}$, which we call ``classical'', and based on cluster robust standard errors (e.g.,\ \cite{arellano1987computing}), which we call ``clustered''.   Table \ref{table:coverage_width} displays the coverage levels achieved by all four sets of estimators, as well as the average width of the constructed confidence intervals in parentheses.

The table is based on 10,000 replications per configuration, so that with $d = 4$ there are 40,000 intervals per configuration. The private intervals achieve a $95\%$ coverage rate for $n=4800$, at almost the same width as the non-private analog and not much wider than the confidence intervals constructed with the pooled OLS cluster robust standard errors.

\begin{table}[H]
\centering
\caption{Empirical coverage (and width) of the ARMA $95\%$ confidence intervals. 
\textit{Note: Widths are reported in parentheses below the coverage rates.}}
\label{table:coverage_width}
\begingroup
\renewcommand{\baselinestretch}{1}\selectfont
\begin{tabular}[t]{c c c c c}
\toprule
 & \multicolumn{2}{c}{DP-OLS} & \multicolumn{2}{c}{Pooled OLS} \\
\cmidrule(r){2-3} \cmidrule(l){4-5}
n & $\sqrt{2}$-uGDP & $\infty$-uGDP & Clustered & Classical \\
\midrule
\multirow{2}{*}{300} & 0.895 & 0.951 & 0.947 & 0.526 \\
                     & (0.524) & (0.456) & (0.413) & (0.151) \\
\addlinespace[0.2em] 
\multirow{2}{*}{600} & 0.936 & 0.949 & 0.948 & 0.525 \\
                     & (0.375) & (0.323) & (0.293) & (0.107) \\
\addlinespace[0.2em]
\multirow{2}{*}{1200}& 0.947 & 0.949 & 0.951 & 0.524 \\
                     & (0.246) & (0.228) & (0.207) & (0.075) \\
\addlinespace[0.2em]
\multirow{2}{*}{2400}& 0.949 & 0.950 & 0.950 & 0.525 \\
                     & (0.167) & (0.162) & (0.147) & (0.053) \\
\addlinespace[0.2em]
\multirow{2}{*}{4800}& 0.950 & 0.950 & 0.950 & 0.520 \\
                     & (0.116) & (0.114) & (0.104) & (0.038) \\
\bottomrule
\end{tabular}
\endgroup
\end{table}

\section{Data example}
\label{sec:real_data}
The data example we consider is the anonymized teaching version of the Childhood Asthma Management Program (CAMP) dataset \citep{childhood2000long}. The dataset contains lung capacity measurements for $n = 695$ children with asthma over the course of several doctor visits. All children were treated with albenol in this period. Moreover, roughly $2/5$ received no additional medication, and the remaining $3/5$ received some other experimental medication. 
The primary measurement is the Forced Expiratory Volume at one second (FEV). A question of interest is how this develops over time given administration of treatment. 
The first regression we consider is, then,
\begin{equation}
\label{DataExample:eq}
\mathsf{FEV}_{i,t} = \alpha + \alpha_i +  \beta \text{days}_{i,t} + \varepsilon_{i,t},
\end{equation}
where $\mathsf{FEV}_{i,t}$ measures child $i$'s FEV at the $t^{th}$ visit, and days$_{i,t}$ measures the number of days that have elapsed since the start of child $i$'s treatment at the time of their $t^{th}$ visit. The primary statistic of interest is $\beta$. We estimate this using two estimators. The first estimator  is a \textit{non-private} one-way Random Effects  estimator under the assumption that $\alpha_i$ are mean-zero iid with $\mathbb{E}[\alpha_i^2] = \sigma_\alpha^2$,  where the model is fit using the `plm' package in R \citep{croissant2008panel} with  the demeaning parameter obtained via the method of \cite{swamy1972exact}. For this estimator, we calculated the standard errors using the cluster robust covariance estimator as in \cite{arellano1987computing}. The second estimator is $\hat{\beta}_{\mathsf{DP}}$ as in Theorem \ref{thm:currentconv}, estimated using $\mu = 1$, $B = 1$, $R = 15$, and $\xi = 10^{-6}$. We use the estimator of Theorem \ref{thm:covariancerobust} with $\mu_{\mathsf{var}} = 1$ to obtain standard errors, so that $(\hat{\beta}_{\mathsf{DP}}, \hat{V}(\hat{\beta}_{\mathsf{DP}})_{\mathsf{DP}})$ satisfies $\sqrt{2}$-uGDP. Table~\ref{tab:DataExampleReg} contains the point estimate and standard error obtained with the two methods. Since the private estimate is necessarily random, the DP entries contain the \textit{interquartile range of the estimates across 1000 repetitions}. Both the non-private and all 1000 private z-scores strongly reject $H_0: \beta = 0$.
\begin{table}[ht]
\begingroup
\renewcommand{\baselinestretch}{1}\selectfont
\centering
\caption{Estimated regression coefficient for \eqref{DataExample:eq}} 
\label{tab:DataExampleReg}
\begin{tabular}{l c c}
\toprule
Estimator    & Estimated Coefficient  & SE    \\
\midrule
{$\hat{\beta}_{\mathsf{RE}}$} & {$0.000640$} & {$8.687 \cdot 10^{-6}$} \\ 
$\hat{\beta}_{\mathsf{DP}}$  & $[0.000639,   \; 0.000645]$ & $[8.192 \cdot 10^{-6}, \; 8.638 \cdot 10^{-6}]$  \\ 
\bottomrule
\end{tabular}
\endgroup
\end{table}
Note furthermore that this is an unbalanced panel with patients receiving between zero and seventeen follow-up visits, with eight users having fewer than three datapoints. Explicitly accounting for such an imbalanced panel could lead to greater efficiency. 

We proceed with the primary question of interest in this study, which is whether additional medication has an effect on FEV. The relevant regression equation in this case is
\begin{equation}
\label{eq:differenceregression}
\mathsf{FEV}_{i,t} = \alpha + z_i \rho + \alpha_i+  (\beta + z_i\theta)\text{days}_{i,t} + \varepsilon_{i,t} =: \varphi^\tr x_{i,t} + \varepsilon_{i,t},
\end{equation}
where $z_{i}$ equals $1$ if patient $i$ received additional medication and $0$ if patient $i$ was part of the control group. The quantity of interest is $\theta$, which measures the increase in slope coefficient for the patients that received additional medication.
Our strategy will be to privately estimate the slope  among the users in the treatment group (corresponding to $\beta + \theta$) using Algorithm \ref{alg:diffmeanest}, doing the same for the users in the control group, and then taking differences to obtain an estimate for $\theta$.  We similarly run Algorithm \ref{alg:covestimationGrouped} twice and sum to obtain an estimate of the variance. Since we run Algorithm \ref{alg:diffmeanest} and \ref{alg:covestimationGrouped} twice  with $\frac{1}{\sqrt{2}}$-uGDP, the final privacy guarantee of our results is $\sqrt{2}$-uGDP. Furthermore,  our analyses are performed on small samples, with the placebo group having $n_1 =275$, and the non-placebo group having $n_0 = 420$, so we use a lower $R = 10$. The result of this analysis may be found in Table \ref{tab:DataExampleDiff}. We note that $\hat{\theta}_\mathsf{RE}$ is not statistically significant at the $\alpha = 0.05$ level, having a p-value of $0.248$, whereas the private $z$-test rejected $H_0: \theta = 0$ in 65 out of 1000 repetitions. 
\begin{table}[ht]
\begingroup
\renewcommand{\baselinestretch}{1}\selectfont
\centering
\caption{Estimated slope difference coefficient for \eqref{eq:differenceregression}}
\label{tab:DataExampleDiff}
\begin{tabular}{l c c}
\toprule
Estimator    & Estimated Coefficient &  SE     \\
\midrule
{$\hat{\theta}_{\mathsf{RE}}$} & {$-2.043 \cdot 10^{-5}$} &{ $1.770 \cdot 10^{-5}$} \\
$\hat{\theta}_{\mathsf{DP}}$  & $[-2.634 \cdot 10^{-6}, \;5.119 \cdot 10^{-5}]$ & $[4.095 \cdot 10^{-5}, \; 4.453 \cdot 10^{-5}]$  \\
\bottomrule
\end{tabular}
\endgroup
\end{table}

\subsection*{Privatizing the summary statistics}
Another approach to estimating \eqref{eq:differenceregression} is to calculate a uDP estimate of $X^\tr X$ and of $X^\tr Y$, and to compose into a linear regression estimator. In particular, for each user, let 
$$x_{i,t,1} = \text{days}_{i,t} - \overline{\text{days}}_i, \qquad x_{i,t,2} = z_i x_{i,t,1}, \qquad \text{ and } \qquad y_{i,t} = \mathsf{FEV}_{i,t} - \overline{\mathsf{FEV}}_i,$$
so that this yields a DP analog of the Fixed Effects estimator. We estimate $(\widehat{X^\tr X})_\mathsf{DP}$ and $(\widehat{X^\tr Y})_\mathsf{DP}$ using Algorithm \ref{alg:trimestTemp}, using the upper triangular part of $X_i^\tr X_i$ to obtain input in $\mathbb{R}^3$, and transforming the DP output into a matrix in $\mathbb{R}^{2 \times 2}$. For both quantities, we set the initial radius $B$ in Algorithm \ref{alg:trimestTemp} equal to the smallest possible $B$ such that the relation $\max_{i =1, \dots, 695} \|D_i\| \leq B$ holds for our dataset, and $R = 1$ (for various choices of $R$, no radius shrank more than once since there is no concentration around a common quantity for any variable), as well as $\mu = 1$ and $\xi = 10^{-5}$, to produce a final estimate $\hat{\varphi}_{\mathsf{SSOptimistic}} := (\widehat{X^\tr X})_{\mathsf{DP}}^{-1}\widehat{(X^\tr Y)}_{\mathsf{DP}}$. This estimator is not really DP as the calibration of $B$ and $R$ was done using the smallest possible values in the observed data as opposed to global quantities. The estimates across $1000$ repetitions may be found in Table~\ref{tab:DataExampleSumStats}.

\begin{table}[ht]
\begingroup
\renewcommand{\baselinestretch}{1}\selectfont
\centering
\caption{Estimated coefficients for \eqref{eq:differenceregression} using $\hat{\phi}_{\mathsf{SSOptimistic}}$}
\label{tab:DataExampleSumStats}
\begin{tabular}{l c}
\toprule
Coefficient    & Estimated Coefficient Interquartile Range   \\
\midrule
$\hat{\beta}_{\mathsf{SSOptimistic}}$  & [$0.000636$, $0.000672$]  \\
$\hat{\theta}_{\mathsf{SSOptimistic}}$  & [$-8.041 \cdot 10^{-5}$, $-4.869 \cdot 10^{-6}$]  \\
\bottomrule
\end{tabular}
\endgroup
\end{table}
Table \ref{tab:DataExampleSumStats} shows that even the optimistic estimator based on adding noise to the summary statistics produces noisier estimates than the private estimator based on aggregating local averages, whose estimated values are displayed in Tables \ref{tab:DataExampleReg} and \ref{tab:DataExampleDiff}. We remark that $T$ in this dataset is small, and that as discussed in Section \ref{subsec:regestimation}, the private estimator based on aggregating local averages has the advantage that the privacy error decays in $T$. Hence, we expect $\hat{\beta}_{DP}$ to compare more favorably to $\hat{\beta}_{\mathsf{SSOptimistic}}$ as the panel length increases, or as the panel becomes more balanced. 

\section{Discussion}

There is now a large literature establishing connections between robust statistics and differential privacy in order to develop new private algorithms.  See 
\citep{dwork:lei2009,avellamedina:brunel2020,avellamedina2020,avellamedina2021,liu:kong:oh2022, ramsay:jagannath:chenouri2022, avellamedina:bradshaw:loh2023,li:berrett:yu2023,hopkins:kamath:majid:narayanan2023,alabi:kothari:tankala:venkat:zhang2023,chhor:sentenac2023,yu:zhao:zhou2024} just to name a few. We note that while our core algorithms build on DP ideas of \cite{karwa:vadhan2018,biswas:dong:kamath:ullman2020}, the construction  intuitively implies some degree of robustness and bears some resemblance to the ideas advanced by \cite{gervini:yohai2002} for robust and efficient estimation. It would be interesting to formally characterize  the robustness properties of our methods. 

In addition, we remark on the efficiency of our linear regression estimator. Several works \citep{dobriban2021distributed, rosenblatt2016optimality} have investigated the efficiency of averaging estimators in distributed settings. \cite{dobriban2021distributed} show that whilst there is no loss of asymptotic efficiency in our setting if $\Gamma_i = \Gamma \; \forall i =1, \dots, n$, this is not true if there is covariate heterogeneity. Moreover, their results show that generally the simple averaging estimator (with equal weights, as in our work) is asymptotically less efficient than the optimally weighted averaging linear regression estimator. As each optimal weight is a parameter specific to a user, implementing such a weighting scheme differentially privately is non-trivial but work in this direction could result in more efficient private estimators.   

Furthermore, while our work addresses efficient estimation and inference in linear regression problems under uDP, it would be natural to extend this to generalized linear models and M-estimators. These  fundamental models have received some attention in the simpler setting of item-level DP \citep{lei2011differentially,cai2020cost, avellamedina2020, avellamedina2021, arora2022differentially,  avellamedina:bradshaw:loh2023}, with many works also addressing DP empirical risk minimization and stochastic convex optimization prediction tasks \citep{chaudhuri2011differentially, kifer2012private,bassily2014private, bassily2019private}. Recent papers have also addressed some of these problems in the uDP setting \citep{levy:sun:amin:kale:kulesza:mohri:suresh2021, bassily2023user, ghazietal2023, liu2024user}, but they did not consider inference, nor dependent data. 

Similarly, there is a large amount of work in the non-private longitudinal regression literature that we cannot incorporate trivially in our private setting. This includes testing for the existence and form of serial correlation \citep{baltagi1995testing, hong2004wavelet, baltagi2007testing}, cross-sectional correlation \citep{frees1995assessing, ng2006testing, moscone2009review, baltagi2016testing}, slope homogeneity \citep{pesaran2008testing}, and the existence of a unit root \citep{unit1999, unit2002, unit2007, unit2010}, among many other relevant procedures. Working out DP counterparts of all these methods would be of great practical relevance. 

Finally, in settings where the error covariance matrix $\Sigma$ can be consistently estimated, standard regression theory suggests that the estimators proposed in this work can be asymptotically improved upon by incorporating FGLS mechanics. We have obtained promising initial results in this direction, which will be the subject of a separate paper. 
\par
{
\begingroup
 \renewcommand{\baselinestretch}{1.1}\selectfont
\bibliographystyle{apalike}
\bibliography{references} 
\endgroup
}

\newpage
\appendix

\section{Auxiliary Results}
\label{app:aux_results}
\begingroup
\renewcommand{\baselinestretch}{1}\selectfont

In Section \ref{app:Aux_A}, we first review some well-known concentration inequalities and a couple of linear algebra lemmas that will be useful in the proofs of the main results. Section \ref{app:Aux_B} presents concentration inequalities under strong mixing conditions that constitute the main technical tools underpinning our theoretical analysis.

\subsection{Basic inequalities and concentration}
\label{app:Aux_A}
\begin{lemma}\cite[Proposition 2.5]{wainwright2019}\label{suplemma:sumSubGaussians} Let $X_i$ for $i \in [n]$ be independent $\sigma^2_i$-sub-Gaussian random variables with mean $m_i$. Then, for all $q\geq 0$,
\[
\mathbb{P}\Big(\sum_{i=1}^n (X_i - m_i) \geq q\Big) \leq \exp \Big(-\frac{q^2}{2\sum_{i=1}^n \sigma_i^2}\Big) \quad \text{
and } \quad 
\mathbb{P}\Big(\sum_{i=1}^n (X_i - m_i) \leq -q\Big) \leq \exp \Big(-\frac{q^2}{2\sum_{i=1}^n \sigma_i^2}\Big).
\]
\end{lemma}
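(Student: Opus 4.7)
The plan is to prove both tail bounds by the standard Cramér--Chernoff (exponential moment) method, exploiting the sub-Gaussian moment generating function bound together with independence. Since the two inequalities are symmetric, I will focus on the upper tail, and then note that the lower tail follows by applying the same argument to $-X_i$, which is also $\sigma_i^2$-sub-Gaussian with mean $-m_i$.

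First, for any $\lambda > 0$, Markov's inequality applied to the nonnegative random variable $\exp\bigl(\lambda \sum_{i=1}^n (X_i - m_i)\bigr)$ gives
\[
\mathbb{P}\Big(\sum_{i=1}^n (X_i - m_i) \geq q\Big) \;\leq\; e^{-\lambda q}\,\mathbb{E}\Big[\exp\Big(\lambda \sum_{i=1}^n (X_i - m_i)\Big)\Big].
\]
By independence of the $X_i$, the expectation factorizes:
\[
\mathbb{E}\Big[\exp\Big(\lambda \sum_{i=1}^n (X_i - m_i)\Big)\Big] \;=\; \prod_{i=1}^n \mathbb{E}\bigl[e^{\lambda(X_i - m_i)}\bigr].
\]

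Second, I invoke the scalar version of the sub-Gaussian condition (Definition \ref{def:subgaussian} specialized to $d = 1$): for each $i \in [n]$ and each $\lambda \in \mathbb{R}$, $\mathbb{E}[e^{\lambda(X_i - m_i)}] \leq \exp(\sigma_i^2 \lambda^2/2)$. Combining with the previous display yields, for every $\lambda > 0$,
\[
\mathbb{P}\Big(\sum_{i=1}^n (X_i - m_i) \geq q\Big) \;\leq\; \exp\Big(\tfrac{\lambda^2}{2}\sum_{i=1}^n \sigma_i^2 - \lambda q\Big).
\]

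Third, I optimize the right-hand side over $\lambda > 0$. The exponent is a convex quadratic in $\lambda$, minimized at $\lambda^\ast = q / \sum_{i=1}^n \sigma_i^2$, which is admissible since $q \geq 0$ and the $\sigma_i^2 > 0$ (the case $q=0$ being trivial). Substituting back gives $\exp\bigl(-q^2 / (2\sum_{i=1}^n \sigma_i^2)\bigr)$, which is exactly the stated upper-tail bound. The lower-tail bound then follows by symmetry: applying the upper-tail bound to $-X_1, \dots, -X_n$ (each $\sigma_i^2$-sub-Gaussian with mean $-m_i$) yields $\mathbb{P}(\sum_i (X_i - m_i) \leq -q) = \mathbb{P}(\sum_i (-X_i - (-m_i)) \geq q) \leq \exp\bigl(-q^2/(2\sum_{i=1}^n \sigma_i^2)\bigr)$. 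There is no real obstacle here; the entire argument is the textbook Chernoff derivation, and the only thing to be mindful of is that the sub-Gaussian MGF bound must hold for all real $\lambda$ (not just $\lambda \geq 0$) to get the lower tail, which is built into Definition \ref{def:subgaussian}.
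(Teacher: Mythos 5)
Your proof is correct and follows exactly the standard Cram\'er--Chernoff argument that underlies the cited result (Proposition 2.5 of \citealt{wainwright2019}); the paper itself offers no proof, as it invokes this lemma directly from that reference. Your handling of the optimization over $\lambda$, the trivial case $q=0$, and the symmetry step for the lower tail (using that the sub-Gaussian MGF bound holds for all real $\lambda$) is complete and accurate.
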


\begin{lemma}\citep[Example 2.1 \& (2.9)]{wainwright2019}\label{suplemma:max_subG}
    Let $Z_r \overset{iid}{\sim} subG(1)$ with $\mathbb{E}[Z_r] = 0$ for $r \in [R]$. Then, for any $0 < \xi \leq 1$,
    \[
    \mathbb{P}\Big(\max_{r \in [R]} |Z_r| \leq \sqrt{2\log\frac{R}{\xi}}\Big) \geq 1 -  2\xi.
    \]
\end{lemma}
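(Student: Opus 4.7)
The plan is to combine a one-sided sub-Gaussian tail bound with a two-sided symmetrization and then apply a union bound over $r\in[R]$. Since the statement is an instance of a textbook maximal inequality for sub-Gaussians, the proof will be short and essentially follows the standard template in Wainwright's book.

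First I would note that by the sub-Gaussian assumption with parameter $1$ and zero mean, the Chernoff bound gives, for any $t\geq 0$,
\[
\mathbb{P}(Z_r \geq t) \leq e^{-t^2/2}, \qquad \mathbb{P}(Z_r \leq -t) \leq e^{-t^2/2},
\]
by optimizing $e^{-\lambda t}\mathbb{E}[e^{\lambda Z_r}]\leq e^{-\lambda t + \lambda^2/2}$ at $\lambda = t$. A union bound over the two sides then yields $\mathbb{P}(|Z_r|\geq t)\leq 2e^{-t^2/2}$.

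Next I would apply a union bound over $r\in[R]$, which is the only place the factor $R$ enters:
\[
\mathbb{P}\!\Big(\max_{r\in[R]}|Z_r|\geq t\Big) \leq \sum_{r=1}^R \mathbb{P}(|Z_r|\geq t) \leq 2R\,e^{-t^2/2}.
\]
Setting $t=\sqrt{2\log(R/\xi)}$ makes the right-hand side equal to $2R\cdot(\xi/R)=2\xi$, and taking the complement gives the claim. Independence of the $Z_r$ is not required for this argument — only the marginal sub-Gaussianity is used, which matches the hypotheses exactly.

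There is no real obstacle here; the only subtlety to mention is that the one-sided Chernoff bound $\mathbb{P}(Z_r\geq t)\leq e^{-t^2/2}$ follows directly from Definition~\ref{def:subgaussian} specialized to $d=1$ and $u=1$, so no additional tools beyond the basic moment generating function calculation are needed. The whole argument is a two-line invocation of the tail bound plus a union bound, and the factor of $2$ on the right-hand side of the conclusion comes precisely from the two-sidedness of the absolute value.
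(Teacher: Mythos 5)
Your proof is correct and is exactly the standard argument behind the cited result: the paper gives no proof of its own, referring instead to \cite[Example 2.1 \& (2.9)]{wainwright2019}, whose content is precisely your Chernoff bound $\mathbb{P}(|Z_r|\geq t)\leq 2e^{-t^2/2}$ followed by a union bound over $r\in[R]$ with $t=\sqrt{2\log(R/\xi)}$. Your side remark that independence is unnecessary is also accurate, since the union bound uses only the marginal tails.
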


\begin{lemma}\cite[Example 2.28]{wainwright2019}\label{suplemma:maxnormals}
    Let $N_r \overset{iid}{\sim} N(0, \mathbb{I}_d)$ for $r \in [R]$. Then, for any $0 < \xi \leq1$,
    \[
    \mathbb{P}\Big( \max_{r \in [R]}\|N_r\| \leq \sqrt{d} + \sqrt{2 \log \frac{R}{\xi}}\Big) \geq 1 - \xi.
    \]
\end{lemma}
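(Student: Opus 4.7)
\textbf{Proof plan for Lemma \ref{suplemma:maxnormals}.} The claim is a standard high-probability bound on the maximum of $R$ independent $\chi_d$-type random variables, so the plan is to combine Gaussian concentration of Lipschitz functions with a union bound over $r \in [R]$.

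First, I would fix a single $r$ and analyze $\|N_r\|$. The map $x \mapsto \|x\|$ is $1$-Lipschitz on $\mathbb{R}^d$ (by the reverse triangle inequality), and $N_r \sim N(0,I_d)$ is standard Gaussian, so the Borell--TIS / Gaussian concentration inequality for Lipschitz functions gives, for every $t \ge 0$,
\[
\mathbb{P}\bigl(\|N_r\| \ge \mathbb{E}\|N_r\| + t\bigr) \le \exp(-t^2/2).
\]
Then I would bound the mean via Jensen: $\mathbb{E}\|N_r\| \le \sqrt{\mathbb{E}\|N_r\|^2} = \sqrt{d}$. Substituting this gives the one-vector tail bound $\mathbb{P}(\|N_r\| \ge \sqrt{d} + t) \le e^{-t^2/2}$.

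Second, I would apply a union bound. Choose $t = \sqrt{2\log(R/\xi)}$ so that each term is at most $\xi/R$, and conclude
\[
\mathbb{P}\Bigl(\max_{r\in[R]} \|N_r\| \ge \sqrt{d} + \sqrt{2\log(R/\xi)}\Bigr) \le \sum_{r=1}^R \mathbb{P}\bigl(\|N_r\| \ge \sqrt{d}+t\bigr) \le R \cdot \frac{\xi}{R} = \xi,
\]
which is the stated bound after taking complements. There is essentially no technical obstacle here: the only nontrivial ingredient is Gaussian concentration for the $1$-Lipschitz norm function, which is a standard result (and the reason this lemma is simply cited from Wainwright). If one wished to avoid Borell--TIS, an alternative is to note $\|N_r\|^2 \sim \chi^2_d$ and use a direct sub-exponential Bernstein tail bound for $\chi^2$ variables, but the Lipschitz-concentration route gives the cleanest constants matching the stated inequality.
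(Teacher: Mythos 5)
Your proof is correct and follows essentially the same route as the paper's cited source: Wainwright's Example 2.28 is precisely the Gaussian concentration bound for the $1$-Lipschitz norm function with the Jensen bound $\mathbb{E}\|N_r\| \le \sqrt{d}$, and the union bound over $r \in [R]$ with $t = \sqrt{2\log(R/\xi)}$ is the standard extension the paper implicitly invokes. Nothing is missing.
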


\begin{lemma}\label{lemma:epsilonNets}
    Define a function $g: [0,1] \mapsto \mathbb{R}_+$. Let $Z \in \mathbb{R}^d$ be a random vector such that for all $u \in \mathbb{S}^{d-1}$ and $\xi \in [0,1]$ we have $ \mathbb{P}(|u^\tr Z| > g(\xi)) \leq \xi$, then
    \[
   \mathbb{P}\left(\|Z\| > \frac{4}{3} g\Big(\frac{\xi}{9^d}\Big)\right) \leq \xi. 
    \]
    Similarly, if $W \in \mathbb{R}^{d \times d}$ is a symmetric, positive semi-definite random matrix such that for all $u \in \mathbb{S}^{d-1}$ and $\xi \in [0,1]$, we have $\mathbb{P}(|u^\tr W u | > g(\xi)) \leq \xi$, then
   \[
\mathbb{P}\left(\|W\| > 2g\Big(\frac{\xi}{9^d}\Big)\right) \leq \xi.
    \]
\end{lemma}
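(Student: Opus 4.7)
The plan is a standard $\epsilon$-net argument. I would start by fixing a $\tfrac{1}{4}$-net $\mathcal{N}$ of the unit sphere $\mathbb{S}^{d-1}$, i.e.\ a set such that for every $v\in\mathbb{S}^{d-1}$ there exists $u\in\mathcal{N}$ with $\|v-u\|\le 1/4$. A standard volumetric covering bound (e.g.\ Corollary 4.2.13 in Vershynin's HDP) gives $|\mathcal{N}|\le (1+2/(1/4))^d = 9^d$, which is precisely the factor appearing in the statement.

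For the vector case, I would write $\|Z\| = \sup_{v\in\mathbb{S}^{d-1}} |v^\tr Z|$ and, for any $v$, pick $u\in\mathcal{N}$ with $\|v-u\|\le 1/4$; then
\[
|v^\tr Z| \le |u^\tr Z| + |(v-u)^\tr Z| \le |u^\tr Z| + \tfrac{1}{4}\|Z\|.
\]
Taking a supremum over $v$ yields $\|Z\|\le \max_{u\in\mathcal{N}} |u^\tr Z| + \tfrac{1}{4}\|Z\|$, hence $\|Z\|\le \tfrac{4}{3}\max_{u\in\mathcal{N}} |u^\tr Z|$. The hypothesis applied at each $u\in\mathcal{N}$ with the rescaled failure level $\xi/9^d$ together with a union bound then gives
\[
\mathbb{P}\!\left(\max_{u\in\mathcal{N}} |u^\tr Z| > g(\xi/9^d)\right) \le |\mathcal{N}|\cdot \tfrac{\xi}{9^d} \le \xi,
\]
which combined with the deterministic bound completes the first claim.

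For the matrix case, I would use that for symmetric $W$ one has $\|W\|=\sup_{v\in\mathbb{S}^{d-1}} |v^\tr W v|$. The key deterministic step is the identity
\[
v^\tr W v - u^\tr W u = (v-u)^\tr W v + u^\tr W (v-u),
\]
which, together with $\|v-u\|\le 1/4$ and Cauchy--Schwarz, yields $|v^\tr W v - u^\tr W u|\le \tfrac{1}{2}\|W\|$. Taking the supremum over $v$ gives $\|W\|\le \max_{u\in\mathcal{N}} |u^\tr W u| + \tfrac{1}{2}\|W\|$, i.e.\ $\|W\|\le 2\max_{u\in\mathcal{N}} |u^\tr W u|$. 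A union bound over $\mathcal{N}$, exactly as in the vector case, then concludes.

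The argument is essentially routine once the net is fixed; the only ``obstacle,'' if any, is being careful about the net constant: I would want the covering number bound $|\mathcal{N}|\le 9^d$ to match precisely, which is why the $1/4$-net is the right choice (producing the $4/3$ factor for the vector case and the $2$ factor for the matrix case). No further probabilistic machinery is needed beyond the union bound.
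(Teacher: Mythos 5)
Your proposal is correct and follows essentially the same route as the paper's proof: the same $1/4$-net of size $9^d$ from Vershynin's Corollary 4.2.13, the same deterministic bounds $\|Z\|\le \tfrac{4}{3}\max_{u\in\mathcal{N}}|u^\tr Z|$ and $\|W\|\le 2\max_{u\in\mathcal{N}}|u^\tr W u|$ (via the identical bilinear decomposition in the matrix case), and the same union bound at level $\xi/9^d$. Nothing is missing.
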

\begin{proof}
    This follows by a standard $\epsilon$-net argument. Namely, by \cite{vershynin2018high}'s Corollary 4.2.13, there exists a set $\mathcal{N}$ such that $\sup_{x \in \mathbb{S}^{d-1}, y \in \mathcal{N}}\|x -y\| \leq \frac{1}{4}$ and $|\mathcal{N}| \leq 9^d$.  Now, let $v = {Z}/{\|Z\|}$ and $v_0 \in \mathcal{N}$ be such that $\|v - v_0\| \leq \frac{1}{4}$. Then $|v_0^\tr Z| \geq  \sup_{v\in\mathbb{S}^{d-1}}|v^\tr Z| - |(v - v_0)^\tr Z|  \geq (1 - \frac{1}{4})\|Z\| $, so that $\sup_{y \in \mathcal{N}}|y^\tr Z| \geq \frac{3}{4} \|Z\|$. Hence,
    \begin{align*}
        \mathbb{P}\Big(\|Z\| > \frac{4}{3}g(\xi)\Big) \leq \mathbb{P}\Big(\sup_{y \in \mathcal{N}} |y^\tr Z| > g(\xi)\Big)\leq |\mathcal{N}| \, \xi,
    \end{align*}
    so that by taking $\xi' = {\xi}/{9^d}$, we obtain
    \[
     \mathbb{P}\Big(\|Z\| > \frac{4}{3}g(\xi')\Big)  =  \mathbb{P}\Big(\|Z\| > \frac{4}{3}g\Big(\frac{\xi}{9^d}\Big)\Big) \leq |\mathcal{N}|\, \frac{\xi}{9^d} \leq \xi.
    \]
    The proof in the matrix case is similar, noting that $\|W\| = \sup_{u \in \mathbb{S}^{d-1}} u^\tr W u$ and that for two unit vectors $u, u_0$ such that $\| u - u_0\| \leq \frac{1}{4}$, we have $u^\tr W u = u_0^\tr Wu_0 + (u - u_0)^\tr W u_0 + u^\tr W(u - u_0)$, and hence $|u^\tr W u - u_0 ^\tr W u_0| \leq \frac{1}{2}\|W\|$.
\end{proof}

\begin{lemma}\cite[Exercise 7.3.5]{vershynin2018high}\label{auxres:GOEnorm}
    Let $W \in \mathbb{R}^{d \times d}$ be a random matrix such that $W_{i,i} \overset{iid}{\sim} N(0, 2\sigma^2)$ for all $i \in [d]$ and $W_{i,j} \overset{iid}{\sim} N(0, \sigma^2)$ for all $i >j$, with $W_{i,j} = W_{j,i}$ when $i < j$. Then $ \|W\| = O(\sigma\sqrt{d + \log \frac{1}{\xi}})$ with probability at least $1-\xi$.
\end{lemma}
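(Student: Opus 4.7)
The target bound is the standard operator-norm estimate for a GOE-type matrix. I would follow the same $\varepsilon$-net scheme used in the proof of Lemma~\ref{lemma:epsilonNets}, but adapted to symmetric (not necessarily positive semi-definite) matrices, combining it with the explicit Gaussian tail of the quadratic form $u^{\tr} W u$ for fixed $u$.

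\textbf{Step 1: pointwise control.} Fix $u \in \mathbb{S}^{d-1}$ and write $u^{\tr} W u = \sum_{i} u_i^2 W_{ii} + 2\sum_{i<j} u_i u_j W_{ij}$, a linear combination of independent centered Gaussians. A direct computation gives
\[
\V(u^{\tr} W u) = 2\sigma^2 \sum_i u_i^4 + 4\sigma^2 \sum_{i<j} u_i^2 u_j^2 = 2\sigma^2\Big(\sum_i u_i^2\Big)^2 = 2\sigma^2,
\]
so $u^{\tr} W u \sim N(0, 2\sigma^2)$ and, by a standard Gaussian tail bound, $\mathbb{P}(|u^{\tr} W u| > t) \leq 2\exp(-t^2/(4\sigma^2))$ for every $t \geq 0$.

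\textbf{Step 2: net discretization.} Because $W$ is symmetric, $\|W\| = \sup_{u \in \mathbb{S}^{d-1}} |u^{\tr} W u|$. Take a $\tfrac{1}{4}$-net $\mathcal{N}$ of $\mathbb{S}^{d-1}$ with $|\mathcal{N}| \leq 9^d$ (Corollary 4.2.13 of \citealt{vershynin2018high}). Repeating the argument given in the proof of Lemma~\ref{lemma:epsilonNets} but keeping absolute values (so as not to require PSD), for any $v \in \mathbb{S}^{d-1}$ one finds a $v_0 \in \mathcal{N}$ with
\[
|v^{\tr} W v| \leq |v_0^{\tr} W v_0| + |(v-v_0)^{\tr} W v_0| + |v^{\tr} W (v-v_0)| \leq |v_0^{\tr} W v_0| + \tfrac{1}{2}\|W\|,
\]
whence $\|W\| \leq 2 \max_{v_0 \in \mathcal{N}} |v_0^{\tr} W v_0|$.

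\textbf{Step 3: union bound and calibration.} Combining Steps 1 and 2 with a union bound,
\[
\mathbb{P}\Big(\|W\| > 2t\Big) \leq \mathbb{P}\Big(\max_{v_0 \in \mathcal{N}} |v_0^{\tr} W v_0| > t\Big) \leq 2 \cdot 9^d \exp\Big(-\tfrac{t^2}{4\sigma^2}\Big).
\]
Choosing $t = 2\sigma\sqrt{\log(2 \cdot 9^d / \xi)}$ makes the right-hand side at most $\xi$, and since $\log(2 \cdot 9^d/\xi) \lesssim d + \log(1/\xi)$, we conclude $\|W\| = O(\sigma\sqrt{d + \log(1/\xi)})$ with probability at least $1-\xi$.

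There is no substantive obstacle in this proof; the only subtlety is that Lemma~\ref{lemma:epsilonNets} is stated for PSD matrices (where $\|W\| = \sup_u u^{\tr}W u$), while here $W$ need not be PSD, so one must use $\|W\| = \sup_u |u^{\tr} W u|$ and redo the three-line net inequality with absolute values, as above. Everything else is a routine variance computation plus a Gaussian tail / union bound.
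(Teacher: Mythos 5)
Your proof is correct: the variance computation $\V(u^{\tr}Wu) = 2\sigma^2\sum_i u_i^4 + 4\sigma^2\sum_{i<j}u_i^2u_j^2 = 2\sigma^2$ is exact (this is precisely what the GOE scaling, with diagonal variance $2\sigma^2$, is designed to achieve, making the quadratic form's law direction-independent), the identity $v^{\tr}Wv - v_0^{\tr}Wv_0 = v^{\tr}W(v-v_0) + (v-v_0)^{\tr}Wv_0$ justifies the net inequality $\|W\| \leq 2\max_{v_0\in\mathcal{N}}|v_0^{\tr}Wv_0|$ for symmetric $W$, and the union bound with $t = 2\sigma\sqrt{\log(2\cdot 9^d/\xi)}$ delivers the claimed rate. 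The paper itself offers no proof — it imports the lemma by citation to Exercise 7.3.5 of \cite{vershynin2018high} — and the intended solution there is a genuinely different route: one first bounds $\mathbb{E}\|W\| \lesssim \sigma\sqrt{d}$ via the Sudakov--Fernique Gaussian comparison inequality (Theorem 7.3.1 of that book), then upgrades to a high-probability statement using concentration of Lipschitz functions of Gaussian vectors, since $W \mapsto \|W\|$ is $1$-Lipschitz in the Frobenius norm of the entries. That comparison-plus-concentration argument yields sharp constants (asymptotically $2\sigma\sqrt{d}$), whereas your $\varepsilon$-net loses constant factors through $|\mathcal{N}| \leq 9^d$ and the factor $2$ from the net; in exchange, your argument is elementary, fully self-contained, and consistent with the paper's existing toolkit — you also correctly flagged the one real subtlety, namely that the paper's Lemma~\ref{lemma:epsilonNets} (matrix case) assumes positive semi-definiteness so that $\|W\| = \sup_u u^{\tr}Wu$, which fails here, and you properly redid the three-term bound with $\|W\| = \sup_u |u^{\tr}Wu|$ instead. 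Since the target is only an $O(\cdot)$ bound, nothing is lost by your route.
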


\begin{lemma}
    \label{lem:spectrumbound}
    Let $A$ and $B$ be real symmetric invertible matrices such that $ \|B^{-\frac{1}{2}}AB^{-\frac{1}{2}} - I\| \leq \epsilon <1.$ Then, we have
    $\|B^{\frac{1}{2}}A^{-\frac{1}{2}}\| \leq (1-\epsilon)^{-1/2}.$
\end{lemma}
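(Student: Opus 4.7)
The plan is to reduce the claim to a statement about the smallest eigenvalue of $M := B^{-1/2}AB^{-1/2}$. Since $B$ is assumed to be positive definite, $B^{-1/2}$ is a well-defined symmetric matrix, and hence $M$ is symmetric. The hypothesis $\|M - I\| \leq \epsilon$ with $\epsilon < 1$ then forces every eigenvalue of $M$ to lie in the interval $[1-\epsilon,\,1+\epsilon]$; in particular $\lambda_{\min}(M) \geq 1-\epsilon > 0$, so $M$ is also positive definite and invertible.

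Next, I would use the standard operator-norm identity $\|X\|^2 = \|X X^\tr\|$, applied to $X = B^{1/2}A^{-1/2}$. Since $B^{1/2}$ and $A^{-1/2}$ are symmetric, this yields
\begin{equation*}
\|B^{1/2}A^{-1/2}\|^2 \;=\; \|B^{1/2}A^{-1/2}\,A^{-1/2}B^{1/2}\| \;=\; \|B^{1/2}A^{-1}B^{1/2}\|.
\end{equation*}
The key algebraic observation is that this last matrix equals $M^{-1}$: indeed, $(B^{-1/2}AB^{-1/2})(B^{1/2}A^{-1}B^{1/2}) = I$, so $B^{1/2}A^{-1}B^{1/2} = M^{-1}$. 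Therefore
\begin{equation*}
\|B^{1/2}A^{-1/2}\|^2 \;=\; \|M^{-1}\| \;=\; \lambda_{\min}(M)^{-1} \;\leq\; (1-\epsilon)^{-1},
\end{equation*}
and taking square roots gives the advertised bound $\|B^{1/2}A^{-1/2}\| \leq (1-\epsilon)^{-1/2}$.

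There is no real obstacle here; the only subtlety is the implicit positivity assumption on $A$ and $B$ so that $A^{-1/2}$ and $B^{-1/2}$ can be defined as symmetric matrices (this is how the lemma will be invoked in the paper, since $B$ will play the role of a population Gram matrix and $A$ that of its empirical counterpart). Once this is in place, the argument is purely algebraic and reduces everything to reading off $\lambda_{\min}(M)$ from the perturbation hypothesis.
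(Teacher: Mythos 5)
Your proof is correct and follows essentially the same route as the paper's: both square the norm via $\|B^{1/2}A^{-1/2}\|^2 = \|B^{1/2}A^{-1}B^{1/2}\|$ and then bound that quantity by $(1-\epsilon)^{-1}$ using the spectral consequence of the hypothesis (you read off $\lambda_{\min}(M)^{-1}$ for $M^{-1} = B^{1/2}A^{-1}B^{1/2}$, while the paper inverts the Loewner sandwich $(1-\epsilon)I \preceq B^{-1/2}AB^{-1/2} \preceq (1+\epsilon)I$, which is the same computation). Your remark about the implicit positive-definiteness needed to define the symmetric square roots is a fair observation that the paper leaves tacit.
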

\begin{proof}
    Note that $\|B^\frac{1}{2}A^{-\frac{1}{2}}\| = \sqrt{\|B^\frac{1}{2} A^{-\frac{1}{2}}\|^2} = \sqrt{\|B^\frac{1}{2}A^{-1} B^\frac{1}{2}\|}$. Furthermore, by assumption, we have that $(1 - \epsilon)I \preceq B^{-\frac{1}{2}}AB^{-\frac{1}{2}}  \preceq (1+\epsilon)I,$ and so taking inverses and accordingly reversing the ordering, we obtain
    \[
    \frac{1}{{1-\epsilon}}I \succeq B^\frac{1}{2}A^{-1}B^\frac{1}{2} \succeq \frac{1}{{1 +\epsilon}}I.
    \]
    Thus, we have
    \[\|B^\frac{1}{2}A^{-\frac{1}{2}}\| = \sqrt{\|B^\frac{1}{2}A^{-1}B^\frac{1}{2}\|} \leq \sqrt{\frac{1}{1-\epsilon}} = \frac{1}{\sqrt{1-\epsilon}}.\]
\end{proof}

\begin{lemma}\label{lem:subGmgfBOUND}
    Let $Z \in \mathbb{R}$ be mean-zero, $\sigma^2$-subGaussian. Then 
    $
    \mathbb{E}[\exp(\frac{Z^2}{6\sigma^2})] \leq 2$.
\end{lemma}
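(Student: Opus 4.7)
The approach is the standard layer-cake-plus-tail-bound argument for converting a sub-Gaussian MGF bound into a control on $\mathbb{E}[\exp(cZ^2)]$ for $c$ slightly below the critical value $1/(2\sigma^2)$. Since $Z$ is mean-zero $\sigma^2$-sub-Gaussian, Definition~\ref{def:subgaussian} (taken with $d=1$ and $u=\pm 1$) gives $\mathbb{E}[e^{\lambda Z}]\le e^{\sigma^2\lambda^2/2}$ for all $\lambda\in\mathbb{R}$. First I would apply Markov to each sign and optimize in $\lambda$ to obtain the two-sided Chernoff tail
\[
\mathbb{P}(|Z|\ge t)\;\le\;2\exp\!\Big(-\frac{t^{2}}{2\sigma^{2}}\Big),\qquad t\ge 0.
\]

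Next, set $c:=1/(6\sigma^{2})$ and use the layer-cake representation together with the change of variables $u=e^{ct^{2}}$:
\[
\mathbb{E}\bigl[e^{cZ^{2}}\bigr]
\;=\;\int_{0}^{\infty}\mathbb{P}\!\bigl(e^{cZ^{2}}>u\bigr)\,du
\;=\;1+\int_{1}^{\infty}\mathbb{P}\!\Big(|Z|>\sqrt{\tfrac{\log u}{c}}\Big)\,du.
\]
Inserting the tail bound from the previous step yields
\[
\mathbb{E}\bigl[e^{cZ^{2}}\bigr]
\;\le\; 1+ 2\int_{1}^{\infty}\exp\!\Big(-\tfrac{\log u}{2c\sigma^{2}}\Big)\,du
\;=\;1+2\int_{1}^{\infty}u^{-\,1/(2c\sigma^{2})}\,du.
\]

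Finally, with the specific choice $c=1/(6\sigma^{2})$ we have $1/(2c\sigma^{2})=3$, so $\int_{1}^{\infty}u^{-3}\,du=\tfrac{1}{2}$, giving $\mathbb{E}[e^{cZ^{2}}]\le 1+2\cdot \tfrac12=2$, as claimed. There is no real obstacle here: the only thing to verify is that the constant $6$ is precisely what makes the tail exponent strictly greater than $1$ (so that the integral converges) while keeping the bound at exactly $2$; any $c<1/(2\sigma^{2})$ would yield a finite bound, and the specific value $c=1/(6\sigma^{2})$ was chosen so that $1/(2c\sigma^{2})=3$ and the arithmetic comes out to $2$.
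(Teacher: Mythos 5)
Your proof is correct and takes essentially the same approach as the paper's: both combine the two-sided Chernoff tail $\mathbb{P}(|Z|\ge t)\le 2e^{-t^{2}/(2\sigma^{2})}$ with the layer-cake representation of $\mathbb{E}[e^{cZ^{2}}]$, the only cosmetic difference being that the paper substitutes $u=e^{at^{2}}$ and integrates in $t$, while you stay in the $u$-variable and evaluate $\int_{1}^{\infty}u^{-3}\,du=\tfrac12$ directly. The two computations yield the identical bound $1+\frac{4c\sigma^{2}}{1-2c\sigma^{2}}=2$ at $c=1/(6\sigma^{2})$.
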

\begin{proof}
    Let $a = \frac{1}{6\sigma^2}$. Then,
    \begin{align*}
    \mathbb{E}[\exp(aZ^2)] &= \int_0^\infty \mathbb{P}(\exp(aZ^2) > u)du = 1 +\int_1^\infty \mathbb{P}(\exp(aZ^2) > u)du  \\&= 1 + \int_0^\infty 2at e^{at^2} \mathbb{P}(|Z| > t)dt \leq 1 + 4a \int_0^\infty t e^{-(\frac{1}{2\sigma^2}  - a)t^2}dt = 1 + \frac{2a}{\frac{1}{2\sigma^2} - a},
    \end{align*}
    through a change of variables $u = e^{at^2}$ and an application of the tail bound of Lemma \ref{suplemma:sumSubGaussians}, where the right side of the tail bound in Lemma \ref{suplemma:sumSubGaussians} incurs an additional factor $2$ since we take a two-tailed bound. Now, plugging in $a = \frac{1}{6\sigma^2}$, we obtain our result $\mathbb{E}[\exp(\frac{Z^2}{6\sigma^2}) ] \leq 2.$
\end{proof}
\begin{lemma}\label{lem:squaresubgaussiansubexp}
    Let the random variables $X, Y \in \mathbb{R}$ be respectively $\sigma_X^2$ and $\sigma_Y^2$-sub-Gaussian, with $|\mathbb{E}[X]| \leq c\sigma_X$ and $|\mathbb{E}[Y] | \leq c\sigma_Y$ for some constant $c>0$, then $\mathbb{P}(|XY - \mathbb{E}[XY] | > q) \leq 2\exp(-\frac{q}{C\sigma_X \sigma_Y})$
    for some constant $C>0$ that only depends on $c$.
\end{lemma}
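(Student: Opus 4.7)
The plan is to decompose $XY - \mathbb{E}[XY]$ into a product of centered sub-Gaussians plus two linear correction terms, and show each piece has the claimed sub-exponential tail separately. Let $X_0 := X - \mathbb{E}[X]$ and $Y_0 := Y - \mathbb{E}[Y]$, which are centered and $\sigma_X^2$- and $\sigma_Y^2$-sub-Gaussian in the sense of Definition \ref{def:subgaussian}. A direct expansion gives
\[
XY - \mathbb{E}[XY] \;=\; \bigl(X_0 Y_0 - \mathbb{E}[X_0 Y_0]\bigr) \;+\; \mathbb{E}[X]\,Y_0 \;+\; \mathbb{E}[Y]\,X_0,
\]
so it suffices to bound each of these three summands with probability of failure comparable to the target, and then combine via a union bound (splitting the deviation $q$ equally).

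For the main piece $X_0 Y_0$, I would use a union bound on the factors: setting $s = \sqrt{q\sigma_X/\sigma_Y}$ and $t = \sqrt{q\sigma_Y/\sigma_X}$, the event $\{|X_0 Y_0| > q\}$ is contained in $\{|X_0| > s\}\cup\{|Y_0| > t\}$, and applying the sub-Gaussian tail bound of Lemma \ref{suplemma:sumSubGaussians} to each gives
\[
\mathbb{P}\bigl(|X_0 Y_0| > q\bigr) \;\leq\; 2\exp\!\Bigl(-\tfrac{s^2}{2\sigma_X^2}\Bigr) + 2\exp\!\Bigl(-\tfrac{t^2}{2\sigma_Y^2}\Bigr) \;=\; 4\exp\!\Bigl(-\tfrac{q}{2\sigma_X\sigma_Y}\Bigr).
\]
To center this, observe that $|\mathbb{E}[X_0 Y_0]| \leq \sqrt{\mathbb{E}[X_0^2]\mathbb{E}[Y_0^2]} \leq C_0 \sigma_X\sigma_Y$ by Cauchy–Schwarz together with the second-moment bound on sub-Gaussian random variables (alternatively, via Lemma \ref{lem:subGmgfBOUND} applied to $X_0/\sigma_X$ and $Y_0/\sigma_Y$). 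Thus $|X_0 Y_0 - \mathbb{E}[X_0 Y_0]| > q$ implies $|X_0 Y_0| > q - C_0 \sigma_X\sigma_Y$, which for $q \geq 2C_0\sigma_X\sigma_Y$ is still at least $q/2$, giving the desired form $\exp(-q/(C\sigma_X\sigma_Y))$; for smaller $q$ the bound is trivial after inflating the constant $C$ so the right-hand side exceeds $1$.

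The two linear terms $\mathbb{E}[X] Y_0$ and $\mathbb{E}[Y] X_0$ are sub-Gaussian with parameters at most $(c\sigma_X\sigma_Y)^2$, using $|\mathbb{E}[X]| \leq c\sigma_X$ and $|\mathbb{E}[Y]| \leq c\sigma_Y$. So Lemma \ref{suplemma:sumSubGaussians} gives them Gaussian-type tails $2\exp(-q^2/(2c^2\sigma_X^2\sigma_Y^2))$. To reconcile this with the target sub-exponential form, I would split into two regimes: for $q \leq c^2 \sigma_X\sigma_Y$ the exponential bound $\exp(-q/(C\sigma_X\sigma_Y))$ is at least $1/e$, so it holds trivially by choosing $C$ large, and for $q \geq c^2 \sigma_X\sigma_Y$ the Gaussian exponent $-q^2/(2c^2\sigma_X^2\sigma_Y^2)$ is at most $-q/(2c^2\sigma_X\sigma_Y)$, which is a bona fide sub-exponential tail. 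This piecewise argument is the only slightly fiddly part; no term is individually hard. Combining the three bounds via a $q/3$ split and absorbing the resulting prefactors into a final constant $C$ depending only on $c$ yields the stated bound $2\exp(-q/(C\sigma_X\sigma_Y))$.
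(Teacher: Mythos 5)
Your proof is correct and shares the paper's overall architecture: the identical three-term decomposition $XY-\mathbb{E}[XY]=(X_0Y_0-\mathbb{E}[X_0Y_0])+\mathbb{E}[X]\,Y_0+\mathbb{E}[Y]\,X_0$, a $q/3$ union bound over the three pieces, the same Cauchy--Schwarz centering of the product term with a trivial-bound regime for $q\lesssim\sigma_X\sigma_Y$, and the same final absorption of prefactors into $C$. The one genuinely different ingredient is the tail bound for the uncentered product $X_0Y_0$: the paper argues via exponential moments --- from $2|uv|\le u^2+v^2$ and Lemma \ref{lem:subGmgfBOUND} it deduces $\mathbb{E}[\exp(|X_0Y_0|/(6\sigma_X\sigma_Y))]\le 2$, hence $\mathbb{P}(|X_0Y_0|>q)\le 2e^{-q/(6\sigma_X\sigma_Y)}$ by Markov --- whereas you use the balanced union bound $\{|X_0Y_0|>q\}\subseteq\{|X_0|>s\}\cup\{|Y_0|>t\}$ with $st=q$, which requires only the sub-Gaussian tail of Lemma \ref{suplemma:sumSubGaussians}. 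Your route is more elementary (no MGF computation) and would extend to settings where only tail bounds are available; the paper's route yields a prefactor of $2$ in one step, but since both proofs ultimately convert $\min(1,Ke^{-x})$ into $2e^{-x\log 2/\log K}$, the larger prefactors you accumulate are immaterial. Two constant-level points to tidy in your write-up, neither a gap: for the linear terms, the inequality $-q^2/(2c^2\sigma_X^2\sigma_Y^2)\le -q/(2c^2\sigma_X\sigma_Y)$ requires $q\ge\sigma_X\sigma_Y$, so when $c<1$ the regime threshold should be $\max(1,c)^2\sigma_X\sigma_Y$ rather than $c^2\sigma_X\sigma_Y$; and in the small-$q$ regime you need the right-hand side to exceed $1$ (i.e.\ $C\ge\max(1,c^2)/\log 2$), not merely to be at least $1/e$ --- both fixes are absorbed into the final constant $C$, exactly as in the paper's treatment.
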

\begin{proof}
    Let $m_X := \mathbb{E}[X]$, $m_Y := \mathbb{E}[Y]$, $\tilde{X} = X - m_X$, $\tilde{Y} = Y - m_Y$. Then $XY - \mathbb{E}[XY] = A + B + C$, where $A := \tilde{X} \tilde{Y} - \mathbb{E}[\tilde{X} \tilde{Y}]$, $B := m_X \tilde{Y}$, $C = m_Y \tilde{X}$. We first prove the result for $m_X, m_Y \neq 0$. It is straightforward to see that, 
    \begin{align}\label{eq:squaresubGproofA}
    \mathbb{P}(|A| > q/3) &\leq 2e^{-\frac{q}{36\sigma_X \sigma_Y}}, \\
    \label{eq:squaresubGproofB}
    \mathbb{P}(|B| > q/3) &\leq 2e^{-\frac{q}{18|m_X| \sigma_Y}}, \\
    \label{eq:squaresubGproofC}
    \mathbb{P}(|C| > q/3) &\leq 2e^{-\frac{q}{18|m_Y| \sigma_X}}.
    \end{align}
    In fact \eqref{eq:squaresubGproofB}-\eqref{eq:squaresubGproofC} follow simply by Lemma \ref{suplemma:sumSubGaussians}. Thus, we prove \eqref{eq:squaresubGproofA}. We have for any $u,v \in \mathbb{R}$, $2|uv| \leq u^2 + v^2$, so
\[
\Big|\frac{\tilde{X} \tilde{Y}}{\sigma_X\sigma_Y}\Big| \leq \frac{1}{2}\Big(\frac{\tilde{X}^2}{\sigma_X^2} + \frac{\tilde{Y}^2}{\sigma_Y^2}\Big).
\]
Taking expectations and Cauchy-Schwarz we get, using Lemma \ref{lem:subGmgfBOUND}, 
\[
\mathbb{E}\Big[\exp\Big(\frac{|\tilde{X} \tilde{Y}|}{6\sigma_X \sigma_Y}\Big)\Big] \leq 2,
\]
and hence, $\mathbb{P}(|\tilde{X} \tilde{Y}| >q) \leq 2\exp(-\frac{q}{6\sigma_X \sigma_Y})$ for all $q>0$. Moreover, by Cauchy-Schwarz,  $|\mathbb{E}[\tilde{X} \tilde{Y}]| \leq \sigma_X \sigma_Y.$
Now, first consider $q > 2 |\mathbb{E}[\tilde{X}\tilde{Y}]|$. Then $q - |\mathbb{E}[\tilde{X}\tilde{Y}]| \geq q/2$ and
\[
\mathbb{P}(|\tilde{X}\tilde{Y} - \mathbb{E}[\tilde{X} \tilde{Y}]| >q ) \leq \mathbb{P}(|\tilde{X} \tilde{Y}| > q/2) \leq 2\exp(-\frac{q}{12\sigma_X\sigma_Y}).
\]
Now let $0 < q \leq 2|\mathbb{E}[\tilde{X} \tilde{Y}]|$. We have trivially  
\[
2 \exp\Big(-\frac{q}{12\sigma_X \sigma_Y}\Big) \geq 2\exp\Big(-\frac{2|\mathbb{E}[\tilde{X} \tilde{Y}]|}{12\sigma_X \sigma_Y}\Big) \geq 2\exp\Big(-\frac{2\sigma_X \sigma_Y}{12\sigma_X \sigma_Y}\Big) = 2e^{-1/6} \geq 1;
\]
hence, since every probability is also trivially bounded by $1$, we  conclude that for all $q>0$, 
\[
\mathbb{P}(| \tilde{X} \tilde{Y} - \mathbb{E}\tilde{X} \tilde{Y}| > q) \leq 2\exp(-\frac{q}{12\sigma_X \sigma_Y}).
\]
By taking $q/3$, this proves \eqref{eq:squaresubGproofA}. Now, by a union bound over the events in \eqref{eq:squaresubGproofA}-\eqref{eq:squaresubGproofC},
    \begin{align*}
    \mathbb{P}(|XY - \mathbb{E}[XY] | >q ) \leq 2e^{-\frac{q}{36\sigma_X \sigma_Y}} + 2e^{-\frac{q}{18|m_X| \sigma_Y}} + 2e^{-\frac{q}{18|m_Y| \sigma_X}} &\leq 2e^{-\frac{q}{36\sigma_X \sigma_Y}} + 4e^{-\frac{q}{18c\sigma_X \sigma_Y}} \\
    &\leq 6e^{-\frac{q}{c'\sigma_X\sigma_Y}},
    \end{align*}
    where $c' = \max(36, 18c)$.

    We have to obtain tail bounds of the form $\mathbb{P}(|XY - \mathbb{E}[XY] | > q) \leq 2\exp(-\frac{q}{C \sigma_X \sigma_Y}),$ and by our derivations and since probabilities are bounded by one, we thus need to show $\min (1, 6\exp(-\frac{q}{c'\sigma_X\sigma_Y})) \leq 2\exp(-\frac{q}{C \sigma_X \sigma_Y}).$ 
    This holds for $C = c' \frac{\log 6}{ \log 2}$, proving our result for the given $C>0$.

    Note that if either or both of $m_X$, $m_Y$ is zero, then the respectively probability bound (over $B$ or $C$ or both) is trivially zero so that the bound holds also in that case.
\end{proof}

\begin{lemma}\cite[Theorem 2]{von1965inequalities}\label{lemma:vonbahresseen}
    Let $Z_1, \dots, Z_n$ be a sequence of random variables satisfying $\mathbb{E}[Z_{m+1} \mid \sum_{i=1}^m Z_m] = 0$ a.s.\ for all $1 \leq m \leq n-1$, and $\mathbb{E}[|Z_i|^r] < \infty$ for all $i \in [n]$ and some $1 \leq r \leq 2$. Then, $\mathbb{E}[|\sum_{i=1}^n Z_i|^r] \leq  2\sum_{i=1}^n \mathbb{E}[|Z_i|^r].$
\end{lemma}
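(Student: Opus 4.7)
The plan is to prove this classical inequality of von Bahr and Esseen by induction on $n$, combining a pointwise scalar inequality with the one-step orthogonality hypothesis.

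First, I would establish the elementary scalar inequality
\[
|a+b|^r \;\leq\; |a|^r + r\,\mathrm{sign}(a)\,|a|^{r-1}\,b + 2|b|^r
\]
for all $a,b \in \mathbb{R}$ and all $1 \leq r \leq 2$. By positive homogeneity, for $a \neq 0$ it suffices (after rescaling by $|a|$ and using the symmetry $a \mapsto -a$, $b \mapsto -b$) to show that $g(t) := 1 + rt + 2|t|^r - |1+t|^r \geq 0$ on $\mathbb{R}$. One checks $g(0) = g'(0^{\pm}) = 0$, verifies convexity on each of the regions $(0,\infty)$, $(-1,0)$, $(-\infty,-1)$ by computing $g''$ and using $r(r-1)\geq 0$ together with $|t|^{r-2} \gtrsim |1+t|^{r-2}$ on the relevant subintervals, and handles $t = -1$ by direct substitution (giving $g(-1) = 3 - r \geq 1$). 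The endpoints $r=1$ and $r=2$ are immediate: for $r=2$ the inequality reduces to $(a+b)^2 \leq a^2 + 2ab + 2b^2$, and for $r=1$ it is a rephrasing of the triangle inequality.

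Next, I would apply this inequality pointwise with $a = S_m := \sum_{i=1}^m Z_i$ and $b = Z_{m+1}$, obtaining
\[
|S_{m+1}|^r \;\leq\; |S_m|^r + r\,\mathrm{sign}(S_m)\,|S_m|^{r-1}\,Z_{m+1} + 2|Z_{m+1}|^r.
\]
Because $\mathrm{sign}(S_m)\,|S_m|^{r-1}$ is a measurable function of $S_m$ alone, conditioning on $S_m$ and invoking the assumption $\mathbb{E}[Z_{m+1}\mid S_m]=0$ kills the cross term. Taking unconditional expectations (all integrals are finite by the assumption $\mathbb{E}[|Z_i|^r]<\infty$ and Jensen) yields the one-step recursion
\[
\mathbb{E}[|S_{m+1}|^r] \;\leq\; \mathbb{E}[|S_m|^r] + 2\,\mathbb{E}[|Z_{m+1}|^r].
\]
Iterating from the trivial base case $\mathbb{E}[|S_1|^r] = \mathbb{E}[|Z_1|^r] \leq 2\mathbb{E}[|Z_1|^r]$ then gives $\mathbb{E}[|S_n|^r] \leq 2\sum_{i=1}^n \mathbb{E}[|Z_i|^r]$, as claimed.

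The main obstacle is establishing the scalar inequality with the sharp constant $2$, which is precisely what produces the factor of $2$ in the lemma's conclusion; the convexity verification for $1 < r < 2$ requires some care near the non-smooth points $t = 0$ and $t = -1$. A cleaner alternative, used in the original von Bahr--Esseen paper, replaces this step by the Fourier representation $|x|^r = c_r\int_0^\infty (1-\cos(xu))\,u^{-1-r}\,du$ for $0<r<2$ and manipulates characteristic functions directly, but either route ultimately rests on the same cancellation supplied by the conditional-mean-zero hypothesis.
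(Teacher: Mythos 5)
The paper contains no internal proof of this lemma: it is imported wholesale as Theorem 2 of \cite{von1965inequalities}, so your attempt is being compared against a citation rather than an argument. Judged on its own, your proof follows the standard modern route to the von Bahr--Esseen bound and is correct in structure: the pointwise inequality $|a+b|^r \le |a|^r + r\,\mathrm{sign}(a)|a|^{r-1}b + 2|b|^r$, the observation that the multiplier $\mathrm{sign}(S_m)|S_m|^{r-1}$ is $\sigma(S_m)$-measurable so that the hypothesis $\mathbb{E}[Z_{m+1}\mid S_m]=0$ --- which conditions only on the partial sum, exactly as the lemma assumes, not on the whole past --- annihilates the cross term, and telescoping; this is precisely what makes the weak one-step hypothesis suffice, and the constant $2$ comes out as claimed. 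Two details deserve tightening. First, your convexity verification of $g(t)=1+rt+2|t|^r-|1+t|^r$ on $(-1,0)$ is false as stated: there $g''(t)=r(r-1)\bigl[2(-t)^{r-2}-(1+t)^{r-2}\bigr]\to-\infty$ as $t\to-1^+$, so $g$ is not convex on that whole interval. The scalar inequality is nonetheless true; a clean repair is the $(r-1)$-H\"older continuity of $\phi'(x)=r|x|^{r-1}\mathrm{sign}(x)$, namely $|\phi'(x)-\phi'(y)|\le r\,2^{2-r}|x-y|^{r-1}$, inserted in the Taylor form $\phi(a+b)-\phi(a)-\phi'(a)b=\int_0^1\left(\phi'(a+sb)-\phi'(a)\right)b\,ds\le 2^{2-r}|b|^r\le 2|b|^r$, which even yields the sharper constant $2^{2-r}$; alternatively, patch a neighborhood of $t=-1$ by continuity, since $g(-1)=3-r\ge 1$ and $g>0$ at the boundary of any such neighborhood. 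Second, finiteness of the cross term should be justified by H\"older, $\mathbb{E}\left[|S_m|^{r-1}|Z_{m+1}|\right]\le\left(\mathbb{E}|S_m|^r\right)^{(r-1)/r}\left(\mathbb{E}|Z_{m+1}|^r\right)^{1/r}$, together with the inductive fact $\mathbb{E}|S_m|^r<\infty$; Jensen alone does not deliver this. Incidentally, the statement as printed has a typo ($\sum_{i=1}^m Z_m$ should read $\sum_{i=1}^m Z_i$), which you silently and correctly repaired.
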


\subsection{Strong mixing and concentration}
\label{app:Aux_B}

\begin{lemma}\label{lem:measurablemixing}
    If $(z_t)_{t \in \mathbb{Z}}$ is strongly mixing with strong mixing coefficients $\alpha(k)$ and $g: \mathbb{R}^d \mapsto \mathbb{R}^p$ 
    is a measurable function, then $(g(z_t))_{t \in \mathbb{Z}}$ is also strongly mixing with strong mixing coefficients $\alpha_g(k) \leq \alpha(k)$.
\end{lemma}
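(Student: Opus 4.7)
The plan is to unwind Definition \ref{def:strongmixing} and observe that passing from $(z_t)$ to $(g(z_t))$ only shrinks the underlying $\sigma$-algebras, so the supremum defining the mixing coefficient can only get smaller.

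First, for the original process, let $\mathcal{F}_a^b = \sigma(z_a, \dots, z_b)$, and for the transformed process let $\mathcal{G}_a^b = \sigma(g(z_a), \dots, g(z_b))$. Since $g$ is measurable, each $g(z_t)$ is $\sigma(z_t)$-measurable, and hence $\sigma(g(z_a), \dots, g(z_b)) \subseteq \sigma(z_a, \dots, z_b)$, i.e.\ $\mathcal{G}_a^b \subseteq \mathcal{F}_a^b$ for all $a \le b$. Extending to the one-sided tails, $\mathcal{G}_{-\infty}^t \subseteq \mathcal{F}_{-\infty}^t$ and $\mathcal{G}_{t+k}^\infty \subseteq \mathcal{F}_{t+k}^\infty$.

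Next, I would write out the mixing coefficient of $(g(z_t))$ as
\[
\alpha_g(k) = \sup_{t \in \mathbb{Z}} \sup_{A \in \mathcal{G}_{-\infty}^t,\, B \in \mathcal{G}_{t+k}^\infty} \bigl|\mathbb{P}(A \cap B) - \mathbb{P}(A)\mathbb{P}(B)\bigr|.
\]
Because the inner supremum is taken over a (weakly) smaller collection of events than the one defining $\alpha(k)$, we get
\[
\sup_{A \in \mathcal{G}_{-\infty}^t,\, B \in \mathcal{G}_{t+k}^\infty} \bigl|\mathbb{P}(A \cap B) - \mathbb{P}(A)\mathbb{P}(B)\bigr| \;\le\; \sup_{A \in \mathcal{F}_{-\infty}^t,\, B \in \mathcal{F}_{t+k}^\infty} \bigl|\mathbb{P}(A \cap B) - \mathbb{P}(A)\mathbb{P}(B)\bigr|.
\]
Taking the supremum over $t \in \mathbb{Z}$ on both sides yields $\alpha_g(k) \le \alpha(k)$, which is the claimed bound.

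There is no real obstacle here; the only mildly subtle point is confirming the inclusion of tail $\sigma$-algebras, which follows from the fact that the generated $\sigma$-algebra of a union of generators is the $\sigma$-algebra generated by the union, together with monotonicity of $\sigma(\cdot)$ under set inclusion of generators. For vector-valued $g$, measurability of $g$ is with respect to the Borel $\sigma$-algebra on $\mathbb{R}^p$, which ensures $g(z_t)$ is $\sigma(z_t)$-measurable and the inclusion argument goes through componentwise.
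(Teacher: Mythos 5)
Your proof is correct and follows essentially the same route as the paper's: establish the inclusion $\mathcal{G}_a^b \subseteq \mathcal{F}_a^b$ via measurability of $g$, then note that the supremum defining $\alpha_g(k)$ is over a smaller family of events. Your extra remarks on the one-sided tail $\sigma$-algebras make explicit a step the paper dismisses as elementary, which is fine.
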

\begin{proof}
    By Definition \ref{def:strongmixing}, we have
    \[
    \alpha_g(k) = \sup_{t \in \mathbb{Z}} \sup_{A \in \mathcal{G}_{-\infty}^t, B \in \mathcal{G}_{t +k}^{\infty}}  |\mathbb{P}(A \cap B) -\mathbb{P}(A) \mathbb{P}(B)|,
    \]
    where $\mathcal{G}_a^b = \sigma(g(X_a), \dots, g(X_b))$ is the sigma algebra generated by $g(X_a)$ through $g(X_b)$. Elementarily, we have $\mathcal{G}_a^b \subseteq \mathcal{F}_a^b,$
    where $\mathcal{F}_a^b$ is the corresponding sigma algebra generated by $X_a$ through $X_b$. Hence, by definition, we obtain
    \begin{align*}
    \alpha_g(k) &= \sup_{t \in \mathbb{Z}} \Big(\sup_{A \in \mathcal{G}_1^t, B \in \mathcal{G}_{t +k}^\infty} \big|\mathbb{P}(A \cap B) - \mathbb{P}(A) \mathbb{P}(B)\big|\Big) \\&\leq \sup_{t \in \mathbb{Z}}\Big(\sup_{A \in \mathcal{F}_1^t, B \in \mathcal{F}_{t +k}^\infty} \big|\mathbb{P}(A \cap B) - \mathbb{P}(A) \mathbb{P}(B)\big|\Big)   = \alpha(k)
    \end{align*}
\end{proof}

\begin{lemma}\cite[Chapter 1.2, Theorem 3; Davydov's inequality]{doukhan1995}\label{auxres:davydov}
    Let $(X_t)_{t \in \mathbb{Z}} $ be a stochastic process. Then, assuming $\mathbb{E}[\|X_s\|^p]<\infty$ and $\mathbb{E}[\|X_t\|^q]<\infty$ for $s,t \in \mathbb{Z}$,
    \[
    |\mathrm{Cov}(X_s,X_t)| \leq 8 \alpha(|s-t|)^{\frac{1}{r}}(\mathbb{E} [\|X_s\|^p])^{1/p} (\mathbb{E}[\|X_t\|^q])^{1/q},
    \]
    for any $p , q, r \geq 1$  such that $\frac{1}{p} + \frac{1}{q} + \frac{1}{r} = 1$, and where $\alpha(\cdot)$ is the $\alpha$-mixing coefficient of the process $(X_t)_{t \in \mathbb{Z}}$ as defined in Definition \ref{def:strongmixing}.
\end{lemma}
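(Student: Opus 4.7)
The plan is to prove Davydov's covariance bound by a standard truncation argument that reduces the prescribed-moment case to the much easier bounded case.

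First I would handle the bounded case. Suppose $X_s, X_t$ are nonnegative scalar random variables with $X_s \leq M_1$ and $X_t \leq M_2$ almost surely. By Fubini,
\begin{equation*}
\mathrm{Cov}(X_s,X_t)=\int_0^{M_1}\!\!\int_0^{M_2}\bigl[\mathbb{P}(X_s>u,\,X_t>v)-\mathbb{P}(X_s>u)\mathbb{P}(X_t>v)\bigr]\,du\,dv.
\end{equation*}
The event $\{X_s>u\}$ lies in $\mathcal{F}_{-\infty}^s$ and $\{X_t>v\}$ in $\mathcal{F}_t^\infty$, so the definition of $\alpha(|s-t|)$ bounds the integrand in absolute value by $\alpha(|s-t|)$, giving $|\mathrm{Cov}(X_s,X_t)|\leq M_1 M_2 \alpha(|s-t|)$. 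Splitting a general bounded scalar random variable into its positive and negative parts and running this bound on the four sign pairings contributes a factor of $4$.

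Next I would truncate. For general $X_s,X_t$ with the prescribed moments, set $X_s^{(M_1)}:=X_s\mathbbm{1}\{|X_s|\leq M_1\}$ and $X_t^{(M_2)}:=X_t\mathbbm{1}\{|X_t|\leq M_2\}$, and decompose
\begin{equation*}
\mathrm{Cov}(X_s,X_t)=\mathrm{Cov}(X_s^{(M_1)},X_t^{(M_2)})+\mathrm{Cov}(X_s-X_s^{(M_1)},X_t)+\mathrm{Cov}(X_s^{(M_1)},X_t-X_t^{(M_2)}).
\end{equation*}
The first term is controlled by the bounded case. For the two remainder terms I would apply the Cauchy--Schwarz form of the covariance and then H\"older's inequality together with the Markov bound $\mathbb{P}(|X_s|>M_1)\leq M_1^{-p}\mathbb{E}|X_s|^p$ (and similarly for $X_t$) to obtain control of the form $C\,M_1^{-(p-1)}\|X_s\|_p\|X_t\|_q$ and its symmetric counterpart.

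Finally, I would optimize in $M_1,M_2$. Choosing $M_1=\alpha(|s-t|)^{-1/p}\|X_s\|_p$ and $M_2=\alpha(|s-t|)^{-1/q}\|X_t\|_q$ balances the bounded-case term against the truncation remainders, and the exponent $1/r$ on $\alpha$ emerges from the identity $1/p+1/q+1/r=1$. The vector-valued statement in the lemma follows by applying the scalar inequality coordinatewise or by treating $\|X_s\|$ as a scalar random variable measurable with respect to $\mathcal{F}_{-\infty}^s$ and similarly for $\|X_t\|$. The main obstacle is bookkeeping the numerical constants carefully enough through the H\"older and optimization steps to recover exactly the stated constant $8$; the structural argument itself is routine once the indicator-function observation in the first step is in place.
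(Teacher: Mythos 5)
The paper gives no proof of this lemma: it is imported by citation from Doukhan (1995, Ch.~1.2, Thm.~3), so there is no internal argument to compare against. Your plan reconstructs the classical truncation proof — essentially the argument in the cited reference — and it is sound. The constant bookkeeping you flag as the main obstacle in fact works out exactly with your truncation levels. The Fubini/indicator step gives $|\mathrm{Cov}(X_s,X_t)|\le 4M_1M_2\,\alpha(|s-t|)$ in the bounded case, and for each truncation remainder the cleanest route is the three-exponent H\"older your Markov step is implicitly gesturing at:
\begin{equation*}
\mathbb{E}\bigl[|X_s|\mathbbm{1}\{|X_s|>M_1\}\,|X_t|\bigr]\;\le\;\|X_s\|_p\,\|X_t\|_q\,\mathbb{P}(|X_s|>M_1)^{1/r}\;\le\;\|X_s\|_p\,\|X_t\|_q\,M_1^{-p/r}\,\|X_s\|_p^{p/r},
\end{equation*}
which with $M_1=\alpha^{-1/p}\|X_s\|_p$ equals $\alpha^{1/r}\|X_s\|_p\|X_t\|_q$; together with $\mathbb{E}|U|\,\mathbb{E}|V|\le\|U\|_{p}\|V\|_{q}$-type terms each remainder costs $2\alpha^{1/r}\|X_s\|_p\|X_t\|_q$, the bounded block costs $4\alpha^{1/r}\|X_s\|_p\|X_t\|_q$, and $4+2+2=8$ gives precisely the stated constant. (Your intermediate exponent $M_1^{-(p-1)}$ is not quite right in general — the correct scaling is $M_1^{-p/r}$, or equivalently work with the conjugate exponent $p'$ defined by $1/p'=1/p+1/r$ — but since your final optimization uses the correct truncation levels, this is a presentational slip, not a gap.) One substantive caution on the vector-valued reading: treating $\|X_s\|$ as a scalar random variable only bounds $\mathrm{Cov}(\|X_s\|,\|X_t\|)$, which is not what the paper needs; the right move is your other suggestion, applying the scalar inequality to linear functionals $u^\tr X_s$ and $v^\tr X_t$ (mixing coefficients are inherited under measurable maps, cf.~Lemma~\ref{lem:measurablemixing}, and $|u^\tr X_s|\le\|X_s\|$ preserves the stated moment bounds), which is exactly how the lemma is invoked in the proofs of Lemma~\ref{lem:expexpconc} and Theorem~\ref{thm:currentconv}.
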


\begin{lemma}\cite[Chapter 1.4, Theorem 2]{doukhan1995}\label{auxres:momentstrongmixing}
    Let $\tau > 2$ and let $\mathcal{C}$ be a finite subset of $\mathbb{N}_+$. Furthermore, let $(Z_t)_{t \in \mathbb{Z}}$ be a sequence of centered strongly mixing random variables with strong mixing coefficients $\alpha(k)$ such that $\exists \epsilon>0$, $\exists c \in 2\mathbb{N}$ with $c \geq \tau$ for which
    $\sum_{r = 1}^\infty (r+1)^{c-2} \alpha(r)^{\epsilon / (c+ \epsilon)} <\infty$
    and $\max_{t \in \mathcal{C}} \mathbb E[|Z_t|^{\tau + \epsilon}]<\infty$. Then,
    \[
    \mathbb{E}|\sum_{t\in \mathcal{C}} Z_t|^\tau \leq C\max \Big(\sum_{t \in \mathcal{C}}(\mathbb{E}|Z_t|^{\tau + \epsilon})^{\tau / (\tau + \epsilon)},  \Big(\sum_{t \in \mathcal{C}}(\mathbb{E}|Z_t|^{2 + \epsilon})^{2 / (2 + \epsilon)} \Big)^{\tau/2} \Big) = O(|\mathcal{C}|^{\tau/2})
    \]
\end{lemma}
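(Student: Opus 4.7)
The plan is to establish this Rosenthal-type moment inequality by combining Davydov's covariance bound (Lemma~\ref{auxres:davydov}) with a combinatorial expansion into index partitions. First I would reduce to the case of an even integer exponent: picking $c \in 2\mathbb{N}$ with $c \geq \tau$ as in the hypothesis, I would prove the analogous bound for $\mathbb{E}|\sum_{t\in\mathcal{C}} Z_t|^c$ and then deduce the $\tau$-moment statement by Lyapunov's inequality, noting $\tau+\epsilon \leq c+\epsilon$ so that the $\tau+\epsilon$ moments of the conclusion are controlled by the $c+\epsilon$ moments appearing in the hypothesis. For the even-integer case, I would expand
\[
\mathbb{E}\Big(\sum_{t\in\mathcal{C}} Z_t\Big)^c \;=\; \sum_{(t_1,\dots,t_c)\in\mathcal{C}^c} \mathbb{E}[Z_{t_1}\cdots Z_{t_c}],
\]
and group the $c$-tuples according to the set partition of $\{1,\dots,c\}$ induced by their equality pattern. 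This reduces the problem to bounding $|\mathbb{E}[\prod_{j=1}^k Z_{s_j}^{m_j}]|$ for ordered distinct times $s_1<\dots<s_k$ with multiplicities $m_1+\dots+m_k=c$, together with counting the number of tuples of each shape.

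Second, I would control each such joint moment by applying Davydov's inequality recursively along the sorted indices. For a given configuration, let $g$ be the largest gap $s_{j^\star+1}-s_{j^\star}$. Splitting the product at this gap and applying Lemma~\ref{auxres:davydov} with Hölder exponents chosen so that $1/r = \epsilon/(c+\epsilon)$ yields
\[
\Big|\mathrm{Cov}\Big(\prod_{j\leq j^\star} Z_{s_j}^{m_j},\; \prod_{j>j^\star} Z_{s_j}^{m_j}\Big)\Big| \;\leq\; 8\,\alpha(g)^{\epsilon/(c+\epsilon)} \prod_{j=1}^k \bigl(\mathbb{E}|Z_{s_j}|^{c+\epsilon}\bigr)^{m_j/(c+\epsilon)}.
\]
Iterating across the cascade of sub-gaps produces a product of marginal $(c+\epsilon)$-moments times $\prod_\ell \alpha(g_\ell)^{\epsilon/(c+\epsilon)}$. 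Summing over all configurations of ordered times $s_1<\dots<s_k$ with a prescribed maximum gap $g$ gives a combinatorial weight that scales like $|\mathcal{C}|\cdot g^{k-2}$, so summation in $g$ converges precisely by the hypothesis $\sum_r (r+1)^{c-2}\alpha(r)^{\epsilon/(c+\epsilon)}<\infty$.

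Third, I would aggregate the contributions from distinct multiplicity patterns. The two summands in the conclusion correspond to the two extremal patterns: $k=c/2$ with all $m_j=2$ produces the ``Gaussian'' term $(\sum_t(\mathbb{E}|Z_t|^{2+\epsilon})^{2/(2+\epsilon)})^{c/2}$, while $k=1$ with $m_1=c$ produces the ``diagonal'' term $\sum_t (\mathbb{E}|Z_t|^{c+\epsilon})^{c/(c+\epsilon)}$. Intermediate patterns sit between these extremes and can be bounded by their maximum via Young's inequality applied to the Hölder-style product of marginal moments, which is what produces the clean \emph{max} on the right-hand side. The final $O(|\mathcal{C}|^{\tau/2})$ claim follows by using the uniform moment bound on $\mathbb{E}|Z_t|^{\tau+\epsilon}$: the Gaussian summand is $O(|\mathcal{C}|^{\tau/2})$ and the diagonal summand is $O(|\mathcal{C}|)$, which is dominated since $\tau>2$.

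The hard part will be the combinatorial bookkeeping of the second and third steps: recursing Davydov across successive gaps so that each factor $\alpha(g_\ell)^{\epsilon/(c+\epsilon)}$ emerges with exactly the right exponent (which requires balancing the multiplicities on either side of each split via a specific Hölder choice), and verifying that summation over all partition-respecting time configurations indeed yields the weight $(g+1)^{c-2}$ from the hypothesis. Collapsing the intermediate multiplicity patterns into the two extremal ones via Young's inequality, so that the conclusion is expressed as a single maximum rather than a long sum of mixed terms, is in my view the most delicate accounting step of the argument.
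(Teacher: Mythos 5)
A preliminary remark: the paper does not prove this lemma at all — it is imported verbatim from Doukhan (1995, Ch.~1.4, Theorem~2) and used as an off-the-shelf tool, so there is no in-paper proof to compare against. Your even-integer skeleton (expand $\mathbb{E}\big(\sum_{t\in\mathcal{C}} Z_t\big)^c$ over $c$-tuples, apply Davydov's inequality at the maximal gap with $1/r = \epsilon/(c+\epsilon)$, count configurations with weight of order $|\mathcal{C}|(g+1)^{c-2}$, and collapse the multiplicity patterns into the two extremal Rosenthal terms) is indeed the classical route for the case where the exponent is itself the even integer $c$, and that part of your sketch is sound in outline.

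The genuine gap is your first step, the reduction from general $\tau>2$ to the even integer $c$ via Lyapunov, which fails in two distinct ways. (i) Moment availability: your expansion of $\mathbb{E}\big|\sum_t Z_t\big|^c$ requires $\max_{t}\mathbb{E}|Z_t|^{c+\epsilon}<\infty$ (your own displayed Davydov step makes this explicit), but the hypothesis only supplies moments of order $\tau+\epsilon$, and $\tau$ may be strictly smaller than $c$ (e.g.\ $\tau = 2.5$, $c=4$); you have the direction of control backwards — finiteness of $\tau+\epsilon$ moments says nothing about $c+\epsilon$ moments, and a degree-$c$ polynomial expansion cannot even be formed without $c$-th moments. (ii) Even granting $c+\epsilon$ moments, Lyapunov gives $\mathbb{E}|S|^\tau \leq (\mathbb{E}|S|^c)^{\tau/c}$, and after subadditivity of $x\mapsto x^{\tau/c}$ the diagonal term becomes $\sum_t (\mathbb{E}|Z_t|^{c+\epsilon})^{\tau/(c+\epsilon)}$; but by Lyapunov again $(\mathbb{E}|Z_t|^{\tau+\epsilon})^{\tau/(\tau+\epsilon)} \leq (\mathbb{E}|Z_t|^{c+\epsilon})^{\tau/(c+\epsilon)}$, so what you derive dominates the claimed bound rather than implying it. Handling non-even $\tau$ with only $\tau+\epsilon$ moments is in fact the substantive content of Doukhan's theorem: the even integer $c$ enters only through the mixing-rate summability condition, and the proof in Doukhan's book achieves this by truncating the variables and applying the even-moment combinatorial machinery to the truncated versions, patching the remainder through an interpolation/induction argument in which the truncation level is tuned so that only $\tau+\epsilon$ moments are ever invoked. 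Without some such truncation device, your sketch proves the lemma only in the special case $\tau\in 2\mathbb{N}$ with $c=\tau$.
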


\begin{lemma}\cite[Theorem 1]{merlevede:peligrad:rio2011}\label{auxres:merlevederesult}
   Let $(Z_t)_{t \in \mathbb{Z}}$ be a sequence of real, centered random variables that satisfy $\sup_{t>0} \mathbb{P}(|Z_t | > q) \leq \exp\{1 - (q/b)^{\gamma_2} \}$ for some constants $\gamma_2, b>0 $ and all $q >0$, 
   and that furthermore have strong mixing coefficients $\alpha(k)$ that satisfy  $\alpha(k) \leq \exp\{-ck^{\gamma_1} \}$, for some constants $c, \gamma_1>0$.
    Then, denoting
    \[
    V:= \sup_{M>0} \sup_{t >0} \Big(\V\left(\tau_M(Z_t)\right) + 2\sum_{r >t} \big| \textrm{Cov}\left(\tau_M(Z_t), \tau_M(Z_r) \right) \big| \Big),
    \]
    with $\tau_M(x) := \max \{-M, \min(x, M) \}$, we have, for $T \geq 4$ and $\gamma<1$ defined by $1/\gamma = 1/\gamma_1 + 1/\gamma_2$, and positive constants $C_1, C_2, C_3, C_4$ that depend on $c, b, \gamma_1, \gamma_2$, for all $q >0$,
    \begin{align*}
    \mathbb{P}\Big(\sup_{R \leq T}|\sum_{t=1}^R Z_t| > q \Big) &\leq  T \exp \Big\{-\frac{q^\gamma}{C_1}\Big\} + \exp\Big\{-\frac{q^2}{C_2(1 + TV)}\Big\} + \exp\Big\{-\frac{q^2}{C_3 T}\Big(\frac{q^{\gamma(1-\gamma)}}{C_4(\log q)^\gamma} \Big)\Big\}.
    \end{align*}
\end{lemma}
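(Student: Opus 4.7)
Since the statement is exactly Theorem 1 of Merlev\`ede--Peligrad--Rio, my plan is to reproduce the classical Bernstein--blocking strategy combined with truncation and a coupling step. The three terms in the bound correspond respectively to (i) the mass that escapes the truncation, (ii) a pure Gaussian/variance regime, and (iii) a moderate/large deviation regime governed by the Weibull exponent $\gamma$ defined through $1/\gamma=1/\gamma_1+1/\gamma_2$.

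First, I would choose a truncation level $M=M(q)$ to be optimized and decompose $Z_t=\tau_M(Z_t)+(Z_t-\tau_M(Z_t))$. The Weibull tail assumption $\mathbb{P}(|Z_t|>M)\leq\exp\{1-(M/b)^{\gamma_2}\}$ together with a union bound over $t\in[T]$ controls the untruncated piece, and choosing $M$ proportional to a power of $q$ matching the exponent $\gamma$ yields a contribution of order $T\exp\{-q^\gamma/C_1\}$, which is the first term in the conclusion. What remains is to control $\sum_{t=1}^R\tau_M(Z_t)$ uniformly in $R\leq T$; this sequence is bounded by $M$ and has long-range variance bounded by $V$ by definition of the supremum in the hypothesis.

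Second, I would apply Bernstein's large--small block decomposition: partition $\{1,\dots,T\}$ into alternating large blocks of length $H$ and small separating blocks of length $h$. The small blocks contribute a term of order $MhT/H$ that can be absorbed once $h$ and $H$ are optimized. For the large blocks I would invoke a coupling inequality (Berbee's lemma, or Bradley's reconstruction) that replaces them with independent copies at total variation cost at most $(T/H)\alpha(h)$; taking $h$ of order $(\log T)^{1/\gamma_1}$ pushes this error into the first exponential term because of the exponential mixing assumption $\alpha(k)\leq\exp\{-ck^{\gamma_1}\}$. The decoupled block sums have variance at most $HV$ and are bounded in absolute value by $MH$, so a classical Bernstein inequality gives a bound of the form $\exp\{-q^2/(C_2(1+TV))\}+\exp\{-q^2/(C_3(T+qMH))\}$.

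Finally, jointly optimizing the three parameters $M$, $H$, $h$ against $q$ and $T$ produces exactly the three claimed exponentials. The supremum over $R\leq T$ can then be promoted by either a L\'evy-type maximal inequality adapted to the blocking, or by a dyadic chaining over $R\in\{2^k\}$, each of which contributes only logarithmic factors that are absorbed into the constants $C_i$. The main obstacle is obtaining the precise exponent $q^{\gamma(1-\gamma)}/(\log q)^\gamma$ in the third term: this requires a delicate joint tuning of the truncation level $M$ with the large-block size $H$ so that the Weibull tail at level $MH$ is balanced against the Bernstein range contribution at deviation $q$, and it is here that the interplay between $\gamma_1$ and $\gamma_2$ becomes most sensitive. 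The rest of the argument is, in my view, standard accounting of constants and moment bounds using Davydov's covariance inequality (available here as Lemma \ref{auxres:davydov}) to bound cross-block covariances that are not eliminated by the coupling.
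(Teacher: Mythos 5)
First, a point of calibration: the paper does not prove this statement at all --- it is imported verbatim, with citation, as Theorem 1 of \cite{merlevede:peligrad:rio2011}, and is used as a black box in the proofs of Lemma \ref{lem:expexpconc} and Lemma \ref{auxres:GammaITExpectationBound}. So there is no ``paper proof'' to match; what you have written is an outline of how one might reprove a difficult published result, and as such it is a plan rather than a proof. Judged on its own terms, the plan has two genuine gaps. The more serious one is the coupling step: Berbee's lemma, which you invoke to replace large blocks by independent copies at total-variation cost $(T/H)\alpha(h)$, requires $\beta$-mixing (absolute regularity), whereas the hypothesis here is only $\alpha$-mixing. Under strong mixing alone, the available couplings (e.g.\ Bradley's reconstruction theorem, which you mention as an alternative) couple one block at a time and incur an error that depends not only on $\alpha(h)$ but also on the range $M$ of the truncated variables, which degrades the bound precisely in the regime where $M$ must grow with $q$. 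This is exactly why Merlev\`ede--Peligrad--Rio do \emph{not} argue by blocking-plus-coupling: their proof proceeds through a Cantor-like dyadic decomposition of $\{1,\dots,T\}$ and a direct Laplace-transform (Bernstein-type) analysis of the resulting block structure, building on their earlier 2009 work, in order to get a loss that is exponentially small in the gap length using only $\alpha$-mixing.

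The second gap is one you concede yourself: the derivation of the third term, with its exponent $q^{\gamma(1-\gamma)}/(\log q)^{\gamma}$, is deferred to ``delicate joint tuning'' of $M$ and $H$. But that tuning \emph{is} the theorem --- the first two terms are standard, and any proof sketch that stops before producing the third exponent has omitted the content that distinguishes this result from a routine truncate-and-Bernstein bound. Relatedly, your treatment of the maximum over $R \leq T$ by dyadic chaining, with logarithmic factors ``absorbed into the constants,'' is not automatic: the inequality must hold for all $q>0$ uniformly in $T \geq 4$, and a multiplicative $\log T$ on the probability cannot in general be absorbed into constants sitting inside the exponentials; in the original proof the maximal form is obtained structurally from the block construction rather than by a posteriori chaining. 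In short: your sketch correctly identifies the three regimes the three terms represent, but it would fail as written at the coupling step and leaves the theorem's hardest computation unexecuted. Since the paper itself treats this lemma as a citation, the appropriate course is to do the same rather than attempt a reproof.
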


\begin{lemma}\label{lem:expexpconc}
    Let $(Z_t)_{t \in \mathbb{Z}}$ be a sequence of centered strongly mixing random variables, with strong mixing coefficients $\alpha(k) \leq C\exp\{-ck\}$. Furthermore, if, for some $K>0$ and all $q>0$,
    we have $
    \sup_{t }\mathbb{P}(|Z_t| > q) \leq 2e^{-\frac{q}{K}},$
    then for any $T>0$ we have, for some constant $c_K>0$ that does not depend on $T$ or $q$,
    \begin{align*}
    \mathbb{P}\Big(\big|\frac{1}{T}\sum_{t=1}^TZ_t \big| > q\Big) \leq T\exp\Big(\frac{-(Tq)^\frac{1}{2}}{c_K}\Big) + \exp\Big(\frac{-(Tq)^2}{c_K(1 + TK^2)}\Big) + \exp\Big(\frac{-Tq^2}{c_K} \exp\big(\frac{(Tq)^{\frac{1}{4}}}{c_K (\log(Tq))^{\frac{1}{2}}}\big)  \Big).
    \end{align*}
    In particular, with probability at least $1-\xi$, and for a constant $C_K$ that depends on $K$,
    \[
    \Big|\frac{1}{T}\sum_{t=1}^TZ_t\Big| = O\Big(K\sqrt{\frac{1}{T}\log \frac{1}{\xi}}  + \frac{C_K}{T} \big(\log \frac{T}{\xi}\big)^2 \Big).
    \]
\end{lemma}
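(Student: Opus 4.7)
\textbf{Proof plan for Lemma \ref{lem:expexpconc}.} The strategy is to recognize this as a direct application of the Merlevède--Peligrad--Rio concentration inequality (Lemma \ref{auxres:merlevederesult}) with tail parameter $\gamma_2 = 1$ (sub-exponential) and mixing parameter $\gamma_1 = 1$, yielding $1/\gamma = 1/\gamma_1 + 1/\gamma_2 = 2$ so $\gamma = 1/2$. The displayed bound in the lemma already has exactly the structure produced by this inequality once we plug in $\gamma = 1/2$ and set $S = \sum_{t=1}^T Z_t$, $q \to Tq$, so the task reduces to verifying the two inputs: (i) the tail hypothesis in a form compatible with Merlevède--Peligrad--Rio, and (ii) a uniform bound on the ``asymptotic variance'' quantity $V$.

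\textbf{Verifying the tail hypothesis.} The hypothesis $\sup_t \mathbb{P}(|Z_t| > q) \leq 2e^{-q/K}$ is equivalent, up to absorbing the factor of $2$ into the constant, to a bound of the form $\exp\{1 - (q/b)^{\gamma_2}\}$ with $\gamma_2 = 1$ and $b$ a constant multiple of $K$; this follows because $\log 2 - q/K \leq 1 - q/K$ for all $q \ge 0$. Hence Lemma \ref{auxres:merlevederesult} applies with constants $C_1,\ldots,C_4$ that depend only on $c$, $C$, and $K$, which we collect into a single symbol $c_K$.

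\textbf{Bounding $V$.} Since $|\tau_M(Z_t)| \leq |Z_t|$ and the exponential tail gives $\mathbb{E}[|Z_t|^p] \leq 2p!\, K^p$ for every $p \geq 1$, the truncated variance is $O(K^2)$ uniformly in $M, t$. For the autocovariance, apply Davydov's inequality (Lemma \ref{auxres:davydov}) with $p = q = 4$ and $r = 2$ (so $1/p + 1/q + 1/r = 1$) to obtain $|\mathrm{Cov}(\tau_M(Z_s), \tau_M(Z_t))| \leq 8\alpha(|s-t|)^{1/2}(\mathbb{E}|Z_s|^4)^{1/4}(\mathbb{E}|Z_t|^4)^{1/4} = O(K^2 e^{-c|s-t|/2})$. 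Summing the geometric series in $|s-t|$ yields $V = O(K^2)$ with a constant independent of $T$, which is exactly what produces the $1 + TK^2$ denominator in the middle exponential.

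\textbf{Assembling the tail bound and inverting.} Substituting $S = \sum_{t=1}^T Z_t$, applying the Merlevède inequality at level $Tq$, and using $V = O(K^2)$ and $\gamma = 1/2$ (so that $\gamma(1-\gamma) = 1/4$) gives precisely the three-term bound stated in the lemma. For the ``in particular'' consequence, split the failure probability as $\xi/3$ across the three terms and invert each:
\begin{itemize}
\item[(1)] $T\exp(-(Tq)^{1/2}/c_K) \leq \xi/3$ forces $q \gtrsim (c_K/T)(\log(T/\xi))^2$, producing the $C_K (\log(T/\xi))^2/T$ contribution.
\item[(2)] $\exp(-(Tq)^2/(c_K(1+TK^2))) \leq \xi/3$ forces $Tq \gtrsim \sqrt{c_K(1+TK^2)\log(1/\xi)}$, i.e.\ $q \gtrsim K\sqrt{\log(1/\xi)/T}$ (the additive $1$ is absorbed into the $C_K/T$ term).
\item[(3)] At the $q$ required by (1) and (2), the third term decays super-polynomially since its exponent contains the extra factor $(Tq)^{1/4}/(\log Tq)^{1/2} \to \infty$, so it is dominated by the other two and contributes nothing new.
\end{itemize}
Taking the maximum of the two requirements yields the asserted $O(K\sqrt{\log(1/\xi)/T} + C_K(\log(T/\xi))^2/T)$ bound.

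\textbf{Main obstacle.} No single step is genuinely hard, but the most delicate bookkeeping is the uniform bound on $V$ via Davydov: one must keep track of how the moments of $Z_t$ depend on $K$ so that the final constant in front of $K^2$ is truly independent of $T$ and of the truncation level $M$, and then confirm that the choice $r = 2$ leaves $\alpha(k)^{1/2}$ still summable under exponential mixing. Once $V = O(K^2)$ is established, the rest is algebraic inversion of the Merlevède bound.
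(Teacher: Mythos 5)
Your proposal is correct and takes essentially the same route as the paper's proof: both reduce the lemma to the Merlev\`ede--Peligrad--Rio inequality (Lemma \ref{auxres:merlevederesult}) with $\gamma_1=\gamma_2=1$, hence $\gamma=\tfrac{1}{2}$, verify the tail hypothesis by absorbing the factor $2$ into $\exp\{1-q/K\}$, and bound $V=O(K^2)$ via Davydov's inequality combined with summability of $\alpha(k)^{1/2}$ under exponential mixing. Your explicit Davydov exponents $p=q=4$, $r=2$ and the three-way split of $\xi$ in the inversion step merely spell out details the paper leaves implicit.
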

\begin{proof}
    This is a near immediate application of Lemma \ref{auxres:merlevederesult}, by noting that by  assumption $\gamma_1 = 1$ and $\gamma_2 = 1$ so that $\gamma = \frac{1}{2}$. Indeed,  $\gamma_2 = 1$ follows from
    \[
    \sup_{t>0} \mathbb{P}(|Z_t| >q) \leq 2e^{-\frac{q}{K}} \leq e e^{-\frac{q}{K}} = \exp\Big\{1 - \frac{q}{K}\Big\}.
    \]
    Finally then, to bound their quantity $V$, we have, for the truncation parameter $\tau_M(x) = \max\{\min\{x, M\} ,-M\} $,
    \begin{align*}
    V &:= \sup_{M>0} \sup_{t>0}\Big(\V(\tau_M(Z_t)) + 2\sum_{r>t} |\text{Cov}(\tau_M(Z_t), \tau_M(Z_r))|  \Big)  \\
    &\leq \sup_{t>0}\Big(\V(Z_t) + 2\sum_{r>t} |\text{Cov}(Z_t, Z_r)|  \Big) \leq  
    \sup_{t>0}\Big(\mathbb{E}[Z_t^2] +
    2C\mathbb{E}[Z_t^2] \sum_{k\geq 1}\alpha(k)^\frac{1}{2}\Big)
     = O(K^2) ,
    \end{align*}
    by applying the moment bounds of sub-exponential random variables and using Lemma \ref{auxres:davydov} (Davydov's inequality) to obtain the final inequality and our exponential strong mixing assumption to conclude that $ \sum_{k\geq 1}\alpha(k)^\frac{1}{2} = O(1)$.
\end{proof}
\begin{lemma}\label{auxres:GammaITExpectationBound}
    Let $x_{i,t}$ for $i \geq 1$, $t \in [T]$ be as in Assumption \ref{assumptions:initialcovariates} and let $(\varepsilon_{i,t})$ be as in Assumption \ref{assumptions:initialerrors}. Then, denoting $\Gamma_{i,T} = \frac{1}{T} X_i^\tr X_i$, for $T \geq 3$ and $ 2 \leq p <\infty$, we have that
   $\sup_{i \geq 1} \mathbb{E}[\|\Gamma_{i,T} - \Gamma_i\|^p] = O(T^{-p/2}),$
    and $\sup_{i\geq 1}\mathbb{E}[\|S_{i,T}\|^p] = O(1)$ denoting $S_{i,T} = \frac{1}{\sqrt{T}}X_i^\tr \varepsilon_i$.
\end{lemma}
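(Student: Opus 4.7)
The lemma splits naturally into two parts, which I would prove in sequence: first the covariance deviation bound, then the score moment bound deduced from Part 1 via conditioning on $X_i$. Both parts rely on the same reduction: since $d = O(1)$, the $\epsilon$-net argument of Lemma \ref{lemma:epsilonNets} lets me replace the spectral norm by a maximum over a fixed-cardinality net $\mathcal{N} \subset \mathbb{S}^{d-1}$ with $|\mathcal{N}| \leq 9^d = O(1)$. Using $\|A\|^p \leq 2^p \sum_{v \in \mathcal{N}} |v^\tr A v|^p$ for symmetric $A$ and $\|Z\|^p \leq (4/3)^p \sum_{v \in \mathcal{N}} |v^\tr Z|^p$ for vectors, moment bounds on scalar forms transfer (up to a universal constant) to moment bounds on the full matrix/vector norm.

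\textbf{Part 1 (covariance).} For a fixed $u \in \mathbb{S}^{d-1}$, I write
\[
u^\tr(\Gamma_{i,T} - \Gamma_i) u = \frac{1}{T} \sum_{t=1}^T W_t, \qquad W_t := (u^\tr x_{i,t})^2 - \mathbb{E}[(u^\tr x_{i,t})^2].
\]
Assumption \ref{assumptions:initialcovariates} implies $u^\tr x_{i,t}$ is sub-Gaussian with parameter at most $C_\Gamma \sigma_x^2$ uniformly in $i$, so $W_t$ is centered and sub-exponential with parameters uniform in $(i,t)$, and by Lemma \ref{lem:measurablemixing}, $(W_t)$ inherits the exponential $\alpha$-mixing of $(x_{i,t})$. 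I then apply Lemma \ref{auxres:momentstrongmixing} with $\tau = p$, any even $c \geq p$, and $\epsilon = 1$: the summability condition $\sum_r (r+1)^{c-2} \alpha(r)^{1/(c+1)} < \infty$ is automatic under exponential mixing, and sub-exponentiality provides all needed moments. The lemma delivers $\mathbb{E}|\sum_{t=1}^T W_t|^p = O(T^{p/2})$, hence $\mathbb{E}[|u^\tr(\Gamma_{i,T} - \Gamma_i)u|^p] = O(T^{-p/2})$ with constants uniform in $(i,u)$. Combining with the $\epsilon$-net reduction above yields $\sup_i \mathbb{E}[\|\Gamma_{i,T} - \Gamma_i\|^p] = O(T^{-p/2})$.

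\textbf{Part 2 (score).} Conditioning on $X_i$ is the crucial move here, because $(x_{i,t} \varepsilon_{i,t})_t$ has only polynomial mixing under Assumption \ref{assumptions:initialerrors}, which would restrict the admissible $p$ if one tried to apply Lemma \ref{auxres:momentstrongmixing} directly. Instead, conditional on $X_i$, $\varepsilon_i$ is sub-Gaussian with parameter $\sigma_\varepsilon^2$, so for any $v \in \mathbb{S}^{d-1}$,
\[
v^\tr S_{i,T} = \tfrac{1}{\sqrt T}(X_i v)^\tr \varepsilon_i \;\; \text{is } \text{subG}\!\bigl(\sigma_\varepsilon^2 \|X_i v\|^2 / T\bigr) \text{ given } X_i,
\]
so standard moment bounds for sub-Gaussian scalars yield $\mathbb{E}[|v^\tr S_{i,T}|^p \mid X_i] \lesssim \sigma_\varepsilon^p \|\Gamma_{i,T}\|^{p/2}$. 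Applying the $\epsilon$-net bound followed by the tower property gives $\mathbb{E}[\|S_{i,T}\|^p] \lesssim \mathbb{E}[\|\Gamma_{i,T}\|^{p/2}]$. Finally $\|\Gamma_{i,T}\|^{p/2} \leq 2^{p/2-1}(C_\Gamma^{p/2} + \|\Gamma_{i,T} - \Gamma_i\|^{p/2})$ and Part 1 bounds the last term by $O(T^{-p/4}) = O(1)$, giving $\sup_i \mathbb{E}[\|S_{i,T}\|^p] = O(1)$.

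\textbf{Main obstacle.} The main care point is the application of Doukhan's moment inequality (Lemma \ref{auxres:momentstrongmixing}) in Part 1: one must check that its summability hypothesis is satisfied for every $p \geq 2$ with constants uniform in $i$, which is where exponential mixing (rather than polynomial mixing) of the covariate process is essential. The conditioning trick in Part 2 is the nontrivial design choice that sidesteps the polynomial mixing of $(x_{i,t}\varepsilon_{i,t})_t$ and reduces the problem back to Part 1, at the cost of one application of the tower property.
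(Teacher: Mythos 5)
Your proposal is correct, and your Part 2 is essentially the paper's argument: conditional on $X_i$, the scalar $v^\tr S_{i,T}$ is sub-Gaussian with parameter $\sigma_\varepsilon^2\, v^\tr \Gamma_{i,T} v$, scalar moment bounds give $\mathbb{E}[\|S_{i,T}\|^p \mid X_i] \lesssim \|\Gamma_{i,T}\|^{p/2}$, and one concludes via $\|\Gamma_{i,T}\|^{p/2} \lesssim \|\Gamma_i\|^{p/2} + \|\Gamma_{i,T}-\Gamma_i\|^{p/2}$ and Part 1 (the paper aggregates over the coordinate basis with H\"older's inequality rather than a net, an immaterial difference). Part 1, however, takes a genuinely different route. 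The paper does not use Lemma \ref{auxres:momentstrongmixing} here at all: it first establishes the high-probability tail bound \eqref{eq: xitxitconcentration} on $\|\Gamma_{i,T}-\Gamma_i\|$ — via the sub-exponential product bound of Lemma \ref{lem:squaresubgaussiansubexp}, the Merlev\`ede--Peligrad--Rio-based concentration inequality of Lemma \ref{lem:expexpconc}, and the net Lemma \ref{lemma:epsilonNets} — and then converts tails into moments by integrating the quantile function, $\mathbb{E}\|\Gamma_{i,T}-\Gamma_i\|^p \le \int_0^1 h(\xi)^p\, d\xi$, evaluating the resulting incomplete Gamma integrals. That is precisely where the hypotheses $p \ge 2$ and $T \ge 3$ enter: the secondary $(\log(T/\xi))^2/T$ term in the tail integrates to $O(T^{-p+1})$, which is $O(T^{-p/2})$ exactly when $p \ge 2$. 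Your route — Doukhan's moment inequality applied directly to $W_t = (u^\tr x_{i,t})^2 - \mathbb{E}[(u^\tr x_{i,t})^2]$, then a deterministic net transfer $\|A\| \le 2\max_{v\in\mathcal{N}}|v^\tr A v|$ with $|\mathcal{N}| = O(1)$ — is more direct and avoids the quantile integration entirely; the summability hypothesis is indeed trivial under exponential mixing, uniformly in $i$. What the paper's route buys is reuse: the tail bound \eqref{eq: xitxitconcentration} is needed independently elsewhere (e.g.\ in the proof of Lemma \ref{lem:suffUI}), so the moment bound comes almost for free once it is in hand.

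Two small patches you should make, neither of which threatens the argument. First, Lemma \ref{auxres:momentstrongmixing} requires $\tau > 2$ strictly, so taking $\tau = p$ fails at the endpoint $p = 2$; handle $p = 2$ either by Davydov's inequality (Lemma \ref{auxres:davydov}) applied to the variance of the mixing sum, or by applying the moment lemma with any $\tau > 2$ and invoking Lyapunov's inequality $\mathbb{E}|X|^2 \le (\mathbb{E}|X|^\tau)^{2/\tau}$. Second, in Part 2 you invoke Part 1 at exponent $p/2$, which lies below $2$ whenever $p < 4$; Jensen again fixes this, $\mathbb{E}\|\Gamma_{i,T}-\Gamma_i\|^{p/2} \le \left(\mathbb{E}\|\Gamma_{i,T}-\Gamma_i\|^{2}\right)^{p/4} = O(T^{-p/4})$ — a gloss the paper's own final display shares.
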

\begin{proof}
        First notice that $\mathbb E[\|\Gamma_{i,T} - \Gamma_i\|^p] = \int_0^1 Q_i(\xi)^p d\xi,$
    where $Q_i(\xi) := \inf\{x \in \mathbb{R}: F_i(x)  \geq \xi \}$ is the quantile function, if $F_i(x) = \mathbb{P}(\|\Gamma_{i,T} - \Gamma_i\| \leq x)$.
    Thus, we begin by finding the tail probabilities of
    \begin{align*}
    \Big\|\frac{1}{T}\sum_{t=1}^{T} (x_{i,t} x_{i,t}^\tr - \mathbb E[x_{i,t} x_{i,t}^\tr] )\Big\|.
\end{align*}
Notice that $|\mathbb{E}[\frac{u^\tr x_{i,t}}{\sqrt{u^\tr \Gamma_i u}}]| \leq 1 $, and since $x_{i,t}$ is $\sigma_x^2$-sub-Gaussian and $\lambda_{\max}(\Gamma_i)\leq C_\Gamma<\infty$, we can apply Lemma \ref{lem:squaresubgaussiansubexp} to conclude that
\begin{align*}
  \mathbb{P}(| u^\tr x_{i,t} x_{i,t}^\tr u - \mathbb E [u^\tr x_{i,t} x_{i,t}^\tr u]| > q)  &  = \mathbb{P}\Big(\Big|\frac{u^\tr x_{i,t} x_{i,t}u - \mathbb{E}[u^\tr x_{i,t} x_{i,t}^\tr u]}{{u^\tr \Gamma_i u}}\Big| > \frac{q}{{u^\tr \Gamma_i u}} \Big)  \leq 2\exp\Big(\frac{-q}{C\sigma_x^2 C_{\Gamma}} \Big).
\end{align*}
Since $(u^\tr x_{i,t} x_{i,t}^\tr u)_{t \in \mathbb{Z}}$ is exponentially strongly mixing by Lemma \ref{lem:measurablemixing},
by applying Lemma \ref{lem:expexpconc} we obtain that with probability at least $1-\xi$, for any $u \in \mathbb{S}^{d-1}$,
\begin{align*}
w_{h}(u):= \Big|\frac{1}{T} \sum_{t=1}^{T} u^\tr x_{i,t} x_{i,t}^\tr u - u^\tr \mathbb{E} [x_{i,t} x_{i,t}^\tr ]u\Big| = O\Big(\sigma_x^2 C_\Gamma \sqrt{\frac{1}{T} \log \frac{1}{\xi}} + \frac{C_{\sigma_x, \Gamma}}{T}(\log \frac{T}{\xi})^2 \Big).
\end{align*}
Since $\|\frac{1}{T}\sum_{t=1}^{T} (x_{i,t} x_{i,t}^\tr - \Gamma_i)\| = \sup_{u \in \mathbb{S}^{d-1}} w_{h}(u)$,
by Lemma \ref{lemma:epsilonNets} we obtain, with probability at least $1-\xi$,
\begin{align}\label{eq: xitxitconcentration}
\|\Gamma_{i,T} - \Gamma_i \| \leq C\Big( \sigma_x^2 C_{\Gamma} \sqrt{\frac{1}{T}\big(d  + \log \frac{1}{\xi}\big)} + C_{\sigma_x, \Gamma} \frac{(d + \log \frac{T}{\xi})^2}{T}\Big) =:h(\xi),
\end{align}
    for some $C>0$. Hence, since $Q_i(1-\xi) \leq  h(\xi)$, we obtain
    \begin{align*}
    \mathbb{E} [\|\Gamma_{i,T} - \Gamma_i \| ^p] &\leq \int_0^1 h(\xi)^p d\xi \\
    &\leq 2^{p-1}C^p\Big(\int_{0}^1 \Big(\sigma_x^2 C_\Gamma\sqrt{\frac{1}{T}\big(d  + \log \frac{1}{\xi}\big)}\Big)^pd\xi + \int_0^1 \Big(\frac{C_{\sigma_x, \Gamma}}{T}(d + \log\frac{T}{\xi})^2\Big)^pd\xi  \Big),
    \end{align*}
    and for the first integral, substituting $\psi = \log \frac{1}{\xi}$, we obtain 
    \begin{align*}
        \int_{0}^1 \Big(\sigma_x^2 \sqrt{\frac{1}{T}\big(d  + \log \frac{1}{\xi}\big)}\Big)^pd\xi = \sigma_x^{2p} T^{-p/2}\int_0^\infty (d + \psi)^{p/2}e^{-\psi}d\psi  = \sigma_x^{2p}T^{-p/2}e^d\Gamma\Big(1+\frac{p}{2},d\Big),
    \end{align*}
    where $\Gamma(s,r) := \int_r^\infty t^{s-1} e^{-t} dt $ denotes the incomplete Gamma function. Thus, we have that the first term is $O(T^{-p/2})$, while for the second term we have, also substituting $\psi = \log \frac{1}{\xi}$,
    \begin{align*}
        \int_0^1 \Big(C_{\sigma_x, \Gamma}\frac{(d + \log\frac{T}{\xi})^2}{T}\Big)^pd\xi &= C_{\sigma_x,\Gamma}^p T^{-p}\int_0^\infty (d + \log T + \psi)^{2p}e^{-\psi}d\psi  \\&= C_{\sigma_x,\Gamma}^p T^{-p}e^{d + \log T}\Gamma(2p + 1, d + \log T) \leq C_{\sigma_x,\Gamma}^p T^{-p+1}e^{d}\Gamma(2p + 1, d),
    \end{align*}
    since the incomplete Gamma function is monotonically decreasing in its second argument, such that $\Gamma(2p + 1, d + \log T) \leq \Gamma(2p + 1, d)$ for $T \geq 3$. Hence, since none of these terms depend on $i$, for $p \geq 2$, we have that \begin{align}\label{eq:inLemmaGammiITMoment}
    \sup_{i \geq 1}\mathbb{E}[\|\Gamma_{i,T} - \Gamma_i\|^p] = O(T^{-p/2}).
    \end{align}
    Proceeding with the moments of the score $S_{i,T}$, first observe $\mathbb{E} [\|S_{i,T}\|^p] = \mathbb E\left[\mathbb{E}[\|S_{i,T}\|^p \mid X_i]\right]$. 
    We also note that using H\"older's inequality to pass from an $L_2$ to an $L_p$ norm, for $p>2$,  $r = p/2$ and $r' = \frac{p}{p-2}$ such that $\frac{1}{r} + \frac{1}{r'} = 1$, we have that 
    \begin{equation}
    \label{eq:Holder}
       \|v\|^p = \Big(\sum_{j=1}^d |v_j|^2 \cdot 1\Big)^{p/2} \leq \Big(\Big(\sum_{j=1}^d |v_j|^{2r} \Big)^{1/r} \Big(\sum_{j=1}^d 1 \Big)^{1/r'}\Big)^{p/2} =d^{p/2 - 1} \sum_{j=1}^d |v_j|^p,
    \end{equation}
    Now, conditional $X_i$, we have that $S_{i,T}$ is $\|\Gamma_{i,T}\|^\frac{1}{2}\sigma_\varepsilon$-sub-Gaussian, so that, with $(e_j)_{j=1}^d$ denoting the natural basis of $\mathbb{R}^d$, and using \eqref{eq:Holder} we see that
    \begin{align*}
        \mathbb{E}[\|S_{i,T}\|^p \mid X_i] &\leq d^{\frac{p}{2} - 1}\sum_{j = 1}^d \mathbb{E}[ |e_j^\tr S_{i,T}|^p \mid X_i] \\  
        &\leq d^\frac{p}{2} C_p \sigma_\varepsilon^p \|\Gamma_{i,T}\|^{p/2} p^{p/2} \leq d^\frac{p}{2} C_p \sigma_\varepsilon^p \left(\|\Gamma_i\| + \|\Gamma_{i,T} - \Gamma_i\|\right)^\frac{p}{2} p^{p/2} \\
        &\leq d^\frac{p}{2}C_p2^{\frac{p}{2}-1} \sigma_\varepsilon^p p^\frac{p}{2} \left(\|\Gamma_i\|^\frac{p}{2} + \|\Gamma_{i,T} - \Gamma_i\|^\frac{p}{2} \right),
    \end{align*}
    and, therefore, for $d,p = O(1)$, using \eqref{eq:inLemmaGammiITMoment} and the fact that $\|\Gamma_i\| \leq C_\Gamma$ uniformly over $i$,
    \begin{align*}
   \sup_{i\geq 1} \mathbb{E}[\|S_{i,T}\|^p]  \leq d^\frac{p}{2}C_p 2^{\frac{p}{2}-1} \sigma_\varepsilon^p p^\frac{p}{2} \Big(\sup_{i \geq 1}\|\Gamma_i\|^\frac{p}{2} + \sup_{i\geq1}\mathbb{E}[\|\Gamma_{i,T} - \Gamma_i\|^\frac{p}{2}] \Big) = O(1).
    \end{align*}
\end{proof}

\section{Proofs of Section \ref{sec:dpmean}}
\label{appendix:trimestalg}
In this appendix, we provide privacy guarantees for Algorithm~\ref{alg:trimestGeneral}, which is a slight generalization of Algorithm~\ref{alg:trimestTemp}. In particular, we can recover Algorithm~\ref{alg:trimestTemp} from Algorithm~\ref{alg:trimestGeneral} by taking $\mu_1 = \mu_2 = \frac{\mu}{2\sqrt{R}}$ and $\mu_3 = \frac{\mu}{\sqrt{2}}$. We also use this algorithm to derive the utility guarantees of Theorem \ref{thm:trimest}.
\begin{algorithm}
\caption{DP Trimming-Based Mean Estimation}
\label{alg:trimestGeneral}
\begin{algorithmic}[1]
\STATE \textbf{Input:} Data $\{D_i\}_{i=1}^n$, privacy parameters $(\mu_1,\mu_2,\mu_3)$, radius $B$, 
max rounds $R$, failure probability\ $\xi$\
\STATE \textbf{Set:} $\tau \gets n - \frac{1}{\mu_2}{\sqrt{2\log\frac{{4}R}{\xi}}}$ and  $n_{LB} \gets \max\left(2\tau - n,1\right)$ \algc{A h.p.\ lower bound on the \# of users in the final estimate} 
\STATE \textbf{Init:} $\hat{m}_{r-2} \gets 0$, $\hat{m}_{r-1} \gets 0$, $r \gets 0$, $r^* \gets 0$
\WHILE{$r \le R$ and $r^* = 0$}\afterctrlc{Shrink the radius \& check that enough users fall inside the new ball}\label{while1}
    \STATE $\omega_i \gets \mathbbm{1}\{\|D_i-\hat m_{r-1}\|\le B/2^r\}$, \quad
           $\omega \gets \sum_{i=1}^n \omega_i + \tfrac{1}{\mu_2}Z_r$, \quad $Z_r\sim\mathcal{N}(0,1)$ \label{line:ZrAlg}
    \IF{$\omega < \tau$}\label{line:countcheck}
        \afterctrlc{Too many users fall outside the ball: output the radius of the previous iteration}
        \STATE $S \gets \{ i : \|D_i-{\hat m_{r-2}}\| < B/2^{r-1} \}$, \quad $n_{\tilde{S}} \gets \max(|S|, n_{LB})$,  \quad  $\mathcal{C} \gets \frac{\sqrt{(R - r)\mu_1^2 + (R - r)\mu_2^2 + \mu_3^2}}{\mu_3}$
       \STATE $\hat m \gets {\hat m_{r-2}} + \frac{1}{n_{\tilde{S}}}\sum_{i\in S}(D_i-{\hat m_{r-2}}) + \frac{2B}{2^{r-1}\mathcal{C}\mu_3  n_{LB}}N_{*}$, \quad $N_{*}\sim\mathcal{N}(0,I)$  \label{line:earlyterminationm}
               \STATE $r^* \gets r-1$
    \ELSIF{$r=R$}
        \afterctrlc{Final round: output the smallest possible radius}
        \STATE $S \gets \{ i : \|D_i-\hat m_{R-1}\| < B/2^{R} \}$, \quad  $n_{\tilde{S}} \gets \max(|S|, n_{LB})$
        \STATE $\hat m \gets \hat m_{R-1} + \frac{1}{n_{\tilde{S}}}\sum_{i\in S}(D_i-\hat m_{R-1}) + \frac{2B}{2^{R}\mu_3 n_{LB}}N_*$, \quad $N_*\sim\mathcal{N}(0,I)$ \label{line:Rterminationm}  
                \STATE $r^* \gets R$

    \ELSE  
        \afterctrlc{Otherwise: update the center and shrink the radius}
        \STATE $S \gets \{ i : \|D_i-\hat m_{r-1}\| < B/2^{\,r} \}$, \quad $n_{\tilde{S}} \gets \max(|S|, n_{LB})$
        \STATE $\hat m_r \gets \hat m_{r-1} + \frac{1}{n_{\tilde{S}}}\sum_{i\in S}(D_i-\hat m_{r-1}) + \frac{2B}{2^{r}\mu_1 n_{LB}}N_r$, \quad $N_r\sim\mathcal{N}(0,I)$ \label{line:meanrefinement} 
        \STATE $r \gets r+1$ 
    \ENDIF
\ENDWHILE
\STATE \textbf{Output:} $(\hat{m}, \hat{m}_0, \dots, \hat{m}_{r^* -1} , \hat{m}_{r^*}\mathbf{1}(r^* \neq R),  r^*)$
\end{algorithmic}
\end{algorithm}

\begin{proposition}
\label{prop:trimprivacy}
    The combined output $(\hat{m}, \hat{m}_0, \dots, \hat{m}_{r^* -1} , \hat{m}_{r^*}\mathbf{1}(r^* \neq R),  r^*)$ of Algorithm \ref{alg:trimestGeneral} satisfies $ \sqrt{R\mu_1^2 + R\mu_2^2 + \mu_3^2 }$-GDP.
\end{proposition}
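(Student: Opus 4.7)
The plan is to recognize Algorithm \ref{alg:trimestGeneral} as an adaptive composition of Gaussian mechanisms and apply Proposition \ref{prop:composition}. Each iteration of the while loop produces at most two Gaussian releases: the noisy count $\omega$, and either an intermediate mean refinement $\hat m_r$ (in the ELSE branch) or the final mean $\hat m$ (in the IF or ELIF branches). My first step would be to bound the user-level sensitivity of each of these three query types, so that the per-query GDP parameter can be read off from Proposition \ref{prop:gaussianmechanism}.

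For the count queries, changing a single user alters the indicator $\omega_{i_0}\in\{0,1\}$ only for that user, so $\sum_i\omega_i$ has user-level sensitivity $1$ and the Gaussian noise of scale $1/\mu_2$ yields $\mu_2$-GDP per count release. For the refinement at iteration $r$, I would analyze
\[
g_r(X^n)\;=\;\hat m_{r-1}+\tfrac{1}{n_{\tilde S}}\sum_{i\in S}(D_i-\hat m_{r-1}),
\]
observing that every summand lies in the ball of radius $B/2^r$ centered at $\hat m_{r-1}$, so modifying one user perturbs the unnormalized sum by at most $2B/2^r$. Together with the floor $n_{\tilde S}\geq n_{LB}$, this gives sensitivity at most $2B/(2^r n_{LB})$, which matches the noise scale $2B/(2^r\mu_1 n_{LB})$ and yields $\mu_1$-GDP per refinement. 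The same argument applied to the final output line shows that in the ELIF branch (with radius $B/2^R$ and noise $2B/(2^R\mu_3 n_{LB})$) the release is $\mu_3$-GDP, while in the IF branch (radius $B/2^{r-1}$ and noise $2B/(2^{r-1}\mathcal{C}\mu_3 n_{LB})$) it is $(\mathcal{C}\mu_3)$-GDP with $\mathcal{C}\mu_3=\sqrt{(R-r)\mu_1^2+(R-r)\mu_2^2+\mu_3^2}$.

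Next I would assemble the total privacy loss by adaptive composition. The branch taken at each iteration is a function of the previously released $\omega$ and $\hat m_r$'s, hence post-processing, so Proposition \ref{prop:composition} applies with the stopping time $r^*$ viewed as (random) post-processing of the prior outputs. Along any execution trace, the number of refinements plus count queries is at most $R$ of each, and the inflated factor $\mathcal{C}$ in the IF branch is deliberately designed so that $(\mathcal{C}\mu_3)^2$ absorbs exactly the ``remaining'' budget $\mu_1^2+\mu_2^2$ for every iteration skipped. Accumulating squared GDP parameters along every execution path therefore telescopes to $R\mu_1^2+R\mu_2^2+\mu_3^2$, and Proposition \ref{prop:composition} concludes that the combined output satisfies $\sqrt{R\mu_1^2+R\mu_2^2+\mu_3^2}$-GDP.

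The principal technical obstacle is the sensitivity analysis of the refinement step, because $S$ and $n_{\tilde S}$ both depend on the data: swapping a single user can simultaneously insert or remove a summand and shift the normalization. I would handle this by a short case analysis on whether the swapped user $i_0$ lies in $S(X)$, in $S(X')$, in both, or in neither, and then combining the uniform bound $\|D_i-\hat m_{r-1}\|\leq B/2^r$ on in-ball summands with the $n_{LB}$ floor on the denominator to cap $\|g_r(X^n)-g_r(X'^n)\|$ by $2B/(2^r n_{LB})$ in all cases. Once this lemma is in hand the rest is bookkeeping via Proposition \ref{prop:composition}.
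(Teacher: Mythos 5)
Your proposal is correct and follows essentially the same route as the paper's proof: per-query sensitivity bounds (count sensitivity $1$; mean-refinement sensitivity $2B/(2^{r}n_{LB})$ via the same case analysis on whether the swapped user lies in $S$, in $S'$, in both, or in neither, with the $n_{LB}$ floor absorbing the normalization shift), the Gaussian mechanism combined with post-processing of the previously released centers, and composition in which the inflation factor $\mathcal{C}$ recaptures the unused budget upon early termination so that every execution path totals $R\mu_1^2+R\mu_2^2+\mu_3^2$. Nothing essential is missing.
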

\begin{proof}    
Let's first consider the case where the algorithm terminates at $r=R$.    The combined output is a function of the user data only through the statistics $\sum_{i=1}^n \omega_i$ and $\frac{1}{n_{\tilde{S}}} \sum_{i \in {S}} (D_i - \hat{m}_{r-1})$.   The output uses at most $2R+1$ such statistics, and in particular it uses the data exactly $2R+1$ times if the algorithm terminates at $r = R$.  Up to $R$ times for the mean refinement (line \ref{line:meanrefinement}), up to $R$ times for the number of users check (line \ref{line:countcheck}), and once for the final output (line \ref{line:earlyterminationm} or \ref{line:Rterminationm}).
    By properties of the Gaussian mechanism, all that we have to prove in case the algorithm terminates at $r = R$ is that the sensitivity of the statistic in line \ref{line:countcheck} is $1$, the sensitivity of the statistic in lines \ref{line:earlyterminationm} and \ref{line:Rterminationm} are $\frac{2B}{2^{r-1}n_{LB}}$ and $\frac{2B}{2^RL}$ respectively, and finally that the sensitivity of the statistic in line \ref{line:meanrefinement} is $\frac{2B}{2^rL}$.

    The sensitivity of $\sum_{i=1}^n \omega_i$ is simple. The $\omega_i$ are indicators, one for each user, and changing a user's entire data row can change the indicator by at most $1$. Hence, the sensitivity of this statistic is $1$, and as a result each call to line \ref{line:countcheck} satisfies $\mu_2$-GDP.  Here we use the fact that $\hat{m}_{r-1}$ is separately privatized in the preceding iteration, so that by post-processing no further privacy is leaked.

    Finally, lines \ref{line:earlyterminationm}, \ref{line:Rterminationm}, and \ref{line:meanrefinement} all concern the sensitivity of $\frac{1}{n_{\tilde{S}}}\sum_{i \in S}(D_i - \hat{m}_{r-j})$, where $S$ is an indexing set over $[n]$ with $n_{\tilde{S}}= \max\{|S|, n_{LB}\}$, and $j \in \{1,2\}$, where $j = 2$ in line \ref{line:earlyterminationm} and $j=1$ in lines \ref{line:Rterminationm} and \ref{line:meanrefinement}. Again, $\hat{m}_{r-j}$ was separately privatized such that by post-processing we can regard it as a public quantity. 

    Without loss of generality, assume the data of user $1$ changes. There are four cases to consider. The first case is one where $1 \notin S$ before and after changing the first user's data. In this case, the sensitivity is zero. The second case is one where $1 \in S$ before and after changing the first user's data. In this case, $n_{\tilde{S}} = n_{\tilde{S}}'$, and the sensitivity is bounded as
    \begin{align*}
    \Big\|\frac{1}{n_{\tilde{S}}}\sum_{i \in S}(D_i  - \hat{m}_{r-j})- \frac{1}{n_{\tilde{S}}'}\sum_{i \in S'}(D_i' - \hat{m}_{r-j})\Big\| &\leq \frac{1}{n_{LB}}\Big\|D_i - \hat{m}_{r-j} - (D_i' - \hat{m}_{r-j})\Big\| \\
    &\leq \frac{1}{n_{LB}} \Big(\frac{B}{2^{r-(j-1)}} + \frac{B}{2^{r-(j-1)}}\Big)  = \frac{2B}{2^{r-(j-1)}n_{LB}}.
    \end{align*}
    The third case is where $1 \in S$ initially, but not after changing the first user's data. In this case $n_{\tilde{S}}' \leq n_{\tilde{S}}$. Since $n_{\tilde{S}}'$ is lower bounded by $n_{LB}$, the worst case change in this respect is if $n_{\tilde{S}} = n_{LB}+1$ and $n_{\tilde{S}}' = n_{LB}$. Then, the sensitivity is
    \begin{align*}
    \Big\|\frac{1}{n_{\tilde{S}}}\sum_{i \in S}(D_i - \hat{m}_{r-j}) - \frac{1}{n_{\tilde{S}}'}\sum_{i \in S'}(D_i - \hat{m}_{r-j}) \Big\| &= \Big\|\Big(\frac{1}{n_{\tilde{S}}} - \frac{1}{n_{\tilde{S}}'}\Big)\sum_{i \in S'}(D_i - \hat{m}_{r-j}) + \frac{1}{n_{\tilde{S}}}(D_1 - \hat{m}_{r-j})\Big\|  \\ &\leq \Big\|\frac{-1}{n_{\tilde{S}} n_{\tilde{S}}'}\sum_{i \in S'}(D_i - \hat{m}_{r-j})\Big\| +\Big\| \frac{1}{n_{\tilde{S}}'}(D_1 - \hat{m}_{r-j})\Big\| \\
    &\leq \frac{2B}{2^{r-(j-1)}n_{LB}}.
    \end{align*}

    Finally, the fourth case is where $1 \notin S$, but $1 \in S'$. Thus, $n_{\tilde{S}}' \geq n_{\tilde{S}} \geq n_{LB}$. Again, the worst case change is if $n_{\tilde{S}} = n_{LB}$, $n_{\tilde{S}}' = n_{LB} + 1$, so that we obtain
    \begin{align*}
    \Big\|\frac{1}{n_{\tilde{S}}}\sum_{i \in S}(D_i - \hat{m}_{r-j}) - \frac{1}{n_{\tilde{S}}'}\sum_{i \in S'}(D_i - \hat{m}_{r-j})\Big\|\leq \frac{2B}{2^{r-(j-1)}n_{LB}},
    \end{align*}
    since this is completely symmetrical to the previous case. Thus, the statistics in line \ref{line:earlyterminationm}, where we have $j = 2$, has sensitivity bounded by $\frac{2B}{2^{r-(2-1)}n_{LB}} = \frac{2B}{2^{r-1}n_{LB}}$, whereas the statistics in lines \ref{line:Rterminationm} and \ref{line:meanrefinement} have sensitivity bounded by $\frac{2B}{2^{r-(1-1)}n_{LB}} = \frac{2B}{2^{r}n_{LB}}$.

    As a result, by the Gaussian mechanism, each instance of line \ref{line:countcheck} satisfies $\mu_2$-GDP, each instance of line \ref{line:meanrefinement} satisfies $\mu_1$-GDP, and the final output (line \ref{line:earlyterminationm} or \ref{line:Rterminationm}) satisfies $\mu_3$-GDP. Hence, the combined output of all $\hat{m}_r$, private count checks, and the final output, satisfies $\mu$-GDP. Since our final outputs $(\hat{m}, r^*)$ are a function of the data only through these privatized statistics, by composition $(\hat{m}, r^*)$ satisfies $\sqrt{R\mu_1^2 + R\mu_2^2 + \mu_3^2}$-GDP.

    The above analysis covers the case where the algorithm terminates through reaching $r = R$. The other possibility is that the algorithm terminates through entering the if-statement of line \ref{line:countcheck}. In this case, the privacy budget reserved for the refinements and count checks of the remaining iterations remains unused and can therefore be recaptured for use in the final estimate. In particular, say that the algorithm terminates through the if-statement of line \ref{line:countcheck} with $r^*$. In this case, the algorithm performed $r^*+1$ center refinements, namely in iterations $0$ through $r^*$. Moreover, the algorithm performed $r^* + 1$ user count checks, namely in iterations $1$ through $r^* + 1$. Hence, if the final output is released with a Gaussian mechanism instantiation with privacy parameter $\mathcal{C}\mu_3$, then the total privacy budget expended will be
    \[
    \sqrt{(r^* + 1)\mu_1^2 + (r^* + 1)\mu_2^2 + \mathcal{C}^2 \mu_3^2},
    \]
    and we have to find the parameter $\mathcal{C}$ that allows us to ``recapture'' the unused privacy budget by solving
 \[
    \sqrt{(r^* + 1)\mu_1^2 + (r^* + 1)\mu_2^2 + \mathcal{C}^2 \mu_3^2} = \sqrt{R\mu_1^2 + R\mu_2^2 + \mu_3^2}.
    \]
    The solution to this equation is easily found to be
    \[
    \mathcal{C} = \frac{\sqrt{(R - r^* - 1)\mu_1^2 + (R - r^* - 1)\mu_2^2 + \mu_3^2}}{\mu_3} = \sqrt{(R - r^* - 1)\frac{\mu_1^2 + \mu_2^2}{\mu_3^2} + 1},
    \]
    so that we can inflate $\mu_3$ by this factor in the if-statement of line \ref{line:countcheck} and still achieve our desired privacy guarantee. Notice in particular that if $r^* = R-1$, then $\mathcal{C} = 1$ and there is no recapturing of privacy budget, but for any smaller value of $r^*$, as long as $\mu_1$ or $\mu_2 >0$, this will result in privacy budget recapture.

    \end{proof}

\subsection{Proof of Theorem \ref{thm:trimest}}
\label{app:trimest}
\begin{proof}
     By Proposition \ref{prop:trimprivacy}, the output of Algorithm \ref{alg:trimestGeneral}, namely, $(\hat{m}_0, \dots, \hat{m}_{r^*-1}, \hat{m}_{\mathsf{DP}}, r^*)$ if $r^* = R$ or $(\hat{m}_0, \dots, \hat{m}_{r^*}, \hat{m}_{\mathsf{DP}}, r^*)$ if $r^* < R$, is $\sqrt{R\mu_1^2 + R\mu_2^2 + \mu_3^2 }$-GDP. Moreover, Algorithm \ref{alg:trimestTemp} is an instantiation of Algorithm \ref{alg:trimestGeneral} with $\mu_1 = \mu_2 = \frac{\mu}{2\sqrt{R}}$ and $\mu_3 = \frac{\mu}{\sqrt{2}}$. 
     Hence, $(\hat{m}_{r^*-1}, \hat{m}_{\mathsf{DP}}, r^*)$ satisfies $\mu$-GDP.
     Furthermore, $B^*$ is a deterministic post-processing of $r^*$. Therefore, the output of Algorithm \ref{alg:trimestTemp} satisfies $\mu$-GDP. In the following, we prove the utility part of the theorem. When useful, we will sometimes use the labels $\mu_1 = \mu_2 = \frac{\mu}{2\sqrt{R}}$ and $\mu_3 = \frac{\mu}{\sqrt{2}}$.

Before proceeding with the proof, we define a few events that will be useful in establishing our results in what follows. Let $Z_r$ be the iid standard Gaussians from Line \ref{line:ZrAlg} of Algorithm \ref{alg:trimestGeneral}, and define the event $\mathcal{E}_1$ as
\begin{equation}\label{eq:trimestE1}
\mathcal{E}_1 = \Big\{\max_{r \in [R]} |Z_r| \leq \sqrt{2\log\frac{4R}{\xi}}\Big\},
\end{equation}
which holds with probability at least $1-\xi/2$ by Lemma \ref{suplemma:max_subG}. Let  $N_r$ be the iid $\mathcal{N}(0, I)$ $d$-dimensional standard Gaussians used in Line \ref{line:meanrefinement} of Algorithm \ref{alg:trimestGeneral} and define event $\mathcal{E}_2$ as
\begin{equation}\label{eq:trimestE2}
\mathcal{E}_2 =\Big\{\max_{r \in 0 , \dots, R-1} \|N_r\| \leq \sqrt{d} + \sqrt{2\log\frac{3(R+1)}{\xi}} \Big\},
\end{equation}
which holds with probability at least $1-\xi/3$ by Lemma \ref{suplemma:maxnormals}. Finally, let $N_*$ be the termination Gaussian noise used in Line \ref{line:earlyterminationm} or \ref{line:Rterminationm} in Algorithm \ref{alg:trimestGeneral} and define event $\mathcal{E}_3$ as 
\begin{equation}\label{eq:trimestE3}
    \mathcal{E}_3 = \|N_{*}\| \leq \sqrt{d} + \sqrt{2 \log \frac{6}{\xi}},
\end{equation}
which holds with probability at least $1-\xi/6$.
Then the event $\mathcal{E} = \mathcal{E}_1 \cap \mathcal{E}_2 \cap \mathcal{E}_3$ holds with probability at least $1-\xi$. We analyze the performance of our algorithm conditional on event $\mathcal{E}$. 
 In doing so, we will make use of the following auxiliary Lemma, which we prove after the proof of this Theorem.
     \begin{lemma}\label{lem:propertiesmean}
    Admit the Assumptions of Theorem \ref{thm:trimest}. On the event $\mathcal{E}$, constructed from~\eqref{eq:trimestE1}-\eqref{eq:trimestE3}, the following properties hold for the internal quantities of Algorithm \ref{alg:trimestTemp}:

    \textbf{Property 1:} $n_{LB} \geq n/2$.

    \textbf{Property 2:} $\frac{B}{2^{r^*}} < 2K_2$.

    \textbf{Property 3:} $\hat{m}_\mathsf{DP} = \frac{1}{|S|}\sum_{i \in {S}} D_i + B^* Z$ with $|S| \geq n_{LB}$ for $Z$ a standard normal random vector bounded on $\mathcal{E}$ and $B^* = O(\frac{K_2}{\mu n})$.
    
    \textbf{Property 4.}\label{gfdrgdrg} For some constant $C>0$ and any vector $v \in \mathbb{R}^d$, and the termination step 
    indexing set $S$ of users included in the final estimate $\hat{m}_{\mathsf{DP}}$,
    \[
    \Big\|\frac{1}{|S|} \sum_{i \in S}D_i - v \Big\| \leq \|\bar{D} - v\| + \frac{CK_2}{n\mu} \sqrt{R \log \frac{R}{\xi}},
    \]
    and consequently,
    \[
    \|\hat{m}_{\mathsf{DP}} - v \| \leq \|\bar{D} - v\| + \frac{CK_2}{n\mu}\sqrt{R \log \frac{R}{\xi}} + \frac{CK_2}{n\mu} \sqrt{d + \log \frac{1}{\xi}}.
    \]
\end{lemma}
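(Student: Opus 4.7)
The privacy claim is immediate: Algorithm~\ref{alg:trimestTemp} is exactly the instantiation of Algorithm~\ref{alg:trimestGeneral} with $\mu_1 = \mu_2 = \mu/(2\sqrt{R})$ and $\mu_3 = \mu/\sqrt{2}$, and with this choice $\sqrt{R\mu_1^2 + R\mu_2^2 + \mu_3^2} = \mu$, so Proposition~\ref{prop:trimprivacy} yields $\mu$-GDP of the full output tuple. Since $(\hat{m}_{\mathsf{DP}}, \hat{m}_{r^*-1}, r^*, B^*)$ is a deterministic function of that output (with $B^*$ post-processed from $r^*$), the privacy conclusion follows.

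For utility, I would condition on the good event $\mathcal{E} = \mathcal{E}_1 \cap \mathcal{E}_2 \cap \mathcal{E}_3$ defined in \eqref{eq:trimestE1}--\eqref{eq:trimestE3}. Lemmas~\ref{suplemma:max_subG} and \ref{suplemma:maxnormals} give $\mathbb{P}(\mathcal{E}) \geq 1-\xi$ after splitting the failure budget into $\xi/2 + \xi/3 + \xi/6$. On $\mathcal{E}$, the whole analysis reduces to invoking Lemma~\ref{lem:propertiesmean}, and then Property~4 with $v = m$ combined with $\|\bar D - m\| \leq K_1$ yields
\[
\|\hat m_{\mathsf{DP}} - m\| \leq K_1 + \frac{CK_2}{n\mu}\sqrt{R \log (R/\xi)} + \frac{CK_2}{n\mu}\sqrt{d + \log(1/\xi)},
\]
which is precisely the claimed bound (the second form comes from combining the two privacy terms under $\sqrt{a} + \sqrt{b} \leq \sqrt{2(a+b)}$).

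The real content lies in Lemma~\ref{lem:propertiesmean}, which I would prove in order. Property~1 ($n_{LB} \geq n/2$) is a direct algebraic consequence of the lower bound $n \geq (C'/\mu)\sqrt{R(\log(R/\xi)+d)}$ after choosing $C'$ large enough. Property~2 ($B/2^{r^*} < 2K_2$) requires an induction showing that, on $\mathcal{E}$, the algorithm never terminates prematurely: while $B/2^r$ exceeds $K_2 + \|\hat m_{r-1} - m\|$, every $D_i$ lies inside the ball so $\sum_i \omega_i = n$, and on $\mathcal{E}_1$ the noisy count $\omega$ exceeds $\tau$. Using this to maintain the invariant $\|\hat m_{r-1} - m\| = O(K_1 + \text{noise term})$ for each round (via the Gaussian bounds on $\mathcal{E}_2$ and Property~1 to control $1/n_{LB}$), one shows the algorithm continues until $B/2^r$ drops below $2K_2$, at which point $r^* = r-1$ gives $B/2^{r^*} < 2K_2$. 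The case $r^* = R$ is handled directly by the hypothesis $R \geq C\log(B/K_2)$ for $C$ sufficiently large. The main obstacle is setting up this induction cleanly while keeping the noise terms and the threshold $\tau$ consistent.

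Property~3 then follows because in either termination branch the set $S$ of retained users is precisely the set that passed the noisy count check in the previous round, so on $\mathcal{E}_1$, $|S| \geq \tau - (2\sqrt{R}/\mu)\sqrt{2\log(4R/\xi)} = n_{LB}$; plugging in $B/2^{r^*} < 2K_2$ and $n_{LB} \geq n/2$ gives $B^* = O(K_2/(n\mu))$. Property~4, the one that feeds into the final bound, follows from the decomposition $\bar D_S - \bar D = \frac{|T|}{n|S|}\sum_{i \in S}(D_i - m) - \frac{1}{n}\sum_{i \in T}(D_i - m)$, where $T = [n] \setminus S$; using $\|D_i - m\| \leq K_2$ and $|T| \leq n - n_{LB} \leq (4/\mu)\sqrt{2R\log(4R/\xi)}$ bounds $\|\bar D_S - \bar D\|$ by the desired $O((K_2/(n\mu))\sqrt{R\log(R/\xi)})$; then adding the Gaussian term $B^* \|N_*\|$ controlled on $\mathcal{E}_3$ via Lemma~\ref{suplemma:maxnormals} gives the second privacy term $O((K_2/(n\mu))\sqrt{d + \log(1/\xi)})$. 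Combining Property~4 with $v = m$ closes the proof.
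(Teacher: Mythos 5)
Your proposal is correct and follows essentially the same route as the paper's proof: you condition on the same event $\mathcal{E} = \mathcal{E}_1 \cap \mathcal{E}_2 \cap \mathcal{E}_3$, run the same induction showing the noisy count check cannot fail while $B/2^r$ is large relative to $K_2$ (Property 2, with the $r^* = R$ case closed by $R \geq C\log(B/K_2)$), derive the identical lower bound $|S| \geq \tau - \tfrac{2}{\mu}\sqrt{2R\log(4R/\xi)} = n_{LB}$ from the passed count check (Property 3), and your Property 4 decomposition $\bar{D}_S - \bar{D} = \tfrac{|T|}{n|S|}\sum_{i\in S}(D_i - m) - \tfrac{1}{n}\sum_{i\in T}(D_i - m)$ is algebraically equivalent to the paper's $\bar{D}_S = \bar{D} - \tfrac{|S^{\mathsf{c}}|}{|S|}(\bar{D}_{S^{\mathsf{c}}} - \bar{D})$, yielding the same $O\bigl(\tfrac{K_2}{n\mu}\sqrt{R\log(R/\xi)}\bigr)$ trimming error and the same $B^*\|N_*\|$ Gaussian term on $\mathcal{E}_3$. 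The only cosmetic issue is that you frame the argument as a proof of Theorem~\ref{thm:trimest} and at one point ``invoke'' Lemma~\ref{lem:propertiesmean} itself, which reads circular for the target statement, but your later paragraphs supply genuine proofs of Properties 1--4, so nothing essential is missing.
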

It is now straightforward to prove the utility result of Theorem \ref{thm:trimest}. 
By Property 4 of Lemma \ref{lem:propertiesmean}, taking $v = m$ and using $\|\bar{D}- m\| \leq K_1$  by assumption, it immediately follows
\begin{align}\label{eq:finalmDPbound}
\|\hat{m}_{\mathsf{DP}} - m\|  \leq K_1 + \frac{CK_2}{n\mu}\sqrt{R \log \frac{R}{\xi}} + \frac{CK_2}{n\mu} \sqrt{d + \log \frac{1}{\xi}} = O\Big(K_1+ \frac{K_2}{n \mu}\sqrt{R \log \frac{R}{\xi}}  + \frac{K_2}{n \mu} \sqrt{d + \log \frac{1}{\xi}}   \Big).
\end{align}
\end{proof}

\subsubsection*{Proof of Lemma \ref{lem:propertiesmean}}
\begin{proof}
   We show that on the event $\mathcal{E}$, Properties 1-3 hold deterministically, starting with Property 1.

\textbf{Property 1.} Notice that $2\tau - n = n - \frac{4}{\mu}\sqrt{2R \log \frac{4R}{\xi}}$, so that if $n \geq \frac{8}{\mu} \sqrt{2R \log \frac{4R}{\xi}}$, which we assume, then $2\tau - n \geq n/2 \geq 1$, and hence $n_{LB} = 2\tau - n \geq n/2$.

\textbf{Property 2.} By $\mathcal{E}_1$, we know that if $\sum_{i=1}^n \omega_i = n$, then 
$$\omega \geq \sum_{i=1}^n \omega_i + \frac{2 \sqrt{R}}{\mu} \min_{r \in [R]} Z_r \geq n - \frac{2}{\mu}\sqrt{2R\log\frac{4R}{\xi}} = \tau.$$
 In other words, the algorithm will not terminate as long as $\sum_{i=1}^n \omega_i = n$.

 In what follows, we show that under $\mathcal{E}$, as long as $n \geq\frac{C}{\mu}\sqrt{R(\log\frac{R}{\xi}+ d)}$ for some absolute constant $C>0$, as assumed, the algorithm will not terminate at step $r$ if $B/2^r \geq 4K_2$. Hence, if the algorithm terminates early at step $r < R$, we know that $B/2^{r} < 4K_2$. In this case $r^* = r-1$ so $B/2^{r^*} = B/2^{r-1}  < 2K_2$.

 Now we prove this by induction. In particular, we show that if $\sum_{i=1}^n \omega_i = n$ in step $r-1 \neq R$, then $\sum_{i=1}^n \omega_i = n$ in step $r$, meaning the algorithm will not terminate, so long as $n \geq\frac{C}{\mu}\sqrt{R(\log\frac{R}{\xi}+ d)}$ for some absolute constant $C>0$ and $B/2^{r+1} \geq 2K_2$.

 In the zero-th iteration, by our assumption $\max_i \|D_i\| \leq B$, we have that
\[\sum_{i=1}^n \omega_i = \sum_{i=1}^n \mathbbm{1}\{\|D_i - \hat{m}_{-1}\| \leq B\} = \sum_{i=1}^n \mathbbm{1}\{\|D_i\| \leq B\}= n.\]
Now assume that $\sum \omega_i = n$ in step $r-1 \neq R$. In this case, the algorithm proceeds to line~\ref{step:iterateS} and we notice that $|S| = \sum \omega_i = n$, meaning $n_{\tilde{S}} = |S|$, from which it follows $\frac{1}{n_{\tilde{S}}}\sum_{i \in S}(D_i - \hat{m}_{r-2})  =  \frac{1}{n}\sum_{i =1}^n(D_i - \hat{m}_{r-2})$. Using $\bar{D} := \frac{1}{n} \sum_{i=1}^n D_i$, from line~\ref{step:iteratem}, we find
\[
\hat{m}_{r-1}  =  \hat{m}_{r-2} + \frac{1}{n}\sum_{i =1}^n(D_i - \hat{m}_{r-2}) + \frac{4B\sqrt{R}}{2^{r-1}\mu n_{LB}} N_{r-1} = \bar{D} + \frac{4B\sqrt{R}}{2^{r-1}\mu n_{LB}} N_{r-1}.
\]

Now we want to find conditions under which $\sum_{i=1}^n \omega_i = n$ in step $r$ as well. First notice that on event $\mathcal{E}_2$,
\begin{equation}
\begin{split}
\label{eq:Di_bound}
\|D_i - \hat{m}_{r-1}\| = \Big\|D_i - \bar{D} - \frac{4B\sqrt{R}}{2^{r-1}\mu n_{LB}}N_{r-1} \Big\| &\leq \|D_i - \bar{D}\| + \frac{4B\sqrt{R}}{2^{r-1}\mu n_{LB}}\|N_{r-1}\| \\
&\leq \|D_i - \bar{D}\| + \frac{4B\sqrt{R}}{2^{r-1}\mu n_{LB}}\Big(\sqrt{d} + \sqrt{2\log\frac{3(R+1)}{\xi}}\Big).
\end{split}
\end{equation}
Next, if \textbf{(A)} $\max_i\|D_i - \bar{D}\| \leq B/2^{r+1}$ and \textbf{(B)}
\[
\frac{4B\sqrt{R}}{2^{r-1}\mu n_{LB}}\Big(\sqrt{d} + \sqrt{2\log\frac{3(R+1)}{\xi}}\Big) \leq \frac{B}{2^{r+1}},
\]
then \eqref{eq:Di_bound} implies $\|D_i - \hat{m}_{r-1}\| \leq B/2^r$ for all $i$. From this, we see that in step $r$, we again have $\sum \omega_i = n$. Now we find conditions under which \textbf{(A)} and \textbf{(B)} are true.

First, for condition \textbf{(A)}, by assumption,
\[
\|D_i - \bar{D}\| = \|D_i - m - \bar{D} + m\| \leq \|D_i - m\| + \|\bar{D} - m\| \leq K_1 + K_2 \leq 2K_2;
\]
hence, $\max_i \|D_i - \bar{D}\| \leq 2K_2$. We notice that the final step uses $K_1 \leq K_2$, which is trivially true since $\max_i\|{D}_i - m\| \leq K_2$ implies $\|\bar{D} - m\| \leq K_2$. Therefore, $\max_i\|D_i - \bar{D}\| \leq B/2^{r+1}$ if $2K_2 \leq B/2^{r+1}$.

By property 2, we have that $n_{LB} = 2\tau - n$.  Then,
Condition \textbf{(B)} is satisfied if
\begin{align}\label{eq:alg1ReqonN}
\frac{16\sqrt{R}}{\mu}\Big(\sqrt{d} + \sqrt{2\log\frac{3(R+1)}{\xi}}\Big) \leq n_{LB} = 2 \tau - n = n - \frac{4}{\mu}\sqrt{2R\log\frac{4R}{\xi}},
\end{align}
which is true when $n \geq \frac{C}{\mu}\sqrt{R(\log\frac{R}{\xi}+ d)}$ for some absolute constant $C>0$, which holds by assumption.

Thus, we have established that if $n$ is large enough, the algorithm will not terminate early unless the radius is small enough: \begin{equation}\label{eq:boundedradius}
B/2^{r^*} < 2K_2;
\end{equation} hence, bounding the induced privacy error.  Moreover, we notice that by assumption 
$R \geq C\log({B}/{K_2}),$
for some absolute constant $C>0$; this ensures that in the case that the algorithm does not terminate early and hence $r^* = R$, we have $B/2^{r^*} = B/2^{R} \leq K_2.$

\textbf{Property 3.}  Notice that if we terminate at $r^*$, where either $r^* = R$ or $r^* < R$, the algorithm passed the $\omega$ check when the radius is equal to $B/2^{r^*}$ and the center is $\hat{m}_{r^*-1}$; we label this as $\omega^*$ and note that since $\omega^*$ passed the line check, we necessarily have $\omega^* \geq \tau$.

Furthermore, for convenience, in the following we let 
$$\tilde{\mathcal{C}} := \mathcal{C} \mathbb{I}\{r^* < R\} + \mathbb{I}\{r^* = R\} = \mathbb{I}\{r^* < R\} \sqrt{2 - \frac{r^* + 1}{R}} + \mathbb{I}\{r^* = R\} \geq 1,$$
to accommodate the two types of termination events.

Now we consider $S$ at the termination step and notice that in either case $r^* = R$ or $r^* = r-1 < R$, we have that $|S| + \frac{2\sqrt{R}}{\mu} Z_{r^*} = \omega^*$. Therefore, conditional on $\mathcal{E}_1$ we have
\begin{equation}
\label{eq:tildeScardinality}
|S| = \omega^* - \frac{2\sqrt{R}}{\mu} Z_{r^*} \geq \tau - \frac{2}{\mu}\sqrt{2R\log\frac{4R}{\xi}} = n - \frac{4}{\mu}\sqrt{2R\log\frac{4R}{\xi}} = 2 \tau - n = n_{LB}.
\end{equation}
As a direct consequence, on event $\mathcal{E}_1$, we have $n_{\tilde{S}}  = \max\{|S|, n_{LB}\}  =  |S|$. Since we have $\hat{m}_{\mathsf{DP}} = \hat{m}_{r^* - 1} + \frac{1}{n_{\tilde{S}}} \sum_{i \in S} (D_i - \hat{m}_{r^*-1}) + \frac{2B}{2^{r^*} \tilde{\mathcal{C}}\mu_3 n_{LB}}N_*$ with $n_{\tilde{S}} = |S|$ by \eqref{eq:tildeScardinality}, it follows that $\hat{m}_{\mathsf{DP}} = \frac{1}{|S|} \sum_{i \in S} D_i +\frac{2B}{2^{r^*} \tilde{\mathcal{C}}\mu_3 n_{LB}}N_*$. Finally, for $B^* = \frac{2B}{2^{r^*} \tilde{\mathcal{C}}\mu_3 n_{LB}}$, notice that   $B/2^{r^*} \leq 2K_2$ by Property 2,   $n_{LB} \geq n/2$ by Property 1, and by construction $\tilde{\mathcal{C}} \geq 1$ and $\mu_3 = \frac{\mu}{\sqrt{2}}$, and hence $B^* = O(\frac{K_2}{n\mu})$.

\textbf{Property 4.} 
Denoting our final output as $\hat{m}_{\mathsf{DP}}$ and analyzing our algorithm on the event $ \mathcal{E}_1 \cap \mathcal{E}_2$, we have
\begin{equation}
\begin{split}
\label{eq:mdp_bound}
\|\hat{m}_{\mathsf{DP}}  - v\| 
&= \Big \|\frac{1}{|S|}\sum_{i \in S}D_i +  \frac{2\sqrt{2} B}{2^{r^*}\tilde{\mathcal{C}}\mu n_{LB}}N_{*} - v \Big\| \leq  \Big\|\frac{1}{|S|}\sum_{i \in S}D_i - v  \Big \| + \frac{2\sqrt{2} B}{2^{r^*}\tilde{\mathcal{C}}\mu n_{LB}} \|N_{*}\|.
\end{split}
\end{equation}
In what follows, we study the two terms on the right side of \eqref{eq:mdp_bound} separately.

We first show that the second term on the right side of \eqref{eq:mdp_bound} is $O(\frac{K_2}{n \mu} \sqrt{d + \log \frac{1}{\xi}})$. We recall that by Property 3, $B^* = O(\frac{K_2}{n\mu})$.
Therefore, on event $\mathcal{E}$, we have
\begin{align}
\label{eq:second_bound}
\frac{2\sqrt{2} B}{2^{r^*}\mu n_{LB}} \|N_{*}\|
\leq \frac{C K_2}{ n \mu } \Big(\sqrt{d + \log \frac{1}{\xi}}\Big).
\end{align}

Now consider the first term on the right side of \eqref{eq:mdp_bound}.
Denote $\bar{D}_S := \frac{1}{|S|}\sum_{i \in S} D_i$ and $S^\mathsf{c} := [n] \char`\\ S$.
Now, since $n \bar{D} =|S|\bar{D}_S + |S^\mathsf{c}|\bar{D}_{S^\mathsf{c}}$, it follows that
\begin{equation}\label{eq:DBarDecomp}
\bar{D}_S = \frac{n}{|S|}\bar{D} - \frac{|S^\mathsf{c}|}{|S|}\bar{D}_{S^\mathsf{c}} = \bar{D} - \frac{|S^\mathsf{c}|}{|S|}\left(\bar{D}_{S^\mathsf{c}} - \bar{D}\right);
\end{equation}
hence, we have
\[
 \Big\|\frac{1}{|S|}\sum_{i \in S}D_i - v \Big\| =  \
\left\|\bar{D}_S - v \right\| = \Big\|\left(\bar{D} - v \right) - \frac{|S^\mathsf{c}|}{|S|}\left(\bar{D}_{S^\mathsf{c}} - \bar{D} \right) \Big\| \leq \left \|\bar{D} -v \right\| + \frac{|S^\mathsf{c}|}{|S|} \left\|\bar{D}_{S^\mathsf{c}} - \bar{D} \right\|.
\]
The first term is as desired, and for the second term, we use that
\begin{equation}
\label{eq:DBarDecomp2}
\left \|\bar{D}_{S^\mathsf{c}} - \bar{D} \right \| = \Big\|\frac{1}{|S^\mathsf{c}|}\sum_{i \in S^\mathsf{c}}D_i - \bar{D} \Big\| \leq \frac{1}{|S^\mathsf{c}|}\sum_{i \in S^\mathsf{c}} \left\|D_i - \bar{D} \right\| \leq \frac{1}{|S^\mathsf{c}|} \sum_{i \in S^\mathsf{c}}(\left\|D_i - m \right\| + \left\|\bar{D} - m \right\|) \leq  2K_2.
\end{equation}
Finally, to show the desired bound, we use that
\begin{equation}\label{eq:sizeSComplement}
\frac{|S^\mathsf{c}|}{|S|}  = \frac{n - |S|}{|S|} \leq \frac{n - n_{LB}}{n_{LB}} \leq \frac{\frac{8}{\mu}\sqrt{2R \log \frac{4R}{\xi}}}{n} =  O\Big( \frac{1}{n\mu}  \sqrt{R\log\frac{R}{\xi}} \Big),
\end{equation}
where the first inequality follows from  \eqref{eq:tildeScardinality} and the second inequality from Property 1.

It follows from~\eqref{eq:DBarDecomp}-\eqref{eq:sizeSComplement} that we have
\[
\Big\|\frac{1}{|S|} \sum_{i \in S} D_i - v \Big\| \leq \|\bar{D} - v\| +  \frac{CK_2}{n\mu}\sqrt{R \log \frac{R}{\xi}},
\]
for some constant $C>0$,
and consequently, using also \eqref{eq:second_bound},
\[
\|\hat{m}_{\mathsf{DP}} - v\| \leq \|\bar{D} - v\| + \frac{CK_2}{n\mu}\sqrt{R \log \frac{R}{\xi}} + \frac{CK_2}{n\mu} \sqrt{d + \log \frac{1}{\xi}}
\]
\end{proof}

\subsection{Proof of Lemma \ref{lemma:subgaussianmeanconc}}
\begin{proof}
    The statement about $\|\frac{1}{nT} \sum_{i=1}^n \sum_{t=1}^T w_{i,t} - m\|$ follows directly from Lemma \ref{suplemma:sumSubGaussians} in combination with Lemma \ref{lemma:epsilonNets}. Namely, for $u \in \mathbb{S}^{d-1}$, we have that $u^\tr(w_{i,t} - m)$ is $\sigma^2$-subGaussian. Hence, by Lemma \ref{suplemma:sumSubGaussians}, 
    \begin{align}\label{eq:vectorsubgaussian}
    \mathbb{P}\Big(\Big|u^\tr \Big[\frac{1}{nT}\sum_{i=1}^n \sum_{t=1}^T w_{i,t} - m\Big]\Big| > q\Big) \leq 2\exp \Big(-\frac{nTq^2}{2\sigma^2}\Big),
    \end{align}
    and hence, by setting the right side of \eqref{eq:vectorsubgaussian} equal to $\xi$ and solving for $q$,
    \[
    \mathbb{P}\Big(\Big|u^\tr \Big[\frac{1}{nT}\sum_{i=1}^n \sum_{t=1}^T w_{i,t} - m\Big]\Big| > \sigma\sqrt{\frac{2}{nT} \log \frac{2}{\xi}}\Big) \leq \xi.
    \]
    Then, by Lemma \ref{lemma:epsilonNets},
    \[
    \mathbb{P}\Big(\Big\|\frac{1}{nT}\sum_{i=1}^n \sum_{t=1}^T w_{i,t} - m \Big\| > \frac{4\sigma}{3}\sqrt{\frac{2}{nT} \Big(d \log 9 + \log \frac{2}{\xi}}\Big)\Big) \leq \xi,
    \]
    and our result for this case follows. For $\max_{i \in [n]} \|\frac{1}{T}\sum_{t=1}^T w_{i,t} - m\|$, notice that by following the preceding steps we can obtain, for any $i \in [n]$,
    \[
    \mathbb{P}\Big(\Big\|\frac{1}{T}\sum_{t=1}^T w_{i,t} - m \Big\| > \frac{4\sigma}{3}\sqrt{\frac{2}{T} \Big(d \log 9 + \log \frac{2n}{\xi}}\Big)\Big) \leq \xi/n,
    \]
    so that our result for this case follows by a union bound over $[n]$.
\end{proof}

\subsection{Proof of Corollary \ref{corollary:subgaussianmeanestimation}}
\begin{proof}
    Let $\mathcal{E}$ denote the event that the bounds of Lemma \ref{lemma:subgaussianmeanconc} hold with probability parameter $\xi' = \xi/2$. On $\mathcal{E}$, given the scaling conditions of Theorem \ref{thm:trimest},  Theorem \ref{thm:trimest} provides that with probability at least $1-\xi/2$,
    \[
    \|\hat{m}_{\mathsf{DP}} - m\| = O\Big(\sigma \sqrt{\frac{1}{nT} \Big(d + \log \frac{1}{\xi}\Big)} + \sigma\sqrt{\frac{1}{n^2\mu^2 T}\Big(d + \log \frac{n}{\xi}\Big)\Big(d + \log \frac{1}{\xi} + R\log\frac{R}{\xi}\Big)} \Big),
    \]
    such that by union bound over these two events that both hold with probability at least $1-\xi/2$, and by noticing that $\log \frac{1}{\xi} \leq R \log \frac{R}{\xi}$, we obtain, with probability at least $1-\xi$,
    \[
      \|\hat{m}_{\mathsf{DP}} - m\| = O\Big(\sigma \sqrt{\frac{1}{nT} \Big(d + \log \frac{1}{\xi}\Big)} + \sigma\sqrt{\frac{1}{n^2\mu^2 T}\Big(d + \log \frac{n}{\xi}\Big)\Big(d + \log \frac{1}{\xi} + R\log\frac{R}{\xi}\Big)} \Big).
    \]
\end{proof}

\section{Proofs of Section \ref{sec:dpregression}}
\label{appendix:section3proofs}
\subsection{Proof of Lemma \ref{lem:betasconcentration}}
\label{app:betasconcentration}
\begin{proof}
    We begin by deriving an event under which $\text{Rank}(X_i) = d$, so that $X_i ^\tr X_i$ is invertible and $\hat{\beta}_i = (X_i^\tr X_i)^{-1} X_i^\tr Y_i$. Recall that $\Gamma_i := \mathbb{E}[\frac{1}{T} X_i^\tr X_i]$,  and by Assumption~\ref{assumptions:initialcovariates} we have $\lambda_{\min}(\Gamma_i) \geq c_\Gamma >0$. Define the event $\mathcal{E}_i := \{\|\frac{1}{T} \Gamma_i^{-\frac{1}{2}} X_i^\tr X_i \Gamma_i^{-\frac{1}{2}} - I\| \leq  \frac{3}{4}\}$. Notice that on $\mathcal{E}_i$, for any $v \neq \mathbf{0}\in \mathbb{R}^d$,
    $$v^\tr \frac{1}{T} X_i^\tr X_iv = v^\tr \Gamma_i^{\frac{1}{2}} \Big(\frac{1}{T} \Gamma_i^{-\frac{1}{2}} X_i^\tr X_i \Gamma_i^{-\frac{1}{2}}\Big) \Gamma_i^{\frac{1}{2}} v = (\Gamma_i^{\frac{1}{2}} v)^\tr   \Big(\frac{1}{T} \Gamma_i^{-\frac{1}{2}} X_i^\tr X_i \Gamma_i^{-\frac{1}{2}}\Big) \Gamma_i^{\frac{1}{2}} v>0,$$
    since  $\lambda_{\min}(\frac{1}{T} \Gamma_i^{-\frac{1}{2}} X_i^\tr X_i \Gamma_i^{-\frac{1}{2}}) \geq \frac{1}{4}$ on $\mathcal{E}_i$ and $\Gamma_i^{\frac{1}{2}}$ is invertible so that $\Gamma_i^{\frac{1}{2}} v \neq \mathbf{0}$. Hence $\frac{1}{T} X_i^\tr X_i$ is invertible on $\mathcal{E}_i$. We now derive conditions under which $\mathcal{E}_i$ holds with high probability.

    Let $z_{i,t} := \Gamma_i^{-1/2}x_{i,t}$, which by Assumption \ref{assumptions:initialcovariates} are sub-Gaussian with proxy variance $\sigma_x^2$. 
             Now, for any arbitrary $u \in \mathbb{S}^{d-1}$, 
            let $(W(u)_{i,t})_{t\geq  1} := (u^\tr  z_{i,t}z_{i,t}^\tr u - \|u\|^2)_{t \geq 1}$.  By Lemma \ref{lem:squaresubgaussiansubexp}, we have
            $\mathbb{P}(|W(u)_{i,t}|>q) \leq 2e^{-\frac{q}{C\sigma_x^2}}$ for all $i \in [n]$ and $t \in [T]$ using that $\mathbb{E}[u^\tr  z_{i,t}z_{i,t}^\tr u] = u^\tr  \Gamma_i^{-1/2}\mathbb{E}[x_{i,t}x_{i,t}^\tr] \Gamma_i^{-1/2} u  = u^\tr u$. Moreover, by Lemma \ref{lem:measurablemixing},  since $z_{i,t}$ is exponentially strongly mixing, so is the sequence $(W(u)_{i,t})_{t \in \mathbb{Z}}$  for all $i \in [n]$. Hence, by invoking Lemma \ref{lem:expexpconc} with $K := C\sigma_x^2$
             we have with probability at least $1-\xi$, 
         \begin{align}\label{eq:concentrationWi}
         \Big|\frac{1}{T} \sum_{t=1}^T W(u)_{i,t}\Big| = O\Big(\sigma_x^2\sqrt{\frac{1}{T} \log \frac{1}{\xi}}  + \frac{C_{\sigma_x}}{T} \big(\log \frac{T}{\xi}\big)^2 \Big).
         \end{align}
         Using an $\epsilon$-net argument (Lemma \ref{lemma:epsilonNets}), it follows, with probability at least $1-\xi$,
         \[
         \Big\|\frac{1}{T} \sum_{t=1}^T z_{i,t} z_{i,t}^\tr - I\Big\| = O\Big( \sigma_x^2\sqrt{\frac{1}{T} \big(d + \log \frac{1}{\xi}\big)}  +  \frac{C_{\sigma_x}}{T}\big(d + \log \frac{T}{\xi}\big)^2\Big).
         \]
         In particular, for any $i \in [n]$,  as long as
         $T \geq C(d + \log \frac{nT}{\xi})^2$ for some constant $C>0$,
         \begin{equation}
             \label{eq:prob_Ec}
         \mathbb{P}(\mathcal{E}_i^{\mathsf{c}}) = \mathbb{P}\left( \Big\|\frac{1}{T} \sum_{t=1}^T z_{i,t} z_{i,t}^\tr - I \Big\| > \frac{3}{4}\right) = \mathbb{P}\left( \Big\|\frac{1}{T} \Gamma_i^{-\frac{1}{2}} X_i^\tr X_i \Gamma_i^{-\frac{1}{2}} - I \Big\| > \frac{3}{4}\right) \leq \frac{\xi}{2n}.
         \end{equation}

         Hence, $\mathcal{E}_i$ holds with probability at least $1-\frac{\xi}{2n}$ as long as $T \geq C(d + \log \frac{nT}{\xi})^2$, and we analyze our error bounds on this event.

    We begin with the concentration of $\max_{i \in [n]} \|\hat{\beta}_i - \beta\|$. We first prove concentration of $u^\tr (\hat{\beta}_i - \beta)$ for arbitrary $i \in [n]$ and $u \in \mathbb{S}^{d-1}$, the unit sphere in $\mathbb{R}^d$, and then pass towards our target through union bounds. To that end, for any $i \in [n]$, noting that on $\mathcal{E}_i$ we have $\hat{\beta}_i = (X_i^\tr X_i)^{-1} X_i^\tr Y_i$,
    \begin{equation}
        \begin{split}
            \mathbb{P}(|u^\tr (\hat{\beta}_i - \beta) | > q) &\leq \mathbb{E}_{X_i}\left[\mathbbm{1}_{\mathcal{E}_i} \mathbb{P}\left(|u^\tr (\hat{\beta}_i - \beta) | > q \, \mid \, X_i \right)\right] + \mathbb{P}(\mathcal{E}_i^\mathsf{c}) \\
         &= \mathbb{E}_{X_i} \left[\mathbbm{1}_{\mathcal{E}_i}\mathbb{P}\left(|u^\tr (X_i^\tr X_i)^{-1} X_i^\tr \varepsilon_i| > q \, \mid \, X_i \right) \right] + \mathbb{P}(\mathcal{E}_i^\mathsf{c}) \\ 
    &= \mathbb{E}_{X_i}\Big[\mathbbm{1}_{\mathcal{E}_i} \mathbb{P}\Big( \Big|\frac{u^\tr (X_i^\tr X_i)^{-1} X_i^\tr}{\|u^\tr (X_i^\tr X_i)^{-1} X_i^\tr \|}\varepsilon_i \Big| > \frac{q}{\|u^\tr (X_i^\tr X_i)^{-1} X_i^\tr \|} \, \Big  \mid \, X_i \Big) \Big] + \mathbb{P}(\mathcal{E}_i^\mathsf{c}) \\
    &\leq \mathbb{E}_{X_i}\Big[\mathbbm{1}_{\mathcal{E}_i} \mathbb{P}\Big( |\tilde{u}^\tr \varepsilon_i| > \frac{q}{C_i} \,  \Big\mid \, X_i \Big) \Big] + \mathbb{P}(\mathcal{E}_i^\mathsf{c}),
    \label{eq:sub_gauss1}
    \end{split}
    \end{equation}
   where $C_i := \|(X_i^\tr X_i)^{-1} X_i^\tr \| \geq  \|u^\tr (X_i^\tr X_i)^{-1} X_i^\tr \|$ and $\tilde{u} \in \mathbb{S}^{T-1}$.  Now, considering \eqref{eq:sub_gauss1}, we notice that  $\tilde{u}^\tr \varepsilon_i$ is a $\sigma_\varepsilon^2$-sub-Gaussian random variable by Definition \ref{def:subgaussian} and Assumption \ref{assumptions:initialerrors}, thereby
    allowing us to apply the tail bound of Lemma \ref{suplemma:sumSubGaussians} (with $n=1$) to find 
    \begin{align}\label{Eq:beta_iconcDecomp} 
    \mathbb{P}\left(\left|u^\tr (\hat{\beta}_i - \beta)\right| > q \right) \leq  \mathbb{E}_{X_i}\left[\mathbbm{1}_{\mathcal{E}_i} 2\exp \Big\{ \frac{-q^2}{2\sigma_\varepsilon^2 C_i^2}\Big\}  \right] + \mathbb{P}(\mathcal{E}_i^\mathsf{c}).
    \end{align}
     The key now is to derive a deterministic bound on $C_i$ under $\mathcal{E}_i$.  We have,
    \begin{align}
   \nonumber C_i = \|(X_i^\tr X_i)^{-1} X_i^\tr \| &= \|\Gamma_i^{-\frac{1}{2}} \Gamma_i^{\frac{1}{2}} (X_i^\tr X_i)^{-\frac{1}{2}} (X_i^\tr X_i)^{-\frac{1}{2}} X_i^\tr \| \\
    &\leq \frac{1}{\sqrt{T}}\|\Gamma_i^{-\frac{1}{2}}\| \cdot \Big\|\Gamma_i^\frac{1}{2} \big(\frac{1}{T}X_i^\tr X_i\big)^{-\frac{1}{2}} \Big\| \cdot \|(X_i^\tr X_i)^{-\frac{1}{2}} X_i^\tr\|.
    \label{eq:Ci_bound}
    \end{align}
    First, by Lemma \ref{lem:spectrumbound}, we have that on $\mathcal{E}_i$,
         \begin{equation}
             \label{eq:Ci_bound2}
             \Big\|\Gamma_i^\frac{1}{2}\Big(\frac{1}{T} X_i^\tr X_i\Big)^{-\frac{1}{2}} \Big \| \leq \frac{1}{\sqrt{1 - {3}/{4}}} = 2,
         \end{equation}
    The third term in \eqref{eq:Ci_bound} is equal to $1$ on $\mathcal{E}_i$, because 
    $(X_i^\tr X_i)^{-\frac{1}{2}} X_i^\tr X_i (X_i^\tr X_i)^{-\frac{1}{2}} = I.$
    Thus, using this and \eqref{eq:Ci_bound2} in \eqref{eq:Ci_bound}, we find that on $\xi_i$ we have
    \begin{align}\label{eq:C_iBoundonEi}C_i^2 \leq \frac{1}{T} \Big\|\Gamma_i^\frac{1}{2} \big(\frac{1}{T}X_i^\tr X_i\big)^{-\frac{1}{2}} \Big\|^2 \|\Gamma_{i}^{-\frac{1}{2}}\|^2 \leq \frac{4}{T}  \|\Gamma_{i}^{-\frac{1}{2}}\|^2.\end{align}
Using this bound in \eqref{Eq:beta_iconcDecomp}, we have shown
    \begin{align}
    \mathbb{P}\left(\left|u^\tr (\hat{\beta}_i - \beta)\right| > q \right)
  &\leq 2\mathbb{E}_{X_i} \Big[\mathbbm{1}_{\mathcal{E}_i} \exp\Big\{\frac{-q^2}{2\sigma_\varepsilon^2 C_i^2}\Big\}\Big] + \mathbb{P}(\mathcal{E}_i^{\mathsf{c}}) \leq 2 \exp\Big\{\frac{-T q^2}{8\sigma_\varepsilon^2 \max_{j} \|\Gamma_{j}^{-{1}/{2}}\|^2}\Big\}+ \mathbb{P}(\mathcal{E}_i^{\mathsf{c}}).  \label{Eq:beta_iconcDecomp_new}
    \end{align}

Hence, by  \eqref{eq:prob_Ec} and \eqref{Eq:beta_iconcDecomp_new}, for any $u \in \mathbb{S}^{d-1}$, when $ q \geq 2\sigma_\varepsilon \max_{1 \leq j \leq n} \|\Gamma_j^{-\frac{1}{2}}\|\sqrt{\frac{2}{T} \log \frac{4n}{\xi}}$,
        \begin{align}\label{eq:UnitDirectionBetaIConc}
            \max_{ i \in [n]}\mathbb{P}\left(\left|u^\tr (\hat{\beta}_i - \beta)\right| > q \right)
            &\leq 2\exp \Big\{\frac{-Tq^2}{8\sigma_\varepsilon^2\max_{i}\|\Gamma_i^{-1/2}\|^2} \Big\} + \frac{\xi}{2n} \leq \frac{\xi}{n}.
        \end{align}
        We can now apply a union bound over $[n]$ and Lemma \ref{lemma:epsilonNets} to \eqref{eq:UnitDirectionBetaIConc} to obtain
        \begin{align*}
            &\mathbb{P}\Big( \max_{i \in [n]} \left\|\hat{\beta}_i - \beta \right\| > \frac{4}{3}2\sigma_\varepsilon \max_{i \in [n]}\|\Gamma_i^{-\frac{1}{2}}\| \sqrt{\frac{2}{T}\log \frac{9^d4n}{\xi}}\Big) \\
            & \quad  \leq n\max_{i \in [n]} \Big[\mathbb{P}\Big( \left \|\hat{\beta}_i - \beta \right\| > \frac{4}{3}2\sigma_\varepsilon \max_{i \in [n]}\|\Gamma_i^{-\frac{1}{2}}\| \sqrt{\frac{2}{T}  \log \frac{9^d4n}{\xi}}\Big)  \Big] \leq n \frac{\xi}{n} = \xi,
        \end{align*}
         concluding our proof that with probability at least $1-\xi$,
         \[
         \max_{i \in [n]}\left\|\hat{\beta}_i - \beta \right\| = O\Big(\sigma_\varepsilon\max_{i \in [n]}\|\Gamma_i^{-\frac{1}{2}}\| \sqrt{\frac{1}{T}\big(d + \log \frac{n}{\xi}\big)} \Big).
         \]

        We proceed to the second part of our theorem concerning the concentration of $\bar{\beta} - \beta$. The proof strategy is similar to that used above.
        We notice that $u^\tr (\bar{\beta} - \beta) = u^\tr (\frac{1}{n} \sum_{i=1}^n \hat{\beta}_i - \beta) = \frac{1}{n} \sum_{i=1}^n u^\tr ( \hat{\beta}_i - \beta)$ for any $u \in \mathbb{S}^{d-1}$. Then, as in \eqref{eq:sub_gauss1} and \eqref{Eq:beta_iconcDecomp},
        using Lemma \ref{suplemma:sumSubGaussians} now with $n$ independent $\|u^\top (X_i^\tr X_i)^{-1} X_i^\tr \|^2 \sigma_\varepsilon^2$-sub-Gaussian random variables %
        conditional on $X$,  using that $\varepsilon_i$ is independent of $\varepsilon_j$ given $X$, and defining $\mathcal{E} := \cap_{i=1}^n \mathcal{E}_i$,
        \begin{align*}
        \mathbb{P}\left(\left|u^\tr (\bar{\beta} - \beta) \right| > q \right)  &\leq 2\mathbb{E}_X \Big[\mathbbm{1}_\mathcal{E} \exp\Big\{\frac{-n^2q^2}{2\sigma_\varepsilon^2 \sum_{i=1}^n C_i^2} \Big\} \Big ] + \mathbb{P}(\mathcal{E}^\mathsf{c})  \leq 2\exp \Big\{\frac{-n^2Tq^2}{8\sigma_\varepsilon^2 \sum_{i=1}^n \|\Gamma_i^{-1/2}\|^2} \Big\} + \frac{\xi}{2} \leq \xi,
        \end{align*}
         where the final inequality follows for
         \[
         q = \Omega \Big(\sigma_\varepsilon\sqrt{\frac{1}{n} \sum_{i=1}^n\|\Gamma_i^{-1/2}\|^2} \sqrt{\frac{1}{nT}\log \frac{1}{\xi}} \Big).
         \]
         Therefore, there exists $C'>0$ such that for all $u\in\mathbb{S}^{d-1}$,
         \begin{align} \label{eq:barBetaDecomp} 
         \mathbb{P} \Big( \left|u^\tr (\bar{\beta} - \beta)\right| > C\sigma_\varepsilon\sqrt{\frac{1}{n} \sum_{i=1}^n\|\Gamma_i^{-1/2}\|^2} \sqrt{\frac{1}{nT}\log \frac{1}{\xi}}\Big) \leq \xi.
         \end{align}
         Finally, then,  again applying Lemma \ref{lemma:epsilonNets} to \eqref{eq:barBetaDecomp} we obtain, with probability at least $1-\xi$,
         \[
         \|\bar{\beta} - \beta\| = O \Big(\sigma_\varepsilon\sqrt{\frac{1}{n} \sum_{i=1}^n\|\Gamma_i^{-1/2}\|^2} \sqrt{\frac{1}{nT} \Big(d + \log \frac{1}{\xi}\Big)} \Big).
         \]
\end{proof}

\subsection{Proof of Lemma \ref{lem:suffUI}}
\label{app:suffUI}
\begin{proof}
For any $i \in [n]$, let $\Gamma_{i,T} := \frac{1}{T}X_i^\tr  X_i$ be the symmetric, positive semi-definite matrix of interest. Then, using the definition of the operator norm,
    \begin{align*}
    \mathbb{E}\left[\|\Gamma_{i,T}^{-1}\|^{k + \delta}\right] = \mathbb{E}\big[\lambda_{\min}(\Gamma_{i,T})^{-(k + \delta)}\big] &= \int_{0}^\infty \hspace{-2pt} \mathbb{P}\big(\lambda_{\min}(\Gamma_{i,T})^{-(k + \delta)} > r\big)dr \\
    &= \int_{0}^\infty \hspace{-2pt}  \mathbb{P}\big(\lambda_{\min}(\Gamma_{i,T}) < r^{\frac{-1}{k + \delta}}\big)dr \leq \int_0^\infty \hspace{-2pt}  \mathbb{P}(\lambda_{\min}(\Gamma_{i,T}) \leq r^{\frac{-1}{k+\delta}})dr. 
\end{align*}
Now, through a change of variables,
    \begin{align*}
\mathbb{E}\left[\big\|\Gamma_{i,T}^{-1}\big\|^{k + \delta}\right] \leq \int_{0}^\infty \mathbb{P}\big(\lambda_{\min}(\Gamma_{i,T}) \leq r^{\frac{-1}{k + \delta}}\big)dr &= (k + \delta) 
    \int_{0}^\infty r^{-(k + \delta) - 1}\mathbb{P} \big(\lambda_{\min}(\Gamma_{i,T}) \leq r\big) dr \\
    &\numberthis \label{eq:integralInGammaiT}\leq C\Big(\int_0^{r_0} r^{-(k + \delta) - 1} \mathbb{P}\big(\lambda_{\min}(\Gamma_{i,T}) \leq r \big)dr + \frac{1}{r_0}\Big),
\end{align*}
where the final step is true for any $r_0 >0$. Thus, to complete the proof it suffices to find a $r_0 >0$ and $T_0$ such that for all $i \in [n]$ and $T \geq T_0$, we have 
\begin{equation}
    \label{eq:anticoncentration_lambda_min}
\int_0^{r_0} r^{-(k+ \delta) - 1}\mathbb{P}(\lambda_{\min}(\Gamma_{i,T}) \leq r)dr < \infty.
\end{equation}
To that end, we 
note that for any $u, v \in \mathbb{S}^{d-1}$ and any symmetric, positive semi-definite $\Gamma_{i,T}$,
\[
\big|u^\tr  \Gamma_{i,T} \, u - v^\tr \Gamma_{i,T} \, v\big|  = \big|(u - v)^\tr \Gamma_{i,T} \, u + v^\tr\Gamma_{i,T} \,(u - v)\big|\leq 2\|\Gamma_{i,T}\|\cdot \|u -v \|.
\]
Now let $\mathcal{N}$ be an $\epsilon$-net of $\mathbb{S}^{d-1}$, meaning that for all $u \in \mathbb{S}^{d-1}$, there exists $\tilde{u} \in \mathcal{N}$ such that $\|v - \tilde u\| \leq \epsilon$. Then, let $v_i := \arg \min_{u \in \mathbb{S}^{d-1}} u^\tr \Gamma_{i,T} \, u$, and let $u_i \in \mathcal{N}$ be such that $\|u_i - v_i\| \leq \epsilon$. Then we know that 
\begin{align*}
    \lambda_{\min}(\Gamma_{i,T})=v_i^\tr \Gamma_{i,T} \, v_i \geq u_i^\tr \Gamma_{i,T} \, u_i - 2\|\Gamma_{i,T}\|\cdot \|u_i - v_i\| &\geq u_i^\tr \Gamma_{i,T} \, u_i - 2\epsilon\|\Gamma_{i,T}\| \\
    &\geq \min_{u \in \mathcal{N}}u^\tr \Gamma_{i,T} \, u - 2\epsilon \|\Gamma_{i,T}\|.
\end{align*}
So for any for any $L>0$,
\begin{align*}
\Big\{\lambda_{\min}(\Gamma_{i,T}) \leq r\Big\} &\subseteq \Big\{\min_{u \in \mathcal{N}}u^\tr \Gamma_{i,T} \, u \leq r + 2\epsilon \|\Gamma_{i,T}\| \Big\} \\
&= \Big\{\min_{u \in \mathcal{N}}u^\tr \Gamma_{i,T} \, u \leq r + 2\epsilon \|\Gamma_{i,T}\| \Big\} \cap \Big(\big\{\|\Gamma_{i,T}\| \leq L \big\} \cup \big\{\|\Gamma_{i,T}\| > L \big\}\Big)  \\
&\subseteq \Big\{\min_{u \in \mathcal{N}} u^\tr \Gamma_{i,T} \, u \leq r +2\epsilon L\Big\} \cup \big\{\|\Gamma_{i,T}\| > L \big\}.
\end{align*}
 Then, by a union bound over these events, we obtain
\begin{equation}
\label{eq:new_lambda_bound}
    \mathbb{P}\left(\lambda_{\min}(\Gamma_{i,T}) \leq r \right) \leq \mathbb{P}\big(\min_{v \in \mathcal{N}} v^\tr \Gamma_{i,T} \, v \leq r +2\epsilon L \big) + \mathbb{P}\left(\|\Gamma_{i,T}\| > L\right).
\end{equation}
Recall, we are trying to demonstrate the bound in \eqref{eq:anticoncentration_lambda_min}. Consider the second term on the right side of \eqref{eq:new_lambda_bound}.
By \eqref{eq: xitxitconcentration} and the triangle inequality, with probability at most $\xi$,
\[
\|\Gamma_{i,T} \| > \|\Gamma_i\| + C_\Gamma\sigma_x^2 \sqrt{\frac{1}{T} \big(d + \log \frac{1}{\xi}\big)} + \frac{ C_{\sigma_x, \Gamma}}{T}\big(\log \frac{T}{\xi}\big)^2,
\]
so if we set $L := C_\Gamma +C_\Gamma \sigma_x^2 \sqrt{\frac{1}{T} (d + \log r^{-(k + \delta + \gamma)})} + \frac{C_{\sigma_x,\Gamma}}{T} (\log [Tr^{-(k+\delta+\gamma)}])^2 $, then $\mathbb{P}(\|\Gamma_{i,T}\| > L) \leq r^{k + \delta + \gamma}$. Thus for this choice of $L$, we have from \eqref{eq:new_lambda_bound},
\begin{equation}
\label{eq:new_lambda_bound_2}
    \mathbb{P}\left(\lambda_{\min}(\Gamma_{i,T}) \leq r \right) \leq \mathbb{P}\big(\min_{v \in \mathcal{N}} v^\tr \Gamma_{i,T} \, v \leq r +2\epsilon L \big) + r^{k + \delta + \gamma},
\end{equation}
so to complete the proof, we show an upper bound for the first term on the right side of \eqref{eq:new_lambda_bound_2}.
Finally, set $\epsilon = \frac{r}{2L}$, where we recall that $0 < r \leq r_0$. Now let $r_0 \leq e^{-1}$, then we will aim to upper bound $L$ defined above: first notice
\begin{align*}
    C_\Gamma \sigma_x^2 \sqrt{\frac{1}{T}\Big(d - (k+\delta + \gamma)\log r\Big)} \leq C_\Gamma \sigma_x^2 \sqrt{d + (k+\delta+\gamma)|\log r|} \leq C\left(1 + |\log r|^2\right) \leq C_1 |\log r|^2,
\end{align*}
for constants $C_1, C>0$ not depending on $r$ and $T$, whereas defining $u := (k + \delta + \gamma)|\log r|$
\begin{align*}
    \frac{C_{\sigma_x,\Gamma}}{T} \Big(\log [Tr^{-(k+\delta+\gamma)}]\Big)^2  \leq \frac{C_{\sigma_x,\Gamma}}{T}(\log T + u)^2 \leq C_{\sigma_x,\Gamma}(4 + 2u^2) \leq c(1 + |\log r|^2) \leq C_2|\log r|^2,
\end{align*}
for constants $c, C_2>0$ not depending on $r$ and $T$. It follows that $L^d \leq C_3^d (\log r)^{2d}$ for a constant $C_3>0$ not depending on $r$ and $T$.
Using Corollary 4.2.13 of \cite{vershynin2018high} and the upper bound on $L$, we have  $|\mathcal{N}| \leq ({6L}/{r})^d \leq 6^d C_3^d |\log r|^{2d} r^{-d} =: C'|\log r|^{2d} r^{-d}$.
Thus, using a union bound over $\mathcal{N}$,
\begin{equation}
\label{eq:prob_bound1}
\mathbb{P}\big(\min_{v \in \mathcal{N}} v^\tr \Gamma_{i,T} \, v \leq r +2\epsilon L \big) \leq  |\mathcal{N}| \sup_{v \in \mathcal{N}} \mathbb{P}(v^\tr \Gamma_{i,T} \, v \leq 2r)  \leq  C' |\log r|^{2d} r^{-d} \sup_{v \in \mathcal{N}} \mathbb{P}(v^\tr \Gamma_{i,T} \, v \leq 2r),
\end{equation}
so that it remains to control $\mathbb{P}(v^\tr \Gamma_{i,T} \, v \leq 2r)$ for an arbitrary $v \in \mathbb{S}^{d-1}$. Fixing $v \in \mathbb{S}^{d-1}$, and letting that $w_{i,t} := x_{i,t}^\tr v$, by exploiting the assumed bound on $f_{W_i}$, we have
\begin{align*}
\mathbb{P}\big(v^\tr  \Gamma_{i,T} \,   v \leq 2r\big) &= \mathbb{P}\Big(\sum_{t=1}^T (x_{i,t}^\tr v)^2 < 2rT\Big) = \int_{\|w\| \leq \sqrt{2rT}} f_{W_i}(w) dw \leq  K^T\int_{\|w\| \leq \sqrt{2rT}}1 dw.
\end{align*}
From the above and by the inequality $\Gamma(z) > (\frac{z}{e})^z\sqrt \frac{2\pi}{z}$ (e.g.\ \citealt{olver2010nist}), we see that
\begin{align}
\label{eq:prob_bound2}
\mathbb{P}\big(v^\tr  \Gamma_{i,T} \,   v \leq 2r\big) 
\leq  K^T\text{Vol} (B_T(\sqrt{2rT})) &= K^T \, \frac{\pi^{T/2}(\sqrt{2rT})^T}{\Gamma(T/2 + 1)}  \leq (4\pi K^2er)^{T/2}.
\end{align}
Plugging the bounds from \eqref{eq:prob_bound1}-\eqref{eq:prob_bound2} into \eqref{eq:new_lambda_bound_2}, we have
\begin{align}
\nonumber \mathbb{P}\left(\lambda_{\min}(\Gamma_{i,T}) \leq r \right) 
&\leq C' |\log r|^{2d} r^{-d} (4\pi K^2 er)^{T/2} + r^{k + \delta + \gamma} \\
&= C'(4\pi K^2 e)^d|\log r|^{2d} (4\pi K^2 er)^{T/2 - d} + r^{k + \delta + \gamma}, \label{eq:new_lambda_bound_3}
\end{align}
where $C'$ is a constant that does not depend on $r$ and $T$. We aim to show \eqref{eq:anticoncentration_lambda_min} using the bound in \eqref{eq:new_lambda_bound_3}.
Letting $r \leq r_0:= \min\{\frac{1}{4\pi eK^2}, e^{-1}\}$, we have that $4\pi K^2 er \leq 1$, so $(4\pi K^2 er)^{T/2 - d}$ is decreasing in $T$. Now let $T_0 \geq 2d$ be some threshold, and then for $T \geq T_0$,
\[
(4\pi K^2 er)^{T/2 - d} \leq (4\pi K^2 er)^{T_0/2 - d},
\]
so we obtain
\begin{align}\label{eq:finalGammaLambBound}
    \mathbb{P}(\lambda_{\min}(\Gamma_{i,t}) \leq r) \leq C' (4\pi K^2e)^{T_0/2} |\log r|^{2d} r^{T_0/2 - d} + r^{k+\delta+\gamma},
\end{align}
and notice also that by \eqref{eq:finalGammaLambBound} we obtain
\[
\mathbb{P}(\lambda_{\min}(\Gamma_{i,T}) = 0) = \lim_{r \to 0^+} \mathbb{P}(\lambda_{\min}(\Gamma_{i,T}) \leq r) \leq \lim_{r \to 0^+}\left(C' (4\pi K^2e)^{T_0/2} |\log r|^{2d} r^{T_0/2 - d} + r^{k+\delta+\gamma} \right) = 0,
\]
so that the inverse exists almost surely for all  $T\geq T_0 > 2d$.

To conclude, using \eqref{eq:finalGammaLambBound} in \eqref{eq:integralInGammaiT}, we obtain, for any $T \geq T_0$,
\begin{align*}
    \mathbb{E}[\|\Gamma_{i,T}^{-1}\|^{k+\delta}] &\leq C_1 + C\int_0^{r_0} r^{-(k+\delta)-1} \left(C'(4\pi K^2 e)^d |\log r|^{2d} (4\pi K^2 er)^{T_0/2 - d} + r^{k + \delta + \gamma} \right)dr \\
    &\leq C_1 + C_2 + CC' (4\pi K^2 e)^{T_0 / 2}\int_0^{r_0} r^{-(k+\delta) - 1 + T_0/2 - d}|\log r|^{2d}dr,
\end{align*}
and this latter integral converges if and only if $-(k+\delta) - 1 + T_0/2 - d > -1$; in other words, $T_0 > 2(d + k + \delta)$. Thus, for any finite $T_0$ such that $T_0 > 2(d + k + \delta)$, we have as desired

\[
\sup_{i \in [n], T \geq T_0}\mathbb{E}\Big[\big\|\big(\frac{1}{T} X_i^\tr X_i \big)^{-1} \big\|^{k + \delta}\Big] < \infty.
\]
\end{proof}

\subsection{Proof of Theorem \ref{thm:currentconv}}
\label{app:thm:currentconv}
\begin{proof}        
        Part (a), the privacy guarantee, follows immediately from Theorem \ref{thm:trimest}. 
        
        For part (b), the error bound also follows from Theorem \ref{thm:trimest} by choosing the upper bounds  $B$, $K_1$ and $K_2$ as follows: $B =\Omega( \|\beta\|  +K_2)$ with
        \begin{equation}\label{eq:OLSK1}
        K_1 = O\Big(\sigma_\varepsilon\sqrt{\frac{1}{n} \sum_{i=1}^n\|\Gamma_i^{-1/2}\|^2} \sqrt{\frac{1}{nT} \big(d + \log \frac{1}{\xi}\big)} \Big), \; \text{ and } \; 
        K_2 = O\Big( \sigma_\varepsilon \max_{i \in [n]}\|\Gamma_i^{-1/2}\| \sqrt{\frac{1}{T}\big(d + \log \frac{n}{\xi}\big)} \Big).
        \end{equation}
        Indeed, Lemma \ref{lem:betasconcentration} shows that these choices of $B$, $K_1$ and $K_2$ are such that with probability at least $1-\xi$, they bound $\max_i\|\beta_i\|$, $\|\bar\beta-\beta\|$ and $\max_i\|\beta_i-\beta\|$, respectively. Hence, part (b) follows from the statement of Theorem \ref{thm:trimest} and the high probability guarantees that Lemma \ref{lem:betasconcentration} ensures for our proposed choices  of $B$, $K_1$ and $K_2$.

   Part (c) of the  theorem concerns the asymptotic distribution of the estimator. We prove this by showing that under the conditions stated in the theorem, (i) $\sqrt{nT}(\hat{\beta}_{\mathsf{DP}} - \bar{\beta}) \overset{p}{\to} 0$ and that (ii) $\sqrt{nT}(\bar{\beta} - \beta) \overset{d}{\to} N(0,   \lim_{n \to \infty} \frac{1}{n} \sum_{i=1}^n \Gamma_i^{-1} \Lambda_i \Gamma_i^{-1} )$. We then conclude by Slutsky's.

    We begin by proving (i). First of all, by Assumption \ref{assumption:UI}, given a fixed $(n,T)$ pair where $T \geq T_0$, we have that $\mathbb{P}(\forall i \in [n]: \text{Rank}(X_i) = d) = 1$. Now consider any diverging double sequence $(n_m, T_m) \to \infty$ indexed by $m \in \mathbb{N}$. Let $m_0 = \min\{ m: T_m \geq T_0\}$. Then $\mathbb{P}(\forall m \geq m_0, i \in [n_m] : \text{Rank}(X_i) = d) = 1$, since this is an intersection of countably many almost sure events, and hence remains almost sure. As such, $\mathbb{P}(\forall m \geq m_0, i \in [n_m]: \hat{\beta}_i = \hat{\beta}_{i,\mathsf{OLS}}) = 1$, and so in the remainder we may analyze the asymptotic properties of our estimator when $\hat{\beta}_i = \hat{\beta}_{i,\mathsf{OLS}} = (X_i^\tr X_i)^{-1} X_i^\tr Y_i$.
    
    {Now, notice that by Property 4 of Lemma \ref{lem:propertiesmean} and the value of $K_2$ from \eqref{eq:OLSK1}, we have, filling in $\bar{\beta}$ for $v$ in the statement of Property 4,
    \begin{align*}
    \sqrt{nT} \|\hat{\beta}_{\mathsf{DP}} - \bar{\beta}\| &\leq  \sqrt{nT} \| \bar{\beta} - \bar{\beta}\| + \frac{CK_2}{n\mu}\sqrt{R \log \frac{R}{\xi}} + \frac{CK_2}{n\mu} \sqrt{d + \log \frac{1}{\xi}}  \\
    &= 0 + O\Big(\frac{\sigma_\varepsilon}{\sqrt{n}\mu} \max_{i \in [n]} \|\Gamma_i^{-\frac{1}{2}}\| \sqrt{(d + \log \frac{n}{\xi})(d + R \log\frac{R}{\xi})}\Big).
    \end{align*}}
    Since the $\|\Gamma_i^{-1/2}\|$ are uniformly bounded (Assumption \ref{assumptions:initialcovariates}), we will have that $\sqrt{nT}(\hat{\beta}_{\mathsf{DP}} - \bar{\beta}) \overset{p}{\to} 0$ if $\xi \to 0$ while $\frac{1}{ n\mu^2}(R \log \frac{R}{\xi} + \log\frac{1}{\xi})(\log \frac{n}{\xi})  \to 0.$
    Because $R \geq 1$, under the growth conditions stated in \eqref{eq:growthconditions}, we have that 
    \[\frac{1}{ n\mu^2}\Big(R \log \frac{R}{\xi} + \log\frac{1}{\xi}\Big)\Big(\log \frac{n}{\xi}\Big) \leq \frac{2}{ n}\Big(R \log \frac{R}{\xi} \Big)\Big(\log \frac{n}{\xi}\Big)  = o(1). \]
    Therefore, $\sqrt{nT}(\hat{\beta}_{\mathsf{DP}} - \bar{\beta}) \overset{p}{\to} 0$.

    We now proceed to prove (ii). We have 
    \[
    \bar{\beta} = \frac{1}{n}\sum_{i=1}^n (X_i^\tr   X_i)^{-1} X_i^\tr   Y_i = \beta + \frac{1}{n}\sum_{i=1}^n (X_i^\tr   X_i)^{-1} X_i^\tr   \varepsilon_i.
    \]
    Define $\Gamma_{i,T} := \frac{1}{T} X_i^\tr X_i$, and $S_{i,T} := \frac{1}{\sqrt T}X_i^\tr \varepsilon_i$. Then, we have
    \begin{equation}
    \label{eq:decomp1}
    \sqrt{nT}(\bar{\beta} - \beta) = {\frac{1}{\sqrt n}}\sum_{i=1}^n \Gamma_{i,T}^{-1} S_{i,T} = {\frac{1}{\sqrt n}}\sum_{i=1}^n \Gamma_i^{-1} S_{i,T} + {\frac{1}{\sqrt n}}\sum_{i=1}^n (\Gamma_{i,T}^{-1} - \Gamma_i^{-1}) S_{i,T},
    \end{equation}
    and we first show that the second term on the right side of \eqref{eq:decomp1} is $o_p(1)$. We have
\begin{align}
\label{eq:Ebound1}
  \mathbb{E}\big[\|\frac{{1}}{\sqrt{n}}\sum_{i=1}^n (\Gamma_{i,T}^{-1} - \Gamma_i^{-1})S_{i,T}\|^2\big] &= \frac{1}{n} \sum_{i=1}^n\sum_{j=1}^n \mathbb{E}[S_{i,T}^\tr (\Gamma_{i,T}^{-1} - \Gamma_i^{-1} ) (\Gamma_{j,T}^{-1} - \Gamma_j^{-1}) S_{j,T}]
  \\ &\label{eq:Ebound1_1} = \frac{1}{n}\sum_{i=1}^n \mathbb{E}[\|(\Gamma_{i,T}^{-1} - \Gamma_i^{-1})S_{i,T}\|^2]=\frac{1}{n}\sum_{i=1}^n   \mathbb{E}[ \|\Gamma_{i}^{-1}(\Gamma_{i} - \Gamma_{i,T})\Gamma_{i,T}^{-1}S_{i,T}\|^2],
\end{align}
 where we use the cross-sectional independence of $X_i$ and $\varepsilon_i$ across $i$ (Assumption \ref{assumptions:initialcovariates} and  \ref{assumptions:initialerrors}) and strict exogeneity to obtain our second equality, since under these assumptions $\mathbb{E}[S_{i,T}^\tr (\Gamma_{i,T}^{-1} - \Gamma_i^{-1})(\Gamma_{j,T}^{-1} - \Gamma_j^{-1}) S_{j,T}] = 0$ for $i\neq j$. Then, from the uniform boundedness of $ \|\Gamma_i^{-1}\|$ (Assumption \ref{assumptions:initialcovariates}) and Holder's Inequality, we can proceed from \eqref{eq:Ebound1_1},
 \begin{align}
 \nonumber \sup_{i \geq1} \mathbb{E}[ \|\Gamma_{i}^{-1} & (\Gamma_{i} - \Gamma_{i,T})\Gamma_{i,T}^{-1}S_{i,T}\|^2] \leq  \sup_{i \geq 1}\|\Gamma_i^{-1}\|^2 \cdot \sup_{i\geq1}\mathbb{E}\left[\|\Gamma_{i,T}^{-1}\|^2 \cdot \|\Gamma_{i,T} - \Gamma_i\|^2 \cdot \|S_{i,T}\|^2 \right]  \nonumber \\
 &\leq C\sup_{i\geq1} \left(\mathbb{E}[\|\Gamma_{i,T}^{-1}\|^{2 + \delta}] \right)^{\frac{2}{2+\delta}}\sup_{i\geq1}\left(\mathbb{E} \left[\|\Gamma_{i,T} - \Gamma_{i}\|^{\frac{4(2+\delta)}{\delta}}\right]\right)^\frac{\delta}{2(2+\delta)}\sup_{i\geq1}\left(\mathbb{E}\left[\|S_{i,T}\|^{\frac{4(2+\delta)}{\delta}}\right] \right)^\frac{\delta}{2(2+\delta)}.\label{eq:Ebound2}
\end{align}
Now considering the terms on the right side of \eqref{eq:Ebound2}, by the uniform integrability of  $\Gamma_{i,T}$, Assumption \ref{assumption:UI},  we have $\sup_{i\geq1}\mathbb{E}[\|\Gamma_{i,T}^{-1}\|^{2 + \delta}]  \leq C$ for some constant $C>0$. Moreover, $\sup_{i\geq1}(\mathbb{E}\|\Gamma_{i,T} - \Gamma_i\|^p)^{2/p} = O(T^{-1})$ and $\sup_{i\geq1}(\mathbb{E}\|S_{i,T}\|^p)^{2/p} = O(1)$ by Lemma \ref{auxres:GammaITExpectationBound}. Putting this together in \eqref{eq:Ebound1}-\eqref{eq:Ebound2}, we have
 \begin{align}
 \mathbb{E}\Big[\big\|\frac{{1}}{\sqrt{n}} \sum_{i=1}^n (\Gamma_{i,T}^{-1} - \Gamma_i^{-1})S_{i,T}\big\|^2\Big]  \label{eq:OLSSmallORder}   &\leq \sup_{i \geq1} \mathbb{E}[ \|\Gamma_{i}^{-1}  (\Gamma_{i} - \Gamma_{i,T})\Gamma_{i,T}^{-1}S_{i,T}\|^2]  = O\Big(\frac{1}{T}\Big).
\end{align}
     Since $T \rightarrow \infty$, using \eqref{eq:decomp1}, we have therefore shown that
    \[
    \sqrt{nT}(\bar{\beta} - \beta) = \frac{{1}}{\sqrt{n}}\sum_{i=1}^n \Gamma_i^{-1} S_{i,T} + o_p(1).
    \]

    Thus,  for our asymptotic covariance $V = \lim_{n \to \infty} \frac{1}{n} \sum_{i=1}^n \Gamma_i^{-1} \Lambda_i \Gamma_i^{-1}$, it remains to  prove that
    \[
    \frac{{1}}{\sqrt{n}}\sum_{i=1}^n \Gamma_i^{-1} S_{i,T}=\frac{1}{\sqrt{nT}}\sum_{i=1}^n \Gamma_i^{-1}  X_i^\tr \varepsilon_i = \frac{1}{\sqrt{nT}}\sum_{i=1}^n \sum_{t=1}^T \Gamma_i^{-1} x_{i,t} \varepsilon_{i,t} \overset{d}{\to} N(0, V).
    \]
    Fix an arbitrary $v \in \mathbb{S}^{d-1}$ and let $z_{i,t}(v) := v^\tr \Gamma_i^{-1} x_{i,t} \varepsilon_{i,t} $ and then we show that
    \begin{equation}
    \label{eq:CLT1}
    \frac{1}{\sqrt{nT}}\sum_{i=1}^n \sum_{t=1}^T z_{i,t}(v) \overset{d}{\to} N(0, v^\tr V v),
    \end{equation}
    and conclude our result by Cram\'er-Wold. To that end, let $b_{i,T}(v) := \frac{1}{\sqrt{T}} \sum_{t=1}^T z_{i,t}(v)$, and $S_{n,T} := \frac{1}{\sqrt{n}}\sum_{i=1}^n b_{i,T}(v)$.  With this notation, \eqref{eq:CLT1} becomes $ S_{n,T} \overset{d}{\to} N(0, v^\tr V v)$ as $(n,T) \rightarrow \infty$.

    We will apply  the Lindeberg-Feller CLT for triangular arrays to the array $\{\frac{1}{\sqrt{n}}b_{i,T}(v)\}$ where we take both $T$ and $n$ to infinity. In particular, letting $m$ denote the row of the array, we pick an arbitrary sequence $(n_m, T_m)$ that diverges in both coordinates. Within each row, the elements will be $\frac{1}{\sqrt{n_m}}b_{i, T_m}(v)$ so that the row sums are $S_m(v) := \frac{1}{\sqrt{n_m}}\sum_{i=1}^{n_m} b_{i,T_m}(v) = S_{n_m, T_m}(v)$. Since $n_m$ is diverging, this is a triangular array. In particular, note that within a row, all elements are independent by our assumption of cross-sectional independence. 

    We proceed with finding the asymptotic variance of $S_m(v)$, using cross-sectional independence, the mean-zero property of $S_m(v)$ by exogeneity and mean-zero errors, and second order stationarity of both processes. We have, 
    \begin{align}
   \nonumber V_m(v)^2 := \V(S_m(v)) = \frac{1}{n_m}\sum_{i=1}^{n_m} \V(b_{i,T_m}(v)) &=\frac{1}{n_m}\sum_{i=1}^{n_m} \V\Big(\frac{1}{\sqrt{T_m}}\sum_{t=1}^{T_m}z_{i,t}(v)\Big) \\
   \nonumber &= \frac{1}{n_m}\sum_{i=1}^{n_m} \V\Big(\frac{1}{\sqrt{T_m}}\sum_{t=1}^{T_m}v^\tr \Gamma_i^{-1} x_{i,t} \varepsilon_{i,t}\Big) \\
    &\label{eq:pfbeforeToeplitz} 
    = v^\tr \frac{1}{n_mT_m}\sum_{i=1}^{n_m}\Gamma_i^{-1}  \sum_{t=1}^{T_m} \sum_{s=1}^{T_m} \mathbb{E}[x_{i,t} \varepsilon_{i,t}\varepsilon_{i,s} x_{i,s}^\tr]\Gamma_i^{-1} v,
    \end{align}
    and since $(x_{i,t} \varepsilon_{i,t})_{t\in \mathbb{Z}}$ is assumed to be weakly stationary and by strict exogeneity, we have %
    \begin{align}
    \sum_{t=1}^{T_m} \sum_{s=1}^{T_m}
    \mathbb{E}[x_{i,t} \varepsilon_{i,t}  \varepsilon_{i,s}x_{i,s}^{\tr}] = \sum_{t=1}^{T_m} \sum_{s=1}^{T_m}
    \mathbb{E}[x_{i,0} \varepsilon_{i,0}  \varepsilon_{i, |t-s|}x_{i,|t-s|}^{\tr}]  
    &=\label{eq:pfrewriteToeplitz} \sum_{|\ell| < T_m} (T_m - |\ell|)\mathbb{E}[x_{i,0} x_{i,\ell}^\tr \varepsilon_{i,0} \varepsilon_{i,\ell}],
    \end{align}
    and hence we can apply \eqref{eq:pfrewriteToeplitz} to \eqref{eq:pfbeforeToeplitz} to obtain
    \begin{align*}
    V_m(v)^2&= v^\tr \frac{1}{n_m T_m} \sum_{i=1}^{n_m}\Gamma_i^{-1} \sum_{|\ell| < T_m}(T_m - |\ell|)\mathbb{E}[x_{i,\ell} x_{i,0}^\tr\varepsilon_{i,\ell} \varepsilon_{i,0}] \Gamma_i^{-1} v \\
    &= v^\tr \frac{1}{n_m}\sum_{i=1}^{n_m} \Gamma_i^{-1}  \sum_{\ell = -\infty}^\infty \mathbb{E}[x_{i,\ell} x_{i,0}^\tr\varepsilon_{i,\ell} \varepsilon_{i,0}]  \Gamma_i^{-1} v + v^\tr \frac{1}{n_m} \sum _{i=1}^{n_m} \Gamma_i^{-1} R_{i, T_m} \Gamma_i^{-1} v\\
    &\overset{}{\to} v^\tr \left( \lim_{n \to \infty} \frac{1}{n} \sum_{i=1}^n \Gamma_i^{-1} \Lambda_i \Gamma_i^{-1} \right)v=v^\tr V v,
    \end{align*}
    where we defined $R_{i,T_m} = \sum_{|\ell| < T_m}(1 - \frac{|\ell|}{T_m})\mathbb{E}[x_{i,\ell} x_{i,0}^\tr \varepsilon_{i,\ell} \varepsilon_{i,0}] - \sum_{\ell = -\infty}^\infty \mathbb{E}[x_{i,\ell} x_{i,0}^\tr \varepsilon_{i,\ell} \varepsilon_{i,0}]$ and $\Lambda_i := \sum_{\ell=-\infty}^\infty \mathbb{E}[x_{i,0} x_{i,\ell}^\tr\varepsilon_{i,0}\varepsilon_{i,\ell}]$. In particular, the last limit used
    \begin{align*}
    \|R_{i,T_m}\| &= \Big\|\sum_{|\ell| < T_m}\Big(1 - \frac{|\ell|}{T_m}\Big)\mathbb{E}[x_{i,\ell} x_{i,0}^\tr \varepsilon_{i,\ell} \varepsilon_{i,0}] - \sum_{\ell = -\infty}^\infty \mathbb{E}[x_{i,\ell} x_{i,0}^\tr \varepsilon_{i,\ell} \varepsilon_{i,0}]\Big\| \\ &\leq \sum_{|\ell | > T_m}\|\mathbb{E}[x_{i,\ell} x_{i,0}^\tr \varepsilon_{i,\ell} \varepsilon_{i,0}]\| + \frac{1}{T_m} \sum_{|\ell| < T_m}|\ell| \|\mathbb{E}[x_{i,\ell}x_{i,0}^\tr \varepsilon_{i,\ell} \varepsilon_{i,0}]\| = o(1).
    \end{align*}
    This last equality holds uniformly in $i$ when $T_m \to \infty$ by our mixing assumption. In particular, by Lemma \ref{lem:measurablemixing}, we have that the mean-zero process $(x_{i,t} \varepsilon_{i,t})$ is strongly mixing with strong mixing coefficients decaying as $\alpha_{score}(k) = O(k^{-3-q})$. 
    This combined with Lemma \ref{auxres:davydov} implies that $\sum_{|\ell|=-^\infty}^\infty |\ell| \|\mathbb{E}[x_{i,l} x_{i,0}^\tr \varepsilon_{i,l} \varepsilon_{i,0}]\| <\infty$, so that the second term converges to zero as $T_m \to \infty$. In the same vein, $\sum_{\ell= -\infty}^\infty \|\mathbb{E}[x_{i,l} x_{i,0}^\tr \varepsilon_{i,\ell}\varepsilon_{i,0}]\| < \infty$ immediately implies that $\sum_{|\ell | > T_m}\|\mathbb{E}[x_{i,\ell} x_{i,0}^\tr \varepsilon_{i,\ell} \varepsilon_{i,0}]\|  \to 0$ as $T_m \to \infty$. 
    
     Next we verify the Lindeberg condition, i.e. that, for all $\epsilon>0$,
\begin{equation}
\label{eq:Lindeberg-Feller}
      \lim_{m \to \infty} \sum_{i=1}^{n_m}\frac{1}{n_mV_m(v)^2}\mathbb{E}\left[b_{i,T_m}^2\mathbbm{1}\{|b_{i,T_m}(v)| \geq \sqrt{n_m}\epsilon V_m(v)\}\right] = 0.
\end{equation}
   Using that for any $0< \delta<2$  we have $|x|^2 \mathbbm{1}\{|x| > a\} \leq \frac{|x|^{2+\delta}}{a^\delta }$, we get
    \begin{equation}
    \label{eq:Lindeberg-Feller1}
        \mathbb{E}\left[b_{i,T_m}^2\mathbbm{1}\{|b_{i,T_m}(v)| \geq \sqrt{n_m}\epsilon V_m(v)\}\right] \leq \frac{\mathbb{E}\left[|b_{i,T_m}|^{2+\delta}\right]}{(\sqrt{n_m}\epsilon V_m(v))^\delta}.
    \end{equation}
    Furthermore,  by Lemma \ref{auxres:momentstrongmixing}, we have 
    \begin{equation}
        \label{eq:Lindeberg-Feller2}
    \sup_{i,T}\mathbb{E}[|b_{i,T}(v)|^{2+\delta}] = \sup_{i,T}\mathbb{E}\Big[ \frac{1}{T^{1+\delta/2}} \Big|\sum_{t=1}^T v^\tr   {\Gamma_i^{-1}}x_{i,t} \varepsilon_{i,t}\Big|^{2+\delta}\Big]=\sup_{i,T}\frac{1}{T^{1+\delta/2}} O(T^{1 + \delta/2}) = O(1).
    \end{equation}
   The conditions of Lemma \ref{auxres:momentstrongmixing} are satisfied because by Lemma \ref{lem:measurablemixing}, the mean-zero process $(x_{i,t} \varepsilon_{i,t})$ is strongly mixing with strong mixing coefficients  $\alpha_{score_i}(k)$ satisfying
    \begin{align}\label{eq:scoremixing}
    \sup_{i \geq 1}\sum_{k=1}^\infty (k+1)^{c-2} \alpha_{score_i}(k)^{\frac{\epsilon}{c + \epsilon}} < \infty,
    \end{align}
    for some $\epsilon>0, c \in 2\mathbb{N}, c>2 + \delta$.  In particular, since $\delta <2$, if we take $c = 4$, then we have
    \[
    \sum_{k=1}^\infty (k+1)^{2} \alpha_{score_i}(k)^{\frac{\epsilon}{4+\epsilon}} \leq C\sum_{k=1}^\infty k^2 k^{(-3-q)\frac{\epsilon}{4 + \epsilon}},
    \]
    and this is finite for any $\epsilon > \frac{12}{q}$. 

    We can therefore conclude from \eqref{eq:Lindeberg-Feller},\eqref{eq:Lindeberg-Feller1} and \eqref{eq:Lindeberg-Feller2} that
      \begin{align}\label{eq:lindebergverif}
     \sum_{i=1}^{n_m}\frac{1}{n_mV_m(v)^2}\mathbb{E}\left[b_{i,T_m}^2\mathbbm{1}\{|b_{i,T_m}(v)| \geq \sqrt{n_m}\epsilon V_m(v)\}\right]= \frac{O(1)}{n_m^{\delta/2}\epsilon^\delta V_m(v)^{2+\delta}} \to 0,
    \end{align}
    since $n_m \to \infty$, $\epsilon = O(1)$, and $V_m(v)^2 \to v^\tr ( \lim_{n \to \infty} \frac{1}{n} \sum_{i=1}^n \Gamma_i^{-1} \Lambda_i \Gamma_i^{-1}) v >0$. Thus, we have verified the Lindeberg condition and conclude that $\frac{S_m(v)}{V_m(v)} \overset{d}{\to} N(0,1)$. Since this is true for every sequence $(n_m, T_m)_{m\geq 1}$ for which $n_m \to \infty$, $T_m \to \infty$, it follows that, using Slutsky's Theorem,
    \[
    \frac{1}{\sqrt{nT}}\sum_{i=1}^{n} \sum_{t=1}^{T} z_{i,t}(v)\overset{d}{\to} N(0, v^\tr   V v),
    \]
    as $(n,T) \to \infty$.
    By Cramer-Wold, it finally follows that
    \[
    \frac{1}{\sqrt{nT}}\sum_{i=1}^{n} \Gamma_i^{-1} X_i^\tr \varepsilon_i\overset{d}{\to} N(0,  V),
    \]
    as $(n,T) \to \infty$, and consequently, by Slutsky's theorem
    \[
    \sqrt{nT}(\bar{\beta} - \beta) \overset{d}{\to} N\left(0,  V\right) \quad \mbox{and} \quad \sqrt{nT}(\hat{\beta}_{\mathsf{DP}} - \beta) \overset{d}{\to} N\left(0,  V\right).
    \]
\end{proof}

\subsection{Proof of Theorem \ref{thm:diffconv}}
\label{app:thm:diffconv}
We begin by proving the privacy and utility guarantee of Algorithm \ref{alg:diffmeanest}, which is the workhorse of Section \ref{subsec:twogroupreg}. In particular, Theorem \ref{thm:trimdiffest} will allow us to derive the guarantees of Theorem \ref{thm:diffconv}.
\begin{algorithm}
\caption{DP Treated mean estimation \hfill \textsc{DPTreatMean}$((D_i, z_i)_{i=1}^n,\mu,B,R,\xi)$}
\label{alg:diffmeanest}
\begin{algorithmic}[1]
\STATE \textbf{Input:} Data $(D_i, z_i)_{i=1}^n$, privacy parameter $\mu$, radius $B$, max rounds $R$, failure probability\ $\xi$
\STATE \textbf{Noisy size:} $\tilde z \gets \sum_{i=1}^n z_i + \tfrac{2}{\mu} Z_0$, \; $Z_0 \sim \mathcal N(0,1)$\label{lineDiff:numUsers} 
\STATE \textbf{Set:} $\tau \gets \max\!\Big\{ \tilde z - \frac{2\sqrt{R}}{\mu}\sqrt{2\log\!\tfrac{4R}{\xi}} - \frac{2}{\mu} \sqrt{2\log \frac{8}{\xi}}
,\, 1\Big\}$, $n_{LB} \gets \max\!\Big\{ \tau - \tfrac{2\sqrt{R}}{\mu}\sqrt{2\log\!\tfrac{4R}{\xi}},\, 1\Big\}$
\STATE \label{line:TreatedAlgLine4} \textbf{Init:} $\hat{m}_{r-2} \gets 0$, $\hat{m}_{r-1} \gets 0$ $r \gets 0$, $r^* \gets -1$
\WHILE{$r \le R$ and $r^* = -1$}
  \STATE $\omega_i \gets \mathbbm{1}\{\|D_i - \hat m_{r-1}\| < B z_i / 2^r\}$, \quad
         $\omega \gets \sum_{i=1}^n z_i \omega_i + \tfrac{2\sqrt{R}}{\mu} Z_r$, \; $Z_r \sim \mathcal N(0,1)$
  \IF{$\omega < \tau$}\label{lineDiff:countcheck}
     \STATE $S \gets \{ i : \|D_i - \hat m_{r-2}\| < B z_i / 2^{\,r-1} \}$, \quad $n_{\tilde{S}} \gets \max(|S|, n_{LB})$, \quad $\mathcal{C} \gets \sqrt{3 - \frac{2r}{R}}$,
     \STATE $\hat m \gets \hat m_{r-2}
       + \frac{1}{n_{\tilde{S}}}\sum_{i \in S} (D_i - \hat m_{r-2})
       + \frac{4B}{2^{\,r-1}\mathcal{C}\mu n_{LB}} N_*$, \; $N_* \sim \mathcal N(0,I)$\label{lineDiff:terminationEarly}
           \STATE $r^* \gets r-1$, \quad $B^* \gets \frac{4B}{2^{r^*}\mathcal{C} \mu n_{LB}}$

  \ELSIF{$r=R$}
     \STATE $S \gets \{ i : \|D_i - \hat m_{r-1}\| < B z_i / 2^{\,R} \}$, \quad $n_{\tilde{S}} \gets \max(|S|, n_{LB})$
     \STATE $\hat m \gets \hat m_{r-1}
       + \frac{1}{n_{\tilde{S}}}\sum_{i \in S} (D_i - \hat m_{r-1})
       + \frac{4B}{2^{\,R}\mu n_{LB}} N_*$\label{lineDiff:terminationR}, \quad $N_* \sim N(0, I)$
           \STATE $r^* \gets R$, \quad $B^* \gets \frac{4B}{2^{r^*}\mu n_{LB}}$, \quad 
  \ELSE
     \STATE $S \gets \{ i : \|D_i - \hat m_{r-1}\| < B z_i / 2^{\,r} \}$, \quad $n_{\tilde{S}} \gets \max(|S|, n_{LB})$
     \STATE $\hat m_r \gets \hat m_{r-1}
       + \frac{1}{n_{\tilde{S}}}\sum_{i \in S} (D_i - \hat m_{r-1})
       + \frac{4B\sqrt{R}}{2^{\,r}\mu n_{LB}} N_r$\label{lineDiff:updateCenter}, \quad $N_r \sim N(0,I)$
     \STATE $r \gets r+1$
  \ENDIF
\ENDWHILE
\STATE \textbf{Output:} $(\hat{m}, \hat{m}_{r^* - 1}, r^*, n_{LB}, B^*)$
\end{algorithmic}
\end{algorithm}

\begin{theorem}
\label{thm:trimdiffest}
   Algorithm \ref{alg:diffmeanest} satisfies $\mu$-GDP. Furthermore, assume there exist $B, K_1, K_2>0$ such that $\max_i \|z_iD_i\| \leq B$, $\|\bar{D}_{z=1} - m\| \leq K_1$ and $\max_{i:z_i = 1}\|{D}_i - m\| \leq K_2$, and assume $\sum_{i=1}^n z_i =: \bar{z} \asymp n$. If $R \geq C \log \frac{B}{K_2}$  and  $n\geq C'(\frac{1}{\mu}\sqrt{R(\log\frac{R}{\xi}+ d)})$ for universal constants $C, C'>0$, then Algorithm \ref{alg:diffmeanest}  satisfies, with probability at least $1-\xi$,
    \[
    \|\hat{m}_{\mathsf{DP}} - m\| = O\Big(K_1 +  \frac{K_2}{n\mu}\sqrt{{R \log \frac{R}{\xi}}{}} + \frac{K_2}{n\mu}\sqrt{{d + \log \frac{1}{\xi}}{}}  \Big).
    \]
\end{theorem}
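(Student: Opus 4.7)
The plan is to mirror the proof of Theorem \ref{thm:trimest} with two modifications that handle the extra private query $\tilde z$ estimating $\bar z = \sum_i z_i$, and the fact that the selection set $S$ is restricted to users with $z_i = 1$ since $\omega_i = 0$ whenever $z_i = 0$. As in Theorem \ref{thm:trimest}, the proof decomposes into a privacy part (composition plus sensitivity calibration) and a utility part (a deterministic analysis on a high-probability event).

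For the privacy claim, I would split the total budget $\mu$ so that $(\mu/2)^2 + (\mu/2)^2 + (\mu/\sqrt 2)^2 = \mu^2$: budget $\mu/2$ for the noisy count $\tilde z$ in Line \ref{lineDiff:numUsers}, budget $\mu/2$ shared across the $R$ count checks (Line \ref{lineDiff:countcheck}), and budget $\mu/\sqrt 2$ shared across the up to $R$ center refinements together with the final release. The calibration then reduces to three sensitivity computations. The quantity $\sum_i z_i$ has sensitivity $1$, and $\sum_i z_i \omega_i$ also has sensitivity $1$ since $z_i \omega_i \in \{0,1\}$. For the mean update $\frac{1}{n_{\tilde S}}\sum_{i \in S}(D_i - \hat m_{r-1})$ I would repeat the four-case argument from the proof of Proposition \ref{prop:trimprivacy}, observing that the only effect of flipping $z_i$ is to toggle user $i$'s membership in $S$, which is already covered by Cases 3--4 of that argument, so the sensitivity bound $\frac{2B}{2^r n_{LB}}$ is preserved. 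Invoking Propositions \ref{prop:postprocessing} and \ref{prop:composition} and verifying that the factor $\mathcal C = \sqrt{3 - 2r/R}$ in Line \ref{lineDiff:terminationEarly} correctly recaptures the budget left over by the skipped iterations (as $\mathcal C$ does in Algorithm \ref{alg:trimestGeneral}) yields $\mu$-GDP.

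For utility I would work on the intersection $\mathcal E$ of the events $\{|Z_0| \le \sqrt{2\log(8/\xi)}\}$, $\{\max_r |Z_r| \le \sqrt{2\log(4R/\xi)}\}$, $\{\max_r \|N_r\| \le \sqrt d + \sqrt{2\log(3(R+1)/\xi)}\}$, and $\{\|N_*\| \le \sqrt d + \sqrt{2\log(6/\xi)}\}$, which by Lemmas \ref{suplemma:max_subG} and \ref{suplemma:maxnormals} together hold with probability at least $1 - \xi$. On $\mathcal E$ I would establish four structural properties in the spirit of Lemma \ref{lem:propertiesmean}: (P1) $n_{LB} = \Omega(n)$, using $\bar z \asymp n$, the definition of $\tau$, and the scaling hypothesis $n \ge C' \mu^{-1} \sqrt{R(\log(R/\xi) + d)}$; (P2) $B/2^{r^*} < 2 K_2$, by an induction showing the algorithm cannot terminate while every treated user lies in the current ball, using $\max_{i: z_i = 1}\|D_i - \bar D_{z=1}\| \le K_1 + K_2 \le 2K_2$ together with the bound on $\max_r \|N_r\|$; (P3) at termination $|S| \ge n_{LB}$ and every $i \in S$ has $z_i = 1$, so that $\hat m_{\mathsf{DP}} = \frac{1}{|S|}\sum_{i \in S} D_i + B^* N_*$ with $B^* = O(K_2/(n\mu))$; and (P4) the decomposition \eqref{eq:DBarDecomp}--\eqref{eq:sizeSComplement} from Lemma \ref{lem:propertiesmean}, applied with the base set $\{i : z_i = 1\}$ in place of $[n]$, gives $\|\hat m_{\mathsf{DP}} - m\| \le \|\bar D_{z=1} - m\| + O(K_2 \sqrt{R\log(R/\xi)}/(n\mu)) + O(K_2 \sqrt{d + \log(1/\xi)}/(n\mu))$. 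Substituting $\|\bar D_{z=1} - m\| \le K_1$ yields the claimed error bound.

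The main obstacle is the coupling between the two noise sources $Z_0$ (from the count $\tilde z$) and the $Z_r$'s (from the subsequent count checks), which jointly determine both the threshold $\tau$ and the effective denominator $n_{LB}$. Unlike in Algorithm \ref{alg:trimestTemp}, $n_{LB}$ must be shown to be of order $n$ while simultaneously preserving the inductive termination argument of P2; this will require chaining the concentration of $\tilde z - \bar z$ with the high-probability bound on the $Z_r$'s, and absorbing the additional $\sqrt{\log(1/\xi)}/\mu$ slack that enters $\tau$ through $Z_0$. Once P1 and P2 are established, the remaining calculations are a straightforward transcription of those in the proof of Theorem \ref{thm:trimest}.
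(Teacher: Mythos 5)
Your proposal is correct and follows essentially the same route as the paper's proof: the identical budget split ($\mu/2$ for $\tilde z$, $\mu/2$ composed over the count checks, the remaining $\mu/\sqrt{2}$ over refinements plus release, with the $z$-flip sensitivity case reducing to the membership-toggle Cases 3--4 of Proposition \ref{prop:trimprivacy} and the same bound $\tfrac{2B}{2^{r-(j-1)}n_{LB}}$), the same augmented high-probability event including $Z_0$, and the same reduction of utility to the Theorem \ref{thm:trimest} argument run on the treated subsample, handled exactly as the paper does via the relation $n_{LB_2} = n_{LB_1} + \mu_3^{-1}(Z_0 - \sqrt{2\log(8/\xi)})$. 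The only correction needed is bookkeeping: your stated event constants give failure probabilities $\xi/4 + \xi/2 + \xi/3 + \xi/6 = 5\xi/4 > \xi$, so the thresholds for the Gaussian-norm events must be tightened (the paper uses $8(R+1)/\xi$ and $8/\xi$, yielding $\xi/8 + \xi/8$ for those two events) --- a trivial adjustment.
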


\begin{proof}
    For simplicity we will sometimes use the labels $\mu_1 = \mu_2 = \frac{\mu}{\sqrt{4R}}$ and $\mu_3 = \frac{\mu}{2}$.
    
    The privacy proof is similar to the proof of Proposition \ref{prop:trimprivacy}. Indeed, the final output is a function of the data only through the statistics $\sum_{i=1}^n z_i$, $\sum_{i=1}^n z_i \omega_i$, and $\frac{1}{n_{\tilde{S}}}\sum_{i \in S} (D_i - \hat{m}_{r-j})$, such that if we prove that these statistics are sufficiently private, our privacy results follows by composition (Proposition \ref{prop:composition}). 

    To begin, since $z_i \in \{0,1\}$ it is clear that the sensitivity of $\sum_{i=1}^n z_i$ is bounded by $1$, so that the statistic $\tilde{z}$ satisfies $\mu_3$-GDP. The same is true for the statistics $\sum_{i=1}^n z_i \omega_i$, since $z_i \omega_i \in \{0,1\}$, such that each instance of Line \ref{lineDiff:countcheck} satisfies $\mu_2$-GDP. 
    
    Finally, we consider the sensitivity of
    $\frac{1}{n_{\tilde{S}}}\sum_{i \in S}z_i(D_i - \hat{m}_{r-j}),$
    where $n_{\tilde{S}} = \max(|S|, n_{LB})$, and where $j=1$ in Lines \ref{lineDiff:terminationR} and \ref{lineDiff:updateCenter} and $j = 2$ in Line \ref{lineDiff:terminationEarly}. Let $(x_i, z_i)_{i=1}^n$ and $(x_i', z_i')_{i=1}^n$ be two neighboring datasets where without loss of generality we assume that only the first entry differs between the two datasets. To derive the sensitivity of our statistic, we must consider two cases. The first case to consider is when $z_1 = z_1'$. In this case the analysis is identical to the analysis in Algorithm \ref{alg:trimestTemp}, and so we immediately know that in this case the sensitivity is bounded by $\frac{2B}{2^{r-(j-1)}n_{LB}}$. Hence, we consider in depth the case where $z_1' = z_1 + 1$ (sensitivity is symmetric so we do not need to consider $z_1' = z_1 - 1$). If $\|D_1' - \hat{m}_{r-j}\| > \frac{B}{2^{r}},$
    then $S = S'$, $n_{\tilde{S}} = n_{\tilde{S}}'$, such that the difference is $0$. If, however, 
    $\|D_1' - \hat{m}_{r-j}\| \leq \frac{B}{2^r},$
    then $S \neq S'$. Now, there are two cases to consider. First, if $|S'| \leq n_{LB}$, then we have that $n_{\tilde{S}} = n_{\tilde{S}}'$, such that we can bound the sensitivity as
    \[
    \Big\|\frac{1}{n_{\tilde{S}}}\sum_{i \in S} z_i(D_i - \hat{m}_{r-j}) - \frac{1}{n_{\tilde{S}}'} \sum_{i \in S'} z_i'(D_i' - \hat{m}_{r-j})\Big\| = \Big\|\frac{1}{n_{\tilde{S}}} (D_1' - \hat{m}_{r-j})\Big\|  \leq \frac{B}{2^{r-(j-1)}n_{LB}}.
    \]
    Otherwise, if $|S'| > n_{LB}$, then $n_{\tilde{S}} \neq n_{\tilde{S}}'$. In this case,
    \begin{align*}
   & \Big\|\frac{\sum_{i \in S} z_i(D_i - \hat{m}_{r-j})}{n_{\tilde{S}}} - \frac{\sum_{i \in S'} z_i'(D_i' - \hat{m}_{r-j})}{n_{\tilde{S}}'} \Big\| \\
   &\qquad = \Big\|\frac{(n_{\tilde{S}} + 1)\sum_{i \in S} z_i(D_i - \hat{m}_{r-j}) - n_{\tilde{S}}\sum_{i \in S'}z_i'(D_i' - \hat{m}_{r-j})}{n_{\tilde{S}}(n_{\tilde{S}} + 1)}\Big\| \\  
    &\qquad \leq \Big\|\frac{\sum_{i \in S} z_i(D_i - \hat{m}_{r-j}) - n_{\tilde{S}}(D_1' - \hat{m}_{r-j})}{n_{\tilde{S}}(n_{\tilde{S}} + 1)}\|.
    \end{align*}
    To complete the proof we notice that
        \begin{align*}
     \Big\|\frac{\sum_{i \in S} z_i(D_i - \hat{m}_{r-j}) - n_{\tilde{S}}(D_1' - \hat{m}_{r-j})}{n_{\tilde{S}}(n_{\tilde{S}} + 1)}\Big\| &\leq \Big\|\frac{n_{\tilde{S}}B}{n_{\tilde{S}}(n_{\tilde{S}} + 1) 2^{r-(j-1)}}\Big\| + \Big\|\frac{n_{\tilde{S}}B}{n_{\tilde{S}}(n_{\tilde{S}} + 1)2^{r-(j-1)}}\Big\| \\
    &\leq \frac{2B}{(n_{\tilde{S}} + 1)2^{r-(j-1)}} \leq \frac{2B}{(n_{LB}+1)2^{r-(j-1)}} \leq \frac{2B}{2^{r-(j-1)}n_{LB}},    \end{align*}
    which gives the sensitivity of this quantity, as all other cases follow analogously. 

    Now, if $r^* = R$, then the final output is a function of the data through  one call to Line \ref{lineDiff:numUsers}, one call to Line \ref{lineDiff:terminationR}, $R$ calls to Line \ref{lineDiff:countcheck}, and $R$ calls to Line \ref{lineDiff:updateCenter}, and we established that these satisfy $\mu_3$, $\mu_3$, $\mu_3$, $\mu_2$, and $\mu_1$-GDP respectively. Therefore, it follows that $(\hat{m}, r^*)$ satisfies $\sqrt{R(\mu_1^2 + \mu_2^2) + 2\mu_3^2}$-uGDP. If, however, the algorithm terminates with $r^* < R$, then the algorithm's output is a function of the data through one call to Line \ref{lineDiff:numUsers}, one call to Line \ref{lineDiff:terminationEarly} with privacy parameter $\mu_3\mathcal{C}$, $r^*+1$ calls to Line \ref{lineDiff:countcheck}, and $r^* + 1$ calls to Line \ref{lineDiff:updateCenter}. Therefore it follows that in this case $(\hat{m}, r^*)$ satisfies $\sqrt{(1 + \mathcal{C}^2)\mu_3^2 + (r^* + 1)(\mu_1^2 + \mu_2^2)}$-uGDP, and it is simple to check that this equals $\sqrt{R(\mu_1^2 + \mu_2^2) + 2\mu_3^2}$ for our choice of $\mathcal{C}$.
    
    We now proceed with the proof of the utility guarantee, which is similar to that of the base algorithm, with some minor tweaks to handle the privatization of the inclusion variable $z_i$. Let us first define the high probability event under which our analysis in Theorem \ref{thm:trimest} translates well to Algorithm \ref{alg:diffmeanest}. We let 
    \[
    \mathcal{E}_0 = \Big\{ |Z_0| \leq \sqrt{2 \log \frac{8}{\xi}} \Big\},
    \]
    which by Lemma \ref{suplemma:max_subG} we know holds with probability at least $1-\xi/4$. Furthermore,
    \[
    \mathcal{E}_1 = \Big\{\max_{r \in [R]} |Z_r| \leq \sqrt{2\log\frac{4R}{\xi}} \Big\},
    \]
    which also by Lemma \ref{suplemma:max_subG} we know holds with probability at least $1-\frac{\xi}{2}$. Moreover,
    \[
    \mathcal{E}_2 = \Big\{\max_{r \in 0, \dots, R-1} \|N_r\| \leq \sqrt{d} + \sqrt{2 \log \frac{8(R+1)}{\xi}}\Big\},
    \]
    which by Lemma \ref{suplemma:maxnormals} we know holds with probability at least $1-\frac{\xi}{8}$, and
    \[
    \mathcal{E}_3 = \Big\{\|N_{*} \| \leq \sqrt{d} + \sqrt{2\log \frac{8}{\xi}} \Big\},
    \] which holds with probability at least $1-\frac{\xi}{8}$. Hence, the event \begin{align}\label{eq:DiffMeanHighProbEvent}\mathcal{E} :=  \mathcal{E}_0 \cap \mathcal{E}_1 \cap \mathcal{E}_2 \cap \mathcal{E}_3\end{align} holds with probability at least $1-\xi$, and we analyze Algorithm \ref{alg:diffmeanest} on this event. 
    
    Let $\mathcal{Z} := \{i : z_i = 1\}$, and $\bar{D}_\mathcal{Z} := \frac{1}{\bar{z}} \sum_{i \in \mathcal{Z}}D_i$. Broadly, Algorithm \ref{alg:diffmeanest} coincides with Algorithm \ref{alg:trimestGeneral} applied to $(D_i)_{i \in\mathcal{Z}}$, with the only difference being the replacement of the true sample size $\bar{z}$ by the noisy sample size $\tilde{z}$, and consequent changes to $\tau$ and $n_{LB}$. Note that it is only through changes to $\tau$ and $n_{LB}$ that the output of Algorithm \ref{alg:diffmeanest} will differ from the output of Algorithm \ref{alg:trimestGeneral} with input $(D_i)_{i \in \mathcal{Z}}$. However, on our new high probability event $\mathcal{E}$, we still have that if $\sum_{i=1}^n \omega_i = \bar{z}$, then $\omega \geq \tau$. Then, as in \eqref{eq:Di_bound}, we obtain, for $i\in \mathcal{Z}$,
    \[
    \|D_i - \hat{m}_{r-1}\| \leq \|D_i - \bar{D}_\mathcal{Z}\| + \frac{4B\sqrt{R}}{2^{r-1}\mu n_{LB}}\|N_{r-1}\| ,
    \]
    so, following the argument around \eqref{eq:Di_bound}, we obtain that the algorithm will not terminate as long as $2K_2 \leq B/2^{r+1}$ and $\bar{z} \geq C\frac{1}{\mu}\sqrt{R (\log \frac{R}{\xi}+d)}$. To see this, denote $n_{LB_1}$ the value of $n_{LB}$ when applying Algorithm \ref{alg:trimestTemp} to $(D_i)_{i\in \mathcal{Z}}$, and $n_{LB_2}$ the value of $n_{LB}$ obtained when applying Algorithm \ref{alg:diffmeanest} to $(D_i, z_i)_{i=1}^n$. Then,    \begin{align}\label{eq:LTwoGroupLOneGroup}n_{LB_2} =  n_{LB_1} + \frac{1}{\mu_3} \Big(Z_0  -\sqrt{2\log \frac{8}{\xi}}\Big),\end{align} where we assume $\sum_{i=1}^n z_i$ is large enough that these quantities are larger than $1$ in our high probability analyses. Under our assumptions,  on $\mathcal{E}$ we have 
    \[
    \Big|\frac{1}{\mu_3} \Big(Z_0  -\sqrt{2\log \frac{8}{\xi}}\Big)\Big| \leq \frac{2}{\mu_3} \sqrt{2 \log \frac{8}{\xi}},
    \]
    such that on this high-probability event, the requirement on $\bar{z}$ is of the same order as in  \eqref{eq:alg1ReqonN}  in the setting of Algorithm \ref{alg:trimestGeneral}.  Thus, we obtain that on the event $\mathcal{E}$, Algorithm \ref{alg:diffmeanest} will achieve $B/2^{r^*} \leq 2K_2$ as long as $\bar{z} \geq C\frac{1}{\mu}\sqrt{R (\log \frac{R}{\xi}+d)}$. 

    The final error bound follows from the utility argument in Theorem \ref{thm:trimest} around lines \eqref{eq:tildeScardinality}-\eqref{eq:finalmDPbound}, by noting that as argued, $n_{LB_2}  \asymp n_{LB_1} \geq c\bar{z} \asymp n$ for some constant $c>0$. Thus,
    \[
    \|\hat{m} - m\| = O\Big(K_1+ \frac{K_2}{n \mu}\sqrt{R \log \frac{R}{\xi}}  + \frac{K_2}{n \mu} \sqrt{d + \log \frac{1}{\xi}}   \Big).
    \]
\end{proof}

We are now ready to state the proof of Theorem \ref{thm:diffconv}.
\begin{proof}
By Theorem \ref{thm:trimdiffest}, with the $K_1$ and $K_2$ obtained in the same way as in the proof of Theorem \ref{thm:currentconv} (i.e., using Lemma \ref{lem:betasconcentration}), we know that, with probability at least $1-\xi/2$, 
\begin{align*}
&\|\hat{\beta}_{1,\mathsf{DP}} - (\beta + \theta)\|  = O\Big(  \sigma_\varepsilon\sqrt{\frac{M_1 (d + \log \frac{1}{\xi})}{nT}} +  \sigma_\varepsilon\max_{i: z_i =1}\|\Gamma_i^{-\frac{1}{2}}\| \sqrt{\frac{(R\log\frac{R}{\xi}+d )(d + \log \frac{n}{\xi})}{n^2\mu^2T}}\Big),
\end{align*}
since $\bar{z} = \Omega(n)$,
and similarly, with probability at least $1-\xi/2$,
\begin{align*}
&\|\hat{\beta}_{0,\mathsf{DP}} - (\beta + \theta)\|  = O\Big(  \sigma_\varepsilon\sqrt{\frac{M_0 (d + \log \frac{1}{\xi})}{nT}} +  \sigma_\varepsilon\max_{i: z_i =0}\|\Gamma_i^{-\frac{1}{2}}\|\sqrt{\frac{(R\log\frac{R}{\xi}+d )(d + \log \frac{n}{\xi})}{n^2\mu^2T}}\Big),
\end{align*}
since also $n_0\asymp n$.
Therefore, with probability at least $1-\xi$,
\begin{align*}
&\|\hat{\beta}_{1, \mathsf{DP}} - \hat{\beta}_{0,\mathsf{DP}} - \theta \| = \|\hat{\beta}_{1,\mathsf{DP}} - (\beta + \theta) - (\hat{\beta}_{0,\mathsf{DP}} - \beta)\| \leq \|\hat{\beta}_{1,\mathsf{DP}} - (\beta + \theta)\| + \|\hat{\beta}_{0,\mathsf{DP}} - \beta\| \\
& = O\Big(  \sigma_\varepsilon\sqrt{\frac{(M_0 + M_1)(d + \log \frac{1}{\xi})}{nT}} +  \sigma_\varepsilon\max_{i \in [n]}\|\Gamma_i^{-\frac{1}{2}}\| \sqrt{\frac{(R\log\frac{R}{\xi}+d )(d + \log \frac{n}{\xi})}{n^2\mu^2T}}  \Big).
\end{align*}
Furthermore, by following the precise same steps as in the proof of Theorem \ref{thm:currentconv}, it follows that under our growth conditions, we have
\[
\sqrt{pnT}\left(\hat{\beta}_{1,\mathsf{DP}} - (\beta + \theta) \right) \overset{d}{\to} N\Big(0, (\lim_{n\to \infty} \frac{1}{pn} \sum_{i \in G_1} \Gamma_i^{-1} \Lambda_i \Gamma_i^{-1}) \Big),
\]
and similarly,
\[
\sqrt{(1-p)nT}\left(\hat{\beta}_{0,\mathsf{DP}} - \beta \right) \overset{d}{\to} N\Big(0, (\lim_{n\to \infty} \frac{1}{(1-p)n} \sum_{i \in G_2} \Gamma_i^{-1} \Lambda_i \Gamma_i^{-1}) \Big),
\]
and it follows, since the two are independent,
\[
\sqrt{nT}(\hat{\beta}_{1,\mathsf{DP}} - \hat{\beta}_{0,\mathsf{DP}} - \theta) \overset{d}{\to} N\Big(0, \frac{1}{p}(\lim_{n\to \infty} \frac{1}{pn} \sum_{i \in G_1} \Gamma_i^{-1} \Lambda_i \Gamma_i^{-1}) + \frac{1}{1-p}(\lim_{n\to \infty} \frac{1}{(1-p)n} \sum_{i \in G_2} \Gamma_i^{-1} \Lambda_i \Gamma_i^{-1}) \Big).
\]

The privacy follows from the privacy proof of Theorem \ref{thm:trimdiffest} and by noting that the privacy parameters in the statement of this theorem have been rescaled by $\frac{1}{\sqrt 2}$ to account for composition over the two sets of queries. 

\end{proof}

\section{Proofs of Section \ref{sec:inf}}
\label{app:covariancerobust}

\subsection{Proof of Theorem \ref{thm:covariancerobust}}
\begin{proof}
We begin by proving the privacy result. The quantities $(\hat{\beta}_{\mathsf{DP}}, \hat{m}_{r^*-1},  r^*, B^*)$ were estimated with $\mu_{\mathsf{est}}$-uGDP. 
It will follow that if the resulting covariance statistic, $\hat{V}(\hat{\beta}_{\mathsf{DP}})_{\mathsf{DP}}$, satisfies $\mu_{\mathsf{var}}$-uGDP when treating $(\hat{\beta}_{\mathsf{DP}}, \hat{m}_{r^* - 1}, r^*, B^*)$ as fixed, non-sensitive quantities, then the combined output $(\hat{\beta}_{\mathsf{DP}}, \hat{V}(\hat{\beta}_{\mathsf{DP}})_{\mathsf{DP}})$ satisfies $\sqrt{\mu_{\mathsf{est}}^2 + \mu_{\mathsf{var}}^2}$-uGDP by the composition properties of GDP (Proposition \ref{prop:composition}).  

We now prove that  $\hat{V}(\hat{\beta}_{\mathsf{DP}})_{\mathsf{DP}}$, calculated in Algorithm \ref{alg:covestimation1}, satisfies $\mu_{\mathsf{var}}$-uGDP when treating $(\hat{\beta}_{\mathsf{DP}}, \hat{m}_{r^*-1},  r^*, B^*)$ as arbitrary fixed quantities. Recall $S$ and $n_{\tilde{S}}$ defined in line~\ref{alg:linewithS} of Algorithm~\ref{alg:covestimation1}. Consider two neighboring datasets, $\mathcal{D}$ and $\mathcal{D}'$, where without loss of generality we vary the data of the first user. Then, the Frobenius norm sensitivity of the covariance construction is the smallest upper bound on
    \[
    \Delta:=\Big\|\frac{1}{n_{\tilde{S}}^2}\sum_{i \in S} (\hat{\beta}_i - \hat{\beta}_{\mathsf{DP}})(\hat{\beta}_i - \hat{\beta}_{\mathsf{DP}})^\tr    - \frac{1}{(n_{\tilde{S}}')^2}\sum_{i \in S'}(\hat{\beta}'_i - \hat{\beta}_{\mathsf{DP}})(\hat{\beta}'_i - \hat{\beta}_{\mathsf{DP}})^\tr  \Big\|_F.
    \]
    We would like to find an upper bound for the sensitivity, $\Delta$.
    First consider the case where $1 \in S$ and $1 \in S'$. Then $S = S'$ and $n_{\tilde{S}} = n_{\tilde{S}}'$, and therefore
    \begin{align*}
    \Delta &= \Big\|\frac{1}{n_{\tilde{S}}^2}\sum_{i \in S}\left( (\hat{\beta}_i - \hat{\beta}_{\mathsf{DP}})(\hat{\beta}_i - \hat{\beta}_{\mathsf{DP}})^\tr    - (\hat{\beta}'_i - \hat{\beta}_{\mathsf{DP}})(\hat{\beta}'_i - \hat{\beta}_{\mathsf{DP}})^\tr   \right)\Big\|_F \\ 
    &= \Big\|\frac{1}{n_{\tilde{S}}^2} \left((\hat{\beta}_1 - \hat{\beta}_{\mathsf{DP}})(\hat{\beta}_1 - \hat{\beta}_{\mathsf{DP}})^\tr   - (\hat{\beta}'_1 - \hat{\beta}_{\mathsf{DP}})(\hat{\beta}'_1 - \hat{\beta}_{\mathsf{DP}})^\tr\right)  \Big\|_F \\ 
    &\leq \frac{1}{n_{\tilde{S}}^2}\sqrt{\|\hat{\beta}_1 - \hat{\beta}_{\mathsf{DP}}\|^4 + \|\hat{\beta}'_1 - \hat{\beta}_{\mathsf{DP}}\|^4  } \leq \frac{1}{n_{\tilde{S}}^2}\sqrt{\kappa^4 + \kappa^4  } \leq \frac{\sqrt 2\kappa^2}{n_{LB}^2},
    \end{align*}
    where we have used that $1 \in S, S'$ implies $\|\hat{\beta}_1 - \hat{\beta}_{\mathsf{DP}}\| \leq \kappa$ by the definition of $S, S'$, since $\|\hat{\beta}_1 - \hat{\beta}_{\mathsf{DP}}\| \leq \|\hat{\beta}_1 - \hat{m}_{r^*-1}\| + \|\hat{m}_{r^* - 1} - \hat{\beta}_{\mathsf{DP}}\| \leq \frac{B}{2^{r^*}} + \delta_{\mathsf{DP}} = \kappa$, and similarly for $\hat{\beta}_1'$.

    On the other hand, if $1 \notin S$ and $1 \in S'$, then necessarily $S' = S \cup \{1\}$, since the two datasets are neighboring, and hence $|S'|=1+|S|$. We can therefore bound the sensitivity $\Delta$ as 
    \begin{align}
     \Delta &= \Big\|\frac{1}{n_{\tilde{S}}^2}\sum_{i \in S} (\hat{\beta}_i - \hat{\beta}_{\mathsf{DP}})(\hat{\beta}_i - \hat{\beta}_{\mathsf{DP}})^\tr    - \frac{1}{(n_{\tilde{S}}')^2}\sum_{i \in S'}(\hat{\beta}'_i - \hat{\beta}_{\mathsf{DP}})(\hat{\beta}'_i - \hat{\beta}_{\mathsf{DP}})^\tr  \Big\|_F \nonumber  \\ 
     &= \Big\|\Big(\frac{1}{n_{\tilde{S}}^2} - \frac{1}{(n_{\tilde{S}}')^2}\Big)\sum_{i \in S} (\hat{\beta}_i - \hat{\beta}_{\mathsf{DP}})(\hat{\beta}_i - \hat{\beta}_{\mathsf{DP}})^\tr   - \frac{1}{(n_{\tilde{S}}')^2}(\hat{\beta}'_1 - \hat{\beta}_{\mathsf{DP}})(\hat{\beta}'_1 - \hat{\beta}_{\mathsf{DP}})^\tr  \Big\|_F \nonumber \\ 
     &\leq \Big\|\frac{2n_{\tilde{S}} + 1}{n_{\tilde{S}}^2 (n_{\tilde{S}}')^2}\sum_{i \in S}(\hat{\beta}_i - \hat{\beta}_{\mathsf{DP}})(\hat{\beta}_i - \hat{\beta}_{\mathsf{DP}})^\tr  \Big\|_F + \Big\|\frac{1}{(n_{\tilde{S}}')^2} (\hat{\beta}'_1 - \hat{\beta}_{\mathsf{DP}})(\hat{\beta}'_1 - \hat{\beta}_{\mathsf{DP}})^\tr   \Big\|_F \nonumber\\ 
     &\leq \frac{2n_{\tilde{S}} + 1}{n_{\tilde{S}}^2 (n_{\tilde{S}}')^2}\sum_{i \in S}\|\hat{\beta}_i - \hat{\beta}_{\mathsf{DP}}\|^2 + \frac{1}{(n_{\tilde{S}}')^2}\|\hat{\beta}'_1 - \hat{\beta}_{\mathsf{DP}}\|^2 \nonumber\\
    &\leq  \frac{(2n_{\tilde{S}} + 1) |S| \kappa^2}{n_{\tilde{S}}^2 (n_{\tilde{S}}')^2} + \frac{\kappa^2}{(n_{\tilde{S}}')^2} \leq \frac{n_{\tilde{S}}(2n_{\tilde{S}} + 1)\kappa^2}{n_{\tilde{S}}^4} + \frac{\kappa^2}{n_{LB}^2} \leq \frac{3n_{\tilde{S}}^2 \kappa^2}{n_{\tilde{S}}^4} + \frac{\kappa^2}{n_{LB}^2} \leq  \frac{4\kappa^2}{n_{LB}^2}. \label{eq:SensitivityOlsCov}
    \end{align}
    By symmetry, the sensitivity is the same in the case that $1 \in S$ and $1 \notin S'$. Finally, if $1 \notin S, S'$ the sensitivity is trivially zero. Thus, in any case, the sensitivity is bounded by $\frac{4\kappa^2}{n_{LB}^2}$.

    In what follows, we argue that to guarantee $\mu$-GDP in the covariance estimator, it suffices to add $N(0, \frac{16\kappa^4}{\mu^2 n_{LB}^4})$ to the diagonal estimates and $N(0, \frac{8\kappa^4}{\mu^2 n_{LB}^4})$ to the off-diagonal estimates. To see this, consider the bijective linear mapping $\text{vec}: \mathcal{M}_{symm}^{d \times d} \mapsto \mathbb{R}^{d(d+1)/2}$ given by
    \[
    \text{vec}(M) = (M_{11}, M_{22}, \dots , M_{dd}, \sqrt{2}M_{12}, \dots, \sqrt{2}M_{(d-1)d} ),
    \]
    and note that this mapping preserves distances in the sense that $\|M\|_F = \|\text{vec}(M)\|_2$ for any $M \in \mathcal{M}_{symm}^{d \times d}$. Thus, the $L_2$-sensitivity of our sample covariance matrix, after being mapped to $\mathbb{R}^{d(d+1)/2}$ using $\text{vec}$, is also $\frac{4\kappa^2}{n_{LB}^2}$. By Proposition \ref{prop:gaussianmechanism}, if $Z \sim N(0,I_{d(d+1)/2})$, then
    \begin{align}\label{eq:covVecPrivate}
    \text{vec}(M) + \frac{4\kappa^2}{n_{LB}^2\mu_{\mathsf{var}}}Z
    \end{align}
    satisfies $\mu_{\mathsf{var}}$-GDP. Now, to retrieve the symmetric matrix, we do
    \[
    \text{vec}^{-1}\Big( \text{vec}(M) + \frac{4\kappa^2}{n_{LB}^2\mu_{\mathsf{var}}}Z\Big) = M + \frac{4\kappa^2}{n_{LB}^2\mu_{\mathsf{var}}}\text{vec}^{-1}(Z),
    \]
    and vec$^{-1}(Z)$ is the matrix that maps Z back to the real symmetric $d \times d$ matrices and divides the off-diagonals by $\sqrt{2}$. By postprocessing (Proposition \ref{prop:postprocessing}), this output satisfies $\mu_{\mathsf{var}}$-GDP. Hence, by composition, $(\hat{\beta}_{\mathsf{DP}}, \hat{V}(\hat{\beta}_{\mathsf{DP}})_{\mathsf{DP}})$ satisfies $\sqrt{\mu_{\mathsf{est}}^2 + \mu_{\mathsf{var}}^2}$-GDP.

    We proceed with the convergence analysis of the algorithm. Our analysis will be performed on the event $\tilde{\mathcal{E}} = \tilde{\mathcal{E}}_0 \cap \tilde{\mathcal{E}}_1 \cap \tilde{\mathcal{E}}_2$, where $\tilde{\mathcal{E}}_1$ is the event that the high probability bounds of Lemma \ref{lem:betasconcentration} hold (they hold with probability at least $1-\xi$), $\tilde{\mathcal{E}}_2$ is the event that equations \eqref{eq:trimestE1}-\eqref{eq:trimestE3} hold when applied to $(\hat{\beta}_i)_{i \in [n]}$ under the conditions of Theorem \ref{thm:currentconv}, which by the discussion surrounding these equations also holds with probability at least $1-\xi$, and the event
    
        \begin{equation}
        \label{eq:E0}
    \tilde{\mathcal{E}_0} := \Big\{ \|W\| = O\Big(\frac{\kappa^2}{n_{LB}^2\mu_{\mathsf{var}}} \sqrt{d + \log \frac{1}{\xi}}\Big)\Big\},
    \end{equation}
    which by Lemma \ref{auxres:GOEnorm} holds with probability at least $1-\xi$. Thus, by a union bound $\tilde{\mathcal{E}}$ holds with probability at least $1-3\xi$. Thus, since by \eqref{eq:growthconditions} we have $\xi = o(1)$, this event will occur with probability tending to $1$.

From line~\ref{alg:cov_est} of Algorithm \ref{alg:covestimation1}, we have
    \begin{equation}
    \label{eq:nTV}
    nT\hat{V}(\hat{\beta}_{\mathsf{DP}})_{\mathsf{DP}} = 
    \frac{nT}{n_{\tilde{S}}^2}\sum_{i \in S}(\hat{\beta}_i - \hat{\beta}_{\mathsf{DP}})(\hat{\beta}_i - \hat{\beta}_{\mathsf{DP}})^\tr   + nT(B^*)^2I +nTW,
    \end{equation}
    where diagonal elements $W_{i,i} \overset{iid}{\sim} N(0, \frac{16\kappa^4}{n_{LB}^4\mu_{\mathsf{var}}^2})$ and off-diagonal elements  $W_{i,j} \overset{iid}{\sim} N(0,\frac{8\kappa^4}{n_{LB}^4\mu_{\mathsf{var}}^2})$ for $j >i$ with $W_{i,j} = W_{j,i}$ for $j <i$. In what follows we show that the second and third terms on the right side of \eqref{eq:nTV} are $o(1)$.

     We begin by analyzing the term $nTW$ in \eqref{eq:nTV}. From line~\ref{alg:linewithS} of Algorithm~\ref{alg:covestimation1}, we have 
     \begin{equation}
     \label{eq:kappa_new}
     \kappa = \frac{B}{2^{r^*}} + \delta_{\mathsf{DP}} = \frac{B}{2^{r^*}}+ \|\hat{m}_{r^*-1} - \hat{\beta}_{\mathsf{DP}}\|,
     \end{equation}
     {and $\frac{B}{2^{r^*}}<2K_2$ on $\tilde{\mathcal{E}}$ by Property 2 of Lemma \ref{lem:propertiesmean}.}
    Furthermore, from Line~\ref{line:mhatmhatminus2} of Algorithm \ref{alg:trimestTemp}, and using the definition of $S$,
    \begin{align*}
        \delta_{\mathsf{DP}} = \|\hat{\beta}_{\mathsf{DP}} - \hat{m}_{r^* - 1}\| &= \Big\|\frac{1}{{n}_{\tilde{S}}}\sum_{i \in S} (\hat{\beta}_i - \hat{m}_{r^* - 1}) + \frac{2\sqrt{2} B}{2^{r^*} \tilde{\mathcal{C}}\mu_{\mathsf{est}} n_{LB}}N_{*}\Big\| \\
        &\leq \frac{1}{|S|}\sum_{i \in S} \frac{B}{2^{r^*}} + \frac{2\sqrt{2}B}{2^{r^*} \tilde{\mathcal{C}} \mu_{\mathsf{est}} n_{LB}} \|N_*\| \\
        &\leq \frac{B}{2^{r^*}} + O\Big(\frac{B\sqrt{d + \log \frac{1}{\xi}}}{2^{r^*} \mu_{\mathsf{est}}n_{LB}}\Big) = O(K_2) + O\Big(K_2\frac{\sqrt{d + \log \frac{1}{\xi}}}{\mu_{\mathsf{est}} n_{LB}}\Big) = O(K_2),
    \end{align*}
    %
    {if $n \geq\frac{C}{\mu_{\mathsf{est}}}\sqrt{R(d + \log\frac{R}{\xi})})$, since then by Property 1 of Lemma \ref{lem:propertiesmean} we have $n_{LB} = \Omega(n)$, and where we used the bound \eqref{eq:trimestE3} on $\|N_*\|$ of $\tilde{\mathcal{E}}$}. Thus $\kappa = O(K_2)$ on $\tilde{\mathcal{E}}$.  As a consequence,
    \[
    \frac{4 \kappa^2}{n_{LB}^2 \mu_{\mathsf{var}}} = O\Big(\frac{K_2^2}{n^2\mu_{\mathsf{var}}}\Big) =O\Big(\sigma_\varepsilon^2 \max_{i \in [n]}\|\Gamma_i^{-\frac{1}{2}}\|^2\frac{1}{T n^2\mu_{\mathsf{var}}}\big(d + \log \frac{n}{\xi}\big) \Big),
    \]
    where the second equality follows from Lemma \ref{lem:betasconcentration}.
   Therefore, we may use $\tilde{\mathcal{E}}_0$ in \eqref{eq:E0} in conjunction with \eqref{eq:growthconditions} to obtain that with probability $1-\xi$,
    \begin{align}\label{eq:OLSCovSimpl1}
    \|nTW\|  = O\Big(\sigma_\varepsilon^2 \max_{i \in [n]}\|\Gamma_i^{-\frac{1}{2}}\|^2\frac{ 1}{n\mu_{\mathsf{var}}} \big(d+\log \frac{n}{\xi}\big)\sqrt{d + \log \frac{1}{\xi}} \Big) = o(1),
    \end{align}

    Now we study the term $nT(B^*)^2I$ in \eqref{eq:nTV}.
     {Notice that on $\tilde{\mathcal{E}}$ and under our assumptions $B^* = \frac{2\sqrt{2}B}{2^{r^*}\mathcal{C} \mu_{\mathsf{est}} n_{LB}} = O(\frac{K_2}{\mu_{\mathsf{est}}n})$ by Property 3 of Lemma \ref{lem:propertiesmean}.}  Hence, using similar argument to those above, also on $\tilde{\mathcal{E}}$ and 
     \eqref{eq:growthconditions},
    \begin{align}\label{eq:ntBbound}
    \|nT(B^*)^2 I\| = nT(B^*)^2 = O\Big(nT\cdot \frac{K_2^2}{\mu_{\mathsf{est}}^2 n^2}\Big) = O\Big(\sigma_\varepsilon^2 \max_{i \in [n]}\|\Gamma_i^{-\frac{1}{2}}\|^2\frac{ 1}{n\mu_{\mathsf{est}}} \big(d+\log \frac{n}{\xi}\big) \Big) = o(1).
    \end{align}

     Thus, to study convergence of $\hat{V}(\hat{\beta}_{\mathsf{DP}})_{\mathsf{DP}}$, using \eqref{eq:nTV} and the fact that the second and third terms on the right side of \eqref{eq:nTV} are $o(1)$, we focus finally on the first term on the right side of \eqref{eq:nTV}:
    \begin{equation}
    \label{eq:final_term}
     \frac{nT}{n_{\tilde{S}}^2}\sum_{i \in S}(\hat{\beta}_i - \hat{\beta}_{\mathsf{DP}})(\hat{\beta}_i - \hat{\beta}_{\mathsf{DP}})^\tr   =  \frac{n}{n_{\tilde{S}}}\Big(\frac{T}{n_{\tilde{S}}}\sum_{i \in S}(\hat{\beta}_i - \hat{\beta}_{\mathsf{DP}})(\hat{\beta}_i - \hat{\beta}_{\mathsf{DP}})^\tr \Big).
    \end{equation}
    Notice that by definition of $n_{\tilde{S}} = \max \{ |S|, n_{LB}\}$,
     \begin{align}\label{eq:CovOLSndivByL0}
    1 \leq \frac{n}{n_{\tilde{S}}} \leq \frac{n}{n_{LB}},
    \end{align}
    {and from the definition of $n_{LB}$ in Algorithm~\ref{alg:trimestTemp} and using Property 1 of Lemma \ref{lem:propertiesmean}},
    \begin{align}\label{eq:CovOLSndivByL}
    \frac{n}{n_{LB}} = 1 + \frac{n-n_{LB}}{n_{LB}} = 1 + \frac{\frac{4}{\mu_{\mathsf{est}}}\sqrt{2R \log \frac{4R}{\xi}}}{n_{LB}} = 1 + O\Big( \frac{1}{n\mu_{\mathsf{est}}} \sqrt{R \log \frac{R}{\xi}}\Big).
    \end{align}
    Considering \eqref{eq:CovOLSndivByL}, we see that by \eqref{eq:growthconditions}, $\frac{n}{n_{LB}} \to 1$ as $n\to \infty$ and from \eqref{eq:CovOLSndivByL0}, we have $\frac{n}{n_{\tilde{S}}} \to 1$ as well. Then, by \eqref{eq:final_term} and the Continuous Mapping Theorem, it suffices to find the probability limit of
    \[
    \frac{T}{n_{\tilde{S}}}\sum_{i \in S}(\hat{\beta}_i - \hat{\beta}_{\mathsf{DP}})(\hat{\beta}_i - \hat{\beta}_{\mathsf{DP}})^\tr.
    \]
    First recall from line~\ref{alg:linewithS} of Algorithm~\ref{alg:covestimation1} that $n_{\tilde{S}} = \max(|S|, n_{LB})$ and $S = |\{ i: \|\hat{\beta}_i - \hat{m}_{r^*-1}\| \leq \frac{B}{2^{r^*}} \}|$. {Notice that $n_{\tilde{S}}= |S|$ on $\tilde{\mathcal{{E}}}$:
    indeed, $|\{i: \|\hat{\beta}_i - \hat{m}_{r^* - 1}\| \leq \frac{B}{2^{r^*}}\}| \geq n_{LB}$ by Property 3 of Lemma \ref{lem:propertiesmean}}.
    Therefore, we study the limit of
    \begin{align}\label{eq:OLSCovSimpl2}
     \frac{T}{n_{\tilde{S}}}\sum_{i \in S}(\hat{\beta}_i - \hat{\beta}_{\mathsf{DP}})(\hat{\beta}_i - \hat{\beta}_{\mathsf{DP}})^\tr= \frac{T}{|S|}\sum_{i \in S}(\hat{\beta}_i - \hat{\beta}_{\mathsf{DP}})(\hat{\beta}_{i} - \hat{\beta}_{\mathsf{DP}})^\tr.
    \end{align}
    {Recall from Property 3 of Lemma \ref{lem:propertiesmean} that on $\tilde{\mathcal{E}}$ we have $\hat{\beta}_{\mathsf{DP}} = \bar{\beta}_{S} + B^*Z,$
    where
     $Z \sim N(0,I)$ has bounded contribution, $|S| \geq n_{LB} \geq n/2$, and $B^* = O(\frac{K_2}{n\mu_{\mathsf{est}}})$. Using this, we write \eqref{eq:OLSCovSimpl2}}
 as
 \begin{align}\label{eq:OLSCovSolve1}
     \frac{T}{|S|}\sum_{i \in S}(\hat{\beta}_i - \hat{\beta}_{\mathsf{DP}})(\hat{\beta}_{i} - \hat{\beta}_{\mathsf{DP}})^\tr = \frac{T}{|S|}\sum_{i \in S}(\hat{\beta}_i - \beta)(\hat{\beta}_i - \beta)^\tr -T(\bar{\beta}_{S} - \beta)(\bar{\beta}_{S} - \beta)^\tr + T(B^*)^2 ZZ^\tr.
    \end{align}

     {In what follows, we show that the final two terms on the right side of \eqref{eq:OLSCovSolve1} are $o(1)$. Under our conditions by Property 4 of Lemma \ref{lem:propertiesmean} and \eqref{eq:OLSK1}, we have, on $\tilde{\mathcal{E}}$}
    \begin{equation}
    \begin{split}
    \label{eq:OLSCovSolve2}
    \|T(\bar{\beta}_{S} - \beta)(\bar{\beta}_{S} - \beta)^\tr \| = T\|\bar{\beta}_{S}- \beta\|^2 &= TO\Big(K_1^2 + \frac{K_2^2 R \log \frac{R}{\xi}}{n^2\mu_{\mathsf{est}}^2} \Big) \\
    &= O\Big(\frac{d + \log \frac{1}{\xi}}{n} + \frac{(d + \log \frac{n}{\xi})R \log \frac{R}{\xi}}{n^2\mu_{\mathsf{est}}^2}\Big) = o(1).
    \end{split}
    \end{equation}
   Furthermore,  by our earlier conclusions around \eqref{eq:ntBbound}, on $\tilde{\mathcal{E}}$,
    \begin{align}\label{eq:OLSCovSolve31}
    \|T(B^*)^2Z Z^\tr \| = T(B^*)^2\|Z\|^2 = o(1).
    \end{align}

    Thus, by \eqref{eq:OLSCovSolve1}, to conclude the convergence analysis it suffices to show that
    \begin{align}\label{eq:OLSCovSolve32}
    \frac{T}{|S|}\sum_{i \in S}(\hat{\beta}_i - {\beta})(\hat{\beta}_i - \beta)^\tr \overset{p}{\to} \Big(\lim_{n\to \infty}\frac{1}{n}\sum_{i=1}^n \Gamma_i^{-1} \Lambda_i \Gamma_i^{-1}\Big).
    \end{align}
    Denoting $A_S := \frac{1}{|S|}\sum_{i \in S}(\hat{\beta}_{i} - \beta)(\hat{\beta}_{i} - \beta)^\tr$, first notice that 
    \begin{align}\label{eq:OLSCovSolve4}
    \frac{T}{|S|}\sum_{i\in S}(\hat{\beta}_{i} - \beta)(\hat{\beta}_{i} - \beta)^\tr = \frac{T}{n}\sum_{i=1}^n (\hat{\beta}_{i} - \beta)(\hat{\beta}_{i} - \beta)^\tr + \frac{T(n - |S|)}{n}(A_S - A_{S^{\mathsf{c}}}).
    \end{align}
    In what follows, we argue that $\frac{T(n - |S|)}{n}(A_S - A_{S^{\mathsf{c}}}) \rightarrow 0$. We have
    \[
    T\Big\|\frac{n - |S|}{n}(A_S - A_{S^{\mathsf{c}}})\Big\| = \frac{T|S^{\mathsf{c}}|}{n}\| A_S - A_{S^{\mathsf{c}}}\| \leq \frac{T|S^{\mathsf{c}}|}{n}\left( \|A_s\| + \|A_{S^{\mathsf{c}}}\| \right) \leq \frac{2T|S^{\mathsf{c}}|}{n}\max_{i \in [n]}\|\hat{\beta}_{i} - \beta\|^2,
    \]
    {and by Properties 1 and 3 of Lemma \ref{lem:propertiesmean}, on $\tilde{\mathcal{E}}$},
    \[
    \frac{|S^{\mathsf{c}}|}{n} = \frac{n - |S|}{n}  \leq \frac{n- n_{LB}}{n}= O\Big(\frac{1}{n\mu_{\mathsf{est}}} \sqrt{R \log \frac{R}{\xi}}\Big),
    \]
    whereas by Lemma \ref{lem:betasconcentration}, for large enough $T$, under our conditions and assumptions, on $\tilde{\mathcal{E}}$,
    \[
    \max_{i \in [n]}\|\hat{\beta}_{i} - \beta\| = O\Big(\sqrt{\frac{1}{T}\big(d + \log \frac{n}{\xi}\big)} \Big).
    \]
    Hence, on this event,
    \[
     T\Big\|\frac{n - |S|}{n}(A_S - A_{S^{\mathsf{c}}})\Big\|  \leq \frac{2T|S^{\mathsf{c}}|}{n}\max_{i \in [n]}\|\hat{\beta}_{i} - \beta\|^2 = O\Big(\frac{d + \log \frac{n}{\xi}}{n} \sqrt{R \log \frac{R}{\xi}} \Big).
    \]
    As a result,  by our growth conditions \eqref{eq:growthconditions},
    \begin{align}\label{eq:OLSCovSolve5}
    T\left(\frac{n - |S|}{n}(A_S - A_{S^{\mathsf{c}}})\right) \overset{p}{\to} 0.
    \end{align}

    Now, considering \eqref{eq:OLSCovSolve32}, \eqref{eq:OLSCovSolve4}, and  \eqref{eq:OLSCovSolve5}, it follows that we can consider simply whether
    $
    \frac{T}{n}\sum_{i=1}^n (\hat{\beta}_i - \beta)(\hat{\beta}_i - \beta)^\tr \overset{p}{\to} (\lim_{n\to \infty}\frac{1}{n}\sum_{i=1}^n \Gamma_i^{-1} \Lambda_i \Gamma_i^{-1}).
  $
    First of all, denote $W_{i,T} := \sqrt{T}(\hat{\beta}_i - \beta) = \Gamma_{i,T}^{-1} S_{i,T},$
    where $\Gamma_{i,T} := \frac{1}{T} \sum_{t=1}^T x_{i,T}x_{i,T}^\tr$, and $S_{i,T} := \frac{1}{\sqrt{T}}\sum_{t=1}^T x_{i,t} \varepsilon_{i,t}$. Hence, what remains to be shown is the following: 
    \begin{align}\label{eq:OLSCovSimpl4}
   \frac{T}{n}\sum_{i=1}^nW_{i,T} W_{i,T}^\tr  \overset{p}{\to} \Big(\lim_{n\to \infty}\frac{1}{n}\sum_{i=1}^n \Gamma_i^{-1} \Lambda_i \Gamma_i^{-1}\Big).
    \end{align}

We know by Slutsky's and standard arguments (see e.g.\ the proof of Theorem \ref{thm:currentconv} for the ingredients) that
$
W_{i,T} \overset{d}{\to} N(0,  \Gamma_i^{-1} \Lambda_i \Gamma_i^{-1}),
$
as $T \to \infty$, which implies, by the Continuous Mapping Theorem, that
$
W_{i,T} W_{i,T}^\tr \overset{d}{\to} Z_i Z_i^\tr,
$
where $Z_i \sim N(0, \Gamma_i^{-1} \Lambda_i \Gamma_i^{-1})$. By Assumption \ref{assumption:UI}, this allows us to conclude that $\mathbb{E}[W_{i,T} W_{i,T}^\tr ] \to \Gamma_i^{-1} \Lambda_i \Gamma_i^{-1}$ as $T \to \infty$. Indeed, for some $\gamma \in (0, \frac{\delta}{2})$ with $\delta$ as in Assumption \ref{assumption:UI}, letting $p = \frac{1 + \delta/2}{1 + \gamma} > 1$ and $q = \frac{p}{p-1}$, by H\"older's inequality,
\[
\sup_{T \geq T_0}\mathbb{E}\|W_{i,T}W_{i,T}^\tr \|^{1+ \gamma} \leq \Big(\sup_{T \geq T_0}\mathbb{E}\|\Gamma_{i,T}^{-1}\|^{2(1+\gamma)p} \Big)^{1/p}\Big(\sup_{T \geq T_0}\mathbb{E}\|S_{i,T}\|^{2(1+\gamma)q} \Big)^{1/q}.
\]
Notice, the first term on the right side of the above is bounded by Assumption \ref{assumption:UI} and the second term by Lemma \ref{auxres:GammaITExpectationBound}. We have thereby established  uniform integrability over $W_{i,T}W_{i,T}^\tr$.
We can now prove our final result; namely, we want to show \eqref{eq:OLSCovSimpl4}. Towards this result, notice that 
\begin{equation}
\begin{split}
\label{eq:final_bound}
&\frac{1}{n}\sum_{i=1}^n W_{i,T}W_{i,T}^\tr - \Big(\lim_{n\to \infty}\frac{1}{n}\sum_{i=1}^n \Gamma_i^{-1} \Lambda_i \Gamma_i^{-1}\Big) \\
&\quad = \frac{1}{n}\sum_{i=1}^n \Big(W_{i,T}W_{i,T}^\tr - \mathbb{E}[W_{i,T}W_{i,T}^\tr]\Big) + \frac{1}{n}\sum_{i=1}^n\Big(\mathbb{E}[W_{i,T}W_{i,T}^\tr ] - \Gamma_i^{-1} \Lambda_i \Gamma_i^{-1}\Big) \\
&\quad \quad +  \frac{1}{n} \sum_{i=1}^n \Gamma_i^{-1} \Lambda_i \Gamma_i^{-1} - \Big(\lim_{n\to \infty}\frac{1}{n}\sum_{i=1}^n \Gamma_i^{-1} \Lambda_i \Gamma_i^{-1}\Big).
\end{split}
\end{equation}
Label the terms on the right side of the above $I_1, I_2$ and $I_3$ and we will show that each converges to $0$ in probability to complete the proof.

First notice that $I_1$ of \eqref{eq:final_bound} converges to $0$ in $L^{1+\delta}$ for $\delta<1$ small enough so that Assumption \ref{assumption:UI} holds with $2\delta$. Hence, it converges to zero in probability as well because
\begin{align*}
    \mathbb{E}\Big\|\frac{1}{n}\sum_{i=1}^n (W_{i,T}W_{i,T}^\tr - \mathbb{E}[W_{i,T}W_{i,T}^\tr])\Big\|^{1 + \delta} 
    &\overset{}{\leq} \mathbb{E} \Big\|\frac{1}{n}\sum_{i=1}^n (W_{i,T} W_{i,T}^\tr - \mathbb{E}[W_{i,T} W_{i,T}^\tr])\Big\|^{1 + \delta}_F  \\
    &\overset{}{\leq}  \sum_{j =1}^d \sum_{k = 1}^d \mathbb{E} \Big|\frac{1}{n} \sum_{i=1}^n \Psi_{i,T,j,k}\Big|^{1 + \delta},
\end{align*}
where we defined $\Psi_{i,T,j,k}$ as the $(j,k)$'th entry of the centered $W_{i,T} W_{i,T}^\tr$. Now we show the right side of the above is $o(1)$.
\begin{align*}
     \sum_{j =1}^d \sum_{k = 1}^d \mathbb{E} \Big|\frac{1}{n} \sum_{i=1}^n \Psi_{i,T,j,k}\Big|^{1 + \delta}  &\overset{(1)}{\leq}  \frac{C}{n^{1 + \delta}} \sum_{i=1}^n\sum_{j =1}^d \sum_{k = 1}^d \mathbb{E} |\Psi_{i,T,j,k}|^{1 + \delta} \\
    &\overset{}{\leq}  \frac{C'd^2}{n^\delta} \sup_{T \geq T_0} \sup_{i,j,k} \mathbb{E}|\Psi_{i,t,j,k}|^{1+\delta} \\
    &\overset{}{\leq}  \frac{C''d^2}{n^{\delta}}\sup_{i, T\geq T_0} \mathbb{E}\Big\|W_{i,T}W_{i,T}^\tr - \mathbb{E}[W_{i,T}W_{i,T}^\tr]\Big\|^{1+\delta}\overset{(2)}{=} o(1),
\end{align*}
In the above, $C,C',C''>0$ are constants independent of $n$ and $T$, and  inequality (1)  follows from Lemma \ref{lemma:vonbahresseen}, and equality (2) follows from Assumptions  \ref{assumptions:initialcovariates}, \ref{assumptions:initialerrors}, and \ref{assumption:UI}.

Next, considering term $I_2$ of \eqref{eq:final_bound},
\begin{align*}
   \Big\|\frac{1}{n}\sum_{i=1}^n (\mathbb{E}[W_{i,T} W_{i,T}^\tr ] - \Gamma_i^{-1} \Lambda_i \Gamma_i^{-1}) \Big\| &\leq \sup_{i\geq 1} \Big\|\mathbb{E}[W_{i,T} W_{i,T}^\tr] - \Gamma_i^{-1} \Lambda_i \Gamma_i^{-1}\Big\|.
   \end{align*}
  Therefore, we will show that $\sup_{i \geq 1} \|\mathbb{E}[W_{i,T} W_{i,T}^\tr] - \Gamma_i^{-1} \Lambda_i \Gamma_i^{-1}\| \to 0$ as $T \to \infty$. We can write
   \begin{align}
   \label{eq:single_term}
       \mathbb{E}[W_{i,T} W_{i,T}^\tr] - \Gamma_i^{-1} \Lambda_i \Gamma_i^{-1} = \left(\mathbb{E}[W_{i,T} W_{i,T}^\tr] - \Gamma_i^{-1} \Lambda_{i,T} \Gamma_i^{-1}\right) + \Gamma_i^{-1}(\Lambda_{i,T} - \Lambda_i)\Gamma_i^{-1},
   \end{align}
    for $\Lambda_{i,T} := \mathbb{E}[S_{i,T} S_{i,T}^\tr]$. Consider the second term on the right side of \eqref{eq:single_term}. Recall,
   \begin{align*}
       \Lambda_{i,T} - \Lambda_i =\sum_{h = -(T-1)}^{T-1}\Big(1 - \frac{|h|}{T}\Big)\mathbb{E}[x_{i,0} x_{i,h}^\tr \varepsilon_{i,0} \varepsilon_{i,h}] - \sum_{h=-\infty}^\infty \mathbb{E}[x_{i,0}x_{i,h}^\tr \varepsilon_{i,0} \varepsilon_{i,h}] ,   \end{align*}
       and, denoting $\Gamma_i(h):= \mathbb{E}[x_{i,0} x_{i,h}^\tr \varepsilon_{i,0} \varepsilon_{i,h}]$, we have
       \begin{align*}
           \sup_{i\geq 1} \| \Gamma_i^{-1}(\Lambda_{i,T} - \Lambda_i)\Gamma_i^{-1}\| &\leq \frac{1}{c_\Gamma^2} \sup_{i\geq 1} \|\Lambda_{i,T} - \Lambda_i\| \\
           &\leq \frac{1}{c_\Gamma^2}\Big(\sum_{|h| \geq T}\sup_{i \geq 1} \|\Gamma_i(h)\| + \frac{1}{T} \sum_{|h| <T}|h|\sup_{i\geq1}\|\Gamma_i(h)\|   \Big) \to 0,
       \end{align*}
       since the sub-Gaussianity and mixing constants are uniform in $i \in \mathbb{N}$. In particular, by Lemma \ref{auxres:davydov} and Assumptions \ref{assumptions:initialcovariates}-\ref{assumptions:initialerrors}, there exists a constant $C>0$ such that $\sup_{i \geq 1} \|\Gamma_i(h)\| \leq C|h|^{-3-q/2}$, so that the above sum is $O(T^{-1})$. Consider the first term on the right side of \eqref{eq:single_term}. We will show $\sup_{i \geq 1}\|\mathbb{E}[W_{i,T}W_{i,T}^\tr] - \Gamma_i^{-1} \Lambda_{i,T} \Gamma_i^{-1} \| \to 0$ as $T \to \infty$. Let $\Delta_{i,T} := \Gamma_{i,T}^{-1} - \Gamma_i^{-1}$. Then
       \begin{align*}
           \mathbb{E}[W_{i,T} W_{i,T}^\tr] - \Gamma_i^{-1} \Lambda_{i,T} \Gamma_i^{-1} = \mathbb{E}[\Delta_{i,T} S_{i,T} S_{i,T}^\tr] \Gamma_i^{-1} + \Gamma_i^{-1} \mathbb{E}[S_{i,T} S_{i,T}^\tr \Delta_{i,T}] + \mathbb{E}[\Delta_{i,T} S_{i,T} S_{i,T}^\tr \Delta_{i,T}],
       \end{align*}
       so that
       \begin{align*}
       \sup_{i\geq 1} &\|\mathbb{E}[W_{i,T} W_{i,T}^\tr] - \Gamma_i^{-1} \Lambda_{i,T} \Gamma_i^{-1} \| \\
       &\leq 2\frac{1}{c_\Gamma}\sup_{i\geq1} (\mathbb{E}\|S_{i,T}\|^{2q})^{1/q} \sup_{i \geq 1} (\mathbb{E}\|\Delta_{i,T}\|^p)^{1/p} + \sup_{i\geq 1}(\mathbb{E}\|S_{i,T}\|^{2q'})^{1/q'} \sup_{i\geq 1} (\mathbb{E}\|\Delta_{i,T}\|^{2p'})^{1/p'}
       \end{align*}
       for $p,q,p',q'$ such that $\frac{1}{p} + \frac{1}{q} = 1$ and $\frac{1}{p'} + \frac{1}{q'} = 1$. Now, for any $r>1$ we have
       \[
       \mathbb{E}\|\Delta_{i,T}\|^r = \mathbb{E}\|\Gamma_{i,T}^{-1} - \Gamma_i^{-1}\|^r \leq \|\Gamma_i^{-1}\|^r(\mathbb{E}\|\Gamma_{i,T}^{-1}\|^{ar})^{1/a} (\mathbb{E}\|\Gamma_{i,T} - \Gamma_i\|^{br})^{1/b}, 
       \]
       with $\frac{1}{a} + \frac{1}{b} = 1$. Now, taking $p = q = 2$, $p' = 1 + \delta/4$, $q' = 1 + 4/\delta$, $a = \frac{2+\delta}{2 + \delta/2}$, and $b = 2 + 4/\delta$, where $\delta>0$ is as in Assumption \ref{assumption:UI}, we obtain that $\sup_{i\geq 1} (\mathbb{E} \|S_{i,T}\|^{4})^{1/2} = O(1)$ and $\sup_{i \geq 1}(\mathbb{E}\|S_{i,T}\|^{2 + 8/\delta})^{\frac{1}{1+4/\delta}} = O(1)$ by Lemma \ref{auxres:GammaITExpectationBound}. Similarly, for our choices of $a,b,p,p'$, we have $ap = \frac{2(2+\delta)}{2+\delta/2}$, $bp = 4 + \frac{8}{\delta}$, $2ap' = 2 + \delta$, and $2bp' = 2b(1 + \delta/4)$. Thus by Assumption \ref{assumption:UI} we have $\sup_{i \geq 1}\mathbb{E}\|\Gamma_{i,T}^{-1}\|^{ap} = O(1)$ and $\sup_{i \geq 1} \mathbb{E}\|\Gamma_{i,T}^{-1}\|^{2ap'} = O(1)$. Furthermore, by Lemma \ref{auxres:GammaITExpectationBound}, we obtain $\sup_{i \geq 1} \mathbb{E}\|\Gamma_{i,T} - \Gamma_i\|^{bp} = O(T^{-2 - \frac{4}{\delta}})$ and $\sup_{i \geq 1} \mathbb{E}\|\Gamma_{i,T} - \Gamma_i\|^{2bp'} = O(T^{-b(1 + \delta/4)})$, so that we may conclude that
       $\sup_{i \geq 1} \|\mathbb{E}[W_{i,T} W_{i,T}^\tr] - \Gamma_i^{-1} \Lambda_{i,T} \Gamma_i^{-1} \| = o(1),$
       and hence $I_2 \to 0$ as $(n,T) \to \infty$.

Finally, $I_3$ goes to zero as $n\to \infty$ by assumption and is independent of $T$. 
\end{proof}

\section{Private Covariance Estimation with Two Groups}\label{Appendix:TwoGroupCov}

\begin{theorem}
\label{thm:covarianceDiff}
Let $\hat{\beta}_{1,\mathsf{DP}}$ be estimated as in Theorem \ref{thm:diffconv} with $\mu_{\mathsf{est}}$-uGDP. Let $(\hat{\beta}_{i}, z_i)_{i=1}^n$ and $\hat{\beta}_{1,\mathsf{DP}}$ be the input to Algorithm \ref{alg:covestimationGrouped} with privacy level $\mu_{\mathsf{var}}$ and denote the resulting estimate by $\hat{V}(\hat{\beta}_{1,\mathsf{DP}})_{\mathsf{DP}}$.

\textbf{(a) Privacy.} 
$(\hat{\beta}_{1,\mathsf{DP}}, \hat{V}(\hat{\beta}_{1,\mathsf{DP}})_{\mathsf{DP}})$ satisfies $\sqrt{\mu_{\mathsf{est}}^2 + \mu_{\mathsf{var}}^2}$-uGDP.

\textbf{(b) Convergence.}
If the conditions of Theorem \ref{thm:diffconv} are met, then, as $(n,T) \to \infty$,
    \[
    \left(\hat{V}(\hat{\beta}_{1,\mathsf{DP}})_{\mathsf{DP}} \right)^{-\frac{1}{2}} (\hat{\beta}_{1,\mathsf{DP}} - \beta) \overset{d}{\to} N(0,I_d).
    \]
\end{theorem}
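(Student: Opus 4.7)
The plan is to follow the template of Theorem \ref{thm:covariancerobust} closely, reducing each step to its group-restricted analog. Algorithm \ref{alg:covestimationGrouped} is essentially Algorithm \ref{alg:covestimation1} applied to the $\bar{z}\asymp n$ users in group $1$, using the termination quantities $(\hat{m}_{r^*-1}, r^*, B^*, n_{LB})$ output by Algorithm \ref{alg:diffmeanest}. Because Theorem \ref{thm:trimdiffest} already gives the same structural guarantees as Theorem \ref{thm:trimest} (termination with $B/2^{r^*} = O(K_2)$, $n_{LB}\asymp \bar z\asymp n$, a Gaussian post-average representation of $\hat m_{\mathsf{DP}}$ with scale $B^* = O(K_2/(n\mu))$, and the sample-trimming bound of Lemma \ref{lem:propertiesmean} applied to $(\hat\beta_i)_{i:z_i=1}$), I can port the proof of Theorem \ref{thm:covariancerobust} line by line, with $K_1,K_2$ chosen as in the proof of Theorem \ref{thm:diffconv} (i.e., the Lemma \ref{lem:betasconcentration} bounds restricted to group $1$ with $n$ replaced by $pn$).

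For part (a), I would bound the $L_2$-sensitivity of the group-$1$ sample-covariance statistic in Line~\ref{alg:cov_est} of the grouped algorithm. Treating the outputs of Algorithm \ref{alg:diffmeanest} ($\hat\beta_{1,\mathsf{DP}}$, $\hat m_{r^*-1}$, $r^*$, $B^*$, $n_{LB}$) as public by post-processing, I split into cases according to whether changing user $1$'s $(D_1,z_1)$ preserves $z_1$ or flips it. When $z_1 = z_1'$ the argument is identical to the case analysis leading to \eqref{eq:SensitivityOlsCov}, giving sensitivity $4\kappa^2/n_{LB}^2$. When $z_1 \neq z_1'$, one user is simultaneously removed from and added to the candidate set $S$; combining the two subcases of \eqref{eq:SensitivityOlsCov} by a triangle inequality yields the same $O(\kappa^2/n_{LB}^2)$ bound. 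Applying the Gaussian mechanism to the vectorization as in the proof of Theorem \ref{thm:covariancerobust} gives $\mu_{\mathsf{var}}$-uGDP for $\hat V(\hat\beta_{1,\mathsf{DP}})_{\mathsf{DP}}$ conditional on the earlier private outputs, and composition with $\mu_{\mathsf{est}}$-uGDP estimation of $\hat\beta_{1,\mathsf{DP}}$ yields part (a).

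For part (b), I would work on the analog of the high-probability event $\tilde{\mathcal{E}}$ in the proof of Theorem \ref{thm:covariancerobust}, intersecting (i) Lemma \ref{lem:betasconcentration} applied to $\{i:z_i=1\}$, (ii) the event from the proof of Theorem \ref{thm:trimdiffest} on which the internal random draws of Algorithm \ref{alg:diffmeanest} are controlled (including the extra noise $Z_0$ used to privatize the sample size, which is bounded by the argument surrounding \eqref{eq:LTwoGroupLOneGroup}), and (iii) the Gaussian-noise control of Lemma \ref{auxres:GOEnorm}. After multiplying by $nT$, decompose $\hat V(\hat\beta_{1,\mathsf{DP}})_{\mathsf{DP}}$ exactly as in \eqref{eq:nTV}. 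The correction term $nT(B^*)^2 I$ and the privacy-noise term $nTW$ are $o(1)$ by the same $K_2$-accounting as \eqref{eq:OLSCovSimpl1}--\eqref{eq:ntBbound}, using the growth conditions in \eqref{eq:growthconditions}. The remaining sample-covariance term reduces, via the identity \eqref{eq:OLSCovSolve1} and the $L^{1+\delta}$ argument that led to \eqref{eq:OLSCovSimpl4}, to showing
\[
\frac{T}{|\{i:z_i=1\}|}\sum_{i:z_i=1}(\hat\beta_i-(\beta+\theta))(\hat\beta_i-(\beta+\theta))^\tr\overset{p}{\to}\lim_{n\to\infty}\frac{1}{pn}\sum_{i:z_i=1}\Gamma_i^{-1}\Lambda_i\Gamma_i^{-1}.
\]
This follows verbatim from the corresponding step in the proof of Theorem \ref{thm:covariancerobust}, because Assumption \ref{assumption:ndiffreq} guarantees $|\{i:z_i=1\}|=\Omega(n)$ and Assumptions \ref{assumptions:initialcovariates}--\ref{assumption:UI} hold uniformly in $i$. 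Rescaling by $1/p^2$ (which enters because $\hat V$ averages only the group-$1$ users but is the variance of the group-$1$ centred estimator) recovers $V_1$ as in Theorem \ref{thm:diffconv}(c), and an application of Slutsky's theorem using the asymptotic normality of $\sqrt{n_1 T}(\hat\beta_{1,\mathsf{DP}}-(\beta+\theta))$ from Theorem \ref{thm:diffconv}(c) yields the pivot convergence.

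The main technical obstacle I expect is the sensitivity analysis in the $z_1 \neq z_1'$ case of part (a), since flipping group membership couples a change in $S$ with a change that, at the algorithm level, already affected $n_{LB}$ through the noisy count $\tilde z$; I will need to be careful to treat $n_{LB}$ as a data-independent post-processed constant (it was released with privacy budget already accounted for in $\mu_{\mathsf{est}}$) so that the sensitivity of the covariance statistic is governed by $n_{LB}$ rather than by the data-dependent $|S|$. The convergence steps, in contrast, should be essentially a restriction of the earlier proof to group~$1$ and will not require genuinely new ideas.
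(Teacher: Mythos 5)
Your part (a) contains a genuine gap in the one new case it needs to handle. When $z_1 \neq z_1'$ you assert that ``one user is simultaneously removed from and added to the candidate set $S$'' and propose to combine the removal and addition subcases of \eqref{eq:SensitivityOlsCov} by a triangle inequality. That event cannot occur, and the bound your argument would actually certify is too weak for the stated theorem. Only user $1$'s record changes between neighboring datasets, while every other quantity defining $S$ in Algorithm \ref{alg:covestimationGrouped} ($\hat m_{r^*-1}$, the radius $B/2^{r^*}$, and all other users' $(\hat\beta_i, z_i)$) is fixed (by post-processing, as you correctly note), so the symmetric difference $S \,\triangle\, S'$ is contained in $\{1\}$. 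Concretely, if $z_1 = 1 \to z_1' = 0$, the membership condition $\|\hat\beta_1' - \hat m_{r^*-1}\| \le (B/2^{r^*})\,z_1' = 0$ forces $1 \notin S'$, so user $1$ is \emph{at most removed}; the reverse flip at most adds. This one-sided observation is precisely how the paper argues, and it is what keeps the global sensitivity at $4\kappa^2/n_{LB}^2$ — the exact constant to which the Gaussian noise in Algorithm \ref{alg:covestimationGrouped} is calibrated. Your triangle-inequality combination would certify only $8\kappa^2/n_{LB}^2$; since the noise scale $4\kappa^2/(n_{LB}^2\mu_{\mathsf{var}})$ is fixed in the algorithm, a doubled sensitivity yields only $2\mu_{\mathsf{var}}$-GDP for the covariance release and hence $\sqrt{\mu_{\mathsf{est}}^2 + 4\mu_{\mathsf{var}}^2}$-uGDP overall, not the claimed $\sqrt{\mu_{\mathsf{est}}^2 + \mu_{\mathsf{var}}^2}$. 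Relatedly, stating the bound as $O(\kappa^2/n_{LB}^2)$ is not adequate in a privacy proof: the Gaussian mechanism (Proposition \ref{prop:gaussianmechanism}) requires the exact constant. The repair is immediate — replace your simultaneous-change case with the one-sided analysis above, after which the flip case reduces to the removal/addition subcases already covered by \eqref{eq:SensitivityOlsCov}.

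Everything else in your proposal matches the paper's proof essentially step for step. Your resolution of the $n_{LB}$ worry is exactly the paper's: $n_{LB}$ is part of the output of Algorithm \ref{alg:diffmeanest}, privatized through the noisy count $\tilde z$ within the $\mu_{\mathsf{est}}$ budget, and is therefore a public post-processed constant in the sensitivity analysis. For part (b), the paper likewise works on the intersected high-probability event (with the extra control of $Z_0$ via the argument around \eqref{eq:LTwoGroupLOneGroup} giving $n_{LB} = \Omega(\sum_i z_i) = \Omega(n)$), disposes of the $nTW$ and $nT(B^*)^2 I$ terms exactly as in \eqref{eq:OLSCovSimpl1} and \eqref{eq:ntBbound}, and reduces via \eqref{eq:OLSCovSolve1}--\eqref{eq:OLSCovSolve5} to the group-restricted limit $\frac{T}{\sum_i z_i}\sum_{i:z_i=1}(\hat\beta_i - \beta_1)(\hat\beta_i - \beta_1)^{\tr} \overset{p}{\to} \lim_{n\to\infty}\frac{1}{pn}\sum_{i:z_i=1}\Gamma_i^{-1}\Lambda_i\Gamma_i^{-1}$, with the additional factor $n/n_{\tilde S} \to 1/p$ supplying the $1/p^2$ in $V_1$ — the same bookkeeping you describe — followed by Slutsky. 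So the convergence half requires no new ideas beyond the paper's, as you anticipated; only the privacy case analysis needs the correction above.
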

\begin{proof}
    For the privacy component of the  proof, all that is required is to verify the sensitivity of the statistic $\frac{1}{n_{\tilde{S}}^2} \sum_{i \in S} (\hat{\beta}_i - \hat{\beta}_{1,\mathsf{DP}})(\hat{\beta}_i - \hat{\beta}_{1,\mathsf{DP}})^\tr$. Notice that by post-processing (Proposition \ref{prop:postprocessing}), $\hat{\beta}_{1,\mathsf{DP}}$ can be considered a fixed quantity for the purpose of the sensitivity analysis. Consider two neighboring datasets $\mathcal{D}$ and $\mathcal{D}'$ where without loss of generality only the data of the first user differs between the two datasets. Thus, the only difference between the datasets $\mathcal{D}$ and $\mathcal{D}'$ is through a difference between $(\hat{\beta}_1, z_1)$ and $(\hat{\beta}_1', z_1')$.  If $z_1 = z_1'$ then the analysis reverts to the analysis of \eqref{eq:SensitivityOlsCov}, yielding a sensitivity of at most ${4\kappa^2}/{n_{LB}^2}$. If $z_1 = z_1' = 0$, then the sensitivity is $0$. Hence the only new case to consider is when $z_1 = 1$ and $z_1 = 0$ (the opposite case follows by symmetry). In this case $1 \notin S'$ and either $1 \in S$ or $1 \notin S'$. In the former case the sensitivity is $0$, whereas in the latter case the sensitivity is bounded by ${4\kappa^2}/{n_{LB}^2}$ by \eqref{eq:SensitivityOlsCov}. Hence, the sensitivity of this statistic is globally bounded by ${4\kappa^2}/{n_{LB}^2}$, and hence by our discussion surrounding \eqref{eq:covVecPrivate} and by composition (Proposition \ref{prop:composition}), we have that  $(\hat{\beta}_{\mathsf{DP}}, \tilde{V}(\hat{\beta}_{1,\mathsf{DP}})_{\mathsf{DP}})$ is $\mu_{\mathsf{var}}$-uGDP. Since $\hat{V}(\hat{\beta}_{1,\mathsf{DP}})_{\mathsf{DP}}$ is deterministic post-processing of $\tilde{V}(\hat{\beta}_{1,\mathsf{DP}})_{\mathsf{DP}}$, the privacy result follows by Proposition \ref{prop:postprocessing}. 

    The utility proof is nearly identical to the utility proof of Theorem \ref{thm:covariancerobust}. As in \eqref{eq:OLSCovSimpl1}, we have $\|nTW\| = o(1)$ on our high probability event, since  again $\kappa = O(K_2)$ and, as we showed in \eqref{eq:LTwoGroupLOneGroup}, we again have $n_{LB} = \Omega(\sum_{i=1}^n z_i ) = \Omega(n)$. For the same reason, we also have $\|nT(B^*)I\| = o(1)$ on our high probability event, as in \eqref{eq:ntBbound}. Thus, it suffices for us to analyze
    \[
    \frac{Tn}{n_{\tilde{S}}^2}\sum_{i \in S} (\hat{\beta}_i - \hat{\beta}_{1,\mathsf{DP}})(\hat{\beta}_i - \hat{\beta}_{1,\mathsf{DP}})^\tr,
    \]
    and as in \eqref{eq:CovOLSndivByL}, we have that $\frac{n}{S} \to 1$ on our high probability event \eqref{eq:DiffMeanHighProbEvent} as $n \to \infty$. Furthermore, as in Property 3 of Lemma \ref{lem:propertiesmean}, we have that $|S| \geq n_{LB}$, so that $n_{\tilde{S}} = |S|$. Thus, it boils down to deriving the probability limit of
    \[
    \frac{T}{|S|}\sum_{ i \in S}(\hat{\beta}_i - \hat{\beta}_{1,\mathsf{DP}})(\hat{\beta}_i - \hat{\beta}_{1,\mathsf{DP}})^\tr,
    \]
    and since \eqref{eq:OLSCovSolve1}, \eqref{eq:OLSCovSolve2}, \eqref{eq:OLSCovSolve31}, \eqref{eq:OLSCovSolve32}, \eqref{eq:OLSCovSolve4} and \eqref{eq:OLSCovSolve5} all apply in our case, we can equivalently obtain the probability limit of
    \[
    \frac{T}{\sum_{i=1}^n z_i}\sum_{i: z_i = 1}(\hat{\beta}_i - \beta)(\hat{\beta}_i - \beta)^\tr,
    \]
    and the discussion after \eqref{eq:OLSCovSimpl4} yields, in our setting, by simply considering the sample $\{i \in [n]: z_i = 1\}$ instead of $\{i \in [n]\}$,
    \[
     \frac{T}{\sum_{i=1}^n z_i}\sum_{i: z_i = 1}(\hat{\beta}_i - \beta)(\hat{\beta}_i - \beta)^\tr \overset{p}{\to} \lim_{n\to\infty} (\frac{1}{pn}\sum_{i: z_i=1} \Gamma_i^{-1} \Lambda_i \Gamma_i^{-1}),
    \]
    proving our desired result.
\end{proof}
Naturally, Theorem \ref{thm:covarianceDiff} also provides guarantees for the output of the algorithm with input $(\hat{\beta}_i, z_i)_{i=1}^n$, yielding guarantees for $\hat{V}(\hat{\beta}_{0,\mathsf{DP}})_{\mathsf{DP}}$. This allows us to obtain the following corollary. Thus, one can also obtain private, asymptotically valid confidence intervals and Wald tests as in Corollaries \ref{corollaryCI} and \ref{corollaryTest} for the coefficient vector $\theta$. 
\begin{corollary}
    Let $(\hat{\beta}_{1,\mathsf{DP}}, \hat{V}(\hat{\beta}_{1,\mathsf{DP}})_{\mathsf{DP}})$ and $(\hat{\beta}_{0,\mathsf{DP}}, \hat{V}(\hat{\beta}_{0,\mathsf{DP}})_{\mathsf{DP}})$ be as in Theorem \ref{thm:covarianceDiff}. Then $(\hat{\beta}_1, \hat{\beta}_0, \hat{V}(\hat{\beta}_{1,\mathsf{DP}})_{\mathsf{DP}}, \hat{V}(\hat{\beta}_{0,\mathsf{DP}})_{\mathsf{DP}})$ satisfies $\sqrt{2(\mu_{\mathsf{est}}^2 + \mu_{\mathsf{var}}^2)}$-uGDP. If furthermore the conditions of Theorem \ref{thm:covarianceDiff} are met, then,
    \[
    \left(\hat{V}(\hat{\beta}_{1,\mathsf{DP}})_{\mathsf{DP}} + \hat{V}(\hat{\beta}_{0,\mathsf{DP}})_{\mathsf{DP}} \right)^{-\frac{1}{2}} \left(\hat{\beta}_{1,\mathsf{DP}} - \hat{\beta}_{0,\mathsf{DP}} - \gamma\right) \overset{d}{\to} N(0,I_d).
    \]
\end{corollary}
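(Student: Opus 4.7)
My plan is to handle the privacy claim and the distributional claim separately, and for both, to reduce to Theorem \ref{thm:covarianceDiff} (applied to each group individually) together with Theorem \ref{thm:diffconv} for the joint behavior of the point estimator.

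For privacy, Theorem \ref{thm:covarianceDiff}(a) implies each pair $(\hat{\beta}_{j,\mathsf{DP}}, \hat{V}(\hat{\beta}_{j,\mathsf{DP}})_{\mathsf{DP}})$ satisfies $\sqrt{\mu_{\mathsf{est}}^2+\mu_{\mathsf{var}}^2}$-uGDP for $j\in\{0,1\}$. Since these two releases are functions of the same underlying dataset, obtained by invoking \textsc{DPTreatMean} and Algorithm \ref{alg:covestimationGrouped} separately on indicator variables $z_i$ and $1-z_i$ with independent injected noise, the composition property (Proposition \ref{prop:composition}) immediately yields $\sqrt{2(\mu_{\mathsf{est}}^2+\mu_{\mathsf{var}}^2)}$-uGDP for the joint tuple.

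For the distributional claim, I would extract two ingredients from the preceding results. First, Theorem \ref{thm:diffconv}(c) gives $\sqrt{nT}(\hat{\beta}_{1,\mathsf{DP}} - \hat{\beta}_{0,\mathsf{DP}} - \theta) \overset{d}{\to} N(0, V_0 + V_1)$. Second, the proof of Theorem \ref{thm:covarianceDiff}(b) in fact establishes the stronger intermediate statement $nT\, \hat{V}(\hat{\beta}_{j,\mathsf{DP}})_{\mathsf{DP}} \overset{p}{\to} V_j$ for $j\in\{0,1\}$, since its self-normalized conclusion is obtained by applying Slutsky's theorem to exactly this limit (Assumption \ref{assumption:ndiffreq} guarantees that $n_j = \Omega(n)$ so the scaling $nT$ is indeed correct for both subgroups). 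By addition and the continuous mapping theorem,
\[
\bigl(nT(\hat{V}(\hat{\beta}_{1,\mathsf{DP}})_{\mathsf{DP}} + \hat{V}(\hat{\beta}_{0,\mathsf{DP}})_{\mathsf{DP}})\bigr)^{-1/2} \overset{p}{\to} (V_0 + V_1)^{-1/2},
\]
where positive definiteness of the limit is secured by the lower bound on $\lambda_{\min}(\Gamma_i)$ in Assumption \ref{assumptions:initialcovariates}.

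Combining these two ingredients via Slutsky's theorem, the factors of $\sqrt{nT}$ cancel and I obtain
\[
\bigl(\hat{V}(\hat{\beta}_{1,\mathsf{DP}})_{\mathsf{DP}} + \hat{V}(\hat{\beta}_{0,\mathsf{DP}})_{\mathsf{DP}}\bigr)^{-1/2}(\hat{\beta}_{1,\mathsf{DP}} - \hat{\beta}_{0,\mathsf{DP}} - \theta) \overset{d}{\to} (V_0+V_1)^{-1/2} N(0, V_0 + V_1) = N(0, I_d).
\]
I do not expect a serious obstacle here; the only mildly delicate point is to verify that the probability limit $nT\hat{V}(\hat{\beta}_{j,\mathsf{DP}})_{\mathsf{DP}} \overset{p}{\to} V_j$ can be distilled from the proof of Theorem \ref{thm:covarianceDiff}(b) as an intermediate step rather than only the joint self-normalized conclusion, but this requires only a bookkeeping exercise tracking the $p^{-2}$ versus $p^{-1}$ scaling factors introduced by restricting the average to a single group.
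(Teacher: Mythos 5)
Your proof is correct and takes essentially the route the paper intends for this corollary, which it leaves as a direct consequence of Theorem \ref{thm:covarianceDiff}: composition (Proposition \ref{prop:composition}) over the two group-wise releases for privacy, and Slutsky's theorem combining Theorem \ref{thm:diffconv}(c) with the intermediate limit $nT\,\hat{V}(\hat{\beta}_{j,\mathsf{DP}})_{\mathsf{DP}} \overset{p}{\to} V_j$, which (with the $p^{-1}$ versus $p^{-2}$ bookkeeping you note) is exactly what the proof of Theorem \ref{thm:covarianceDiff}(b) establishes before self-normalizing. One small caveat: positive definiteness of $V_0+V_1$ does not follow from the lower bound on $\lambda_{\min}(\Gamma_i)$ alone (that only controls $\|\Gamma_i^{-1}\|$ from above); it rests on nondegeneracy of the long-run score covariances $\Lambda_i$, the same implicit assumption the paper uses when asserting $v^\tr V v>0$ in its Lindeberg verification, so this matches the paper's own treatment.
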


\begin{algorithm}[H]
\caption{Treated User-Level Covariance Matrix Estimation}
\begin{algorithmic}[1]
\label{alg:covestimationGrouped}
\STATE \textbf{Input:} User-level OLS estimates $\hat{\beta}_i$, group membership variables $z_i$, the output $(\hat{\beta}_{1,\mathsf{DP}}, \hat{m}_{r^*-1},  r^*, n_{LB}, B^*) $ of Algorithm \ref{alg:diffmeanest}, and covariance privacy parameter $\mu_{\mathsf{var}}$.
\STATE Let $\delta_{\mathsf{DP}} \gets \|\hat{\beta}_{1,\mathsf{DP}} - \hat{m}_{r^*-1}\|$,  $\kappa \gets \frac{B}{2^{r^*}} + \delta_{\mathsf{DP}}$,  $S \gets \{i \in [n]: \|\hat{\beta}_i - \hat{m}_{r^*-1}\| \leq \frac{B}{2^{r^*}} z_i \}$,  $n_{\tilde{S}} \gets \max(|S|, n_{LB})$
\STATE Let 
        \[
        \tilde{V}(\hat{\beta}_{1,\mathsf{DP}})_{\mathsf{DP}} \gets \frac{1}{n_{\tilde{S}}^2}\sum_{i \in S}(\hat{\beta}_i - \hat{\beta}_{1,\mathsf{DP}})(\hat{\beta}_i - \hat{\beta}_{1,\mathsf{DP}})^\tr  
        + (B^*)^2I 
        +  W,
        \]
        where $W_{i,i} \overset{iid}{\sim} N(0, \frac{16\kappa^4}{n_{LB}^4\mu_{\mathsf{var}}^2} )$ and  $W_{i,j} \overset{iid}{\sim} N(0,\frac{8\kappa^4}{n_{LB}^4\mu_{\mathsf{var}}^2})$ for $j >i$ with $W_{i,j} = W_{j,i}$ for $j <i$.
\STATE Let $\hat{V}(\hat{\beta}_{1,\mathsf{DP}})_{\mathsf{DP}}$ be the projection of $\tilde{V}(\hat{\beta}_{1,\mathsf{DP}})_{\mathsf{DP}}$ onto the cone $\mathcal{M}_{symm}^{d\times d}$. 
\STATE \textbf{Output:} $\hat{V}(\hat{\beta}_{1,\mathsf{DP}})_{\mathsf{DP}}$
\end{algorithmic}
\end{algorithm}

\section{Examples}
\subsection{Example \ref{example:covariateassumption}}\label{subsec:covariateassumption}
\begin{proof}
        In this case  
    \[
    \Gamma =  \begin{bmatrix}
        1 & m^\tr \\ m & mm^\tr
    \end{bmatrix} + \begin{bmatrix}
        0 & \mathbf{0}^\tr \\
        \mathbf{0} & \sum_{j=0}^\infty \Psi_j \Sigma_e\Psi_j^\tr
    \end{bmatrix}. 
    \]
Furthermore, since $e_{i,t} \overset{iid}{\sim}\text{subG}(\sigma_x^2)$, we have $\Gamma^{-\frac{1}{2}}x_{i,t} \sim \text{subG}(\sigma_x^2 \|\Gamma^{-\frac{1}{2}}\|^2 \sum_{j=0}^\infty \|\Psi_j\|^2)$. The remaining conditions are easy to check, but note that 
    \[
    \lambda_{\min}(\Gamma) \geq \frac{\lambda_{\min}(\sum_{j \geq 0} \Psi_j \Sigma_e \Psi_j^\tr)}{1 + \|m\|^2 + \lambda_{\min}(\sum_{j \geq 0} \Psi_j \Sigma_e \Psi_j^\tr)}, \quad \text{ and } \quad \lambda_{\max}(\Gamma) \leq 1 + \|m\|^2 + \max\{1, \Sigma_e \sum_{j\geq 0} \|\Psi_j\|^2\},
    \]
    which are bounded by our example assumptions. Exponential strong mixing is due to \cite[Theorem 1]{mokkadem1988mixing}.
\end{proof}
\subsection{Example \ref{example:ARassumptions}}\label{subsec:exampleARAssumption}
\begin{proof}
        To see this, note that the causality assumption means $
    \sum_{j=0}^\infty |\psi_j| < \infty,$ (e.g.\ \citealt{brockwell1991time})
    so that, for any $\lambda \in \mathbb{R}$,
    \[
    \mathbb{E}[\exp(\lambda \varepsilon_{i,t})] = \prod_{j=0}^\infty \mathbb{E}[\exp(\lambda \psi_j r_{i,t-j})] \leq \prod_{j\geq 0} \exp\Big(\frac{\sigma^2_r\lambda^2 \psi_j^2}{2}\Big) = \exp\Big(\frac{\sigma_r^2 \lambda^2}{2}\sum_{j\geq 0} \psi_j^2 \Big),
    \]
    so it has proxy variance at most $\sigma^2 \|\boldsymbol{\psi}\|_{\ell_2}^2$. Furthermore, \cite{mokkadem1988mixing} proves that causal ARMA models with absolutely continuous innovations are exponentially strongly mixing. Finally, the spectral density of $(\varepsilon_{i,t})_{t\geq1}$ has the form
    \[
    f(\lambda) = \frac{\sigma^2}{2\pi} \frac{1}{|1 - \sum_{k=1}^p \phi_k e^{-ik\lambda}|^2}, \quad \lambda \in [-\pi ,\pi],
    \]
    with $\sigma^2 := \V(r_{i,t})$ \citep{brockwell1991time}, and causality implies that all roots of the denominator lie outside the unit circle, so that, denoting $\Phi(z) = 1 - \sum_{k=1}^p\phi_k z^p$,
    \[
    0 < \frac{\sigma^2}{2\pi\max_{\lambda} |\Phi(e^{-i\lambda})|^2} \leq f(\lambda) \leq \frac{\sigma^2}{2\pi \min_{\lambda}|\Phi(e^{-i\lambda})|^2} <\infty.
    \]
\end{proof}

\subsection{Example \ref{example:UI}}\label{subsec:UIexample}
\begin{proof}
        We have, for any $v \in \mathbb{S}^{d-1}$, that $(v^\tr x_{1,t}, \dots, v^\tr x_{1,T})$ is Gaussian. Furthermore, the covariance matrix $\Sigma_T(v)$ has (see e.g. Theorem 11.8.3 of \citealt{brockwell1991time})
    \[
    \lambda_{\min}(\Sigma_T(v)) \geq \frac{\lambda_{\min}(\Sigma_z)}{\sup_\omega s^2_{\max}(P(e^{-i\omega}))},
    \]
    with $P(e^{-i\omega}) := I - \sum_{k=1}^q A_k e^{-i\omega k}$, which has a bounded spectrum by the causality assumption, and where $s_{\max}(P(e^{-i\omega}))$ is the largest singular value of $P(e^{-i\omega})$
. Thus, since $\lambda_{\min}(\Sigma_z)>0$ (the non-degenerate innovations case), there exists some $c>0$ such that $\frac{\lambda_{\min}(\Sigma_z)}{\sup_\omega s^2_{\max}(P(e^{-i\omega}))} \geq c$, independent of $T$ and $v$. It follows that
    \[
    f_{v^\tr x_{1,t}, \dots, v^\tr x_{1,T}}(w) \leq (2\pi c)^{-T/2} = \Big(\frac{1}{\sqrt{2 \pi c}}\Big)^T,
\]
so we can take $K:= \sqrt{\frac{1}{2\pi c}}$.
\end{proof}

\endgroup

\end{document}